\newtheorem{theorem}{Theorem}[section]
\newtheorem{lemma}[theorem]{Lemma}
\newtheorem{definition}{Definition}[section]
\newtheorem{fact}[theorem]{Fact}
\newtheorem{corollary}[theorem]{Corollary}
\newtheorem{proposition}[theorem]{Proposition}
\newtheorem{remark}[theorem]{Remark}
\newtheorem{example}[theorem]{Example}
\newtheorem{assumption}{Assumption}[section]
\newtheorem{problem}{Problem}[section]
\def\cF{{\mathcal{F}}}
\def\cL{{\mathcal{L}}}
\def\cK{{\mathcal{K}}}
\def\bx{{\bm{x}}}
\def\bz{{\bm{z}}}
\def\bd{{\bm{d}}}
\def\R{{\mathbb{R}}}
\newcommand{\inProd}[2]{\langle #1 , #2 \rangle }
\DeclareMathOperator{\ri}{ri}
\DeclareMathOperator{\epi}{epi}
\DeclareMathOperator{\spanVec}{span}
\numberwithin{equation}{section}
\def\iEBR{{\rm Ind}_{\rm EBR}}
\title{\sf Error bounds for perspective cones of a class of nonnegative Legendre functions}
\author{
Xiaozhou Wang\thanks{South China Normal University, Guangzhou, People's Republic of China ({\tt xiaozhou.wang@m.scnu.edu.cn}). The research of this author was partially supported  by the National Natural Science Foundation of China (12501427).}
\and
Bruno F. Louren\c{c}o\thanks{Department of Fundamental Statistical Mathematics, Institute of Statistical Mathematics, Tokyo, Japan ({\tt bruno@ism.ac.jp}). This author was supported partly by the JSPS Grant-in-Aid for Early-Career Scientists  23K16844.}
\and
Ting Kei Pong\thanks{The Hong Kong Polytechnic University, Hong Kong, People's Republic of China ({\tt tk.pong@polyu.edu.hk}). The research of this author was partially supported by the Hong Kong Research Grants Council PolyU 15300122 and the PolyU internal grant 4-ZZPJ.}
}
\date{\today}
\begin{document}
  \maketitle


\begin{abstract}
Error bounds play a central role in the study of conic optimization problems, including the analysis of convergence rates for numerous algorithms.
Curiously, those error bounds are often H\"olderian with exponent $1/2$.
In this paper,  we try to explain the prevalence of the $1/2$ exponent by investigating {generic properties} of error bounds for conic feasibility problems where the underlying cone is a {perspective cone}  constructed from a nonnegative Legendre function on $\R$. 
Our analysis relies on the facial reduction technique and the computation of \emph{one-step facial residual functions} ($\mathds{1}$-FRFs).
Specifically, under appropriate assumptions on the Legendre function, we show that  $\mathds{1}$-FRFs can be taken to be H\"olderian of exponent  $1/2$ {almost everywhere}  with respect to the two-dimensional Hausdorff measure.
This enables us to further establish that having a {uniform H\"olderian error bound} with exponent $1/2$ is a generic property for a class of feasibility problems involving these cones. 
  \end{abstract}

\begin{keywords}
	H\"olderian error bounds, Facial residual functions, Perspective cones, Hausdorff measure, Hausdorff dimension
\end{keywords}

\section{Introduction}
Consider the following conic feasibility problem
\begin{align}\tag{{\bf Feas}}\label{Feas}
{\rm find}\quad  {\bm z}\in (\mathcal{L}+{\bm a}) \cap \mathcal{K},
\end{align}
where $\mathcal{E}$ is a finite-dimensional real Euclidean space, $\mathcal{K}\subseteq \mathcal{E}$ is a closed convex cone,  $\mathcal{L}\subseteq \mathcal{E}$ is a subspace and ${\bm a}\in\mathcal{E}$ is a vector.

We recall that the problem of minimizing/maximizing a linear function over a set of the form $ (\mathcal{L}+{\bm a}) \cap \mathcal{K}$ is called \emph{conic linear programming} (CLP). CLP is a cornerstone of convex optimization and it includes special cases such as linear programming, semidefinite programming and other important classes of problems that are relevant to modern applications.

The jump from linear programming to CLP over an arbitrary convex cone is significant. One important source of difficulty can be traced back to the analysis of feasibility problems such as \eqref{Feas}.
When solving a CLP numerically, typically one makes use of an iterative method that generates points ${\bm z}^k$ that ``approximately'' solve \eqref{Feas}.
And herein lies a critical point: typically, we can only  estimate the individual distances between ${\bm z}^k$  and the sets $\mathcal{L}+{\bm a}$ and $\mathcal{K}$.
The actual distance between ${\bm z}^k$ and the intersection $(\mathcal{L}+{\bm a})\cap \mathcal{K}$ is far harder to be computed directly\footnote{For example, projecting some point onto a nonnegative orthant $\R^n_+$ entails ``zeroing'' the negative entries.
The projection operator onto an affine space $\mathcal{L}+{\bm a}$ also has a closed form expression.
	However, the projection onto $(\mathcal{L}+{\bm a})\cap \R^n_+ $ does not typically have a closed form.}.

Inequalities that upper bound the distance to an intersection of sets through the individual distances are known as \emph{error bounds} and are used
in a myriad of ways in optimization theory.
A basic application of error bound theory is in the study of  convergence rates of algorithms in optimization.
This is quite natural as error bounds provide a glue that links the analysis of the individual components of a problem (i.e., each individual set) to the problem taken as a whole (i.e., the intersection of sets).

The error bound study of \eqref{Feas} for the case where $\mathcal{K}$ is polyhedral is relatively well-understood.
In this case, it can be shown that if a sequence $\{{\bm z}^k\}$ approaches $\mathcal{L}+{\bm a}$ and $\mathcal{K}$ simultaneously, then the distance from ${\bm z}^k$ to $(\mathcal{L}+{\bm a})\cap \mathcal{K}$ goes to zero at least with the same order as the individual distances
from ${\bm z}^k$ to $\mathcal{L}+{\bm a}$ and $\mathcal{K}$ go to zero.
 That is because the celebrated Hoffman's lemma \cite{Hoffman-1952} implies the existence of a constant $\kappa > 0$ such that
 \begin{align}\label{hoffman}
 	{\rm d}({\bm x},(\mathcal{L}+{\bm a})\cap \mathcal{K})\leq \kappa \max\{{\rm d}({\bm x},\mathcal{L} + {\bm a}), {\rm d}({\bm x},\mathcal{K}) \}\quad \forall\, {\bm x \in \mathcal{E}}.
 \end{align}
In particular, this type of error bound is true for the feasible region of linear programs, i.e.,  for the case where $\mathcal{K}$ is a nonnegative orthant.
Unfortunately, when $\mathcal{K}$ is not polyhedral we cannot expect an inequality such as \eqref{hoffman} to hold and we need more general types of error bounds as we will recall later in Section~\ref{section-pre}.

Here, we focus on error bounds for \eqref{Feas} that can be obtained without extra assumptions.
Following a pioneering work by Sturm for the semidefinite programming case \cite{Sturm-2000}, a new framework was developed in a series of papers \cite{Lindstrom-Lourenco-Pong-2023,Lindstrom-Lourenco-Pong-2025,Lourencco-2021} that made it possible to compute error bounds for \eqref{Feas} for many different types of $\mathcal{K}$.
An important part of this framework is the computation of so-called \emph{one-step facial residual functions} ($\mathds{1}$-FRFs) for the faces of the cone $\mathcal{K}$.

After computing $\mathds{1}$-FRFs for symmetric cones \cite{Lourencco-2021}, exponential cones \cite{Lindstrom-Lourenco-Pong-2023}, $p$-cones \cite{Lindstrom-Lourenco-Pong-2025}, power cones \cite{Lin-et-2024} and log-determinant cones \cite{Lin-Lindstrom-et-2024}, a pattern has emerged.
Discarding certain trivial cases, in all examples computed so far, the dominant term in $\mathds{1}$-FRFs is ``almost always'' a power term of exponent $1/2$, see Table~\ref{table-eb}.
This observation leads naturally to the following question.


\begin{quote}
Is the prevalence of $\mathds{1}$-FRFs with exponent $1/2$ a general phenomenon? How to quantify this in more precise terms?
\end{quote}
This paper is an attempt to answer these questions.
 Our main result is that for three-dimensional cones constructed from certain perspective functions, $\mathds{1}$-FRFs with exponent $1/2$ are valid almost-everywhere with respect to the two-dimensional Hausdorff measure, see Theorem \ref{the-almost-1frf}.

 The importance of having $\mathds{1}$-FRFs with exponent $1/2$ (or larger) is that this can be leveraged to describe the cases in which a H\"olderian error bound of exponent $1/2$ holds for the problem \eqref{Feas}.
 This, by its turn, can be further connected to fast convergence of certain algorithms.
 This connection can either be direct or indirect.
 For examples of the latter, see \cite{BNPS17,BLT17,Liu-Lourenco-2024}, where error bound conditions between sets are directly translated to convergence rates of algorithms.
 Examples of more indirect connections can be seen in \cite{Lindstrom-Lourenco-Pong-2025,YLP22}.
In \cite{YLP22}, it is shown how to compute the Kurdyka-{\L}ojasiewicz (KL) exponent of a convex function by looking at the error bounds of its conic representation.
In this context, an error bound of exponent $1/2$ can often be translated to a KL exponent of $1/2$, which, by its turn, often means linear convergence of certain algorithms, see, for example, \cite{Attouch-Bolte-2009,Attouch-Bolte-Redont-Soubeyran-2010,LP18}.
An example of this approach can also be found in \cite[Section~5.1]{Lindstrom-Lourenco-Pong-2025}, where the KL exponent of a linear regression problem with $p$-norm regularization is obtained via the error bounds of its conic representation.


 \subsection{Related works}

%
%
As discussed previously,  if  $\mathcal{K}$ is polyhedral, then \eqref{Feas} satisfies an error bound as
in \eqref{hoffman}, as a consequence of Hoffman's pioneering work \cite{Hoffman-1952}.
Even if $\mathcal{K}$ is not polyhedral, if it happens that the affine space $\mathcal{L} + {\bm a}$ passes through the relative interior of $\mathcal{K}$, then we also have a similar error bound that holds locally over each bounded set, see \cite[Theorem~7]{Bauschke-Borwein-1996}.
In our terminology, this corresponds to a Lipschitzian error bound as in Definition~\ref{def-error-bound}.

The requirement that $\mathcal{L} + {\bm a}$ passes through the relative interior of $\mathcal{K}$ is an example of \emph{constraint qualification} (CQ), which we cannot assume to hold always.
Making use of the  facial reduction technique \cite{Borwein-Wolkowicz-1981, Pataki-2013, Waki-Muramatsu-2013}, several works examined the error bound properties of \eqref{Feas} with non-polyhedral $\mathcal{K}$ without assuming any CQs; see \cite{Lin-et-2024, Lin-Lindstrom-et-2024, Lindstrom-Lourenco-Pong-2023, Lindstrom-Lourenco-Pong-2025, Lourencco-2021}.
In these works, a key technical component is the computation of the $\mathds{1}$-FRFs for a given cone.
Once all the $\mathds{1}$-FRFs are computed, they are composed in a special manner in order to obtain an error bound inequality for \eqref{Feas}, see 
Theorem~\ref{the-error-bound}. 
In all examples discussed in \cite{Lin-et-2024, Lin-Lindstrom-et-2024, Lindstrom-Lourenco-Pong-2023, Lindstrom-Lourenco-Pong-2025, Lourencco-2021},
the most common type of  $\mathds{1}$-FRF is the H\"{o}lderian $\mathds{1}$-FRF  of exponent $1/2$ (see Definition \ref{def-holder-1frf}).

An important case  is when $\cK$ is a cone of real symmetric positive semidefinite matrices.
Then, the corresponding \eqref{Feas} becomes the so-called
\emph{semidefinite feasibility problem}.
In this case, \eqref{Feas} satisfies a H\"{o}lderian error bound, as shown by Sturm \cite{Sturm-2000}.
The exponent that appears in the H\"{o}lderian error bound is controlled by the so-called \emph{singularity degree}.
When the singularity degree is $1$, the exponent becomes $1/2$.
Related to that,  Drusvyatskiy, Li and Wolkowicz \cite[Proposition 3.2]{Drusvyatskiy-Li-Wolkowicz-2017} established that the singularity degree is at most one for almost all (in the sense of Lebesgue measure) semidefinite feasibility problems.
See also \cite{Li-2010, Li-2013} for the related topic of error bounds for convex polynomials.



In addition, other authors have explored generic properties (in the sense of Hausdorff measure) of conic linear programs, see \cite{Silva-Tuncel-2019, Pataki-Levent-2001}.
For more details on various generic properties of optimization problems, see also \cite{Bolte-Daniilidis-Lewis-2011, Drusvyatskiy-Lewis-2011, Dur-Jargalsaikhan-Still-2017, Lee-Pham-2017, Schurr-Tits-Oleary-2007}.

\subsection{Main results}\label{sec:int_main}
Let
$f : \R \to \R_+ \cup\{\infty\}$ be a nonnegative, proper, closed and convex function
and denote by $f^{\pi}:\R \times \R\to \R\cup\{\infty\}$  the closed perspective function of $f$, which is  defined as
\begin{equation}\label{eq:pers}
\begin{aligned}
f^{\pi}(x,t) \coloneqq \begin{cases}
  t f(x/t) & {\rm if}\ t > 0,\\
  f_\infty(x) & {\rm if}\ t = 0,\\
\infty & {\rm if}\ t < 0,
\end{cases}
\end{aligned}
\end{equation}
with $f_\infty$ being the recession function of $f$ (see \eqref{def-f-infty} for the definition).
Then, ${\rm epi}\, f^\pi$ is a closed convex cone
and as we will see in Section~\ref{sec:pers-cone}, many useful three-dimensional cones can be written in this form.

Cones arising as the epigraph of perspective functions will be referred as
\emph{perspective cones}.
In this paper, under a mild assumption (see Assumption \ref{assu-blanket}), we show the following two generic properties.
\begin{itemize}
	\item   The $\mathds{1}$-FRF for the perspective cone ${\rm epi}\, f^\pi$ is H\"{o}lderian of exponent $1/2$ (see Definition \ref{def-holder-1frf}) almost everywhere with respect to the two-dimensional Hausdorff measure; see Theorem \ref{the-almost-1frf}.
	\item Using the generic property for $\mathds{1}$-FRFs concerning the perspective cone ${\rm epi}\, f^\pi$, we further prove a generic property concerning the error bound property for \eqref{Feas} with $\mathcal{K}= {\rm epi}\, f^\pi$ in Corollary \ref{coro-almost-eb}.
More specifically, for a special class of \eqref{Feas} with $\mathcal{K}= {\rm epi}\, f^\pi$ and $\mathcal{L}$ being a two-dimensional subspace, almost every (with respect to the two-dimensional Hausdorff measure) feasibility problem $\{ {\rm epi}\, f^\pi, \mathcal{L}+{\bm a}\}$ satisfies a uniform H\"{o}lderian error bound with exponent $1/2$ (see Definition \ref{def-error-bound}).
\end{itemize}

The paper is organized as follows. In Section \ref{section-pre}, we present preliminaries and the assumptions on the univariate functions we analyze. Section \ref{section-face} is devoted to characterizing the facial structures of ${\rm epi}\, f^\pi$, which include exposed faces and non-exposed faces of ${\rm epi}\, f^\pi$.
In Section \ref{section-frf}, we show that many $\mathds{1}$-FRFs for ${\rm epi}\, f^\pi$ can be taken to be H\"{o}lderian of exponent $1/2$.
In Section \ref{section-measure}, we further show that both  H\"{o}lderian $\mathds{1}$-FRFs of exponent $1/2$ for ${\rm epi}\, f^\pi$ and uniform H\"{o}lderian error bounds with exponent $1/2$ for a class of feasibility problems 
are generic with respect to the two-dimensional Hausdorff measure.

\section{Preliminaries}\label{section-pre}
In this section we review some notions in convex analysis; more details can be found in \cite{Rockafellar-1970}.
We assume that the Euclidean spaces ${\cal E}$ involved are equipped with some inner product $\inProd{\cdot}{\cdot}$ and we denote the corresponding
induced norm by $\|\cdot\|$; we will use the standard inner product on $\R^3$ in Sections~\ref{section-face}, \ref{section-frf} and \ref{section-measure}.
We use $B(\eta)$ to denote a ball with center at the origin and radius $\eta\geq 0$.

Throughout the paper, vectors will be boldfaced while scalars use normal typeface. For example, for ${\bm p}\in \R^3$, we write ${\bm p}=(p_1,p_2,p_3)$ with $p_1$, $p_2$ and $p_3\in \R$.
For a nonempty convex set $C\subseteq {\cal E}$, we use ${\rm int}(C)$, ${\rm ri}(C)$, ${\rm bdry}(C)$, ${\rm cl}(C)$ and $\spanVec(C)$ to represent the interior, relative interior, boundary, closure and linear span of $C$, respectively. The support function of the set $C$ is
\[
\sigma_C({\bm x})\coloneqq\sup\{ \langle {\bm x}, {\bm y}\rangle : {\bm y}\in C  \}.
\]
The \emph{normal cone of $C$ at ${\bm x} \in C$} is denoted by $\mathcal{N}_{C}({\bm x})$ and
satisfies $\mathcal{N}_{C}({\bm x}) = \{{\bm y}  : \sigma_C(\bm y) = \inProd{{\bm x}}{\bm{y}} \}$.
The distance of ${\bm x}$ to the set $C$ is
\[
{\rm d}({\bm x},C)\coloneqq\inf\{\|{\bm x}-{\bm y}\| : {\bm y}\in C\}.
\]
In addition, if $C$ is closed, we use  ${\rm P}_C({\bm x})$ to denote the projection of a vector ${\bm x}$ onto $C$, so that ${\rm d}({\bm x},C) = \|{\bm x} -  {\rm P}_C({\bm x})\|$ holds.

A  function $g:{\cal E}\rightarrow (-\infty, \infty]$ is said to be \emph{proper} if its domain ${\rm dom }\, g\coloneqq \{{\bm x}\in{\cal E} : g({\bm{x}})<\infty \}$ is nonempty. A proper function is said to be \emph{closed} if it is lower semicontinuous.
We use ${\rm epi}\,g  \subseteq{\cal E}\times \R$ to denote the epigraph of $g$, i.e.,
${\rm epi}\,g\coloneqq\{({\bm x},r)\in {\cal E}\times \R: r\geq g({\bm x}) \}.$

Let $h:\R\rightarrow (-\infty, \infty]$ be a proper closed convex function.
The subdifferential $\partial h$ at $x\in \R$ is defined as
\[
\partial h(x) \coloneqq \{ \xi\in\R : h(y)\geq h(x)+\xi(y-x)~\,\,\, \forall\, y\in \R \}.
\]
The domain of $\partial h$ is defined
as ${\rm dom}\, \partial h = \{ x \in \R : \partial h(x) \neq \emptyset\}$.
The  conjugate function $h^*$ of $h$ is given by
\[
h^*(x^*) \coloneqq \sup\{ x x^*-h(x) : x\in \R  \}.
\]
We also recall the Fenchel-Young inequality \cite[pg.~105]{Rockafellar-1970}:
\begin{align}\label{young}
	h(x)+h^*(x^*)- xx^*\geq 0  \quad \forall\, x,  x^*\in \R;
\end{align}
moreover, it holds that
\begin{align}\label{young-1}
	h(x)+h^*(x^*)- x x^*=0 \Longleftrightarrow x^*\in \partial h(x)\Longleftrightarrow x \in \partial h^*(x^*),
\end{align}
see \cite[Theorem~23.5]{Rockafellar-1970}.
Let $y \in {\rm dom}\,h$ be fixed. The recession function of $h$ is defined as
\begin{align} \label{def-f-infty}
	h_\infty(d) \coloneqq \sup_{t> 0}\frac{h(y + td) - h(y)}{t}=\lim_{t \rightarrow \infty}\frac{h(y + td) - h(y)}{t} \ \ \ \ \forall\, d\in \R,
\end{align}
and it is known that the definition of $h_\infty$ is independent of the choice of $y$ in ${\rm dom}\,h$.
Support functions, recession functions and conjugates of $h$ are related as follows (see \cite[Theorem 2.5.4]{Auslender-Alfred-Teboulle-2006}):
\begin{align}\label{infty-sigma}
	(h^*)_\infty(x)=\sigma_{{\rm dom}\, h}(x)\quad {\rm and}\quad h_\infty(x)=\sigma_{{\rm dom}\, h^*}(x)\ \ \ \forall x\in \R.
\end{align}

The following definitions are important in this paper.
\begin{definition}[essential smoothness,  strict convexity and local strong convexity {\cite{Goebel-Rockafellar-2008}}]\label{def-smooth-conexity}
	Let $h:\R\rightarrow \R\cup\{\infty\}$ be a proper convex function.
	\begin{enumerate}[\rm (i)]
		\item We say that $h$ is \emph{essentially smooth} if ${\rm int}({\rm dom}\,h)\neq \emptyset$ and it is differentiable on ${\rm int}({\rm dom}\,h)$, and
		$|h'(x^k)| \rightarrow \infty$ whenever $\{x^k\}$ belongs to ${\rm int}({\rm dom}\,h)$ and converges to a point in ${\rm bdry}({\rm dom}\,h)$.\footnote{These functions are called essentially differentiable functions in \cite{Goebel-Rockafellar-2008}. Here, we follow \cite[Section~26]{Rockafellar-1970} and adopt the terminology ``essential smoothness".}
		
		\item We say that  $h$  is \emph{essentially strictly convex} if it is strictly convex on every convex subset of ${\rm dom}\, \partial h$.
		
		\item We say that $h$  is \emph{essentially locally strongly  convex} if it is strongly convex on every compact and convex  subset of ${\rm dom}\, \partial h$.
	\end{enumerate}
\end{definition}
Clearly, essential  local strong convexity implies essential  strict convexity.
A proper, closed and convex function is called a \emph{Legendre function} if  it is  essentially smooth and essentially strictly convex.

If $h$ is  a Legendre function (which implies, in view of \cite[Theorem~26.3]{Rockafellar-1970}, that $h^*$ is also Legendre), then it follows from \eqref{young} that for any $x^*\in  {\rm bdry}({\rm dom}\, h^*)$,
\begin{align}\label{young-2}
h(x)+h^*(x^*)- x x^*> 0  \quad \forall\, x\in \R,
\end{align}
where the strict inequality follows from  \eqref{young-1} and the  fact that $\partial h^*(x^*)=\emptyset$ (which follows from Theorem 23.4 and Theorem 26.1 in \cite{Rockafellar-1970}).

\subsection{Faces and convex geometry}
Let $\mathcal{K}\subseteq \R^n$ be a nonempty closed convex cone. Its dual cone is $\mathcal{K}^*\coloneqq\{{\bm x}\in \R^n: \langle {\bm x},{\bm y} \rangle\geq 0 \,\,\,\forall \,{\bm y}\in \mathcal{K}\}$.
A nonempty convex cone $\mathcal{F}\subseteq \mathcal{K}$ is said to be a \emph{face} of $\mathcal{K}$  (denoted by $\mathcal{F}\unlhd \mathcal{K}$) if
\[
\forall \, {\bm x},{\bm y}\in \mathcal{K},  {\bm x}+{\bm y}\in \mathcal{F} \Longrightarrow {\bm x},{\bm y} \in \mathcal{F}.
\]
A face $\mathcal{F}\subseteq \mathcal{K}$ is \emph{nontrivial} if $\mathcal{F}\neq \mathcal{K}$ and $\mathcal{F} \neq \mathcal{K}  \cap -\mathcal{K}$.
A cone $\mathcal{K}$ is said to be \emph{pointed} if $ \mathcal{K}\cap -\mathcal{K}=\{{\bm 0}\}$.
If ${\rm int}{(\mathcal{K})} \neq \emptyset $ then
$\mathcal{K}$ is said to be \emph{solid} or \emph{full-dimensional}.

$\mathcal{F}$ is called an \emph{exposed face} of $\mathcal{K}$
if there exists  ${\bm z}\in \mathcal{K}^*$ such that $\mathcal{F}=\mathcal{K}\cap \{{\bm z}\}^{\bot}$. In addition,  according to \cite[Lemma 2.2]{Lourencco-Muramatsu-Tsuchiya-2021}, it is known that
\begin{align}\label{pre-1}
	{\bm z}\in {\rm ri}(\mathcal{K}^*) \Longrightarrow \mathcal{K}\cap \{{\bm z}\}^\bot= \mathcal{K}  \cap -\mathcal{K}.
\end{align}

\subsection{Perspective functions and cones}\label{sec:pers-cone}
Throughout this paper we will analyze 
functions satisfying the following assumption. 
\begin{assumption}\label{assu-blanket}
	The function $f : \R \to \R_+ \cup\{\infty\}$ is nonnegative, proper, closed and convex.
	Furthermore, $f$ is essentially smooth,\footnote{This implies that ${\rm dom}\, \partial f = {\rm int}({\rm dom}\, f)$; see \cite[Theorem 26.1]{Rockafellar-1970}.} and there exists a finite set $W\subset {\rm int}({\rm dom}\,f)$ such that
	$f$ is strongly convex on every compact and convex subset of ${\rm int}({\rm dom}\,f)\setminus W$ and $f'$ is locally Lipschitz continuous on every convex subset of ${\rm int}({\rm dom}\,f)\setminus W$.
\end{assumption}
Under Assumption~\ref{assu-blanket}, $f$ is a Legendre function,  see Proposition \ref{prop-duality}.
In particular, if $W=\emptyset$, then $f$ must also be  essentially locally strongly convex with locally Lipschitz derivative.
We mention in passing that it was shown in \cite[Corollary 4.4]{Goebel-Rockafellar-2008} that  a proper, closed and convex function is essentially smooth with locally Lipschitz gradient and essentially locally strongly convex if and only if its conjugate function is so.


In this paper, we focus on the feasibility problem \eqref{Feas} with $\mathcal{K} = {\rm epi}\, f^\pi$, where $f$ satisfies Assumption~\ref{assu-blanket} and $f^\pi$ denotes its perspective function as in \eqref{eq:pers}:
\begin{align}\label{prob-fea}
	{\rm Find}\quad  {\bm z}\in (\mathcal{L}+{\bm a}) \cap {\rm epi}\,f^\pi.
\end{align}
In view of \cite[Page~67 and Theorem~8.2]{Rockafellar-1970},  ${\rm epi}\,f^\pi$  can be written as follows:
\begin{align}
	{\rm epi}\,f^\pi &= \overline{\bigcup_{t>0}\{(x,t,r):\; (x,r)\in t\cdot {\rm epi}\,f\}} \notag\\
	& = \underbrace{\{(x,0,s):\; (x,s)\in {\rm epi}\,f_\infty\}}_{=:\mathcal{K}^\theta}\cup  \underbrace{\{(x,t,r):\; (x,r)\in t\cdot {\rm epi}\,f, t>0\}}_{\eqqcolon \mathcal{K}^t}. \label{per-cone}
\end{align}
We also observe that $\epi f^\pi$ is pointed and solid, see Proposition~\ref{prop-pointed}.

Many important cones can be written as the epigraph of perspective functions, as illustrated in the following examples.
\begin{example}[the exponential cone]\label{example-exp}
	Let $f(x) =\exp(x)$.
	Then, $f$ satisfies Assumption \ref{assu-blanket} with $W=\emptyset$. Moreover, we have
\begin{align*}
f^*(x)=
\begin{cases}
x \ln x-x & \mbox{if $x>0$}, \\
0 & \hbox{if $x=0$}, \\
\infty & \mbox{if $x<0$},
\end{cases}\quad \mbox{and}\quad
f_\infty(x)=
\begin{cases}
0  & \mbox{if $x\leq 0$}, \\
\infty & \mbox{if $x>0$}.
\end{cases}
\end{align*}
The corresponding perspective cone is the exponential cone as follows (see \cite{Lindstrom-Lourenco-Pong-2023}):
\begin{align}\label{exp-cone}
{\rm epi}\,f^\pi=\mathcal{K}^t\cup \mathcal{K}^\theta=\{ (x,t,r) : r \geq t \exp(x/t), t>0\}\cup \{(x,0,s) : x\leq 0, s\geq 0\}\subset \R^3.
\end{align}
\end{example}

\begin{example}[the exp-exponential cone]\label{example-exp-exp}
	Let $f(x)=\exp(\exp(x))$.
	Then, $f$ satisfies Assumption \ref{assu-blanket} with $W=\emptyset$.
	We have $f_\infty(d)=0$ if $d\leq 0$ and  $f_\infty(d)=\infty$ if $d>0.$
	The perspective cone  is
	\[
	{\rm epi}\, f^\pi=\mathcal{K}^\theta\cup \mathcal{K}^t=\{(x,0,r): x\leq 0, r\geq 0\}\cup \{(x,t,r) : r\geq t \exp(\exp(x/t)) : t>0\}.
	\]
\end{example}

\begin{example}[the power cone and beyond]\label{example-power}
	This example is taken from \cite[Theorem 2]{Hildebrand-2014}.
	Let
	\begin{align*}
		f(x) =
		\begin{cases}
			\alpha^{-p}|x|^p & \mbox{if $x< 0$}, \\
			x^p & \mbox{if $x\geq 0$},
		\end{cases}
	\end{align*}
	where $p\in [2,\infty)$ and $\alpha\in(0,1]$. Direct computation shows that
	\begin{align*}
		&f'(x)=
		\begin{cases}
			-\alpha^{-p}p|x|^{p-1} &  \mbox{if $x<0$}, \\
			p x^{p-1}  &  \mbox{if $x\geq 0$},
		\end{cases} \quad
		f{''}(x)
		=
		\begin{cases}
			\alpha^{-p}p(p-1)|x|^{p-2}  & \mbox{if $x<0$}, \\
			p(p-1)x^{p-2}  & \hbox{if $x> 0$},
		\end{cases}\\
		&f_\infty(d)=
		\begin{cases}
			0  & \mbox{if $d=0$}, \\
			\infty & \hbox{otherwise}.
		\end{cases}
	\end{align*}
	For $p=2$, $f(x)$ satisfies Assumption \ref{assu-blanket} with $W=\emptyset$, and $f$ is actually strongly convex  with Lipschitz derivative. For $p\in (2, \infty)$, $f$ satisfies Assumption \ref{assu-blanket} with $W=\{0\}$.
	
	From \eqref{per-cone},  the perspective cone is as follows
	\begin{align*}
		{\rm epi}\, f^\pi=\mathcal{K}^\theta\cup \mathcal{K}^t=\{(x,t,r): -\alpha r^{\frac{1}{p}}t^{\frac{1}{q}} \leq x\leq r^{\frac{1}{p}}t^{\frac{1}{q}}, r\geq 0, t\geq 0\},
	\end{align*}
	where   $1/p+1/q=1$.
	When $\alpha=1$, then ${\rm epi}\, f^\pi$ is exactly the three-dimensional power cone \cite{Lin-et-2024}:
	\begin{align}\label{power-cone}
	\mathcal{P}_{1,2}^{\frac{1}{p},\frac{1}{q}}\coloneqq \{(x,t,r): r^{\frac{1}{p}} t^{\frac{1}{q}}\geq |x|, r\geq 0,t\geq 0\}.
	\end{align}
\end{example}

\begin{example}[the second-order cone]\label{example-soc}
	The second-order cone $\mathcal{K}^{3}_2\coloneqq \{(x,t,r) : r\geq \sqrt{t^2+x^2}\}$ is linearly isomorphic to the power cone of $\mathcal{P}_{1,2}^{\frac{1}{2},\frac{1}{2}}=\{(x,t,r): r^{\frac{1}{2}}t^{\frac{1}{2}}\geq |x|, r\geq 0, t\geq 0\}$ via the bijective linear mapping $\mathcal{T}$ given by $\mathcal{T}(x,t,r)\coloneqq (2x,r-t,r+t)$; that is $\mathcal{T}\mathcal{P}_{1,2}^{\frac{1}{2},\frac{1}{2}}=\mathcal{K}^3_2$.
\end{example}

\begin{remark}[How restrictive is Assumption~\ref{assu-blanket}?]
Examples \ref{example-exp}-\ref{example-soc} cover the exponential cone, the power cone, the second-order cones and beyond.
These cones are useful in modern applications and they are implemented in several codes for conic programming such as  MOSEK \cite{mosek-2025}, DDS \cite{Karimi-Tuncel-2024}, Alfonso \cite{Papp-Yildiz-2022}, Hypatia \cite{Coey-Kapelevich-Vielma-2022} and Clarabel \cite{Goulart-Chen-2024}.
This suggests that
 Assumption \ref{assu-blanket}, although special, captures many important perspective cones.

 We also mention in passing that Hildebrand showed in \cite[Theorem 2]{Hildebrand-2014} that every three-dimensional pointed, solid, and closed convex cone with automorphism group of dimension at least $2$ is linearly isomorphic to exactly one of the five cones below :
\begin{enumerate}[1.]
\item the exponential cone in Example \ref{example-exp};
\item the nonnegative orthant $\R^3_+$;
\item the perspective cone  $\{(x,t,r): r^{\frac{1}{p}} t^{\frac{1}{q}}\geq x, r\geq 0,t\geq 0\}$  with $1/p+ 1/q=1$, which arises from the mapping that $x\mapsto x^p$ if $x\geq 0$ and $x\mapsto 0$ otherwise, where $p\in[2,\infty)$;
\item the perspective cone in Example \ref{example-power};
\item the perspective cone $\{(x,t,r): r^{\frac{1}{p}} t^{\frac{1}{q}}\geq x\geq 0, r\geq 0,t\geq 0\}$  with $1/p+ 1/q=1$, which arises from the mapping that $x\mapsto x^p$ if $x\geq 0$ and $x\mapsto \infty$ otherwise, where $p\in[2,\infty)$.
\end{enumerate}
The conclusion is that half of the isomorphism classes of nonpolyhedral cones with automorphism group of dimension at least $2$ are covered by cones  as in \eqref{per-cone} with $f$ satisfying Assumption~\ref{assu-blanket}. 
Here we recall that for $\R^3_+$, the sole polyhedral cone in the list above, its error bounds follow from Hoffman's lemma \cite{Hoffman-1952}.
We also recall that linearly isomorphic cones have the same facial residual functions up to positive rescaling in view of \cite[Proposition 17]{Lourencco-2021}.

\end{remark}

\subsection{Error bounds}

The definitions of Lipschitzian and H\"{o}lderian error bounds are as follows.
\begin{definition}[Lipschitzian and H\"{o}lderian error bounds]\label{def-error-bound}
Let $C_1$, $C_2 \subseteq \R^n$ be closed convex sets with $C_1\cap C_2\neq \emptyset$.
We say that $\{C_1, C_2\}$ satisfies a \emph{uniform H\"{o}lderian error bound} with exponent $\gamma\in (0,1]$ if for every bounded set $B$ there exists  constant $\kappa_B>0$  such that
\begin{align*}
{\rm d}({\bm x},C_1 \cap C_2)\leq \kappa_B \max\{{\rm d}({\bm x},C_1), {\rm d}({\bm x},C_2) \}^{\gamma}\quad \forall\, {\bm x}\in B.
\end{align*}
If $\gamma= 1$, then the error bound is said to be \emph{Lipschitzian}.
\end{definition}

In order to obtain error bounds for \eqref{prob-fea}, we will follow the approach described in \cite{Lindstrom-Lourenco-Pong-2023,Lindstrom-Lourenco-Pong-2025,Lourencco-2021}, see Appendix~\ref{section-appedix-a1} for more details.
The most technical part of applying the theory described in those papers is computing the so-called \emph{one-step facial residual function ($\mathds{1}$-FRF)}.
\begin{definition}[One-step facial residual function ($\mathds{1}$-FRF)]\label{def-1frf}
Let $\mathcal{K}$ be a closed convex cone and ${\bm z}\in \mathcal{K}^*$.
A function $\psi_{\mathcal{K},{\bm z}}: \R_+ \times \R_+ \to \R_+$ is called a one-step facial residual function ($\mathds{1}$-FRF) for $\mathcal{K}$ and ${\bm z}$
 if it satisfies the following two conditions:
\begin{enumerate}[\rm (i)]
\item $\psi_{\mathcal{K},{\bm z}}$  is nonnegative, nondecreasing in each argument and $\psi_{\mathcal{K},{\bm z}}(0,t)=0$ for all $t\in\R_+$;
\item for each ${\bm x}\in {\rm span}(\mathcal{K})$ and any $\epsilon\ge 0$,
\[
{\rm d}({\bm x},\mathcal{K})\leq \epsilon,\,   \langle {\bm x}, {\bm z} \rangle \leq \epsilon \Longrightarrow {\rm d}({\bm x},\mathcal{K}\cap \{{\bm z}\}^\bot)\leq \psi_{\mathcal{K},{\bm z}}(\epsilon, \|{\bm x}\|). \footnote{Note that item (ii) can be equivalently written as
\[
 {\rm d}({\bm x},\mathcal{K}\cap \{{\bm z}\}^\bot)\leq \psi_{\mathcal{K},{\bm z}}(\max\{{\rm d}({\bm x},\mathcal{K}), \langle {\bm x}, {\bm z}\rangle  \}, \|{\bm x}\|)
 \quad \forall\, x\in {\rm span}(\mathcal{K}).
\]}
\]
\end{enumerate}
\end{definition}
The $\mathds{1}$-FRF is a special case of the FRF described in  \cite{Lourencco-2021}
and, intuitively, expresses an error bound between a cone $\cK$ and one of its supporting hyperplanes.
It can be shown that FRFs always exist, e.g., \cite[Section~3.2]{Lourencco-2021}.
Once all the $\mathds{1}$-FRFs for a cone and its subfaces are determined, by making use of a facial reduction algorithm \cite{Borwein-Wolkowicz-1981,Lourenco-Muramatsu-Tsuchiya-2018,Pataki-2013,Waki-Muramatsu-2013} it is possible to obtain error bounds for \eqref{prob-fea}, see Theorem~\ref{the-error-bound}.

In this paper, we will focus on $\mathds{1}$-FRFs having the following format. 
\begin{definition}[H\"olderian $\mathds{1}$-FRFs]\label{def-holder-1frf}
Let $\mathcal{K}$ be a closed convex cone and ${\bm z}\in \mathcal{K}^*$. We say that the $\mathds{1}$-FRF for $\mathcal{K}$ and ${\bm z}$ is \emph{H\"olderian of exponent $\alpha\in(0,1]$} if there exist nonnegative nondecreasing functions $\rho_1, \rho_2:\R_+\rightarrow \R_+$ such that the $\mathds{1}$-FRF, denoted by $\psi_{\mathcal{K},{\bm z}}:\R_+\times \R_+ \rightarrow \R_+$, is given by
$\psi_{\mathcal{K},{\bm z}}(\epsilon,t) = \rho_1(t)\epsilon+\rho_2(t)\epsilon^{\alpha}$.
\end{definition}
\begin{remark}
	If $\mathcal{K}$ and ${\bm z}$ admit a H\"{o}lderian $\mathds{1}$-FRF of exponent $\alpha$, then
they also admit H\"{o}lderian $\mathds{1}$-FRF of exponent $\beta$ for any $\beta\in (0,\alpha]$.

If   $\mathcal{K}$ in Definition \ref{def-holder-1frf} is pointed, then for any  ${\bm z}\in {\rm ri}(\mathcal{K}^*)$, the corresponding $\mathds{1}$-FRF can be taken to be H\"olderian of exponent $1$; see \cite[Lemma 2.2]{Lindstrom-Lourenco-Pong-2025}.  In view of this fact and that ${\rm epi}\, f^\pi$ is pointed thanks to  Proposition \ref{prop-pointed}, we only need to study $\mathds{1}$-FRFs associated with those ${\bm z}\in {\rm bdry}(({\rm epi}\, f^\pi)^*)$.
\end{remark}

Formulae for $\mathds{1}$-FRFs  are known for symmetric cones \cite[Theorem~35]{Lourencco-2021},
exponential cones \cite{Lindstrom-Lourenco-Pong-2023},
$p$-cones \cite{Lindstrom-Lourenco-Pong-2025}, generalized power cones \cite{Lin-et-2024} and log-determinant cones \cite{Lin-Lindstrom-et-2024}.
See also Table~\ref{table-eb} for
some examples in three dimensions.

Table~\ref{table-eb} illustrates the motivating phenomenon for this paper.
In all examples therein, the cases in which the known $\mathds{1}$-FRFs are \emph{not} dominated by a power factor of exponent $1/2$ seem to be confined to sets of smaller dimensions.
For example, in the power cone case, every
${\bm z}\in {\rm bdry}(\mathcal{K}^*)\setminus \{{\bm 0}\}$ with $z_x \neq 0$ leads to a H\"olderian $\mathds{1}$-FRF of exponent $1/2$.
This fails for the case that $z_x =  z_t = 0$ with $p>2$, but the set of such ${\bm z}$ has smaller Hausdorff dimension than the set of ${\bm z}$ affording exponent $1/2$.



\begin{table}[h]
	\centering
	\begin{tabular}{c|cccc}
		\hline
		\multirow{2}{*}{$\mathcal{K}$}
		& ${\bm z}=(z_x,z_t,z_r)$ & \multirow{2}{*}{$\mathds{1}$-FRFs for $\mathcal{K}$ and ${\bm z}$} & dominated by\\
		& ${\bm z}\in {\rm bdry}(\mathcal{K}^*)\setminus \{\bm 0\}$ & & the exponent 1/2 \\
		\hline
		\multirow{6}{*}{\makecell[c]{power cone \\ $p\in [2,\infty)$ \\ $1/p+1/q=1$ \\see \eqref{power-cone} }}
		&  \multirow{2}{*}{$z_x\neq 0$} &  $(\epsilon,t)\mapsto \rho_1(t) \epsilon+\rho_2(t) \epsilon^{\frac{1}{2}}$   &   \multirow{2}{*}{\ding{51}}\\
		&   &  \cite[Corollary 3.9(i)]{Lin-et-2024} & \\
		\cline{2-4}
		& $z_x=z_r= 0$; or &  $(\epsilon,t)\mapsto \rho_1(t) \epsilon+\rho_2(t) \epsilon^{1/q}$ &  \multirow{2}{*}{\ding{51}} \\
		& $z_x=z_t=0$, $p=2$  &   \cite[Corollary 3.9(ii)]{Lin-et-2024}  & \\
		\cline{2-4}
		&  \multirow{2}{*}{$z_x=z_t= 0$, $p>2$} &  $(\epsilon,t)\mapsto \rho_1(t) \epsilon+\rho_2(t) \epsilon^{1/p}$ &  \multirow{2}{*}{\ding{55}} \\
		&   &   \cite[Corollary 3.9(ii)]{Lin-et-2024}  & \\
		\hline
		\multirow{8}{*}{\makecell[c]{exponential \\ cone \\ see \eqref{exp-cone}}}
		& \multirow{2}{*}{$z_x<0$} & $(\epsilon,t)\mapsto \rho_1(t) \epsilon+\rho_2(t) \epsilon^{\frac{1}{2}}$ & \multirow{2}{*}{\ding{51}}\\
		& &  \cite[Corollary 4.7]{Lindstrom-Lourenco-Pong-2023} & \\
		\cline{2-4}
		& \multirow{2}{*}{$z_x=0$, $z_t>0$, $z_r>0$} & $(\epsilon,t)\mapsto \rho_1(t) \epsilon$ & \multirow{2}{*}{\ding{51}}\\
		& & \cite[Corollary 4.11(i)]{Lindstrom-Lourenco-Pong-2023}& \\
		\cline{2-4}
		& \multirow{2}{*}{$z_x=0$, $z_t=0$, $z_r>0$} &  log-type function & \multirow{2}{*}{\ding{55}}\\
		& &\cite[Corollary 4.11(ii)]{Lindstrom-Lourenco-Pong-2023} & \\
		\cline{2-4}
		& \multirow{2}{*}{$z_x=0$, $z_t>0$, $z_r=0$}  & entropy-type function & \multirow{2}{*}{\ding{51}}\\
		& &\cite[Corollaries 4.3 and 4.4]{Lindstrom-Lourenco-Pong-2023} & \\
		\hline
		\multirow{6}{*}{\makecell[c]{$p$-cone \\ $p\in (1,\infty)$}}
		& \multirow{2}{*}{$z_t\neq 0$, $z_x\neq 0$}  &  $(\epsilon,t)\mapsto \rho_1(t) \epsilon+\rho_2(t) \epsilon^{\frac{1}{2}}$ &  \multirow{2}{*}{\ding{51}}\\
        & & \cite[Corollary 4.1]{Lindstrom-Lourenco-Pong-2025} & \\
        \cline{2-4}
		& $z_t\neq 0$, $z_x= 0$, $p\in (1,2]$; or &  $(\epsilon,t)\mapsto \rho_1(t) \epsilon+\rho_2(t) \epsilon^{\frac{1}{p}}$  &  \multirow{2}{*}{\ding{51}} \\
		& $z_t= 0$, $z_x\neq 0$, $p\in (1, 2]$ & \cite[Corollary 4.1]{Lindstrom-Lourenco-Pong-2025}  &\\
		\cline{2-4}
		& $z_t\neq 0$, $z_x= 0$, $p> 2$; or  & $(\epsilon,t)\mapsto \rho_1(t) \epsilon+\rho_2(t) \epsilon^{\frac{1}{p}}$ & \multirow{2}{*}{\ding{55}}\\
		&  $z_t= 0$, $z_x\neq 0$, $p> 2$  & \cite[Corollary 4.1]{Lindstrom-Lourenco-Pong-2025}   &\\
		\hline
	\end{tabular}
	\caption{$\mathds{1}$-FRFs for exposed faces of some important cones in $\R^3$. Here, $\rho_1, \rho_2: \R_+\rightarrow \R_+$ are nonnegative nondecreasing functions. 
Note that  the $p$-cone is written as $\{ (v_r,v_x,v_t): v_r\geq \sqrt[p]{|v_x|^p+|v_t|^p}\}$ in \cite[(1.1)]{Lindstrom-Lourenco-Pong-2025}, while it is given by $\{ (v_x,v_t,v_r): v_r\geq \sqrt[p]{|v_x|^p+|v_t|^p}\}$ in the table.}\label{table-eb}
\end{table}

\section{Facial structure of epigraphical cones}\label{section-face}
In this section, we discuss the facial structure of epigraphical cones and their exposed faces.
We start with a description of the dual cone.

Let $(f^*)^{\pi}$ be the perspective function of the conjugate function of $f$. Similar to \eqref{per-cone}, its epigraph is given as follows:
\begin{align}\label{per-cone-conjugate}
{\rm epi}\,(f^*)^\pi= \{(x,0,s):\; (x,s)\in {\rm epi}\,(f^*)_\infty\}\cup \{(x,t,r):\; (x,r)\in t\cdot {\rm epi}\,f^*, t>0\}.
\end{align}
The following proposition characterizes the dual cone of ${\rm epi}\,f^\pi$ in terms of  $f^*$.

\begin{proposition}[the dual cone of ${\rm epi}\, f^\pi$]\label{prop-dual-cone}
Consider \eqref{per-cone}.
The dual cone 
satisfies
\begin{align}
({\rm epi}\,f^\pi)^*=\underbrace{\{(-x,s,0):  (x,s)\in {\rm epi}\,(f^*)_\infty\}}_{\eqqcolon  \mathcal{K}^{*\theta}}   \cup\underbrace{\{(-x,r,t):\  (x,r)\in t\cdot {\rm epi}\,f^*, t>0\}}_{\eqqcolon  \mathcal{K}^{*t}}. \label{dual-cone}
\end{align}
\end{proposition}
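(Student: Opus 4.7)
The plan is to unpack the definition of the dual cone directly, split on the sign of the third coordinate of a dual vector, and reduce the remaining inequalities to Fenchel-type statements about $f^*$ and $(f^*)_\infty$.

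First I would write $(y_1,y_2,y_3)\in({\rm epi}\,f^\pi)^*$ as the requirement $y_1 x+y_2 t+y_3 r\ge 0$ for every $(x,t,r)\in{\rm epi}\,f^\pi$. Letting $r\to\infty$ while keeping $(x,t)$ fixed in the effective domain of $f^\pi$ forces $y_3\ge 0$. I would then identify the sought decomposition by matching the two pieces $\mathcal{K}^{*t}$ and $\mathcal{K}^{*\theta}$ with the subcases $y_3>0$ and $y_3=0$, respectively, and write $-y_1$ in place of $y_1$ at the end to recover the precise form stated in the proposition.

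For the case $y_3>0$, I would use the fact that for fixed $(x,t)$ with $t>0$ and $x\in t\cdot{\rm dom}\,f$, the infimum over admissible $r$ is attained at $r=tf(x/t)$. Substituting $u=x/t$ and dividing by $t$ reduces the inequality to $y_3 f(u)+y_1 u+y_2\ge 0$ for all $u\in{\rm dom}\,f$, which by the definition of the Fenchel conjugate is equivalent to $f^*(-y_1/y_3)\le y_2/y_3$, i.e.\ $(-y_1,y_2)\in y_3\cdot{\rm epi}\,f^*$. I would then verify that the extra constraints coming from the points $(x,0,r)\in\mathcal{K}^\theta$ (which give $y_3 f_\infty(x)+y_1 x\ge 0$ for $x\in{\rm dom}\,f_\infty$) are automatically implied, by invoking the first identity in \eqref{infty-sigma}, namely $(f^*)_\infty(\cdot)=\sigma_{{\rm dom}\,f}(\cdot)$, together with the finiteness of $f^*$ at $-y_1/y_3$.

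For the case $y_3=0$, the inequality $y_1 x+y_2 t\ge 0$ restricted to $\mathcal{K}^t$ becomes (after dividing by $t>0$ and setting $u=x/t$) the requirement $y_1 u+y_2\ge 0$ for all $u\in{\rm dom}\,f$, i.e.\ $y_2\ge \sigma_{{\rm dom}\,f}(-y_1)$. Applying \eqref{infty-sigma} rewrites this as $(f^*)_\infty(-y_1)\le y_2$, which is precisely $(-y_1,y_2)\in{\rm epi}\,(f^*)_\infty$. Finally, I would observe that the restriction of the inequality to $\mathcal{K}^\theta$ gives $y_1 x\ge 0$ for $x$ in the recession cone of ${\rm dom}\,f$, and that this is already implied by finiteness of $\sigma_{{\rm dom}\,f}(-y_1)$.

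The only substantive step is the subsumption argument for the $\mathcal{K}^\theta$-constraints in both cases; this is where Assumption \ref{assu-blanket} and the identities in \eqref{infty-sigma} are needed to ensure that no extra conditions slip through. Combining the two subcases and relabelling $(-y_1,y_2,y_3)$ as $(x,r,t)$ yields the decomposition \eqref{dual-cone} with $\mathcal{K}^{*t}$ and $\mathcal{K}^{*\theta}$ exactly as stated.
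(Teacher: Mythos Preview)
Your approach is correct and genuinely different from the paper's. The paper argues at a higher level of abstraction: it recognizes that $\mathcal{K}^t\cup\{\bm 0\}$ is the cone generated by $\{(u,1,v):(u,v)\in{\rm epi}\,f\}$, invokes \cite[Theorem~14.4]{Rockafellar-1970} (the standard conic polarity between ${\rm epi}\,f$ and ${\rm epi}\,f^*$) to obtain directly the equivalence $(x^*,t^*,r^*)\in{\rm epi}\,(f^*)^\pi\Longleftrightarrow(-x^*,r^*,t^*)\in(\mathcal{K}^t\cup\{\bm 0\})^*$, and then passes to closures. Your direct computation is more elementary (no external polarity theorem needed) and makes the role of the Fenchel inequality explicit, at the cost of having to verify by hand that the $\mathcal{K}^\theta$-constraints are subsumed. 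Two small corrections: in the $y_3>0$ subsumption step you need the \emph{second} identity in \eqref{infty-sigma}, namely $f_\infty=\sigma_{{\rm dom}\,f^*}$, since the claim is $f_\infty(x)\ge(-y_1/y_3)x$ whenever $-y_1/y_3\in{\rm dom}\,f^*$; the identity you cite, $(f^*)_\infty=\sigma_{{\rm dom}\,f}$, is the one you correctly use in the $y_3=0$ case. Also, Assumption~\ref{assu-blanket} plays no role here --- the result holds for any nonnegative proper closed convex $f$, and neither proof uses more than that.
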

\begin{proof}
Recall from \eqref{per-cone} that $\mathcal{K}^t=\{(x,t,r):\; (x,r)\in t\cdot {\rm epi}\,f, t>0\}$. Then ${\cal K}^t\cup\{\bm 0\}$ is the convex cone  generated by $(u,1,v)$ with $(u,v)\in {\rm epi}\, f$.
According to  \cite[Theorem 8.2]{Rockafellar-1970}, we see that
\begin{align}\label{prop-dual-cone-1}
{\rm cl}({\mathcal{K}^t}\cup\{\bm 0\})={\mathcal{K}^t}\cup{\mathcal{K}^\theta}={\rm epi}\, f^\pi.
\end{align}
Similarly, we deduce from \eqref{per-cone-conjugate} that ${\rm epi}\, (f^*)^\pi = {\rm cl}(\{(x,t,r):\; (x,r)\in t\cdot {\rm epi}\,f^*, t>0\}\cup \{{\bm 0}\})$. In view of these observations, we see further that
\begin{align*}
(x^*,t^*,r^*)\in {\rm epi}\, (f^*)^\pi
\overset{\rm (a)}{\Longleftrightarrow} & (-x^*,r^*,t^*) \in (\mathcal{K}^t\cup\{{\bm 0}\})^* \\
\overset{\rm (b)}{\Longleftrightarrow} & (-x^*,r^*,t^*) \in ({\rm cl}(\mathcal{K}^t\cup\{{\bm 0}\}))^* \overset{\rm (c)}{=}({\rm epi}\, f^\pi)^*,
\end{align*}
where (a) follows from \cite[Theorem 14.4]{Rockafellar-1970},
(b) follows from the fact that $({\rm cl}(\mathcal{Q}))^*=\mathcal{Q}^*$ for any nonempty convex cone $\mathcal{Q}$,
and (c) follows from  \eqref{prop-dual-cone-1}.
The above display together with \eqref{per-cone-conjugate} implies  \eqref{dual-cone}.
\end{proof}

\subsection{Exposed  faces}\label{sec:exposed_faces}

In this subsection, we study the facial structure of ${\rm epi}\, f^\pi$ in \eqref{per-cone} and we recall that we are under the blanket Assumption~\ref{assu-blanket}.
By definition, each exposed face is of
the form ${\rm epi}\, f^{\pi} \cap \{{\bm z}^*\}^\bot$  for some ${\bm z}^*\in ({\rm epi}\, f^{\pi})^*$.
However, if ${\bm z}^*$ is zero or is in ${\rm int}(({\rm epi}\, f^{\pi})^*)$ (noting that $({\rm epi}\, f^{\pi})^*$ is solid and pointed thanks to Proposition \ref{prop-pointed}), then ${\bm z}^*$
exposes a trivial face, see \eqref{pre-1}.
Accordingly, in order to understand the nontrivial faces of ${\rm epi}\, f^\pi$ it is enough to focus on the nonzero ${\bm z}^*$ contained in
the boundary of $({\rm epi}\, f^{\pi})^*$.


 We first classify the nontrivial exposed faces that are exposed by ${\bm z}^*=(-x^*,r^*,t^*)\in {\rm bdry}(\mathcal{K}^{*t})\cap \mathcal{K}^{*t}$, where $t^*>0$ and $\mathcal{K}^{*t}$ is given in \eqref{dual-cone}.
\begin{proposition}\label{prop-e-face}
Let  ${\bm z}^*=(-x^*,r^*,t^*)\in  {\rm bdry}(\mathcal{K}^{*t})\cap \mathcal{K}^{*t}$ be such that ${\rm epi}\, f^{\pi} \cap \{{\bm z}^*\}^\bot$ is a nontrivial exposed face, where $t^*>0$ and $\mathcal{K}^{*t}$ is given in \eqref{dual-cone}.
\begin{enumerate}[\rm (i)]
\item If $\beta\coloneqq x^*/t^*\in {\rm int}({\rm dom}\, f^*)$ and $r^*=t^*f^*(\beta)$, then we have
\begin{align}\label{prop-e-face-1}
\mathcal{F}^e_\beta\coloneqq {\rm epi}\,f^\pi\cap \{{\bm z}^*\}^\bot=\{t(\bar{\beta},1, f(\bar{\beta})): t\geq 0\},
\end{align}
where $\bar{\beta} \coloneqq (f^*)'(\beta)$.
\item If $\beta\coloneqq x^*/t^*\in {\rm bdry}({\rm dom}\, f^*)\cap {\rm dom}\, f^*$ and $r^*\geq t^* f^*(\beta)$, then we have
\begin{align}\label{prop-e-face-2}
\mathcal{F}^e_{\rm bd}\coloneqq {\rm epi}\,f^\pi\cap \{{\bm z}^*\}^\bot=\mathcal{K}^\theta \cap \{{\bm z}^*\}^\bot=\{(x,0, f_\infty(x)): f_\infty(x)= \beta x \},
\end{align}
where $\mathcal{K}^\theta$ is defined in \eqref{per-cone}.
\end{enumerate}
\end{proposition}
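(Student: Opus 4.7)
The plan is to unfold the orthogonality relation $\langle (x,t,r), {\bm z}^* \rangle = 0$, which in our notation reads
\[
-xx^* + tr^* + rt^* = 0,
\]
and then combine it with membership in ${\rm epi}\, f^\pi$. The decomposition ${\rm epi}\, f^\pi = \mathcal{K}^\theta \cup \mathcal{K}^t$ from \eqref{per-cone} naturally splits the analysis into the subcases $t>0$ and $t=0$.

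For part (i), where $\beta = x^*/t^* \in {\rm int}({\rm dom}\, f^*)$ and $r^* = t^* f^*(\beta)$, I would treat $t > 0$ first. Here $(x,r) \in t \cdot {\rm epi}\, f$, so after dividing the orthogonality relation by $t \cdot t^* > 0$ and using $r \geq t f(x/t)$, one obtains
\[
0 \;\geq\; f(x/t) + f^*(\beta) - \beta (x/t),
\]
and the Fenchel--Young inequality \eqref{young} forces equality. By \eqref{young-1} this means $x/t \in \partial f^*(\beta)$. Since $f^*$ is Legendre (so differentiable on ${\rm int}({\rm dom}\, f^*)$ by Proposition \ref{prop-duality}), the subdifferential is the singleton $\{\bar\beta\}$, yielding $x = t\bar\beta$ and $r = t(\beta \bar\beta - f^*(\beta)) = t f(\bar\beta)$. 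For the boundary case $t=0$, we have $(x,0,r) \in \mathcal{K}^\theta$, and the orthogonality relation becomes $r = \beta x$; combined with $r \geq f_\infty(x) = \sigma_{{\rm dom}\, f^*}(x)$ from \eqref{infty-sigma}, we get $\beta x \geq \sigma_{{\rm dom}\, f^*}(x)$. Since $\beta$ lies in the \emph{interior} of ${\rm dom}\, f^*$, perturbing to $\beta \pm \varepsilon$ (still in ${\rm dom}\, f^*$) shows $\sigma_{{\rm dom}\, f^*}(x) \geq \beta x + \varepsilon |x|$, forcing $x = 0$ and hence $r = 0$. This pins the face down to the claimed ray.

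For part (ii), where $\beta \in {\rm bdry}({\rm dom}\, f^*) \cap {\rm dom}\, f^*$, the key observation is the strict Fenchel--Young inequality \eqref{young-2}. If $t > 0$, the same manipulation as above, now with $r^* \geq t^* f^*(\beta)$, gives
\[
0 \;\geq\; t\, t^* \bigl( f(x/t) + f^*(\beta) - \beta (x/t) \bigr),
\]
but the right-hand side is \emph{strictly} positive by \eqref{young-2}, a contradiction. Hence $t = 0$ and the face sits inside $\mathcal{K}^\theta$. Orthogonality reduces to $r = \beta x$, while membership in $\mathcal{K}^\theta$ requires $r \geq f_\infty(x)$. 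Using $f_\infty = \sigma_{{\rm dom}\, f^*}$ and $\beta \in {\rm dom}\, f^*$, one has $\sigma_{{\rm dom}\, f^*}(x) \geq \beta x$, so both inequalities merge into $r = \beta x = f_\infty(x)$, producing the description in \eqref{prop-e-face-2}.

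The routine pieces are the Fenchel--Young manipulations and the support-function identities from \eqref{infty-sigma}; the main technical point I anticipate having to be careful about is the role of $\beta$ being interior versus boundary of ${\rm dom}\, f^*$, because that is precisely what separates the ``strict'' Young inequality \eqref{young-2} from the plain inequality \eqref{young}, and it is this dichotomy that dictates whether the face collapses to a one-dimensional ray (case (i)) or to the potentially larger subset of $\mathcal{K}^\theta$ described in case (ii).
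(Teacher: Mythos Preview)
Your proof is correct and follows essentially the same approach as the paper: split into the cases $t>0$ and $t=0$, reduce via the Fenchel--Young inequality \eqref{young}--\eqref{young-1} and the support-function identity \eqref{infty-sigma}, and use the interior/boundary dichotomy for $\beta$ to distinguish the two items. The only cosmetic difference is in part~(ii) for $t>0$: the paper argues that $(f^*)'(\beta)$ fails to exist at a boundary point of ${\rm dom}\,f^*$, whereas you invoke the strict inequality \eqref{young-2} directly; these are equivalent consequences of the Legendre property.
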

\begin{proof}
We first discuss the conditions that a given
\begin{align}\label{prop-e-face-2-1}
(x,t,r)\in{\rm epi}\,f^\pi\cap \{{\bm z}^*\}^\bot,
\end{align}
must satisfy, where ${\bm z}^*\in  {\rm bdry}(\mathcal{K}^{*t})\cap \mathcal{K}^{*t}$.
Since $(x,t,r)\in  {\rm epi}\,f^\pi$ and ${\bm z}^*\in  {\rm bdry}(\mathcal{K}^{*t})\cap \mathcal{K}^{*t}$,
we see that
\begin{align}\label{prop-e-face-2-2}
  \begin{cases}
r\geq tf(x/t)& \mbox{if $t>0$},\\
r\geq f_\infty(x)& \mbox{if $t=0$},\\
\end{cases}\,\, \mbox{and}\,\,
  \begin{cases}
  r^*= t^*f^*(x^*/t^*) & \mbox{if $x^*/t^*\in {\rm int}({\rm dom}\,f^*)$},\\
  r^*\geq t^*f^*(x^*/t^*) & \mbox{if $x^*/t^*\in {\rm bdry}({\rm dom}\,f^*)\cap {\rm dom}\,f^*$}.
  \end{cases}
\end{align}
Consider two cases for $t$ as follows:
\begin{enumerate}[\rm (a)]
\item $t>0$;
\item $t=0$.
  \end{enumerate}

{\bf Case (a):}
From \eqref{prop-e-face-2-1},  the condition $(x,t,r)\in \{{\bm z}^*\}^\bot$ means that
\begin{align*}
\langle(-x^*,r^*,t^*),(x,t,r)\rangle = -x^*x+tr^*+rt^*=0,
\end{align*}
which implies that (noting that $t>0$ and $t^*>0$)
\begin{align*}
0&=r+t\frac{r^*}{t^*}- x\frac{x^*}{t^*} \geq tf(x/t)+tf^*(x^*/t^*)- x \frac{x^*}{t^*}\notag\\
&=t\left(f(x/t)+f^*(x^*/t^*)-\frac{x}{t}\frac{x^*}{t^*} \right)\geq 0, 
\end{align*}
where the first inequality follows from \eqref{prop-e-face-2-2},
and the second inequality follows from \eqref{young}.
Since $t>0$,  we deduce from the last display that
\begin{align}
&f(x/t)+f^*(x^*/t^*)- \frac{x}{t}\frac{x^*}{t^*}=0, \label{prop-e-face-4}\\
&r=tf(x/t),\,\,\,\, r^*=t^*f^*(x^*/t^*).\label{prop-e-face-4-1}
\end{align}
Invoking  \eqref{young-1} and the fact that $f$ is Legendre,   we deduce from  \eqref{prop-e-face-4} that
\begin{align}\label{prop-e-face-5}
\frac{x}{t}=  (f^*)'(x^*/t^*).
\end{align}

{\bf Case (b):} From  \eqref{prop-e-face-2-1}, $(x,t,r)\in \{{\bm z}^*\}^\bot$  implies  that
\begin{align*}
\langle(-x^*,r^*,t^*),(x,0,r)\rangle= -x x^*+rt^*=0.
\end{align*}
Recall that $t^*>0$. The last display means that
\begin{align}\label{prop-e-face-6}
  x\frac{x^*}{t^*}=r \overset{\rm (a)}{\geq} f_\infty(x)\overset{\rm (b)}{=} \sigma_{{\rm dom}\, f^*}(x)=\sup \{  xy : y\in {\rm dom}\, f^*\} \overset{\rm (c)}{\geq}  x\frac{x^*}{t^*},
\end{align}
where (a) follows from \eqref{prop-e-face-2-2},
 (b) follows from \eqref{infty-sigma}, and (c) holds because $z^*\in \mathcal{K}^{*t}$ (which implies that $x^*/t^*\in {\rm dom}\, f^*$). We  deduce from \eqref{prop-e-face-6} that
\begin{align}\label{prop-e-face-7}
r=f_\infty(x)=\sigma_{{\rm dom}\, f^*}(x)= x\frac{x^*}{t^*},
\end{align}
and the third equality is equivalent to
\begin{align}\label{prop-e-face-8}
  x\in \mathcal{N}_{{\rm dom}\, f^*}(x^*/t^*).
\end{align}

After the above discussion regarding \eqref{prop-e-face-2-1}, we now proceed in our proof by letting $\beta \coloneqq x^*/t^*$ and, in addition, when $\beta\in {\rm int}({\rm dom}\,f^*)$, we let $\bar\beta \coloneqq (f^*)'(\beta)$.
We further consider two cases:
\begin{enumerate}[\rm (i)]
\item $\beta\in {\rm int}({\rm dom}\, f^*)$. Then $r^* = t^*f^*(\beta)$ in view of \eqref{prop-e-face-2-2} and hence ${\bm z}^*=(-x^*,r^*,t^*)=t^*(-\beta,f^*(\beta),1)$.
Now, if $(x,t,r)\in{\rm epi}\,f^\pi\cap \{{\bm z}^*\}^\bot$ and $t = 0$, then we see from \eqref{prop-e-face-8} that $x=0$ and thus we  deduce from \eqref{prop-e-face-7} that $r=f_\infty(x)=0$, which further implies that $\mathcal{K}^\theta\cap \{{\bm z}^*\}^\bot=\{\bm 0\}$.
 Combining this observation with  \eqref{prop-e-face-4-1} and \eqref{prop-e-face-5}, we deduce that
\begin{align*}
\mathcal{F}^e_\beta&\coloneqq {\rm epi}\,f^\pi\cap \{{\bm z}^*\}^\bot=(\mathcal{K}^t \cap \{{\bm z}^*\}^\bot)\cup (\mathcal{K}^\theta \cap \{{\bm z}^*\}^\bot)\\
&=\{(x,t,r): x/t =  \bar\beta, r=tf(x/t), t>0 \}\cup \{\bm 0\}\\
&=\{t(\bar\beta,1,f(\bar\beta)): t\geq 0 \}.
\end{align*}

\item  $\beta\in{\rm bdry}({\rm dom}\, f^*)\cap {\rm dom}\, f^*$.
In this case,
we know that  $(f^*)'(\beta)$ does not exist thanks to the property of Legendre functions. Now, suppose $(x,t,r)\in{\rm epi}\,f^\pi\cap \{{\bm z}^*\}^\bot$.
In view of \eqref{prop-e-face-5}, we must have $t = 0$, which implies that $\mathcal{K}^t\cap \{{\bm z}^*\}^\bot=\emptyset$.
Therefore, in view of \eqref{prop-e-face-2-2} and  \eqref{prop-e-face-7},
 if ${\bm z}^*=(-x^*,r^*,t^*)=t^*(-\beta,r^*/t^*,1)$  with $\beta\in{\rm bdry}({\rm dom}\, f^*)\cap {\rm dom}\, f^*$ and $r^*\geq t^*f^*(\beta)$,  we deduce  that
\begin{align*}
\mathcal{F}^e_{\rm bd}&\coloneqq {\rm epi}\,f^\pi\cap \{{\bm z}^*\}^\bot=\mathcal{K}^\theta\cap \{{\bm z}^*\}^\bot =\{(x,0,f_\infty(x)): f_\infty(x)= \beta x\}.
\end{align*}
\end{enumerate}
This completes the proof.
\end{proof}

Next, we consider the nontrivial exposed faces that arise from ${\rm epi}\, f^{\pi} \cap \{{\bm z}^*\}^\bot$ for some ${\bm z}^*=(-x^*, s^*,0)\in\mathcal{K}^{*\theta} \setminus \{\bm 0\}$, where $\mathcal{K}^{*\theta}$ is given in \eqref{dual-cone}.

\begin{proposition}\label{prop-md-face}
Let ${\bm z}^*=(-x^*,s^*,0)\in\mathcal{K}^{*\theta}\setminus \{\bm 0\}$ be such that ${\rm epi}\, f^{\pi} \cap \{{\bm z}^*\}^\bot$ is a nontrivial exposed face, where $\mathcal{K}^{*\theta}$ is given in \eqref{dual-cone}.
\begin{enumerate}[\rm (i)]
\item If   $s^*>(f^*)_{\infty}(x^*)$ and $x^* = 0$, then we have
\begin{align}\label{prop-md-face-1}
\mathcal{F}^e_{\theta}\coloneqq {\rm epi}\, f^{\pi} \cap \{{\bm z}^*\}^\bot=\mathcal{K}^\theta\cap \{{\bm z}^*\}^\bot =\{(x,0,s): (x,s)\in {\rm epi}\, f_\infty \} = \mathcal{K}^\theta,
\end{align}
where $\mathcal{K}^\theta$ is defined in \eqref{per-cone}.
\item If   $s^*>(f^*)_{\infty}(x^*)$ and $x^* \neq 0$, then we have
\begin{align}\label{prop-md-face-1_5}
\mathcal{F}^e_{\neq\theta}\coloneqq {\rm epi}\, f^{\pi} \cap \{{\bm z}^*\}^\bot=\mathcal{K}^\theta\cap \{{\bm z}^*\}^\bot =\{(0,0,s): s\ge 0 \}.
\end{align}
\item If    $s^*=(f^*)_\infty(x^*)$ with $x^*\neq 0$, then we have
\begin{align}\label{prop-md-face-2}
\mathcal{F}^e_{\mathds{1}} \coloneqq {\rm epi}\, f^{\pi} \cap \{{\bm z}^*\}^\bot=\{ (x,t,r) :   r\geq t f(\alpha),  x=\alpha t,  t\geq 0 \},
\end{align}
where $\alpha$ is the unique element in ${\rm bdry}({\rm dom}\, f)\cap {\rm dom}\, f$ such that $x^*\in\mathcal{N}_{{\rm dom}\, f}(\alpha)$.\footnote{Since ${\rm dom}\,f$ is an interval, the $\alpha$ is either the left or right end point, depending on the sign of $x^*$.}
\end{enumerate}
\end{proposition}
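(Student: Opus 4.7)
\textbf{Proof proposal for Proposition~\ref{prop-md-face}.}

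The plan is to analyze, exactly as in the proof of Proposition~\ref{prop-e-face}, the orthogonality condition $\langle {\bm z}^*, (x,t,r)\rangle = 0$ separately on the two pieces $\mathcal{K}^t$ and $\mathcal{K}^\theta$ making up ${\rm epi}\,f^\pi$ in \eqref{per-cone}, and then reassemble. Since $t^*=0$ here, the orthogonality condition simplifies to
\begin{equation*}
 -x^* x + s^* t = 0.
\end{equation*}

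First I would handle $\mathcal{K}^t$, i.e., the case $t>0$. Dividing by $t$ turns the orthogonality condition into $s^* = x^*(x/t)$. Recalling from \eqref{infty-sigma} that $(f^*)_\infty(x^*) = \sigma_{{\rm dom}\, f}(x^*) = \sup\{x^* u : u \in {\rm dom}\, f\}$, we see that if $s^* > (f^*)_\infty(x^*)$ (cases (i) and (ii)) then no $u = x/t \in {\rm dom}\, f$ can satisfy $s^* = x^* u$, so $\mathcal{K}^t \cap \{{\bm z}^*\}^\bot = \emptyset$. If instead $s^* = (f^*)_\infty(x^*)$ with $x^* \neq 0$ (case (iii)), then $s^* = x^* u$ forces $u = s^*/x^*$; by the definition of the normal cone this value of $u$ is precisely the (unique, since ${\rm dom}\, f$ is a real interval) $\alpha \in {\rm bdry}({\rm dom}\, f)\cap {\rm dom}\, f$ with $x^* \in \mathcal{N}_{{\rm dom}\, f}(\alpha)$. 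Hence the orthogonality together with $(x,t,r)\in \mathcal{K}^t$ yields exactly $x = \alpha t$ and $r \geq t f(\alpha)$ for $t > 0$.

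Next I would handle $\mathcal{K}^\theta$, i.e., the case $t=0$. Then the orthogonality condition reduces to $x^* x = 0$ and the epigraphical constraint is $r \geq f_\infty(x)$. In case (i), $x^* = 0$, so the orthogonality is automatic and $\mathcal{K}^\theta \cap \{{\bm z}^*\}^\bot = \mathcal{K}^\theta$, giving \eqref{prop-md-face-1}. In case (ii), $x^* \neq 0$ forces $x = 0$; combined with $r \geq f_\infty(0) = 0$ this yields exactly the ray $\{(0,0,s): s \geq 0\}$ in \eqref{prop-md-face-1_5}. In case (iii), the same reasoning on $\mathcal{K}^\theta$ gives $x=0$, $r\geq 0$, which is precisely the slice $t=0$ of the cone in \eqref{prop-md-face-2}.

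Finally I would assemble the two pieces. In cases (i) and (ii), $\mathcal{K}^t\cap\{{\bm z}^*\}^\bot$ is empty, so the exposed face coincides with the one computed on $\mathcal{K}^\theta$. In case (iii), the $\mathcal{K}^\theta$ contribution $\{(0,0,r): r\geq 0\}$ merges continuously with the $\mathcal{K}^t$ contribution $\{(x,t,r): x=\alpha t,\ r\geq t f(\alpha),\ t>0\}$ to produce the closed cone in \eqref{prop-md-face-2}. The step I expect to require the most care is case (iii): one must verify that the value $\alpha = s^*/x^*$ obtained from the orthogonality condition really does lie in ${\rm dom}\, f$ (so that $f(\alpha)$ is finite and the face is nonempty beyond the axial ray), and one must use that $f$ is Legendre, hence $(f^*)_\infty = \sigma_{{\rm dom}\, f}$ attains its supremum at a point of ${\rm bdry}({\rm dom}\, f)\cap{\rm dom}\, f$ rather than at an ``open'' end of the interval; uniqueness of $\alpha$ then follows because ${\rm dom}\, f$ is a (possibly unbounded) interval, so $x^* \neq 0$ singles out the left or right endpoint according to the sign of $x^*$.
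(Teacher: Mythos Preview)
Your proposal is correct and follows essentially the same approach as the paper: split ${\rm epi}\,f^\pi$ into $\mathcal{K}^t$ and $\mathcal{K}^\theta$, reduce the orthogonality $\langle {\bm z}^*, (x,t,r)\rangle=0$ to $s^*=x^*(x/t)$ (for $t>0$) or $x^*x=0$ (for $t=0$), invoke $(f^*)_\infty=\sigma_{{\rm dom}\,f}$ via \eqref{infty-sigma}, and reassemble according to the three cases. The paper argues the $t>0$ piece via the sandwich $s^*\geq (f^*)_\infty(x^*)=\sigma_{{\rm dom}\,f}(x^*)\geq x^*(x/t)=s^*$, which is exactly your reasoning phrased slightly differently; your caution about whether the maximizer $\alpha$ actually lies in ${\rm dom}\,f$ (and not merely in $\overline{{\rm dom}\,f}$) is apt and is a point the paper's proof does not address explicitly either.
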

\begin{proof}
Let
\begin{align}\label{prop-md-face-2-1}
 (x,t,r)\in{\rm epi}\,f^\pi\cap \{{\bm z}^*\}^\bot,
\end{align}
where ${\bm z}^*\in\mathcal{K}^{*\theta}\setminus \{\bm 0\}$.
Since $(x,t,r)\in  {\rm epi}\,f^\pi$ and ${\bm z}^*\in  \mathcal{K}^{*\theta}$,
we have that
\begin{align}\label{prop-md-face-2-2}
 \begin{cases}
r\geq tf(x/t)& \mbox{if $t>0$},\\
r\geq f_\infty(x)& \mbox{if $t=0$},\\
\end{cases}\,\, \mbox{and} \,\,
  s^*\geq  (f^*)_\infty(x^*).
\end{align}
 We consider two cases for $t$ as follows:
\begin{enumerate}[\rm (a)]
\item $t>0$;
\item $t=0$.
\end{enumerate}

{\bf Case (a):} Since $(x,t,r)\in \{{\bm z}^*\}^\bot$, we see that
\[
\langle(-x^*,s^*,0),(x,t,r)\rangle  =  -xx^*+ts^*=0.
\]
This implies that
\begin{align}\label{prop-md-face-3}
\frac{x}{t} x^* =s^* \overset{\rm (a)}{\geq} (f^*)_\infty(x^*)\stackrel{\rm (b)}{=}\sigma_{{\rm dom}\, f}(x^*)=\sup\{ x^* y : y\in {\rm dom}\, f\}\stackrel{\rm (c)}{\geq} \frac{x}{t} x^*,
\end{align}
where (a) follows from the second inequality of \eqref{prop-md-face-2-2},
 (b) follows from  \eqref{infty-sigma},
 and (c) follows from  the fact that $x/t\in {\rm dom}\, f$ (see \eqref{prop-md-face-2-2}).
It follows  from \eqref{prop-md-face-3} that
\begin{align}\label{prop-md-face-4}
s^*=(f^*)_\infty(x^*) \,\, \mbox{and}\,\,
  x^*\in \mathcal{N}_{{\rm  dom}\, f}(x/t).
 \end{align}
From the first equality in \eqref{prop-md-face-4}, we see that $x^*\neq 0$ (otherwise we will have ${\bm z}^*={\bm 0}$, a contradiction).
 Then, since $x^*\neq 0$, the inclusion in \eqref{prop-md-face-4}  implies that
 $x/t\in {\rm bdry}({\rm dom}\, f)\cap {\rm dom}\, f$.  This implies that there exists $\alpha\in  {\rm bdry}({\rm dom}\, f)\cap {\rm dom}\, f$ such that
\begin{align}\label{prop-md-face-4-1}
\mbox{$x^*\in \mathcal{N}_{{\rm dom}\, f}(\alpha)$ and $x/t =\alpha$},
 \end{align}
and the $\alpha$, being an end point of the interval ${\rm dom}\, f$, uniquely depends on the sign of $x^*$.

{\bf Case (b):}  According to \eqref{prop-md-face-2-1}, we have  that
\begin{align}\label{prop-md-face-5}
\langle(-x^*,s^*,0),(x,0,r)\rangle = x x^*=0 \quad \mbox{and\ \ \ $(x,r)\in {\rm epi}\, f_\infty$}.
\end{align}

After discussing the above two cases for $t$, we next consider three cases for ${\bm z}^*=(-x^*,s^*,0)\in\mathcal{K}^{*\theta}\setminus \{\bm 0\}$ as follows:
\begin{enumerate}[\rm (i)]
\item $s^*>(f^*)_\infty(x^*)$ and $x^* = 0$;
\item $s^*>(f^*)_\infty(x^*)$ and $x^* \neq 0$;
\item $s^*=(f^*)_\infty(x^*)$ and $x^* \neq 0$.
\end{enumerate}

For cases (i) and (ii), the  first equality in  \eqref{prop-md-face-4} implies  that $\mathcal{K}^t \cap \{{\bm z}^*\}^\bot=\emptyset$.
Therefore, we can  deduce from  \eqref{prop-md-face-5} that
\[
{\rm epi}\, f^{\pi} \cap \{{\bm z}^*\}^\bot=\mathcal{K}^\theta\cap \{{\bm z}^*\}^\bot = \{(x,0,s): xx^* = 0, (x,s)\in {\rm epi}\, f_\infty \},
\]
and the desired expressions for ${\cal F}^e_{\theta}$ and ${\cal F}^e_{\neq \theta}$ follow immediately.

Finally, for case (iii), it follows that for the unique $\alpha$ satisfying $x^*\in\mathcal{N}_{{\rm dom}\, f}(\alpha)$,
\begin{align}
\mathcal{F}^e_{\mathds{1}} \coloneqq &\, {\rm epi}\, f^{\pi} \cap \{{\bm z}^*\}^\bot\notag\\
=&\{(x,0,s) :   x x^*=0, (x,s)\in {\rm epi}\, f_\infty \}\cup\{ (x,t,r) :   r\geq t f(x/t),  x/t=\alpha,  t>0 \}\notag\\
=&\{(0,0,s): s\geq 0\}\cup\{ (x,t,r) :   r\geq tf(\alpha),  x/t=\alpha,  t>0 \}\notag\\
=&\{ (x,t,r) :   r\geq t f(\alpha),  x=\alpha t, t\geq 0 \}, \notag
\end{align}
where the second equality follows from \eqref{prop-md-face-2-2}, \eqref{prop-md-face-4}, \eqref{prop-md-face-4-1} and \eqref{prop-md-face-5},
and the third equality holds because $x^*\neq 0$.
This gives the face $\mathcal{F}^e_{\mathds{1}}$ in \eqref{prop-md-face-2}.
\end{proof}

All exposed faces of ${\rm epi}\, f^\pi$ given in Propositions \ref{prop-e-face} and \ref{prop-md-face} are summarized in  Table \ref{table-e-face}.
\begin{table}[h]
 \centering
\begin{tabular}{c|c|c}
\hline
\multirow{2}{*}{{\rm Case}}
& \multirow{2}{*}{${\bm z}^*\in {\rm bdry}(({\rm epi}\, f^\pi)^*)\setminus \{\bm 0\}$} & \multirow{2}{*}{the exposed face  ${\rm epi}\, f^\pi\cap\{{\bm z}^*\}^\bot$} \\
&   &   \\
\hline
\multirow{3}{*}{1}
&  ${\bm z}^*=(-x^*,r^*,t^*)$, $t^*>0$,  &  \multirow{3}{*}{$\mathcal{F}^e_\beta=\{t(\bar{\beta},1, f(\bar{\beta})): t\geq 0\}$ with $\bar{\beta}\coloneqq (f^*)'(\beta)$, see \eqref{prop-e-face-1}}\\
&$\beta\coloneqq x^*/t^* \in {\rm int}({\rm dom}\, f^*)$,  & \\
& $r^*=t^*f^*(\beta)$ & \\
\hline
\multirow{3}{*}{2}
& ${\bm z}^*=(-x^*,r^*,t^*)$, $t^*>0$,  & \\
& $\beta\coloneqq x^*/t^*$, $r^*\geq t^* f^*(\beta)$ & $\mathcal{F}^e_{\rm bd}= \{(x,0, f_\infty(x)): f_\infty(x)= \beta x \}$, see \eqref{prop-e-face-2}\\
& $\beta\in {\rm bdry}({\rm dom}\, f^*)\cap {\rm dom}\, f^*$ &   \\
\hline
\multirow{2}{*}{3}
& ${\bm z}^*=(-x^*,s^*,0)$, & \multirow{2}{*}{$\mathcal{F}^e_{\theta}=\{(x,0,s): (x,s)\in {\rm epi}\, f_\infty \}$, see \eqref{prop-md-face-1}} \\
& $s^*>(f^*)_{\infty}(x^*)$, $x^* = 0$ & \\
\hline
\multirow{2}{*}{4}
& ${\bm z}^*=(-x^*,s^*,0)$,  &  \multirow{2}{*}{$\mathcal{F}^e_{\neq\theta}=\{(0,0,s): s\ge 0 \}$, see \eqref{prop-md-face-1_5}} \\
& $s^*>(f^*)_{\infty}(x^*)$, $x^* \neq 0$ & \\
\hline
\multirow{2}{*}{5}
& ${\bm z}^*=(-x^*,s^*,0)$,  & $\mathcal{F}^e_{\mathds{1}} =\{ (x,t,r) :   r\geq t f(\alpha),  x=\alpha t,  t\geq 0 \}$  \\
& $s^*=(f^*)_\infty(x^*)$, $x^*\neq 0$ & with $\alpha$ uniquely defined via $x^*\in\mathcal{N}_{{\rm dom}\, f}(\alpha)$, see \eqref{prop-md-face-2} \\
\hline
\end{tabular}
  \caption{All exposed faces of ${\rm epi}\, f^\pi$}\label{table-e-face}
\end{table}

\subsection{Non-Exposed  faces}

In this section, we study the non-exposed faces of ${\rm epi}\, f^{\pi}$ in \eqref{per-cone} and, yet again, we recall that we are under the blanket Assumption~\ref{assu-blanket}. 
Generally speaking, every proper face of a closed convex cone is contained in some proper exposed face, e.g., see \cite[Proposition 3.6]{Borwein-Wolkowicz-1981}.

Accordingly, if there are any non-exposed faces, they must be subfaces of the faces described in Section~\ref{sec:exposed_faces}.
The one-dimensional faces $\mathcal{F}^e_{\beta}$, $\mathcal{F}^e_{\rm bd}$ and $\mathcal{F}^e_{\neq \theta}$ only contain the trivial exposed face $\{\bm 0\}$ as proper faces, so they cannot contain non-exposed faces of ${\rm epi}\, f^{\pi}$.
Thus, the non-exposed proper faces of
${\rm epi}\, f^{\pi}$ (if any) must be contained in some proper 2-dimensional exposed face of ${\rm epi}\, f^{\pi}$.

According to Lemma \ref{lemma-dom-fstar}(i),  ${\rm cl}({\rm dom}\, f^*)$ must take one of the following forms:
\[
[-a,\infty),\, (-\infty,b], \,[-a,b],  \, \R,
\]
for some $a,b\geq 0$ with $b+a>0$.
The non-exposed faces of ${\rm epi}\, f^\pi$ in \eqref{per-cone} are described below, according to the above four cases.
\begin{enumerate}[\rm (i)]
\item  ${\rm cl}({\rm dom}\, f^*)=[-a,\infty)$ for some $a\geq 0$. In this case, from \eqref{infty-sigma} we have
\[
f_\infty(x) = \sigma_{{\rm dom}\, f^*}(x) = -ax + \delta_{(-\infty,0]}(x).
\]
We consider two further subcases.
\begin{itemize}
\item  ${\rm int}({\rm dom}\,(f^*)_\infty)= \emptyset$. This means ${\rm dom}\,(f^*)_\infty = \{0\}$.
Hence, from Proposition \ref{prop-md-face}, we see that the cases giving rise to
$\mathcal{F}^e_{\neq \theta}$ and $\mathcal{F}^e_{\mathds{1}}$ are void and the only 2-dimensional exposed face of ${\rm epi}\, f^{\pi}$ is $\mathcal{F}^e_{\theta}=\{(x,0,s): x\leq 0, s\geq -ax\}$.
Then, the one-dimensional exposed faces of $\mathcal{F}^e_{\theta}$ are the following:
\begin{align}\label{face-ne-f12}
\hat{\mathcal{F}}_{1}\coloneqq \{x(1,0,-a): x\leq 0\},\ \ \
\hat{\mathcal{F}}_{2}\coloneqq \{(0,0,s): s\geq 0\}.
\end{align}

If ${\rm dom}\, f^*=[-a,\infty)$, then $\hat{\mathcal{F}}_2$ is the unique non-exposed face since $\hat{\mathcal{F}}_1=\mathcal{F}_{\rm bd}^e$ with $\beta=-a$ according to \eqref{prop-e-face-2}.
If ${\rm dom}\, f^*=(-a,\infty)$, then $\hat{\mathcal{F}}_1$ and $\hat{\mathcal{F}}_2$ are non-exposed faces because the case giving rise to $\mathcal{F}_{\rm bd}^e$ is void.

\item ${\rm int}({\rm dom}\,(f^*)_\infty)\neq \emptyset$. In this case, we must have ${\rm dom}\,(f^*)_\infty \neq \{0\}$.  According to Proposition~\ref{prop-md-face}, the 2-dimensional exposed faces of ${\rm epi}\, f^{\pi}$ are $\mathcal{F}^e_{\theta}=\{(x,0,s): x\leq 0, s\geq -ax\}$ and $\mathcal{F}^e_{\mathds{1}} =\{ (x,t,r) :   r\geq t f(\alpha),  x=\alpha t,  t\geq 0 \}$ for the
 $\alpha$ uniquely defined via $ x^*\in\mathcal{N}_{{\rm dom}\, f}(\alpha)$,  while $\mathcal{F}^e_{\neq \theta}=\hat{\mathcal{F}}_{2}=\{(0,0,s): s\geq 0\}$ is a one-dimensional exposed face of  ${\rm epi}\, f^{\pi}$.
    The one-dimensional exposed faces of $\mathcal{F}^e_{\theta}$ are $\hat{\cal F}_1$ and $\hat{\cal F}_2$,
 and the one-dimensional exposed faces of $\mathcal{F}^e_{\mathds{1}}$ are $\hat{\mathcal{F}}_2$ and faces of the form\footnote{We abuse the notation and use $\hat{\mathcal{F}_{3}}$ to denote faces of this type, where $\alpha$ depends on the choice of $x^*$ and is a boundary point of ${\rm dom}\,f$.
 	Hence, there can be up to two faces of this form. Moreover, since $\alpha$ is a boundary point of ${\rm dom}\,f$, this type of faces does not overlap with exposed faces of the type ${\cal F}^e_{\beta}$.}
\begin{align}\label{face-ne-f3}
\hat{\mathcal{F}_{3}}\coloneqq \{t(\alpha,1,f(\alpha)): t\geq 0\}.
\end{align}

 If  ${\rm dom}\, f^*=[-a,\infty)$, then $\hat{\mathcal{F}}_3$ is the unique type of non-exposed face since $\hat{\mathcal{F}}_1=\mathcal{F}^e_{\rm bd}$ with $\beta=-a$ from \eqref{prop-e-face-2} and $\hat{\cal F}_2=\mathcal{F}^e_{\neq \theta}$ is an exposed face of ${\rm epi}\, f^\pi$. If  ${\rm dom}\, f^*=(-a,\infty)$, then $\hat{\mathcal{F}}_1$ and (the type) $\hat{\mathcal{F}}_3$ are non-exposed faces since the case giving rise to $\mathcal{F}_{\rm bd}^e$ is void (see \eqref{prop-e-face-2}) and  $\hat{\cal F}_2=\mathcal{F}^e_{\neq \theta}$ is an exposed face of ${\rm epi}\, f^\pi$.
\end{itemize}

\item ${\rm cl}({\rm dom}\, f^*)=(-\infty,b]$ for some $b\geq 0$.
In this case, from \eqref{infty-sigma} we have
\[
f_\infty(x) = \sigma_{{\rm dom}\, f^*}(x) = bx + \delta_{[0,\infty)}(x).
\]
We consider two further subcases.
\begin{itemize}
\item  ${\rm int}({\rm dom}\,(f^*)_\infty)= \emptyset$.  This means ${\rm dom}\,(f^*)_\infty = \{0\}$.
Thus, we have from Proposition \ref{prop-md-face}  that the cases leading to  $\mathcal{F}^e_{\neq \theta}$ and $\mathcal{F}^e_{\mathds{1}}$ are void  and the only 2-dimensional   exposed face of ${\rm epi}\, f^{\pi}$ is $\mathcal{F}^e_{\theta}=\{(x,0,s): x\geq  0, s\geq bx\}$.
Then, the nontrivial exposed faces of   $\mathcal{F}^e_{\theta}$ are $\hat{\mathcal{F}}_2$ and
\begin{align}\label{face-ne-f4}
\hat{\mathcal{F}}_{4}\coloneqq \{x(1,0,b): x\geq 0\}.
\end{align}

If ${\rm dom}\, f^*=(-\infty,b]$, then $\hat{\mathcal{F}}_2$ is the unique non-exposed face since $\hat{\mathcal{F}}_4=\mathcal{F}_{\rm bd}^e$ with $\beta=b$ according to \eqref{prop-e-face-2}.
If ${\rm dom}\, f^*=(-\infty,b)$, then $\hat{\mathcal{F}}_2$ and $\hat{\mathcal{F}}_4$ are non-exposed faces since the case giving rise to $\mathcal{F}_{\rm bd}^e$ is void (see \eqref{prop-e-face-2}).

\item ${\rm int}({\rm dom}\,(f^*)_\infty)\neq \emptyset$.
In this case, it holds that ${\rm dom}\,(f^*)_\infty \neq \{0\}$ and hence in view of Proposition~\ref{prop-md-face} we see that $\mathcal{F}^e_{\theta}=\{(x,0,s): x\geq  0, s\geq bx\}$  and  $\mathcal{F}^e_{\mathds{1}} =\{ (x,t,r) :   r\geq t f(\alpha),  x=\alpha t,  t\geq 0 \}$ for the $\alpha$ uniquely defined via $x^*\in\mathcal{N}_{{\rm dom}\, f}(\alpha)$ are the 2-dimensional exposed faces of ${\rm epi}\, f^{\pi}$
 while $\mathcal{F}^e_{\neq \theta}=\hat{\mathcal{F}}_{2}=\{(0,0,s): s\geq 0\}$ is a one-dimensional exposed face of  ${\rm epi}\, f^{\pi}$.
 The one-dimensional exposed faces of $\mathcal{F}^e_{\mathds{1}}$ are $\hat{\mathcal{F}}_2$ and $\hat{\mathcal{F}}_3$,  and $\mathcal{F}^e_{\theta}$ contains one-dimensional exposed faces  $\hat{\mathcal{F}}_2$ and $\hat{\mathcal{F}}_4$.

If  ${\rm dom}\, f^*=(-\infty,b]$, then $\hat{\mathcal{F}}_3$ is the unique type of non-exposed faces since $\hat{\mathcal{F}}_4=\mathcal{F}_{\rm bd}^e$ with $\beta=b$ according to \eqref{prop-e-face-2} and $\hat{\mathcal{F}}_{2}=\mathcal{F}^e_{\neq \theta}$ is an exposed face of ${\rm epi}\, f^\pi$. If  ${\rm dom}\, f^*=(-\infty,b)$, then (the type) $\hat{\mathcal{F}}_3$ and $\hat{\mathcal{F}}_4$ are non-exposed faces since the case giving rise to $\mathcal{F}_{\rm bd}^e$ is void (see \eqref{prop-e-face-2}) and $\hat{\mathcal{F}}_{2}=\mathcal{F}^e_{\neq \theta}$ is an exposed face of ${\rm epi}\, f^\pi$.
\end{itemize}

\item  ${\rm cl}({\rm dom}\, f^*)=[-a,b]$ for some $a,b\geq 0$ with $b+a>0$. In this case, invoking \eqref{infty-sigma} we have
\begin{align}\label{case_f_infty}
f_\infty(x) = \sigma_{{\rm dom}\, f^*}(x) =
\begin{cases}
 bx & \mbox{if $x\geq 0$}, \\
 -ax & \mbox{if $x<  0$}.
\end{cases}
\end{align}

In this case, we see that ${\rm int}({\rm dom}\,(f^*)_\infty)= \emptyset$ because ${\rm dom}\,f^*$ is bounded. Hence, the cases leading to $\mathcal{F}^e_{\neq \theta}$ and $\mathcal{F}^e_{\mathds{1}}$ are void according to Proposition \ref{prop-md-face}.
Thus, $\mathcal{F}^e_\theta$ is the unique 2-dimensional  exposed face of ${\rm epi}\, f^{\pi}$.
 From \eqref{case_f_infty} and \eqref{prop-md-face-1}, we have that
$\mathcal{F}^e_\theta=\{(x,0,s): s\geq bx, s\geq -ax, x\in \R\}$.
The one-dimensional exposed faces of  $\mathcal{F}^e_{\theta}$
are $\hat{\mathcal{F}}_1$ and $\hat{\mathcal{F}}_4$.

If ${\rm dom}\, f^*=[-a,b]$, then there is no non-exposed face since $\hat{\mathcal{F}}_1=\mathcal{F}_{\rm bd}^e$ with $\beta=-a$ and  $\hat{\mathcal{F}}_4=\mathcal{F}_{\rm bd}^e$ with $\beta=b$ according to \eqref{prop-e-face-2}.
If ${\rm dom}\, f^*=(-a,b]$, then $\hat{\mathcal{F}}_1$ is the unique non-exposed faces since $\hat{\mathcal{F}}_4=\mathcal{F}_{\rm bd}^e$ with $\beta=b$ according to \eqref{prop-e-face-2}.
If ${\rm dom}\, f^*=[-a,b)$, then $\hat{\mathcal{F}}_4$ is the unique non-exposed faces since $\hat{\mathcal{F}}_1=\mathcal{F}_{\rm bd}^e$ with $\beta=-a$ according to \eqref{prop-e-face-2}.
If ${\rm dom}\, f^*=(-a,b)$, then $\hat{\mathcal{F}}_1$ and $\hat{\mathcal{F}}_4$ are the non-exposed faces since the case corresponding to $\mathcal{F}_{\rm bd}^e$ is void (see \eqref{prop-e-face-2}).

\item ${\rm dom}\, f^*=\R$.
In this case, we have ${\rm dom}\, f_\infty=\{0\}$  thanks to \eqref{infty-sigma}. From this fact and  \eqref{prop-md-face-1}, we see that $\mathcal{F}^e_\theta=\hat{\mathcal{F}}_2=\{(0,0,s): s\geq 0\}$  reduces to a one-dimensional exposed face of ${\rm epi}\, f^\pi$. Next, we consider two further subcases.
\begin{itemize}
\item If ${\rm int}({\rm dom}\,(f^*)_\infty)= \emptyset$, then there is no non-exposed face since $\mathcal{F}^e_{\mathds{1}}$ is void (see \eqref{prop-md-face-2}) and $\mathcal{F}^e_\theta$ is a one-dimensional face of ${\rm epi}\, f^\pi$.
\item If ${\rm int}({\rm dom}\,(f^*)_\infty)\neq \emptyset$,  then the only (type of) 2-dimensional exposed face of ${\rm epi}\, f^\pi$ is $\mathcal{F}^e_{\mathds{1}} =\{ (x,t,r) :   r\geq t f(\alpha),  x=\alpha t,  t\geq 0 \}$ for the $\alpha$ uniquely defined via $x^*\in\mathcal{N}_{{\rm dom}\, f}(\alpha)$.  The one-dimensional exposed faces of $\mathcal{F}^e_{\mathds{1}}$ are $\hat{\mathcal{F}}_2$ and (of the type) $\hat{\mathcal{F}}_3$.
One can see that $\hat{\mathcal{F}}_3$ is the unique type of non-exposed faces of ${\rm epi}\, f^\pi$ since $\hat{\mathcal{F}}_2=\mathcal{F}^e_\theta$ is an exposed face of ${\rm epi}\, f^\pi$.
 \end{itemize}
\end{enumerate}
Table \ref{table-ne-face} contains the summary of all possible non-exposed faces of ${\rm epi}\, f^\pi$.

\begin{table}[h]
 \centering
\begin{tabular}{ccccccc}
\hline
Case & ${\rm dom}\, f^*$ & ${\rm int}({\rm dom}\,(f^*)_\infty)$ & $\hat{\mathcal{F}}_1$ & $\hat{\mathcal{F}}_2$ &  $\hat{\mathcal{F}}_3$ & $\hat{\mathcal{F}}_4$ \\
\hline
1& $[-a,\infty)$ & $\emptyset$ & \ding{55} & \ding{51} & \ding{55} & \ding{55} \\
2& $(-a,\infty)$ & $\emptyset$ & \ding{51} & \ding{51} & \ding{55} & \ding{55} \\
3& $[-a,\infty)$ & $\neq\emptyset$ & \ding{55} & \ding{55} & \ding{51} &  \ding{55}\\
4& $(-a,\infty)$ & $\neq\emptyset$ & \ding{51} & \ding{55} & \ding{51} & \ding{55} \\
\hline
5&  $(-\infty,b]$ & $\emptyset$ & \ding{55} & \ding{51} & \ding{55} & \ding{55} \\
6& $(-\infty,b)$ & $\emptyset$ & \ding{55} & \ding{51} & \ding{55} & \ding{51} \\
7& $(-\infty,b]$ & $\neq\emptyset$ & \ding{55} &\ding{55} & \ding{51} &\ding{55}  \\
8& $(-\infty,b)$ & $\neq\emptyset$ &\ding{55}&\ding{55} & \ding{51} & \ding{51} \\
\hline
9&  $[-a,b]$ & $\emptyset$ &\ding{55} & \ding{55} & \ding{55} & \ding{55} \\
10& $(-a,b]$ & $\emptyset$ & \ding{51} & \ding{55} & \ding{55} & \ding{55} \\
11& $[-a,b)$ & $\emptyset$ & \ding{55} & \ding{55} & \ding{55} & \ding{51} \\
12& $(-a,b)$ & $\emptyset$ & \ding{51} & \ding{55} & \ding{55} & \ding{51} \\
\hline
13& $\R$ & $\emptyset$ &\ding{55} & \ding{55} &  \ding{55} & \ding{55} \\
14& $\R$ & $\neq \emptyset$ & \ding{55} & \ding{55} & \ding{51} & \ding{55} \\
\hline
\end{tabular}
  \caption{Non-exposed faces of ${\rm epi}\, f^\pi$; see \eqref{face-ne-f12}, \eqref{face-ne-f3} and  \eqref{face-ne-f4} for the definitions of $\hat{\mathcal{F}}_1$ to $\hat{\mathcal{F}}_4$.}\label{table-ne-face}
\end{table}

\section{Facial residual functions}\label{section-frf}

In this section, we show
several cases where $\mathds{1}$-FRFs for
${\rm epi}\,f^\pi$ and ${\bm z}^* \in ({\rm epi}\,f^\pi)^*$ can be taken to be
H\"olderian of exponent $1/2$ as in Definition~\ref{def-holder-1frf}.
This includes the cases where ${\bm z}^*$ exposes the following faces.
\begin{enumerate}
  \item all faces of the form ${\cal F}^e_\beta$  as in \eqref{prop-e-face-1} with $\beta\in {\rm int}({\rm dom}\, f^*)\setminus f'(W)$ (see Theorem~\ref{thm-frf-1d-f}), where $f'(W)\coloneqq \{ v : v= f'(w), w\in W\}$ is a finite set with $W\subset  {\rm int}({\rm dom}\,f)$  given in Assumption \ref{assu-blanket};
\item those ${\cal F}^e_{\rm bd}$ as in \eqref{prop-e-face-2} with $r^* > t^* f^*(\beta)$ (see Theorem~\ref{thm-frf-2d-f});
  \item those ${\cal F}^e_{\neq \theta}$ as in \eqref{prop-md-face-1_5} (see Theorem~\ref{thm-frf-f0}).
\end{enumerate}
We will then argue that ${\bm z}^*\in {\rm bdry}(({\rm epi}\, f^{\pi})^*)$ that leads to other exposed faces form a set of measure zero (with respect to the two-dimensional Hausdorff measure) in ${\rm bdry}(({\rm epi}\, f^{\pi})^*)$ in Section~\ref{section-measure}.

\subsection{$\mathds{1}$-FRF concerning $\mathcal{F}^e_\beta$}

In this section, we compute $\mathds{1}$-FRFs concerning the class of one-dimensional faces $\mathcal{F}^e_\beta$.
Recall that we are under the blanket Assumption~\ref{assu-blanket} and recall from Proposition \ref{prop-e-face}(i) that
\begin{align*}
\mathcal{F}^e_\beta={\rm epi}\, f^\pi \cap \{{\bm z}^*\}^\bot= \{t \hat{{\bm f}}: t\geq 0\},
\end{align*}
where
\begin{align}
&\hat{{\bm f}}= (\bar{\beta},1, f(\bar{\beta}))  \,\, \mbox{with $\bar\beta\coloneqq  (f^*)'(\beta) \in {\rm int}({\rm dom}\, f)$},  \label{def-f}\\
& {\bm z}^*=t^*(-\beta, f^*(\beta), 1)\in {\rm bdry}(({\rm epi}\, f^\pi)^*) \,\, \mbox{with $\beta\in {\rm int}({\rm dom}\, f^*)$ and $t^*>0$}. \label{def-z}
\end{align}
We will first show that, for ${\bm z}^*$ as in \eqref{def-z} with $\beta\notin f'(W)$ (where $W$ is given in Assumption~\ref{assu-blanket}), an error bound in the form of \eqref{thm-frf-2} in Theorem \ref{thm-frf} holds for $\mathfrak{g}=|\cdot|^{\frac{1}{2}}$ .

\begin{theorem}\label{thm-frf-1d-f}
Let $\mathcal{F}^e_\beta={\rm epi}\, f^\pi \cap \{{\bm z}^*\}^\bot$ be a one-dimensional face with ${\bm z}^*=t^*(-\beta, f^*(\beta), 1)$, $\beta\in {\rm int}({\rm dom}\, f^*)\setminus  f'(W)$ and $t^*>0$, where $f'(W)\coloneqq \{ v : v= f'(w), w\in W\}$ with $W\subset  {\rm int}({\rm dom}\,f)$  given in Assumption \ref{assu-blanket}.
Let $\eta>0$ and $\gamma_{{\bm z}^*, \eta}$ be defined in \eqref{thm-frf-1} with $\mathfrak{g}=|\cdot|^{\frac{1}{2}}$ and $({\rm epi}\, f^\pi,\mathcal{F}^e_{\beta},{\bm z}^*)$ in place of $(\mathcal{K},\mathcal{F},{\bm z})$. Then we have that $\gamma_{{\bm z}^*, \eta}\in(0,\infty]$ and
\[
{\rm d}({\bm q}, \mathcal{F}^e_\beta)\leq
\kappa_{{\bm z}^*,\eta}  ({\rm d}({\bm q}, {\rm epi}\, f^\pi))^{\frac{1}{2}} \quad \forall\, {\bm q}\in\{{\bm z}^*\}^\bot \cap B(\eta),
\]
where $\kappa_{{\bm z}^*,\eta} \coloneqq \max\{2\eta^{\frac{1}{2}}, 2\gamma_{{\bm z}^*,\eta}^{-1}\}$.
\end{theorem}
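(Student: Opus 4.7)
The plan is to invoke Theorem~\ref{thm-frf} from the appendix with $\mathfrak{g}(\cdot)=|\cdot|^{1/2}$ applied to the triple $(\epi f^\pi, \mathcal{F}^e_\beta, {\bm z}^*)$. That result converts the abstract quantity $\gamma_{{\bm z}^*,\eta}$ into the explicit constant $\kappa_{{\bm z}^*,\eta}$ in the statement, so the whole task reduces to verifying $\gamma_{{\bm z}^*,\eta} > 0$.

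I would establish this by contradiction: assume $\gamma_{{\bm z}^*,\eta}=0$, producing a sequence $\{{\bm q}^k\}\subset(\{{\bm z}^*\}^\bot\cap B(\eta))\setminus \mathcal{F}^e_\beta$ with $\sqrt{{\rm d}({\bm q}^k,\epi f^\pi)}/{\rm d}({\bm q}^k,\mathcal{F}^e_\beta)\to 0$. Since the denominator is bounded by $2\eta$, this forces ${\rm d}({\bm q}^k,\epi f^\pi)\to 0$, and, up to a subsequence, ${\bm q}^k\to{\bm q}^\infty=t_\infty\hat{{\bm f}}\in\mathcal{F}^e_\beta$. The decisive input is Assumption~\ref{assu-blanket}: since $\beta\notin f'(W)$ and $\bar\beta=(f^*)'(\beta)$ satisfies $\beta=f'(\bar\beta)$ by the Legendre property of $f$, we have $\bar\beta\in\interior({\rm dom}\,f)\setminus W$, so $f$ is strongly convex on a compact convex neighborhood of $\bar\beta$. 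This yields $\mu,\delta>0$ with $f(y)-f(\bar\beta)-\beta(y-\bar\beta)\ge\tfrac{\mu}{2}(y-\bar\beta)^2$ for $y\in[\bar\beta-\delta,\bar\beta+\delta]$. Homogenizing via the perspective operation, and using the Fenchel equality $f(\bar\beta)+f^*(\beta)=\beta\bar\beta$, translates this into
\[
p_3-\beta p_1+f^*(\beta)p_2\ \ge\ \frac{\mu(p_1-\bar\beta p_2)^2}{2 p_2}
\]
for any $(p_1,p_2,p_3)\in\epi f^\pi$ with $p_2>0$ and $p_1/p_2\in[\bar\beta-\delta,\bar\beta+\delta]$.

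Apply this inequality to the projection ${\bm p}^k := P_{\epi f^\pi}({\bm q}^k)$, which converges to ${\bm q}^\infty$. The orthogonality ${\bm q}^k\in\{{\bm z}^*\}^\bot$ rewrites the left-hand side as $\tfrac{1}{t^*}\langle{\bm p}^k-{\bm q}^k,{\bm z}^*\rangle$, which Cauchy--Schwarz bounds by $\|{\bm z}^*\|\,{\rm d}({\bm q}^k,\epi f^\pi)/t^*$. Thus $|p_1^k-\bar\beta p_2^k|=O(\sqrt{{\rm d}({\bm q}^k,\epi f^\pi)})$. Inserting this into the decomposition ${\rm d}({\bm q}^k,\mathcal{F}^e_\beta)\le\|{\bm q}^k-p_2^k\hat{{\bm f}}\|\le\|{\bm q}^k-{\bm p}^k\|+\|{\bm p}^k-p_2^k\hat{{\bm f}}\|$ and using the identity $p_3^k-p_2^k f(\bar\beta)=(p_3^k-\beta p_1^k+f^*(\beta)p_2^k)+\beta(p_1^k-\bar\beta p_2^k)$ gives ${\rm d}({\bm q}^k,\mathcal{F}^e_\beta)=O(\sqrt{{\rm d}({\bm q}^k,\epi f^\pi)})$, contradicting the ratio assumption.

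The hard part is the boundary case $t_\infty=0$, in which $p_2^k\to 0$ degenerates the homogenized bound and $p_1^k/p_2^k$ need not stay in the neighborhood of $\bar\beta$. I would handle it by exploiting the conic homogeneity of $\epi f^\pi$, $\mathcal{F}^e_\beta$ and $\{{\bm z}^*\}^\bot$: normalize $\tilde{{\bm q}}^k := {\bm q}^k/\|{\bm q}^k\|$. The vanishing of the ratio in fact implies ${\rm d}({\bm q}^k,\epi f^\pi)=o(\|{\bm q}^k\|^2)$, hence ${\rm d}(\tilde{{\bm q}}^k,\epi f^\pi)\to 0$; any accumulation point of $\{\tilde{{\bm q}}^k\}$ is a unit vector in $\{{\bm z}^*\}^\bot\cap\epi f^\pi=\mathcal{F}^e_\beta$, and hence equal to $\hat{{\bm f}}/\|\hat{{\bm f}}\|$, which lies strictly away from the apex. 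The non-degenerate argument above then applies to $\{\tilde{{\bm q}}^k\}$; scaling back by $\|{\bm q}^k\|$ multiplies both distances by the same factor and preserves the square-root bound, yielding the contradiction.
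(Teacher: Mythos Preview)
Your argument is correct and constitutes a genuinely different route from the paper's.

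\textbf{On the setup.} One small gap: you write that $\gamma_{{\bm z}^*,\eta}=0$ ``produces a sequence $\{{\bm q}^k\}\subset(\{{\bm z}^*\}^\bot\cap B(\eta))\setminus\mathcal{F}^e_\beta$'' with the stated ratio vanishing. Strictly, $\gamma_{{\bm z}^*,\eta}$ is an infimum over points ${\bm v}$ on ${\rm bdry}(\epi f^\pi)$, not over points in the hyperplane. You should invoke Lemma~\ref{lemma-gamma} to get ${\bm v}^k\in{\rm bdry}(\epi f^\pi)\cap B(\eta)$ and then set ${\bm q}^k:={\bm w}^k=P_{\{{\bm z}^*\}^\bot}{\bm v}^k$; since ${\rm d}({\bm q}^k,\epi f^\pi)\le\|{\bm w}^k-{\bm v}^k\|$ and ${\rm d}({\bm q}^k,\mathcal{F}^e_\beta)=\|{\bm w}^k-{\bm u}^k\|$, your ratio is dominated by the one that vanishes. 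This is routine but worth stating.

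\textbf{Comparison with the paper.} The paper stays on the cone side: it parametrizes ${\bm v}^k$ by its position on $\partial(\epi f^\pi)$ and distinguishes (a) ${\bm v}^k\in\mathcal{K}^\theta$, (b-near) ${\bm v}^k\in\mathcal{K}^t$ with $v_x^k/v_t^k$ close to $\bar\beta$, and (b-far) ${\bm v}^k\in\mathcal{K}^t$ with $v_x^k/v_t^k$ bounded away from $\bar\beta$. Case (b-near) is the strong-convexity estimate, essentially your main case. Cases (a) and (b-far) are handled by separate calculations: (a) uses $f_\infty=\sigma_{{\rm dom}\,f^*}$ and $\beta\in\interior({\rm dom}\,f^*)$ to show $\|{\bm w}^k-{\bm v}^k\|\gtrsim\|{\bm v}^k\|$; (b-far) introduces an auxiliary function $G$ on $\epi f$ and proves $\inf G>0$ via an analysis of its liminf at infinity through the recession function. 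Your normalization trick replaces both of these at once: observing that in either regime the limit must be ${\bm 0}$, you rescale to the unit sphere, where the unique accumulation point in $\{{\bm z}^*\}^\bot\cap\epi f^\pi$ is $\hat{{\bm f}}/\|\hat{{\bm f}}\|$, and the strong-convexity estimate applies. This is cleaner and avoids the function $G$ entirely. The one place to be a bit more careful is your phrase ``preserves the square-root bound'': scaling by $\|{\bm q}^k\|$ turns ${\rm d}(\tilde{{\bm q}}^k,\mathcal{F}^e_\beta)\le C\sqrt{{\rm d}(\tilde{{\bm q}}^k,\epi f^\pi)}$ into ${\rm d}({\bm q}^k,\mathcal{F}^e_\beta)\le C\sqrt{\|{\bm q}^k\|}\sqrt{{\rm d}({\bm q}^k,\epi f^\pi)}$, so the constant picks up a harmless factor $\sqrt{\|{\bm q}^k\|}\le\sqrt{\eta}$; this still yields the contradiction.
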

\begin{proof}
In view of Lemma \ref{lemma-gamma}, we need to show that
\begin{align}\label{thm-frf-1d-f-1}
\liminf_{k\rightarrow\infty} \frac{\|{\bm w}^k-{\bm v}^k\|^{\frac{1}{2}}}{\|{\bm w}^k-{\bm u}^k\|}>0
\end{align}
 for any $\bar{{\bm v}}\in \mathcal{F}^e_{\beta}$ and   sequence $\{{\bm v}^k\}\subset{\rm bdry}({\rm epi}\, f^\pi)\cap B(\eta)\backslash\mathcal{F}^e_\beta$ such that
\begin{align*}
\lim_{k\rightarrow \infty}{\bm v}^k=\lim_{k\rightarrow \infty}{\bm w}^k=\bar{{\bm v}} \quad \mbox{with ${\bm w}^k={\rm P}_{\{{\bm z}^*\}^\bot}{\bm v}^k$, ${\bm u}^k={\rm P}_{{\mathcal{F}^e_\beta}}{\bm w}^k$ and ${\bm w}^k\neq {\bm u}^k$}.
\end{align*}
By passing to subsequences if necessary, we consider two possible cases.
\begin{enumerate}[\rm (a)]
\item ${\bm v}^k\in \mathcal{K}^\theta$ for all $k$;
\item ${\bm v}^k\in \mathcal{K}^t$ for all $k$.
\end{enumerate}

{\bf Case (a):} We first assume ${\bm v}^k\in \mathcal{K}^\theta$ for all $k$, i.e.
\begin{align}\label{thm-frf-1d-f-1-1}
{\bm v}^k=(v^k_x,0,v^k_r) \,\, \mbox{with $(v^k_x, v^k_r)\in {\rm epi}\, f_\infty$}.
\end{align}

Since $\beta\in {\rm int}({\rm dom}\, f^*)\setminus f'(W)$, there exists sufficiently small $\varepsilon\in(0,1)$ such that
\begin{align}\label{thm-frf-1d-f-2-0}
(1\pm\varepsilon)\beta\in{\rm dom}\, f^*\ \ \ {\rm and}\ \ \ \beta+\varepsilon \xi\in {\rm dom}\, f^* \quad \mbox{$\forall\, \xi \in [-1,1]$}.
\end{align}
It follows that
\begin{align}
\|{\bm w}^k-{\bm v}^k\|&=\frac{|\langle {\bm z}^*, {\bm v}^k\rangle|}{\|{\bm z}^*\|} \overset{\rm (a)}{=}\frac{\langle {\bm z}^*, {\bm v}^k\rangle}{\|{\bm z}^*\|}=\frac{\langle t^*(-\beta, f^*(\beta), 1), (v^k_x,0, v^k_r)\rangle}{\|{\bm z}^*\|}\notag\\
&= \frac{t^*}{\|{\bm z}^*\|}(v^k_r- v^k_x \beta)\geq \frac{t^*}{\|{\bm z}^*\|}(f_\infty(v^k_x)- v^k_x \beta) \notag \\
&\overset{\rm (b)}{=}  \frac{t^*}{\|{\bm z}^*\|}(\sup\{ v^k_x y : y\in {\rm dom}\, f^* \}- v^k_x \beta) \notag\\
&\overset{\rm (c)}{\geq}
\begin{cases}
\frac{t^*}{\|{\bm z}^*\|}( v^k_x(\beta+\varepsilon (v^k_x/|v^k_x|)) - v^k_x \beta) &  \mbox{if $v^k_x\neq 0$}, \\
 0  &   \mbox{if $v^k_x= 0$,}
\end{cases} \notag \\
&= \frac{\varepsilon t^*}{\|{\bm z}^*\|}|v^k_x|, \label{thm-frf-1d-f-2-1}
\end{align}
where (a) follows from the facts that ${\bm z}^*\in ({\rm epi}\, f^\pi)^*$ and ${\bm v}^k\in {\rm epi}\, f^\pi$ and thus $\langle {\bm z}^*, {\bm v}^k\rangle \geq 0$,
(b) follows from \eqref{infty-sigma}, and (c) follows from \eqref{thm-frf-1d-f-2-0}. Next, notice that
\begin{align*}
\frac{1}{t^*}|\langle {\bm z}^*, {\bm v}^k\rangle|&=v^k_r- \beta v^k_x=  v^k_r-\frac{1}{1+{\rm sgn}(v^k_r)\varepsilon}(1+{\rm sgn}(v^k_r)\varepsilon) \beta v^k_x \\
&{\geq}  v^k_r-\frac{1}{1+{\rm sgn}(v^k_r)\varepsilon}f_\infty(v^k_x)\geq  \frac{\varepsilon {\rm sgn}(v^k_r) v^k_r}{1+{\rm sgn}(v^k_r)\varepsilon}\geq  \frac{\varepsilon}{1+\varepsilon}|v^k_r|,
\end{align*}
where the first inequality  follows from  the fact that  $(1\pm \varepsilon)\beta\in {\rm dom}\,f^*$ (see \eqref{thm-frf-1d-f-2-0}) and  \eqref{infty-sigma}, and the second inequality follows from  \eqref{thm-frf-1d-f-1-1}.
The last display implies that
\begin{align}\label{thm-frf-1d-f-2-2}
\|{\bm w}^k-{\bm v}^k\|=\frac{|\langle {\bm z}^*, {\bm v}^k\rangle|}{\|{\bm z}^*\|}\geq \frac{\varepsilon t^*}{(1+\varepsilon)\|{\bm z}^*\|} |v^k_r|.
\end{align}


 Let $c_1=\min\{\frac{\varepsilon t^*}{2\|{\bm z}^*\|},\frac{\varepsilon t^*}{2(1+\varepsilon)\|{\bm z}^*\|}\}$. In view of  \eqref{thm-frf-1d-f-2-1} and \eqref{thm-frf-1d-f-2-2}, we  deduce that
\begin{align}
\|{\bm w}^k-{\bm v}^k\|&=0.5\|{\bm w}^k-{\bm v}^k\|+0.5\|{\bm w}^k-{\bm v}^k\|\geq \frac{\varepsilon t^*}{2\|{\bm z}^*\|} |v^k_x|+\frac{\varepsilon t^*}{2(1+\varepsilon)\|{\bm z}^*\|}|v^k_r|\notag\\
&\geq c_1\|(v^k_x,0,v^k_r)\|=c_1\|{\bm v}^k\|. \label{thm-frf-1d-f-2-3}
\end{align}
Then,  it follows  that
\begin{align*}
\|{\bm w}^k-{\bm u}^k\|&={\rm d}({\bm w}^k,\mathcal{F}^e_\beta)\leq \|{\bm w}^k\|\leq \|{\bm v}^k\|\leq c_1^{-1}\| {\bm w}^k-{\bm v}^k\| \quad \forall\, k,
\end{align*}
where the first inequality holds because ${\bm 0}\in \mathcal{F}^e_\beta$, the  second inequality follows from
the fact that ${\bm 0}\in \{{\bm z}^*\}^\bot$ and the nonexpansiveness of the projection mapping, and the last inequality follows from \eqref{thm-frf-1d-f-2-3}.
Therefore, the above display shows  that \eqref{thm-frf-1d-f-1} is satisfied.

{\bf Case (b):} Next, we assume ${\bm v}^k\in \mathcal{K}^t$ for all $k$, i.e.
\begin{align}\label{Caseiivk}
0\neq {\bm v}^k=(v^k_x,v^k_t,v^k_t (f(v^k_x/v^k_t)+\varsigma^k)), \quad v^k_t>0 \ \ \ \ \ \forall \, k,
\end{align}
where $\varsigma^k\geq 0$ if $v^k_x/v^k_t\in {\rm bdry}({\rm dom}\, f)\cap {\rm dom}\,f$  with  $v^k_t\varsigma^k\leq \eta$ since ${\bm v}^k\in B(\eta)$, and $\varsigma^k= 0$ otherwise.\footnote{Recall that $\{{\bm v}^k\}\subset{\rm bdry}({\rm epi}\, f^\pi)$. }

Consider the following two functions $h_1$, $h_2$ from $\R$ to $\R\cup\{\infty\}$
\begin{align}
 h_1(\zeta)&\coloneqq f(\zeta)+f^*(\beta)-\beta \zeta,\label{thm-frf-h1} \\
 h_2(\zeta)&\coloneqq |\zeta-\bar{\beta}|+|f(\zeta)-f(\bar{\beta})|,\label{thm-frf-h2}
\end{align}
where $\bar{\beta} \coloneqq (f^*)'(\beta)$ satisfies  $\bar{\beta}\in {\rm int}({\rm dom}\,  f)\setminus W$ since $\beta\in {\rm int}({\rm dom}\,  f^*)\setminus f'(W)$ and $(f')^{-1}=(f^*)'$ thanks to \cite[Theorem 5.10]{Bauschke-Borwein-Combettes-2001}.
It is clear that both  $h_1$ and $h_2$  are nonnegative.
In view of the definition of $h_1$, \eqref{young-1} and $\beta= f'(\bar{\beta})\in {\rm int}({\rm dom}\, f^*)$, we know that
\begin{align}\label{thm-frf-h1-1}
\mbox{$h_1(\zeta)=0$ if and only if $\zeta=\bar\beta$}.
\end{align}
In addition, from the fact that $(f')^{-1}=(f^*)'$ thanks to \cite[Theorem 5.10]{Bauschke-Borwein-Combettes-2001},  we  have
\begin{align}\label{thm-frf-h1-2}
 h'_1(\bar{\beta})= f'(\bar\beta)-\beta= f'((f^*)'(\beta))-\beta=0.
\end{align}

Recall that $\hat{{\bm f}} = (\bar{\beta},1, f(\bar{\beta}))$  in \eqref{def-f} and ${\bm z}^*=t^*(-\beta, f^*(\beta), 1)$ in \eqref{def-z}.
We compute
\begin{align}
\|{\bm w}^k-{\bm v}^k\|&=\frac{|\langle {\bm z}^*, {\bm v}^k\rangle|}{\| {\bm z}^*\|}=\frac{\langle t^*(-\beta, f^*(\beta), 1), (v^k_x,v^k_t,v^k_t (f(v^k_x/v^k_t)+\varsigma^k))\rangle}{\| {\bm z}^*\|}\notag\\
&=\frac{t^*}{\|{\bm z}^*\|}v^k_t(-\beta (v^k_x/v^k_t)+ f^*(\beta)+f(v^k_x/v^k_t)+\varsigma^k)\notag\\
&=\frac{t^*v^k_t(h_1(v^k_x/v^k_t)+\varsigma^k)}{\|{\bm z}^*\|}, \label{thm-frf-1d-f-3}\\
\|{\bm w}^k-{\bm u}^k\|&\overset{\rm (a)}{\leq} \|{\bm v}^k-{\rm P}_{\mathcal{F}^e_\beta}{\bm v}^k\| \overset{\rm (b)}{\leq}  \|{\bm v}^k-v^k_t \hat{{\bm f}}\| =\|(v^k_x,v^k_t,v^k_t (f(v^k_x/v^k_t)+\varsigma^k))-v^k_t(\bar{\beta},1, f(\bar{\beta}))\|\notag \\
& =v^k_t\|(v^k_x/v^k_t-\bar{\beta},0, f(v^k_x/v^k_t)-f(\bar{\beta})+\varsigma^k)\|\notag \\
&\leq v^k_t h_2(v^k_x/v^k_t)+  v^k_t \varsigma^k, \label{thm-frf-1d-f-4}
\end{align}
where (a) follows from Lemma \ref{lemma-bound-wu}, and (b) holds because $v^k_t \hat{{\bm f}}\in \mathcal{F}^e_\beta$.

Since $\bar\beta\in {\rm int}({\rm dom}\, f)\setminus W$, we can choose $\Delta > 0$ such that $\Xi:= \{\xi: |\xi-\bar{\beta}|\le \Delta\}\subset {\rm int}({\rm dom}\,f)\setminus W$.
Since $h_1$ is strongly convex on $\Xi$ (thanks to Assumption \ref{assu-blanket} and the definition of $h_1$ in \eqref{thm-frf-h1}), we can now invoke \cite[Proposition~4.2]{Goebel-Rockafellar-2008} to deduce the existence of $\bar{c} > 0$ such that
\begin{equation*}
  h_1(\zeta) \ge h_1(\bar{\beta}) + h'_1(\bar{\beta})(\zeta - \bar{\beta}) + \bar{c} |\zeta - \bar{\beta}|^2\quad \mbox{whenever $|\zeta-\bar{\beta}|\le \Delta$}.
\end{equation*}
Recalling that $h_1(\bar\beta)=h'_1(\bar\beta)=0$ (thanks to \eqref{thm-frf-h1-1} and \eqref{thm-frf-h1-2}), we further deduce that
\begin{align}\label{thm-frf-1d-f-5-2}
h_1(\zeta)\geq \bar{c} |\zeta-\bar\beta|^2 \quad \mbox{whenever $|\zeta-\bar{\beta}|< \Delta$}.
\end{align}

On the other hand, we see that $f$ is Lipschitz continuous
on $\Xi$  and we denote by  $l_{\bar\beta}$ the
 Lipschitz continuity modulus.
Using this fact together with \eqref{thm-frf-h2}, we obtain the following
\begin{align}
 h_2(\zeta)=|\zeta-\bar{\beta}|+|f(\zeta)-f(\bar{\beta})|\leq (1+l_{\bar\beta})|\zeta-\bar{\beta}| \quad \mbox{whenever $|\zeta-\bar{\beta}|< \Delta$}. \label{thm-frf-1d-f-6}
\end{align}
We  deduce from \eqref{thm-frf-1d-f-5-2} and \eqref{thm-frf-1d-f-6} that there exists $c_2>0$ such that
\begin{align}\label{thm-frf-1d-f-8}
h_1(\zeta)^{\frac{1}{2}}\geq c_2 h_2(\zeta) \quad {\rm whenever}\  |\zeta-\bar\beta |< \Delta.
\end{align}

Define the function  $G:\R\times\R \rightarrow \R_+ \cup \{\infty\}$ as follows:
\begin{equation*}
G(\zeta,\xi)= \\
\left\{
  \begin{split}
&    \frac{-\beta \zeta + \xi +f^*(\beta)}{\sqrt{1+\|(\zeta,\xi)\|^2}}  &&\mbox{if $(\zeta,\xi)\in {\rm epi}\,f$ and $|\zeta-\bar{\beta}|\geq \Delta$}, \\
  &  \infty  &&\mbox{else}.
  \end{split}
  \right.
\end{equation*}
The function $G$ is lower semicontinuous. Moreover, $G$ is positive since for any $(\zeta,\xi)\in {\rm dom}\,G$ we have that
\[
-\beta \zeta + \xi +f^*(\beta)\geq -\beta \zeta + f(\zeta) +f^*(\beta) > 0,
\]
where the first inequality holds because $(\zeta,\xi)\in{\rm epi}\, f$ and the last inequality follows from  \eqref{young} and \eqref{young-1} because $\zeta\neq \bar{\beta}$ whenever $(\zeta,\xi)\in {\rm dom}\, G$.
Next, we will  show that
\begin{align}\label{thm-frf-1d-f-10}
\inf G>0.
\end{align}
Since $G$ is lower semicontinuous and positive, it suffices to show that
\begin{align}\label{thm-frf-1d-f-10-0}
\ell\coloneqq \liminf_{\|(\zeta,\xi)\|\rightarrow \infty} G(\zeta,\xi)>0.
\end{align}
To this end, let $\{(\zeta^l,\xi^l)\}\subset {\rm epi}\, f$ be a sequence with $\|(\zeta^l,\xi^l)\|\rightarrow \infty$ such that
$\lim_{l\rightarrow \infty} G(\zeta^l,\xi^l)=\ell$.
 Passing to a subsequence if necessary, we have
\begin{align}\label{thm-frf-1d-f-10-1}
(\zeta^*,\xi^*)\coloneqq \lim_{l\rightarrow \infty}\frac{(\zeta^l,\xi^l)}{\|(\zeta^l,\xi^l)\|}\overset{\rm (a)}{\in} {\rm epi}\, f_\infty\overset{\rm (b)}{=}{\rm epi}\,\sigma_{{\rm dom}\, f^*},
\end{align}
where (a) follows from \cite[Proposition 2.1.4]{Auslender-Alfred-Teboulle-2006}, and (b) follows from \eqref{infty-sigma}.
Then, it holds that
\begin{align}
\ell = \lim_{l\rightarrow \infty} G(\zeta^l,\xi^l)&= \lim_{l\rightarrow \infty}\frac{-\beta \zeta^l + \xi^l +f^*(\beta)}{\sqrt{1+\|(\zeta^l,\xi^l)\|^2}}\notag \\
&=\lim_{l\rightarrow \infty}\frac{-\langle(\beta,-1), (\zeta^l,\xi^l)/\|(\zeta^l,\xi^l)\|\rangle  +f^*(\beta)/\|(\zeta^l,\xi^l)\|}{\sqrt{1/\|(\zeta^l,\xi^l)\|^2+1}}\notag \\
&= - \beta \zeta^*+\xi^*,  \label{thm-frf-1d-f-10-2}
\end{align}
where the last equality follows from \eqref{thm-frf-1d-f-10-1}.
 Note that $\|(\zeta^*,\xi^*)\|=1$.
If $\zeta^*=0$, then $\xi^*=1$ (since $\xi^*\ge 0$ in view of \eqref{thm-frf-1d-f-10-1} and the fact that $f_\infty \geq 0$ from Lemma \ref{lemma-nonegative}); it then follows from \eqref{thm-frf-1d-f-10-2} that
\begin{align*}
\ell=\lim_{l\rightarrow \infty} G(\zeta^l,\xi^l)=- \beta \zeta^*+\xi^*=\xi^*=1>0.
\end{align*}
On the other hand, if $\zeta^*\neq 0$, we can choose small $\varepsilon>0$ such that $\beta+\varepsilon \zeta^*/|\zeta^*|\in {\rm dom}\, f^*$ since $\beta\in {\rm int}({\rm dom}\, f^*)$. Thus, we deduce that
\begin{align*}
\ell=\lim_{l\rightarrow \infty} G(\zeta^l,\xi^l)&= - \beta \zeta^*+\xi^*= -( \beta+\varepsilon \zeta^*/|\zeta^*|) \zeta^*+\xi^*+\varepsilon |\zeta^*|\notag \\
& \geq  \xi^*-\sigma_{{\rm dom}\, f^*}(\zeta^*) +\varepsilon |\zeta^*| \geq \varepsilon |\zeta^*|>0,
\end{align*}
where the first inequality holds because $\beta+\varepsilon \zeta^*/|\zeta^*|\in {\rm dom}\, f^*$,
and the second inequality follows from \eqref{thm-frf-1d-f-10-1}.
The last two displays together imply that \eqref{thm-frf-1d-f-10-0} holds.

Let $\zeta^k\coloneqq v^k_x/v^k_t\in {\rm dom}\, f$. If $|\zeta^k-\bar{\beta}|\geq \Delta$, then we have
\begin{align}
\frac{\|{\bm w}^k-{\bm v}^k\|}{\|{\bm w}^k-{\bm u}^k\|}\overset{\rm (a)}{\geq}& \frac{\|{\bm w}^k-{\bm v}^k\|}{\|{\bm v}^k\|}=\frac{\langle {\bm z}^*, {\bm v}^k\rangle}{\|{\bm z}^*\| \|{\bm v}^k\|}
\overset{\rm (b)}{=}\frac{t^*}{\|{\bm z}^*\|}\frac{- \beta v^k_x+f^*(\beta)v^k_t+v^k_t(f(v^k_x/v^k_t)+\varsigma^k)}{\sqrt{(v^k_x)^2+(v^k_t)^2+(v^k_t)^2( f(v^k_x/v^k_t)+\varsigma^k)^2}}\notag \\
\overset{\rm (c)}{=}&\frac{t^*}{\|{\bm z}^*\|}\frac{- \beta \zeta^k+f^*(\beta)+(f(\zeta^k)+\varsigma^k)}{\sqrt{(\zeta^k)^2+1+ (f(\zeta^k)+\varsigma^k)^2}}
\overset{\rm (d)}{\geq} \frac{t^*}{\|{\bm z}^*\|}\inf G>0, \label{thm-frf-1d-f-11}
\end{align}
where (a) follows from the facts that $\|{\bm v}^k\|\neq0$ and $\|{\bm w}^k-{\bm u}^k\|\leq \|{\bm w}^k\|\leq \|{\bm v}^k\|$ since $\{{\bm z}^*\}^{\bot}$ and $\mathcal{F}^e_{\beta}$ contain the origin, (b) follows from the definition of ${\bm v}^k$ and ${\bm z}^*$,
(c) follows upon recalling that $\zeta^k=v^k_x/v^k_t$ and $v^k_t>0$, and (d) follows from \eqref{thm-frf-1d-f-10} thanks to the facts that $(\zeta^k,f(\zeta^k)+\varsigma^k)\in {\rm epi}\, f$ and $|\zeta^k-\bar{\beta}|\geq \Delta$.

Next, we  discuss the case $|\zeta^k-\bar\beta|< \Delta$.  In this case, we have
\begin{align}\label{thm-frf-1d-f-12}
&\|{\bm w}^k-{\bm v}^k\|^{\frac{1}{2}}
\stackrel{\rm (a)}{=}\sqrt{\frac{t^* v^k_t}{\|{\bm z}^*\|}}(h_1(\zeta^k)+\varsigma^k)^{\frac{1}{2}}
\stackrel{\rm (b)}{\geq} \sqrt{\frac{t^* v^k_t}{2\|{\bm z}^*\|}}(h_1(\zeta^k)^{\frac{1}{2}}+(\varsigma^k)^{\frac{1}{2}})\notag\\
\stackrel{\rm (c)}{\geq} & \sqrt{\frac{t^* v^k_t}{2\|{\bm z}^*\|}} \left(c_2 h_2(\zeta^k)+\sqrt{v^k_t/\eta} \varsigma^k \right)
 =\sqrt{\frac{t^*}{2\|{\bm z}^*\|v^k_t}} \left(c_2 v^k_th_2(\zeta^k)+\sqrt{v^k_t/\eta} v^k_t \varsigma^k \right) \notag\\
\stackrel{\rm (d)}{\geq} &\sqrt{\frac{t^*}{2\|{\bm z}^*\|v^k_t}}\min\left\{c_2, \sqrt{v^k_t/\eta}\right\}  \|{\bm w}^k-{\bm u}^k\|
\stackrel{\rm (e)}{\geq} \min \left\{\frac{c_2 \sqrt{t^*}}{\sqrt{2\|{\bm z}^*\|\eta}}, \frac{\sqrt{t^*}}{\sqrt{2\|{\bm z}^*\|\eta}}\right\} \|{\bm w}^k-{\bm u}^k\| ,
\end{align}
where (a) follows from \eqref{thm-frf-1d-f-3},
(b) follows by the fact $\sqrt{a+b}\geq (\sqrt{a}+\sqrt{b})/\sqrt{2}$ for any $a,b\geq 0$,
(c) follows from \eqref{thm-frf-1d-f-8} and $v^k_t \varsigma^k \leq \eta$ (see \eqref{Caseiivk}),
(d) follows from \eqref{thm-frf-1d-f-4},
and (e) holds because $0<v^k_t\leq \eta$ as ${\bm v}^k\in B(\eta)$.
Then we can see from \eqref{thm-frf-1d-f-11} and  \eqref{thm-frf-1d-f-12} that \eqref{thm-frf-1d-f-1} is satisfied.

Therefore, according to Lemma \ref{lemma-gamma}, we know that $\gamma_{{\bm z}^*, \eta}\in (0,\infty]$. Invoking Theorem \ref{thm-frf}, we obtain the desired error bound.
\end{proof}

Combining Theorem \ref{thm-frf-1d-f}, Lemma \ref{lemma-frf} and Theorem \ref{thm-frf}, we obtain the  following $\mathds{1}$-FRF concerning $\mathcal{F}^e_{\beta}$.
\begin{corollary}[$\mathds{1}$-FRF  concerning $\mathcal{F}^e_{\beta}$]\label{coro-1frf-1d-f}
Let $\mathcal{F}^e_\beta={\rm epi}\, f^\pi \cap \{{\bm z}^*\}^\bot$ be the one-dimensional face with ${\bm z}^*=t^*(-\beta,f^*(\beta), 1)$, $\beta\in {\rm int}({\rm dom}\, f^*)\setminus f'(W)$ and $t^*>0$, where $f'(W)\coloneqq \{ v : v= f'(w), w\in W\}$ with $W\subset  {\rm int}({\rm dom}\,f)$  given in Assumption \ref{assu-blanket}.
 Let $\kappa_{{\bm z}^*,  t} \coloneqq \max\{2t^{\frac{1}{2}}, 2\gamma_{{\bm z}^*,t}^{-1}\} $ be defined in Theorem \ref{thm-frf-1d-f}, which is a nondecreasing function with respect to $t$ by the definition.
Then, the $\mathds{1}$-FRF $\psi_{{\rm epi}\, f^\pi,{\bm z}^*}: \R_+\times \R_+\rightarrow \R_+$ for ${\rm epi}\, f^\pi$ and ${\bm z}^*$ is given by
\begin{align*}
\psi_{{\rm epi}\, f^\pi,{\bm z}^*}(\epsilon,t)\coloneqq \max\{\epsilon,\epsilon/\|{\bm z}^*\|\}+\kappa_{{\bm z}^*,t}(\epsilon+\max\{\epsilon,\epsilon/\|{\bm z}^*\|\})^{\frac{1}{2}}.
\end{align*}
In particular, in view of Definition \ref{def-holder-1frf}, the above $\mathds{1}$-FRF is H\"{o}lderian of exponent $1/2$.
\end{corollary}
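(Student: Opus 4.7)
The plan is to deduce the corollary from Theorem \ref{thm-frf-1d-f} by a projection-and-triangle-inequality argument, which is precisely the passage from a hyperplane error bound to a $\mathds{1}$-FRF formalized in the cited Lemma \ref{lemma-frf} and Theorem \ref{thm-frf}. I would fix ${\bm x} \in \spanVec({\rm epi}\, f^\pi)$ with $\|{\bm x}\| \leq t$, ${\rm d}({\bm x},{\rm epi}\, f^\pi)\leq \epsilon$ and $\langle {\bm x},{\bm z}^*\rangle\leq \epsilon$, and let ${\bm q} := {\rm P}_{\{{\bm z}^*\}^\bot}({\bm x})$, so that $\|{\bm x}-{\bm q}\|=|\langle {\bm x},{\bm z}^*\rangle|/\|{\bm z}^*\|$. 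To bound this quantity, I would combine the hypothesis $\langle{\bm x},{\bm z}^*\rangle \le \epsilon$ with the lower bound $\langle {\bm x},{\bm z}^*\rangle \ge -\epsilon\|{\bm z}^*\|$, which follows from ${\bm z}^* \in ({\rm epi}\, f^\pi)^*$ and Cauchy--Schwarz applied to ${\bm x}$ and its nearest point in ${\rm epi}\, f^\pi$. This yields $\|{\bm x}-{\bm q}\| \le \max\{\epsilon,\epsilon/\|{\bm z}^*\|\}$, which supplies the first (linear) summand in the claimed formula.

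Next, since ${\bm 0}\in\{{\bm z}^*\}^\bot$ and the orthogonal projection is nonexpansive, $\|{\bm q}\|\le\|{\bm x}\|\le t$, so Theorem~\ref{thm-frf-1d-f} applies to ${\bm q}$ with $\eta=t$. Combined with the triangle inequality ${\rm d}({\bm q},{\rm epi}\,f^\pi)\le \|{\bm x}-{\bm q}\|+{\rm d}({\bm x},{\rm epi}\, f^\pi) \le \epsilon + \max\{\epsilon,\epsilon/\|{\bm z}^*\|\}$, this gives
\[
{\rm d}({\bm q},\mathcal{F}^e_\beta)\le \kappa_{{\bm z}^*,t}\bigl(\epsilon + \max\{\epsilon,\epsilon/\|{\bm z}^*\|\}\bigr)^{1/2}.
\]
Adding $\|{\bm x}-{\bm q}\|$ via a final triangle inequality and recalling $\mathcal{F}^e_\beta = {\rm epi}\, f^\pi \cap \{{\bm z}^*\}^\bot$ produces the claimed bound on ${\rm d}({\bm x}, {\rm epi}\,f^\pi \cap \{{\bm z}^*\}^\bot)$. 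The monotonicity of $\kappa_{{\bm z}^*,t}$ in $t$, together with $\psi_{{\rm epi}\,f^\pi,{\bm z}^*}(0,t)=0$ and the obvious nonnegativity, verifies the remaining requirements of Definition~\ref{def-1frf}.

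For the H\"olderian structure of Definition \ref{def-holder-1frf}, I would factor $\epsilon$ out of the maximum and $\epsilon^{1/2}$ out of the square root, rewriting the bound as $\rho_1\epsilon + \rho_2(t)\epsilon^{1/2}$ with $\rho_1 := \max\{1,1/\|{\bm z}^*\|\}$ and $\rho_2(t) := \kappa_{{\bm z}^*,t}\sqrt{1+\max\{1,1/\|{\bm z}^*\|\}}$; both are nonnegative and nondecreasing in $t$, so the $\mathds{1}$-FRF is H\"olderian of exponent $1/2$. No real obstacle is anticipated here: Theorem \ref{thm-frf-1d-f} is the substantive ingredient, and the remainder is an exercise in projections and triangle inequalities, i.e., a direct application of the general framework supplied by Lemma \ref{lemma-frf} and Theorem \ref{thm-frf}.
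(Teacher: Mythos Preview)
Your proposal is correct and follows essentially the same route as the paper: the paper simply cites Theorem~\ref{thm-frf-1d-f}, Lemma~\ref{lemma-frf} and Theorem~\ref{thm-frf}, and what you have written is precisely the projection-and-triangle-inequality argument that underlies Lemma~\ref{lemma-frf}, specialized to $\mathfrak{g}=|\cdot|^{1/2}$. Your explicit verification of the H\"olderian form via $\rho_1,\rho_2$ is a nice touch that the paper leaves implicit.
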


\subsection{$\mathds{1}$-FRF concerning $\mathcal{F}^e_{\rm bd}$}

In this section, we study the $\mathds{1}$-FRF concerning the  face $\mathcal{F}^e_{\rm bd}$.
Recall that we are under the blanket Assumption~\ref{assu-blanket} and recall from  Proposition  \ref{prop-e-face}(ii) that
\begin{align}\label{face-2d}
\mathcal{F}^e_{\rm bd}={\rm epi}\, f^\pi \cap \{{\bm z}^*\}^\bot={\rm epi}\, f^\pi \cap \{t^*(-\beta,r^*/t^*,1)\}^\bot= \{(x,0,f_\infty(x)): f_{\infty}(x)= x\beta\},
\end{align}
where ${\bm z}^*=t^*(-\beta,r^*/t^*,1) \in {\rm bdry}(({\rm epi}\, f^\pi)^*)$ with
\begin{align}
  \beta\in {\rm bdry}({\rm dom}\, f^*)\cap {\rm dom}\, f^*,\ \  r^*\ge t^*f^*(\beta),\ \  t^*>0. \label{def-z-beta-r}
\end{align}
We will show that, for those ${\bm z}^*=t^*(-\beta,r^*/t^*,1)$ given as in \eqref{def-z-beta-r} with $r^*> t^*f^*(\beta)$, an error bound in the form of \eqref{thm-frf-2} in Theorem \ref{thm-frf} holds for $\mathfrak{g}=|\cdot|$.
First, we present an example to illustrate that for those ${\bm z}^*$ given as in \eqref{def-z-beta-r} with $r^*= t^*f^*(\beta)$, \eqref{thm-frf-2} in Theorem \ref{thm-frf} can fail to hold for $\mathfrak{g}=|\cdot|$.

\begin{example}
Let $f(x)=\exp(x)$.
One can see from Example~\ref{example-exp} that ${\rm bdry}({\rm dom}\, f^*)\cap {\rm dom}\, f^*=\{0\}$ and $f^*(0)=0$. Thus, setting in \eqref{def-z-beta-r} $\beta = 0$, $t^* > 0$ and $r^* = t^* f^*(0)= 0$, we consider ${\bm z}^* = (0,0,t^*)$, and we see from \eqref{face-2d} that $\mathcal{F}^e_{\rm bd}= {\rm epi}\, f^\pi \cap \{{\bm z}^*\}^\bot=\{(x,0,f_\infty(x)): f_\infty(x) = 0\}=\{(x,0,0) : x\leq 0\}$.
It was established in \cite[Remark~4.14]{Lindstrom-Lourenco-Pong-2023} that an error bound of the form \eqref{thm-frf-2} cannot hold for $\{{\rm epi}\, f^\pi, \{{\bm z}^*\}^\bot\}$.
\end{example}

\begin{proposition}\label{prop-R-d}
Let $\beta$ be defined in \eqref{def-z-beta-r} and define $R_\beta:\R^2\rightarrow \R$ as
\begin{align}\label{prop-R-d-0}
R_\beta(x,\xi)\coloneqq  \xi + f^*(\beta)- x\beta.
\end{align}
Define
\begin{align}\label{prop-def-varpi}
\varpi\coloneqq \max\{\max\{f^*(y): y\in{\rm bdry}({\rm dom}\, f^*)\cap {\rm dom}\, f^*\},0\}.
\end{align}
Then it holds that
\begin{align}\label{prop-R-d-2-1}
\infty>\varpi\geq f^*(y) \quad \forall\, y \in {\rm bdry}({\rm dom}\, f^*)\cap {\rm dom}\, f^*.
\end{align}
Moreover, there exists $c>0$ such that
\begin{align}\label{prop-R-d-1}
R_\beta(x,\xi)+\varpi\geq c\, {\rm d}((x,\xi), {\rm epi}\, f_\infty) \quad \forall\, (x,\xi)\in {\rm epi}\, f.
\end{align}
\end{proposition}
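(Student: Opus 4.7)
Since $f^*$ is a proper closed convex function on $\R$, its effective domain ${\rm dom}\, f^*$ is an interval, so ${\rm bdry}({\rm dom}\, f^*) \cap {\rm dom}\, f^*$ consists of at most two points, namely the finite endpoints of ${\rm dom}\, f^*$ that belong to it. At each such $y$, the value $f^*(y)$ is finite, so $\varpi$ in \eqref{prop-def-varpi} is a finite maximum, and the pointwise inequality $\varpi \geq f^*(y)$ is immediate from the definition.

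\textbf{Second claim.} I would argue by contradiction together with recession/compactness analysis. Suppose no $c > 0$ satisfies \eqref{prop-R-d-1}; then there exists a sequence $\{(x^k, \xi^k)\} \subset {\rm epi}\, f$ with $d_k \coloneqq {\rm d}((x^k,\xi^k), {\rm epi}\, f_\infty) > 0$ and $R_\beta(x^k, \xi^k) + \varpi \leq d_k/k$, where $R_\beta \geq 0$ on ${\rm epi}\, f$ by Fenchel-Young \eqref{young} applied with $y = \beta$ and using $\xi^k \geq f(x^k)$. If $\{(x^k, \xi^k)\}$ is bounded, extract a convergent subsequence with limit $(x^*, \xi^*) \in {\rm epi}\, f$. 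Passing to the limit yields $R_\beta(x^*, \xi^*) + \varpi = 0$, forcing $\varpi = 0$ and $R_\beta(x^*, \xi^*) = 0$. The latter, combined with $\xi^* \geq f(x^*)$, makes Fenchel-Young tight at $(x^*, \beta)$, so by \eqref{young-1} we have $x^* \in \partial f^*(\beta)$; but Assumption~\ref{assu-blanket} and Proposition~\ref{prop-duality} give that $f^*$ is essentially smooth, so $\partial f^*(\beta) = \emptyset$ for $\beta \in {\rm bdry}({\rm dom}\, f^*)$, a contradiction.

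If $\{(x^k, \xi^k)\}$ is unbounded, pass to a subsequence with $\|(x^k,\xi^k)\| \to \infty$ and normalized directions converging to a unit vector $(\hat d_x, \hat d_\xi) \in ({\rm epi}\, f)_\infty = {\rm epi}\, f_\infty$, so $\hat d_\xi \geq f_\infty(\hat d_x) \geq \hat d_x \beta$ using \eqref{infty-sigma} and $\beta \in {\rm dom}\, f^*$. Along the sequence, $R_\beta$ grows at asymptotic rate $\hat d_\xi - \hat d_x \beta \geq 0$. If this rate is strictly positive, then $R_\beta + \varpi$ grows linearly in $\|(x^k,\xi^k)\|$ while $d_k$ remains bounded (since ${\rm epi}\, f_\infty$ is a cone containing the entire limiting ray $\{t(\hat d_x,\hat d_\xi):t\geq 0\}$), contradicting $R_\beta + \varpi \leq d_k/k$. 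If the rate vanishes, i.e., $\hat d_\xi = \hat d_x\beta = f_\infty(\hat d_x)$, then a direct computation shows that $R_\beta$ is invariant under translation along $(\hat d_x,\hat d_\xi)$; one may then decompose $(x^k,\xi^k)$ into its recession component and a transversal residual and reduce to the previously handled bounded case.

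The main obstacle is the zero-growth-rate sub-case, where one must carefully analyze the residual after subtracting the recession component and verify that the ratio is still bounded below, leveraging that ${\rm epi}\, f_\infty - t(\hat d_x,\hat d_\xi) \supseteq {\rm epi}\, f_\infty$ so that distances to ${\rm epi}\, f_\infty$ from points perturbed along the recession ray are controlled by the residual alone. The essential technical principle throughout is that $R_\beta$ vanishes on ${\rm epi}\, f$ only at points where Fenchel-Young equality holds at $(x,\beta)$, which is forbidden by essential smoothness of $f^*$ at any $\beta \in {\rm bdry}({\rm dom}\, f^*)$.
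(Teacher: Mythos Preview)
Your first claim is handled correctly and matches the paper's argument.

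For the second claim, your approach is genuinely different from the paper's. The paper first reduces \eqref{prop-R-d-1} to its restriction to the graph $\{(x,f(x)):x\in{\rm dom}\,f\}$ and then does an explicit case analysis on the shape of ${\rm dom}\,f^*$ (using Lemma~\ref{lemma-dom-fstar}(ii)), computing $f_\infty$ and ${\rm d}((x,f(x)),{\rm epi}\,f_\infty)$ directly in each case. Your contradiction-plus-recession strategy is more conceptual, but two parts do not go through as written.

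\textbf{Positive-rate unbounded sub-case.} Your claim that $d_k$ remains bounded is false: take ${\rm epi}\,f_\infty=\{(0,s):s\geq 0\}$ and $(x^k,\xi^k)=(\sqrt{k},k)$; then the normalized direction tends to $(0,1)\in{\rm epi}\,f_\infty$ but $d_k=\sqrt{k}\to\infty$. What \emph{is} true is $d_k\leq \bigl\|(x^k,\xi^k)-\|(x^k,\xi^k)\|(\hat d_x,\hat d_\xi)\bigr\|=o(\|(x^k,\xi^k)\|)$, while $R_\beta(x^k,\xi^k)$ grows like $(\hat d_\xi-\hat d_x\beta)\|(x^k,\xi^k)\|$; that comparison already yields the contradiction. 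So this sub-case is salvageable with a one-line fix.

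\textbf{Zero-rate sub-case.} Here the gap is real. Your decomposition $(x^k,\xi^k)=t_k(\hat d_x,\hat d_\xi)+r_k$ and the translation-invariance of $R_\beta$ are fine, and your inclusion ${\rm epi}\,f_\infty-t(\hat d_x,\hat d_\xi)\supseteq{\rm epi}\,f_\infty$ does give ${\rm d}(r_k,{\rm epi}\,f_\infty)\geq d_k$, so the residual $r_k$ also violates the putative bound. But two things block the reduction to the bounded case: (i) there is no reason for $r_k$ to be bounded --- convergence of the normalized sequence only gives $\|r_k\|=o(\|(x^k,\xi^k)\|)$; and (ii) even if $r_k\to r^*$, the limit need not lie in ${\rm epi}\,f$ (you are subtracting, not adding, a recession direction of ${\rm epi}\,f$), so you cannot invoke $\xi^*\geq f(x^*)$ and hence cannot force Fenchel--Young equality at $(x^*,\beta)$. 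Without that, $R_\beta(r^*)=0$ by itself carries no contradiction. The paper sidesteps this entirely: in the cases where the zero-rate direction could occur (e.g.\ ${\rm dom}\,f^*=[-a,b]$), it shows directly that ${\rm epi}\,f+(0,\varpi)\subseteq{\rm epi}\,f_\infty$, so ${\rm d}((x,\xi),{\rm epi}\,f_\infty)\leq\varpi\leq R_\beta(x,\xi)+\varpi$ and $c=1$ works.
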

\begin{proof}
From the definition of $\varpi$ in \eqref{prop-def-varpi} and the fact that ${\rm bdry}({\rm dom}\, f^*)$ contains at most two points, we see that \eqref{prop-R-d-2-1} holds.

Next, observe that  for any  $(x,\xi)\in {\rm epi}\, f$,
\begin{align*}
{\rm d}((x,\xi), {\rm epi}\, f_\infty)\leq \|(x,\xi)-(x,f(x))\| + d((x,f(x)),{\rm epi}\, f_\infty) = \xi-f(x)+  d((x,f(x)),{\rm epi}\, f_\infty),
\end{align*}
and
\begin{align*}
R_\beta(x,\xi)= \xi-f(x)+f(x)+f^*(\beta)- x\beta= \xi-f(x)+R_\beta(x,f(x)).
\end{align*}
Thus, to establish \eqref{prop-R-d-1}, it suffices to show that there exists $c > 0$ such that
\begin{align}\label{prop-R-d-2}
R_\beta(x,f(x))+\varpi \geq c  {\rm d}((x, f(x)), {\rm epi}\, f_\infty)\quad \forall\, x\in{\rm dom}\,f.
\end{align}
To this end, we first note from Lemma \ref{lemma-dom-fstar}(ii) that ${\rm dom}\, f^*$ must take one of the following forms:
\[
[-a,\infty),\, (-\infty, b],\, [-a, b),\, (-a,b],\, [-a,b],
\]
for some $a,b \geq 0$ with $a>-b$. We prove \eqref{prop-R-d-2} for ${\rm dom}\, f^*$ as above separately as follows.

\begin{itemize}
\item \underline{${\rm dom}\, f^*=[-a,\infty)$}.
In this case, we see that ${\rm bdry}({\rm dom}\, f^*)\cap {\rm dom}\, f^*=\{-a\}$, i.e., $\beta = -a$.
Notice that $R_{-a}(x, f(x))>0$ for any $x$ (thanks to \eqref{young-2}),
$x\mapsto  R_{-a}(x, f(x))$ is lower semicontinuous (thanks to the lower semicontinuity of $f$)
and ${\rm d}((x, f(x)), {\rm epi}\, f_\infty)\leq |x|$ for any $x\in {\rm dom}\, f$ (thanks to facts that $\{(0,r): r\geq 0\}\subseteq  {\rm epi}\, f_\infty$ and that $f$ is nonnegative). In view of this,
it suffices to show that \eqref{prop-R-d-2} holds for all $x\in {\rm dom}\, f$ with sufficiently large $|x|$.
To this end, we compute
\begin{align}\label{prop-R-d-3}
&f_\infty(x)\overset{\rm (a)}{=}\sigma_{{\rm dom }\,f^*}(x)=\sup\{ x\cdot y: y\in [-a,\infty) \}=
\begin{cases}
|a x| & \mbox{if $x\leq 0$}, \\
\infty & \hbox{if $x>0$},
\end{cases}
\end{align}
where (a) follows from \eqref{infty-sigma}.
Then
  \[
 f(x)+\varpi-f_\infty(x)= f(x)+\varpi-(-ax)\geq f(x)+f^*(-a)-(-a) x> 0 \quad \forall\, x\leq 0,
  \]
  where the first equality follows from \eqref{prop-R-d-3},
   the first inequality holds because $\varpi\geq f^*(-a)$,
   and the last inequality follows from  \eqref{young-2} thanks to the fact that $-a\in{\rm bdry}({\rm dom}\, f^*)\cap {\rm dom}\, f^*$.
The above display implies that $(x,f(x)+\varpi)\in {\rm epi}\,f_\infty$ whenever $x\leq 0$ and $x\in{\rm dom}\, f$, which means that
  \begin{align*}
  R_{-a}(x,f(x)) >0 = {\rm d}((x,f(x)+\varpi), {\rm epi}\, f_\infty)\geq {\rm d}((x,f(x)), {\rm epi}\, f_\infty)-\varpi
  \end{align*}
  whenever $x\leq 0$ and $x\in{\rm dom}\, f$,
  where the last inequality follows from the triangle inequality.

Next, from \eqref{prop-R-d-3} and Lemma \ref{lemma-infty},  we know that $\infty=f_\infty(1)=\lim_{x\rightarrow \infty} f(x)/x=\lim_{x\rightarrow \infty}(f(x)+f^*(-a)-(-ax))/x$.
 Then, there exists positive constant $M$ such that $f(x)+f^*(-a)-(-ax))\ge x$ whenever $x\ge M$.
 Hence, for any $x\in {\rm dom}\,f$ with $x\ge M$, we have
 \begin{align*}
 R_{-a}(x,f(x))+\varpi
 \geq f(x)+f^*(-a)-(-a x) \geq  x= {\rm d}((x,f(x)), {\rm epi}\, f_\infty),
 \end{align*}
 where the first inequality holds because $\varpi\geq 0$ and the last equality follows from the observation that ${\rm d}((x,f(x)), {\rm epi}\, f_\infty)=x$ whenever $x>0$ and $x\in {\rm dom}\,f$, thanks to \eqref{prop-R-d-3}.

 Combining the last two displays, we have
\begin{align*}
 R_{-a}(x,f(x))+\varpi\geq {\rm d}((x,f(x)), {\rm epi}\, f_\infty) \quad \mbox{$\forall\, |x|\geq M$ and $x\in {\rm dom}\, f$.}
 \end{align*}
Therefore, we deduce that \eqref{prop-R-d-2} is satisfied with $\beta=-a$.

\item \underline{${\rm dom}\, f^*=(-\infty,b]$}.  It follows from a similar argument as in the case for  ${\rm dom}\, f^*=[-a,\infty)$. We omit the details for brevity.

\item \underline{${\rm dom}\, f^*=[-a,b)$}.
In this case, we see that $\beta = -a$. Moreover, from \eqref{infty-sigma}, we see that ${\rm dom}\, f_\infty =\R$,
 which also implies that  ${\rm dom}\, f=\R$ thanks to \cite[Proposition 2.1.6]{Auslender-Alfred-Teboulle-2006}.\footnote{From \cite[Definition 2.5.1]{Auslender-Alfred-Teboulle-2006}, we have that ${\rm epi}\, f_\infty=({\rm epi}\, f)_\infty$, where $C_\infty$ denotes the asymptotic cone of the set $C$ (see \cite[Definition 2.1.2]{Auslender-Alfred-Teboulle-2006}).  From this fact and \cite[Proposition 2.1.6]{Auslender-Alfred-Teboulle-2006}, we deduce that ${\rm epi}\, f={\rm epi}\, f + ({\rm epi}\, f)_\infty={\rm epi}\, f + {\rm epi}\, f_\infty$, which implies that ${\rm dom}\, f=\R$ since ${\rm dom}\, f_\infty=\R$.}
 Similar to the discussion in ${\rm dom}\, f^*=[-a,\infty)$, it suffices to show that  \eqref{prop-R-d-2} holds with $\beta=-a$ for all sufficiently large $|x|$.

Direct computation shows that
\begin{align}\label{prop-R-d-5}
&f_\infty(x)\overset{\rm (a)}{=}\sigma_{{\rm dom }\,f^*}(x)=\sup\{ x\cdot y: y\in [-a,b) \}=
\begin{cases}
|a x| & \mbox{if $x\leq 0$}, \\
bx & \hbox{if $x>0$},
\end{cases}
\end{align}
and
\begin{align}
&{\rm d}((x,f(x)), {\rm epi}\, f_\infty)\leq
\begin{cases}
\max\{|a x|-f(x) , 0\}& \mbox{if $x\leq 0$}, \\
\max\{bx-f(x) , 0\} & \hbox{if $x>0$},
\end{cases}\label{prop-R-d-6}
\end{align}
where (a) follows from \eqref{infty-sigma}.
 This leads to
 \begin{align*}
 f(x)+\varpi- f_\infty(x)=f(x)+\varpi- (-ax)\geq f(x)+f^*(-a)-(-ax)>0 \quad \forall\, x\leq 0,
 \end{align*}
 where the first equality follows from \eqref{prop-R-d-5},
   the first inequality holds because $\varpi\geq f^*(-a)$,
   and the last inequality follows from  \eqref{young-2} since $-a\in{\rm bdry}({\rm dom}\, f^*)\cap {\rm dom}\, f^*$.
The last display shows that ${\rm d}((x,f(x)+\varpi), {\rm epi}\, f_\infty)=0$ whenever $x\leq 0$ (recall that ${\rm dom}\,f=\R$). Then, we have
\[
R_{-a}(x,f(x))>0= {\rm d}((x,f(x)+\varpi), {\rm epi}\, f_\infty)\geq  {\rm d}((x,f(x)), {\rm epi}\, f_\infty)-\varpi \quad \forall\, x\leq 0,
\]
  where the last inequality follows from the triangle inequality.

Now, we consider $x\geq 0$. Since $b=f_\infty(1)=\lim_{x\rightarrow \infty} f(x)/x$ thanks to \eqref{prop-R-d-5} and Lemma \ref{lemma-infty}, there exist $\epsilon \in (0,(a+b)/2)$ (note that $a+b>0$) and $M>0$ such that
\begin{align}\label{prop-R-d-7}
f(x)\geq (b-\epsilon) x \quad   \forall\, x\geq M.
\end{align}
From \eqref{prop-R-d-7} and the fact that $b+a>2\epsilon$, by further increasing $M$ if necessary, we see that
\begin{align}\label{prop-R-d-8}
(b+a-2\epsilon)x+f^*(-a) \geq 0 \ \ {\rm and}\ \ \epsilon x \geq \max\{bx-f(x) , 0\} \quad \mbox{whenever $x\geq M$.}
\end{align}
Then, it follows that
\begin{align*}
R_{-a}(x,f(x))+\varpi &\overset{\rm (a)}{\geq}
f(x)+f^*(-a)-(-ax) \overset{\rm (b)}{\geq} (b+a-\epsilon)x+f^*(-a) \\
 &\overset{\rm (c)}{\geq} \epsilon x \overset{\rm (d)}{\geq}{\rm d}((x,f(x)), {\rm epi}\, f_\infty) \quad \forall\, x\geq M,
\end{align*}
where (a) follows from the fact that $\varpi\geq 0$ and the definition of $R_{-a}$ in  \eqref{prop-R-d-0},
(b) follows from \eqref{prop-R-d-7},
(c) follows from first inequality in \eqref{prop-R-d-8},
and (d) follows from \eqref{prop-R-d-6} the second inequality in \eqref{prop-R-d-8}.

Therefore, we deduce that \eqref{prop-R-d-2}  is satisfied with $\beta=-a$.

\item \underline{${\rm dom}\, f^*=(-a,b]$}. For this case, \eqref{prop-R-d-2} follows from a similar argument
as in ${\rm dom}\, f^*=[-a,b)$. We omit the details for brevity.

\item \underline{${\rm dom}\, f^*=[-a,b]$}.
In this case, the $\beta$ under consideration can be $-a$ or $b$.
In view of \eqref{infty-sigma}, the boundedness of ${\rm dom}\, f^*$ implies that ${\rm dom}\, f_\infty={\rm dom}\, f=\R$.
In addition,  observe that for any $x\in \R$, we have
\begin{align*}
\infty &\overset{\rm (a)}{>}f(x)+\varpi-f_\infty(x)=\inf \{f(x)+\varpi- yx: y\in [-a,b]\}\notag\\
&  =   \inf\{f(x)+\varpi- y x: y\in \{-a,b\}\}    \overset{\rm (b)}{\geq}     \inf\{f(x)+f^*(y)- y x: y\in \{-a,b\}\}  \geq 0,
\end{align*}
where (a) follows from the fact that ${\rm dom}\, f_\infty={\rm dom}\, f=\R$, (b) follows from the facts that $\varpi\geq f^*(y)$ for any $y\in {\rm bdry}({\rm dom}\, f^*)$ thanks to \eqref{prop-R-d-2-1}, and the last inequality follows from \eqref{young}.
The last display together with the fact that ${\rm dom}\, f_\infty={\rm dom}\, f=\R$ implies that ${\rm epi}\, f+(0,\varpi) \subseteq {\rm epi}\, f_\infty$ and hence we see that
\[
0={\rm d}((x,f(x)+\varpi), {\rm epi}\, f_\infty)\geq  {\rm d}((x,f(x)), {\rm epi}\, f_\infty)-\varpi \quad \forall\, x\in \R,
\]
where the last inequality follows from the triangle inequality.

On the other hand,   it always holds that $R_\beta(x,f(x))>0$ for any $x$ thanks to the fact that $\beta\in {\rm bdry}({\rm dom}\, f^*)$ and \eqref{young-2}.
Therefore, we see that \eqref{prop-R-d-2} holds with $c = 1$.
\end{itemize}
\end{proof}

\begin{lemma}\label{lemma-k0}
Let $\mathcal{F}^e_{\rm bd}={\rm epi}\, f^\pi \cap \{{\bm z}^*\}^\bot$ be a proper exposed  face defined in \eqref{face-2d}, where ${\bm z}^*=t^*(-\beta, r^*/t^*, 1)$ with
 $\beta\in {\rm bdry}({\rm dom}\, f^*)\cap {\rm dom}\, f^*$, $r^*>t^*f^*(\beta)$ and $t^*>0$.
 Let $\hat{{\bm z}}\coloneqq t^*(-\beta,1)$.
  Then the following statements hold.
 \begin{enumerate}[\rm (i)]
 \item $\hat{{\bm z}}\in ({\rm epi}\, f_\infty)^*$.
\item  Let $\mathcal{K}^\theta$ be given in \eqref{per-cone} and $\mathcal{S}_{\hat{{\bm z}}}\coloneqq \{(x,0,r) : \langle(x,r), \hat{{\bm z}}\rangle=0\}$. Then we have for any ${\bm v}=(v_x,0,v_r)\in \mathcal{K}^\theta$ that
 \begin{equation}\label{lemma-k0-1}
\begin{split}
&{\rm d}({\bm v},\mathcal{F}^e_{\rm bd})={\rm d}((v_x,v_r),{\rm epi}\, f_\infty \cap \{\hat{{\bm z}}\}^\bot), \\
&{\rm d}({\bm v},\mathcal{S}_{\hat{{\bm z}}})={\rm d}((v_x,v_r),\{\hat{{\bm z}}\}^\bot),\,\,\,\,\,\,\,\,\,\,\,
{\rm d}({\bm v},\mathcal{K}^\theta)={\rm d}((v_x,v_r),{\rm epi}\, f_\infty )=0.
\end{split}
\end{equation}
\end{enumerate}
\end{lemma}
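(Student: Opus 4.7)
The plan is to handle parts (i) and (ii) essentially by direct computation, using the identification $f_\infty = \sigma_{{\rm dom}\,f^*}$ from \eqref{infty-sigma}.

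For part (i), I would show that $\langle (x,s), \hat{\bm z}\rangle \ge 0$ for every $(x,s)\in {\rm epi}\,f_\infty$. Writing out the inner product gives $t^*(s-\beta x)$, so it suffices to show $s\ge \beta x$. Since $s\ge f_\infty(x) = \sigma_{{\rm dom}\,f^*}(x) \ge \beta x$ (the last inequality because $\beta\in {\rm dom}\,f^*$ by hypothesis), this is immediate.

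For part (ii), the key observation is that $\mathcal{F}^e_{\rm bd}$, $\mathcal{S}_{\hat{\bm z}}$ and $\mathcal{K}^\theta$ are all contained in the slice $\{(x,t,r) : t=0\}\subset\R^3$, and so is ${\bm v}$. Hence in each case, projecting ${\bm v}=(v_x,0,v_r)$ onto one of these sets reduces to a two-dimensional projection of $(v_x,v_r)$ onto the corresponding slice. Concretely:
\begin{itemize}
\item $\mathcal{K}^\theta\cap\{t=0\}$ corresponds to ${\rm epi}\,f_\infty$, and since ${\bm v}\in\mathcal{K}^\theta$ we get distance $0$.
\item $\mathcal{S}_{\hat{\bm z}}\cap\{t=0\}$ corresponds to $\{\hat{\bm z}\}^\bot\subset\R^2$ by the definition of $\mathcal{S}_{\hat{\bm z}}$.
\item For $\mathcal{F}^e_{\rm bd}$, I need to identify its slice with ${\rm epi}\,f_\infty\cap\{\hat{\bm z}\}^\bot$. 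Using the description $\mathcal{F}^e_{\rm bd}=\{(x,0,f_\infty(x)) : f_\infty(x)=\beta x\}$ from \eqref{face-2d}, this is exactly $\{(x,0,s) : s=f_\infty(x)=\beta x\}$; meanwhile $(x,s)\in{\rm epi}\,f_\infty\cap\{\hat{\bm z}\}^\bot$ means $s\ge f_\infty(x)$ together with $s=\beta x$. Combined with the inequality $f_\infty(x)\ge\beta x$ established in part (i), the equality $s=\beta x$ forces $f_\infty(x)=\beta x=s$, so the two sets agree up to the fixed middle coordinate $0$.
\end{itemize}

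Once each set is identified with its slice, the equality of the distances is just the observation that for any set $C\subset\R^2$ and its lift $\tilde C = \{(x,0,s) : (x,s)\in C\}\subset\R^3$, we have $\|(v_x,0,v_r)-(x,0,s)\|=\|(v_x,v_r)-(x,s)\|$, so the two infima coincide. I do not expect any serious obstacle here; the main point demanding care is the first equality of \eqref{lemma-k0-1}, which relies crucially on part (i) to collapse the epigraphical constraint $s\ge f_\infty(x)$ onto the exposed set $\{f_\infty(x)=\beta x\}$. The other two equalities are essentially definitional.
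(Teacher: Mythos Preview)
Your proposal is correct and follows essentially the same approach as the paper. For (i), the paper instead observes that $\langle \hat{\bm z},(x,s)\rangle = \langle {\bm z}^*,(x,0,s)\rangle \ge 0$ because ${\bm z}^*\in({\rm epi}\,f^\pi)^*$ and $(x,0,s)\in\mathcal{K}^\theta\subset{\rm epi}\,f^\pi$; your route via $f_\infty=\sigma_{{\rm dom}\,f^*}$ is an equally short alternative. For (ii), the paper writes $\mathcal{F}^e_{\rm bd}=\mathcal{K}^\theta\cap\{{\bm z}^*\}^\bot=\mathcal{K}^\theta\cap\mathcal{S}_{\hat{\bm z}}$ directly from Proposition~\ref{prop-e-face}(ii) and the vanishing middle coordinate, whereas you verify the same identification by unwinding the explicit description of $\mathcal{F}^e_{\rm bd}$ and using the inequality from (i); both arguments yield the same 2D reduction.
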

\begin{proof}
According to \eqref{per-cone} and \eqref{dual-cone}, we see that
$\langle (-t^*\beta,t^*), (x,s)\rangle =\langle (-t^*\beta,r^*,t^*), (x,0,s)\rangle \geq 0$  for any $(x,s)\in {\rm epi}\, f_\infty$, which establishes item (i).
Next, from Proposition  \ref{prop-e-face}(ii), we have
\begin{align*}
\mathcal{F}^e_{\rm bd}&={\rm epi}\, f^\pi \cap \{{\bm z}^*\}^\bot=\mathcal{K}^\theta \cap \{{\bm z}^*\}^\bot=\mathcal{K}^\theta \cap \mathcal{S}_{\hat{{\bm z}}},
\end{align*}
where the last equality follows from the fact that the second component of elements in $\mathcal{K}^\theta$ are zero. From the above display, the definition of $\mathcal{S}_{\hat{{\bm z}}}$ and the fact that ${\bm v}\in \mathcal{K}^\theta$, we obtain the equalities in \eqref{lemma-k0-1}.
\end{proof}

Now, we are ready to prove the following Lipschitzian error bound.
\begin{theorem}\label{thm-frf-2d-f}
Let $\mathcal{F}^e_{\rm bd}={\rm epi}\, f^\pi \cap \{{\bm z}^*\}^\bot$ be a proper exposed  face defined in \eqref{face-2d}, where ${\bm z}^*=t^*(-\beta, r^*/t^*, 1)$ with
 $\beta\in {\rm bdry}({\rm dom}\, f^*)\cap {\rm dom}\, f^*$, $r^*>t^*f^*(\beta)$ and $t^*>0$.
  Let $\eta>0$ and $\gamma_{{\bm z}^*, \eta}$ be defined in \eqref{thm-frf-1} with $\mathfrak{g}=|\cdot|$ and $({\rm epi}\, f^\pi,\mathcal{F}^e_{\rm bd},{\bm z}^*)$ in place of $(\mathcal{K},\mathcal{F},{\bm z})$. Then we have $\gamma_{{\bm z}^*, \eta}\in(0,\infty]$ and
\[
{\rm d}({\bm q}, \mathcal{F}^e_{\rm bd})\leq \kappa_{{\bm z}^*,\eta} {\rm d}({\bm q}, {\rm epi}\, f^\pi) \quad \forall\, {\bm q}\in\{{\bm z}^*\}^\bot \cap B(\eta),
\]
where $\kappa_{{\bm z}^*,\eta} \coloneqq \max\{2, 2\gamma_{{\bm z}^*,\eta}^{-1}\}$.
\end{theorem}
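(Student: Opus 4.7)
The plan is to follow the strategy of Theorem~\ref{thm-frf-1d-f}. Invoking Lemma~\ref{lemma-gamma} with $\mathfrak{g}(t) = t$, it suffices to show that for every sequence $\{{\bm v}^k\} \subset {\rm bdry}({\rm epi}\, f^\pi) \cap B(\eta) \setminus \mathcal{F}^e_{\rm bd}$ converging to some $\bar{\bm v} \in \mathcal{F}^e_{\rm bd}$ (with ${\bm w}^k = {\rm P}_{\{{\bm z}^*\}^\bot}{\bm v}^k$ and ${\bm u}^k = {\rm P}_{\mathcal{F}^e_{\rm bd}}{\bm w}^k$), one has $\liminf_k \|{\bm w}^k - {\bm v}^k\| / \|{\bm w}^k - {\bm u}^k\| > 0$; Theorem~\ref{thm-frf} then yields the claimed inequality. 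As in the proof of Theorem~\ref{thm-frf-1d-f}, after passing to a subsequence I would split into two cases: (a) ${\bm v}^k \in \mathcal{K}^\theta$ for all $k$, and (b) ${\bm v}^k \in \mathcal{K}^t$ for all $k$. Throughout, Lemma~\ref{lemma-bound-wu} reduces the task to producing an upper bound of the form ${\rm d}({\bm v}^k, \mathcal{F}^e_{\rm bd}) \leq C \|{\bm w}^k - {\bm v}^k\|$.

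In Case (a), Lemma~\ref{lemma-k0} collapses the ratio to a two-dimensional problem involving ${\rm epi}\, f_\infty$ and $\{\hat{{\bm z}}\}^\bot$ with $\hat{{\bm z}} = t^*(-\beta, 1) \in ({\rm epi}\, f_\infty)^*$. Since $f_\infty = \sigma_{{\rm dom}\, f^*}$ is piecewise linear on the interval ${\rm dom}\, f^*$, the cone ${\rm epi}\, f_\infty \subset \R^2$ is polyhedral, and Hoffman's lemma applied in $\R^2$ delivers the Lipschitzian bound directly: because $(v^k_x,v^k_r) \in {\rm epi}\, f_\infty$, the only relevant distance is to $\{\hat{{\bm z}}\}^\bot$, and that quantity equals a constant multiple of $\|{\bm w}^k-{\bm v}^k\|$ once one compares $\|\hat{{\bm z}}\|$ with $\|{\bm z}^*\|$.

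Case (b) is where the strict inequality $r^* > t^* f^*(\beta)$ is decisive. Writing ${\bm v}^k = v_t^k(\zeta^k, 1, f(\zeta^k) + \varsigma^k)$ with $\zeta^k = v_x^k/v_t^k$, $v_t^k > 0$, and $\varsigma^k \geq 0$, and using $h_1(\zeta) = f(\zeta) + f^*(\beta) - \beta\zeta$ from \eqref{thm-frf-h1}, a direct computation gives
\[
\|{\bm w}^k - {\bm v}^k\| = \frac{v_t^k \bigl[t^* h_1(\zeta^k) + t^* \varsigma^k + (r^* - t^* f^*(\beta))\bigr]}{\|{\bm z}^*\|}.
\]
Each of the three nonnegative bracketed terms yields a separate lower bound on $\|{\bm w}^k - {\bm v}^k\|/v_t^k$, and the strict positivity of $r^* - t^* f^*(\beta)$ forces $v_t^k$, $v_t^k h_1(\zeta^k)$ and $v_t^k \varsigma^k$ to all be linearly controlled by $\|{\bm w}^k - {\bm v}^k\|$. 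This linear (not square-root) control of $v_t^k$ is precisely what produces a Lipschitzian rather than H\"olderian bound.

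To estimate ${\rm d}({\bm v}^k, \mathcal{F}^e_{\rm bd})$ from above, I would split it as $v_t^k + {\rm d}((v_x^k, v_r^k), {\rm epi}\, f_\infty \cap \{\hat{{\bm z}}\}^\bot)$. Proposition~\ref{prop-R-d} applied at $(\zeta^k, f(\zeta^k) + \varsigma^k) \in {\rm epi}\, f$ and rescaled by $v_t^k$ (using positive homogeneity of the cone ${\rm epi}\, f_\infty$) gives ${\rm d}((v_x^k, v_r^k), {\rm epi}\, f_\infty) \leq c^{-1}(v_t^k h_1(\zeta^k) + v_t^k \varsigma^k + \varpi v_t^k)$, while the identity $v_r^k - \beta v_x^k = v_t^k(h_1(\zeta^k) + \varsigma^k - f^*(\beta))$ controls ${\rm d}((v_x^k, v_r^k), \{\hat{{\bm z}}\}^\bot)$; two-dimensional Hoffman applied to the polyhedral pair $\{{\rm epi}\, f_\infty, \{\hat{{\bm z}}\}^\bot\}$ then combines these into ${\rm d}((v_x^k, v_r^k), {\rm epi}\, f_\infty \cap \{\hat{{\bm z}}\}^\bot) \leq C \|{\bm w}^k - {\bm v}^k\|$. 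The main obstacle I anticipate is the bookkeeping that pairs the constant-prefactor term $\varpi v_t^k$ from Proposition~\ref{prop-R-d} against the strict gap $r^* - t^* f^*(\beta) > 0$; this gap is the very mechanism by which the theorem rules out the degenerate example preceding its statement, so one must take care that no estimate silently assumes $r^* = t^* f^*(\beta)$.
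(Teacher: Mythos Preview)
Your proposal is correct and follows essentially the same approach as the paper: both arguments invoke Lemma~\ref{lemma-gamma}, split into the $\mathcal{K}^\theta$/$\mathcal{K}^t$ cases, reduce Case~(a) to the two-dimensional polyhedral pair $\{{\rm epi}\,f_\infty,\{\hat{\bm z}\}^\bot\}$ via Lemma~\ref{lemma-k0}, and in Case~(b) use Proposition~\ref{prop-R-d} together with the strict gap $r^*-t^*f^*(\beta)>0$ to linearly control $v_t^k$, $v_t^k h_1(\zeta^k)$ and $v_t^k\varsigma^k$ by $\|{\bm w}^k-{\bm v}^k\|$. The only cosmetic difference is that in Case~(b) the paper first projects ${\bm v}^k$ onto $\mathcal{K}^\theta$ (introducing $\bar{\bm v}^k$ and the constant $\bar\varpi$) before applying Lemma~\ref{lemma-1frf}, whereas you strip off the $v_t^k$ component and work directly with $(v_x^k,v_r^k)$ in $\R^2$; your route is slightly more economical but the substance is identical.
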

\begin{proof}
Letting $x^* \coloneqq t^*\beta$ and $\hat{{\bm z}}\coloneqq (-x^*,t^*)$, then we see that $\hat{{\bm z}}\in ({\rm epi}\, f_\infty)^*$ thanks to Lemma \ref{lemma-k0}(i).
Applying Lemma \ref{lemma-1frf}, we see that  ${\rm epi}\, f_\infty$ and $\hat{\bm z}$ admit a $\mathds{1}$-FRF $\psi_{{\rm epi}\, f_\infty, \hat{{\bm z}}}:\R_+\times \R_+\rightarrow \R_+$ given by
\begin{align}\label{thm-frf-2d-f-3-1}
\psi_{{\rm epi}\, f_\infty, \hat{{\bm z}}}(\epsilon,t)=\tau \epsilon,
\end{align}
for some $\tau>0$.

According to Lemma \ref{lemma-gamma}, we need to show that
\begin{align}\label{thm-frf-2d-f-1}
\liminf_{k\rightarrow\infty} \frac{\|{\bm w}^k-{\bm v}^k\|}{\|{\bm w}^k-{\bm u}^k\|}>0
\end{align}
 for any $\bar{{\bm v}}\in \mathcal{F}^e_{\rm bd}$ and   sequence $\{{\bm v}^k\}\subset{\rm bdry}({\rm epi}\, f^\pi)\cap B(\eta)\backslash\mathcal{F}^e_{\rm bd}$ such that
\begin{align}\label{thm-frf-2d-f-2}
\lim_{k\rightarrow \infty}{\bm v}^k=\lim_{k\rightarrow \infty}{\bm w}^k=\bar{{\bm v}} \quad \mbox{with ${\bm w}^k={\rm P}_{\{{\bm z}^*\}^\bot}{\bm v}^k$, ${\bm u}^k={\rm P}_{{\mathcal{F}^e_{\rm bd}}}{\bm w}^k$ and ${\bm w}^k\neq {\bm u}^k$}.
\end{align}
By passing to subsequences if necessary, we will consider the following two  cases.
\begin{enumerate}[\rm (a)]
\item ${\bm v}^k\in \mathcal{K}^\theta$ for all $k$;
\item ${\bm v}^k\in \mathcal{K}^t$ for all $k$.
\end{enumerate}

{\bf Case (a):} We first assume ${\bm v}^k\in \mathcal{K}^\theta$ for all $k$, i.e.
\begin{align*}
{\bm v}^k=(v^k_x,0,v^k_r), \quad \mbox{where $(v^k_x, v^k_r)\in {\rm epi}\, f_\infty$}.
\end{align*}
Since ${\bm w}^k={\rm P}_{\{{\bm z}^*\}^\bot}{\bm v}^k$, we have (recall that $x^* \coloneqq t^*\beta$)
\begin{align}
\|{\bm w}^k-{\bm v}^k\|=\frac{|\langle {\bm z}^*, {\bm v}^k\rangle|}{\|{\bm z}^*\|}=\frac{|\langle (-x^*,r^*,t^*), (v^k_x,0,v^k_r)\rangle|}{\|{\bm z}^*\|}=\frac{| -x^* v^k_x+t^* v^k_r|}{\|{\bm z}^*\|}. \label{thm-frf-2d-f-3}
\end{align}
Let $\mathcal{S}_{\hat{{\bm z}}}\coloneqq {\{(x,0,r) : \langle(x,r), \hat{{\bm z}}\rangle=0\}}$. Then, for all $k$
\begin{align}
\|{\bm w}^k-{\bm u}^k\|\overset{\rm (a)}{\leq} &\|{\bm v}^k-{\rm P}_{\mathcal{F}^e_{\rm bd}}{\bm v}^k\|
\overset{\rm (b)}{=}{\rm d}((v^k_x,v^k_r),{\rm epi}\, f_\infty \cap \{\hat{{\bm z}}\}^\bot)  \notag \\
\overset{\rm (c)}{\leq}& \psi_{{\rm epi}\, f_\infty, \hat{{\bm z}}} ({\rm d}({\bm v}^k,\mathcal{K}^\theta)+ \|\hat{\bm z}\|{\rm d}({\bm v}^k,\mathcal{S}_{\hat{{\bm z}}}), \|{\bm v}^k\| )
\overset{\rm (d)}{=} \psi_{{\rm epi}\, f_\infty, \hat{{\bm z}}} ( \|\hat{\bm z}\|{\rm d}({\bm v}^k,\mathcal{S}_{\hat{{\bm z}}}), \|{\bm v}^k\| ) \notag \\
\overset{\rm (e)}{=}& \tau \|\hat{\bm z}\| {\rm d}({\bm v}^k,\mathcal{S}_{\hat{{\bm z}}})
 {=}
 \tau   \|\hat{\bm z}\| \frac{| -x^* v^k_x+t^*v^k_r|}{\|(-x^*,t^*)\|}
 \overset{\rm (f)}{=}  c_1 \|{\bm w}^k-{\bm v}^k\|,    \label{thm-frf-2d-f-5}
\end{align}
where (a) follows from Lemma \ref{lemma-bound-wu},
(b) follows from \eqref{lemma-k0-1} in Lemma \ref{lemma-k0}(ii) since ${\bm v}^k \in \mathcal{K}^\theta$,
(c) follows from \eqref{lemma-k0-1} and the fact that $\psi_{{\rm epi}\, f_\infty, \hat{{\bm z}}}$ is a $\mathds{1}$-FRF for ${\rm epi}\, f_\infty$ and  $\hat{{\bm z}}$ (see Definition \ref{def-1frf}),
(d) follows from the fact that ${\bm v}^k\in \mathcal{K}^\theta$,
(e) follows from \eqref{thm-frf-2d-f-3-1}, and
(f) follows from \eqref{thm-frf-2d-f-3} upon letting $c_1\coloneqq \tau \|{\bm z}^*\|$ (recall that $\hat{{\bm z}} \coloneqq  (-x^*,t^*)$).
The above display shows that \eqref{thm-frf-2d-f-1} holds.

{\bf Case (b):}  We next assume ${\bm v}^k\in \mathcal{K}^t$ for all $k$; that is,
\begin{align*}
0\neq {\bm v}^k=(v^k_x,v^k_t,v^k_t (f(v^k_x/v^k_t)+\varsigma^k)), \quad v^k_t>0 \,\,\,\,\,\, \forall \, k,
\end{align*}
where $\varsigma^k\geq 0$ if $v^k_x/v^k_t\in {\rm bdry}({\rm dom}\, f)\cap {\rm dom}\, f$  with  $v^k_t\varsigma^k\leq \eta$ since ${\bm v}^k\in B(\eta)$, and $\varsigma^k= 0$ otherwise.

Recall from \eqref{prop-R-d-2-1} that $\varpi=\max\{\max\{f^*(y): y\in  {\rm bdry}({\rm dom}\, f^*)\cap{\rm dom}\, f^*\},0\}<\infty$ and define
\begin{align}\label{thm-def-varpi}
\bar\varpi=\frac{r^*/t^*-f^*(\beta)}{\varpi+r^*/t^*-f^*(\beta)}.
\end{align}
Then $\bar{\varpi}>0$ thanks to $\varpi\geq 0$ and $r^*/t^*-f^*(\beta)>0$.
We can now compute that
\begin{align}
\|{\bm w}^k-{\bm v}^k\|&=\frac{|\langle {\bm v}^k,{\bm z}^*\rangle|}{\|{\bm z}^*\|}=\frac{\langle {\bm v}^k,{\bm z}^*\rangle}{\|{\bm z}^*\|}=\frac{\langle (v^k_x,v^k_t,v^k_t(f(v^k_x/v^k_t)+\varsigma^k)), (-x^*,r^*,t^*)\rangle}{\|{\bm z}^*\|} \notag \\
&=\frac{- v^k_x x^*+v^k_t r^*+ t^* v^k_t (f(v^k_x/v^k_t)+\varsigma^k)}{\|{\bm z}^*\|} \notag\\
&=v^k_t t^* \frac{- (v^k_x/v^k_t) \beta + f^*(\beta)+  f(v^k_x/v^k_t)+\varsigma^k+(r^*/t^*-f^*(\beta))}{\|{\bm z}^*\|}\notag \\
&\overset{\rm (a)}{=}v^k_t \frac{t^*}{\|{\bm z}^*\|}(R_\beta(v^k_x/v^k_t,f(v^k_x/v^k_t)+\varsigma^k)+r^*/t^*-f^*(\beta)) \notag \\
 & = v^k_t \frac{t^*\bar{\varpi}}{\|{\bm z}^*\|}(\bar{\varpi}^{-1}R_\beta(v^k_x/v^k_t,f(v^k_x/v^k_t)+\varsigma^k)+\bar{\varpi}^{-1}(r^*/t^*-f^*(\beta))) \notag \\
 &\overset{\rm (b)}{\geq}v^k_t \frac{t^*\bar{\varpi}}{\|{\bm z}^*\|}(R_\beta(v^k_x/v^k_t,f(v^k_x/v^k_t)+\varsigma^k)+\varpi+r^*/t^*-f^*(\beta)),\label{thm-frf-2d-f-6}
\end{align}
where (a) follows from the definition of $R_\beta$ in \eqref{prop-R-d-0},
 and (b) follows from \eqref{thm-def-varpi} since $R_\beta(x,\xi)\geq 0$  for any $(x,\xi)\in {\rm epi}\, f$ thanks to \eqref{young}.
In addition, notice that
\begin{align}
&{\rm d}({\bm v}^k,\mathcal{K}^\theta)&\notag\\
=&\inf_{(x,r)\in {\rm epi}\, f_\infty} (|v^k_x-x|^2+(v^k_t)^2+(v^k_t (f(v^k_x/v^k_t)+\varsigma^k)-r)^{2})^{\frac{1}{2}} \notag \\
\overset{\rm (a)}{=}&v^k_t\inf_{(x,r)\in {\rm epi}\, f_\infty} (|v^k_x/v^k_t-x|^2+1+(f(v^k_x/v^k_t)+\varsigma^k-r)^2)^{\frac{1}{2}}  \notag \\
\leq& v^k_t ({\rm d}((v^k_x/v^k_t,f(v^k_x/v^k_t)+\varsigma^k),{\rm epi}\, f_\infty)+1) \notag \\
 \overset{\rm (b)}{\leq}  & v^k_t c^{-1}((R_\beta(v^k_x/v^k_t,f(v^k_x/v^k_t)+\varsigma^k)+\varpi)+c) \notag \\
\overset{\rm (c)}{=}   & v^k_t c^{-1}\frac{\|{\bm z}^*\|}{t^* \bar\varpi} \frac{t^*\bar\varpi}{\|{\bm z}^*\|}\left((R_\beta(v^k_x/v^k_t,f(v^k_x/v^k_t)+\varsigma^k)+\varpi)+c\frac{r^*/t^*-f^*(\beta)}{r^*/t^*-f^*(\beta)}\right) \notag \\
\leq &  v^k_t  \max\left\{c^{-1},\frac{1}{r^*/t^*-f^*(\beta)}\right\}\frac{\|{\bm z}^*\|}{t^* \bar\varpi} \frac{t^* \bar\varpi}{\|{\bm z}^*\|}(R_\beta(v^k_x/v^k_t,f(v^k_x/v^k_t)+\varsigma^k)+\varpi+r^*/t^*-f^*(\beta)) \notag \\
 \overset{\rm (d)}{\leq}& c_2 \|{\bm w}^k-{\bm v}^k\|, \label{thm-frf-2d-f-7}
\end{align}
where (a) follows from the fact that ${\rm epi}\, f_\infty$ is a cone and $v^k_t >0$,
(b) follows from  \eqref{prop-R-d-1},
(c) follows from the facts that $t^*>0$, $z^*\neq 0$, $r^*/t^*-f^*(\beta)>0$ and $\bar\varpi>0$,
and (d) follows from \eqref{thm-frf-2d-f-6} upon letting $c_2\coloneqq \max\left\{c^{-1},\frac{1}{r^*/t^*-f^*(\beta)}\right\}\frac{\|{\bm z}^*\|}{t^* \bar\varpi}$.
Recall that $\hat{{\bm z}}\coloneqq(-x^*,t^*)\in  ({\rm epi}\, f_\infty)^*$ and $\mathcal{S}_{\hat{{\bm z}}}\coloneqq {\{(x,0,r) : \langle(x,r), \hat{{\bm z}}\rangle=0\}}$.
Then
\begin{align}
&{\rm d}({\bm v}^k,\mathcal{S}_{\hat{{\bm z}}})\notag\\
=& {\rm d}((v^k_x, v^k_t, v^k_t (f(v^k_x/v^k_t)+\varsigma^k)), \{(x,0,r) : \langle(x,r), (-x^*,t^*)\rangle=0\}) \notag \\
=& \sqrt{{\rm d}((v^k_x,  v^k_t (f(v^k_x/v^k_t)+\varsigma^k)),\{ (-x^*,t^*)\}^\bot)^2+(v^k_t)^2} \notag\\
\leq& \frac{| -x^*v^k_x+ t^*(v^k_t(f(v^k_x/v^k_t)+\varsigma^k))|}{\|\hat{{\bm z}}\|} +v^k_t \notag \\
\overset{\rm (a)}{=}& v^k_t \frac{t^*}{\|\hat{{\bm z}}\|} \left( |-(v^k_x/v^k_t) \beta+ f(v^k_x/v^k_t)+\varsigma^k)| + \frac{\|\hat{{\bm z}}\|}{t^*} \right) \notag \\
\overset{\rm (b)}{=}& v^k_t \frac{t^*}{\|\hat{{\bm z}}\|} \left( |R_\beta(v^k_x/v^k_t,f(v^k_x/v^k_t)+\varsigma^k)-f^*(\beta)| + \frac{\|\hat{{\bm z}}\|}{t^*} \right)\notag \\
\overset{\rm (c)}{\leq}& v^k_t \frac{t^*}{\|\hat{{\bm z}}\|} \left( R_\beta(v^k_x/v^k_t,f(v^k_x/v^k_t)+\varsigma^k)+\varpi+ |f^*(\beta)| +\frac{\|\hat{{\bm z}}\|}{t^*}\right) \notag \\
\overset{\rm (d)}{=}& v^k_t \frac{\|{\bm z}^*\|}{\|\hat{{\bm z}}\| \bar\varpi}\frac{t^* \bar\varpi}{\|{\bm z}^*\|} \left( (R_\beta(v^k_x/v^k_t,f(v^k_x/v^k_t)+\varsigma^k)+\varpi)+ (|f^*(\beta)| +\|\hat{{\bm z}}\|/t^*)\frac{r^*/t^*-f^*(\beta)}{r^*/t^*-f^*(\beta)}\right) \notag \\
 \leq& \max\left\{\frac{|f^*(\beta)| + \|\hat{{\bm z}}\|/t^*}{r^*/t^*-f^*(\beta)}, 1\right\} \frac{\|{\bm z}^*\|}{\|\hat{{\bm z}}\|\bar\varpi}\frac{v^k_t t^* \bar\varpi}{\|{\bm z}^*\|} \left( R_\beta(v^k_x/v^k_t,f(v^k_x/v^k_t)+\varsigma^k)+\varpi+ r^*/t^*-f^*(\beta) \right)  \notag \\
 \overset{\rm (e)}{=}& c_3 \|{\bm w}^k-{\bm v}^k\|,   \label{thm-frf-2d-f-8}
\end{align}
where (a) follows from the fact that $\beta=x^*/t^*$ and $t^*>0$,
(b) follows from the definition of $R_\beta$ in \eqref{prop-R-d-0},
(c) follows from the triangle inequality and the facts that $R_\beta$ is nonnegative and $\varpi\geq 0$,
(d) follows from the facts that ${\bm z}^*\neq 0$, $r^*/t^*-f^*(\beta)>0$ and $\bar\varpi>0$,
and (e) follows from \eqref{thm-frf-2d-f-6} and upon letting $c_3\coloneqq \max\left\{\frac{|f^*(\beta)| + \|\hat{{\bm z}}\|/t^*}{r^*/t^*-f^*(\beta))}, 1\right\}\frac{\|{\bm z}^*\|}{\|\hat{{\bm z}}\| \bar\varpi}$.

Letting $\bar{{\bm v}}^k\coloneqq {\rm P}_{\mathcal{K}^\theta}{\bm v}^k$, we deduce that
\begin{align}
\|{\bm w}^k-{\bm u}^k\| & \leq \|{\bm v}^k-{\rm P}_{\mathcal{F}^e_{\rm bd}}{\bm v}^k\|= \|{\bm v}^k- \bar{{\bm v}}^k +\bar{{\bm v}}^k-{\rm P}_{\mathcal{F}^e_{\rm bd}}\bar{{\bm v}}^k+{\rm P}_{\mathcal{F}^e_{\rm bd}}\bar{{\bm v}}^k -{\rm P}_{\mathcal{F}^e_{\rm bd}}{\bm v}^k\| \notag\\
 & \leq 2\|{\bm v}^k-\bar{{\bm v}}^k\|+\|\bar{{\bm v}}^k-{\rm P}_{\mathcal{F}^e_{\rm bd}}\bar{{\bm v}}^k\|, \label{thm-frf-2d-f-9}
\end{align}
where the first inequality follows from  Lemma \ref{lemma-bound-wu},
and the last inequality follows from the triangle inequality and the nonexpansiveness of the projection operator.
It follows that for all $k$
\begin{align}
&\|{\bm w}^k-{\bm u}^k\|-2\|{\bm v}^k-\bar{{\bm v}}^k\| \overset{\rm (a)}{\leq} \|\bar{{\bm v}}^k - {\rm P}_{\mathcal{F}^e_{\rm bd}}\bar{{\bm v}}^k\|\overset{\rm (b)}{=}  {\rm d}((\bar{v}^k_x,\bar{v}^k_r),{\rm epi}\, f_\infty \cap \{\hat{{\bm z}}\}^\bot) \notag\\
\overset{\rm (c)}{\leq}& \psi_{{\rm epi}\, f_\infty, \hat{{\bm z}}} ({\rm d}(\bar{{\bm v}}^k,\mathcal{K}^\theta)+ \|\hat{\bm z}\| {\rm d}(\bar{{\bm v}}^k,\mathcal{S}_{\hat{{\bm z}}}), \|\bar{{\bm v}}^k\| )\overset{\rm (d)}{=}   \psi_{{\rm epi}\, f_\infty, \hat{{\bm z}}} (\|\hat{\bm z}\| {\rm d}(\bar{{\bm v}}^k,\mathcal{S}_{\hat{{\bm z}}}), \|\bar{{\bm v}}^k\| )  \notag \\
\overset{\rm (e)}{\leq}  & \psi_{{\rm epi}\, f_\infty, \hat{{\bm z}}} (\|\hat{\bm z}\| {\rm d}({\bm v}^k,\mathcal{K}^\theta)+ \|\hat{\bm z}\| {\rm d}({\bm v}^k,\mathcal{S}_{\hat{{\bm z}}}), \|\bar{{\bm v}}^k\| ) \overset{\rm (f)}{\leq}  \psi_{{\rm epi}\, f_\infty, \hat{{\bm z}}} ((c_2+c_3 )\|\hat{\bm z}\|\|{\bm w}^k-{\bm v}^k\|), \|\bar{{\bm v}}^k\| )  \notag \\
\overset{\rm (g)}{=} & \tau(c_2+c_3)\|\hat{\bm z}\|\|{\bm w}^k-{\bm v}^k\| \notag
\end{align}
where (a) follows from  \eqref{thm-frf-2d-f-9},
(b) follows from \eqref{lemma-k0-1} in Lemma \ref{lemma-k0} since $\bar{{\bm v}}^k\in \mathcal{K}^\theta$,
(c) follows from \eqref{lemma-k0-1} and the fact that $\psi_{{\rm epi}\, f_\infty, \hat{{\bm z}}}$ is a $\mathds{1}$-FRF for ${\rm epi}\, f_\infty$ and  $\hat{{\bm z}}$ (see Definition \ref{def-1frf}),
(d) holds because $\bar{{\bm v}}^k\in \mathcal{K}^\theta$,
(e) follows from the fact that $\psi_{{\rm epi}\, f_\infty, \hat{{\bm z}}}$ is nondecreasing,
(f) follows from \eqref{thm-frf-2d-f-7} and \eqref{thm-frf-2d-f-8},
and (g) follows from \eqref{thm-frf-2d-f-3-1}.
The last display together with \eqref{thm-frf-2d-f-7} shows that there exists a constant $c_4>0$ such that
\begin{align}\label{thm-frf-2d-f-11}
\|{\bm w}^k-{\bm u}^k\|\leq  c_4\|{\bm w}^k-{\bm v}^k\| \quad \forall\, k.
\end{align}
Therefore, we see that \eqref{thm-frf-2d-f-1} holds.
\end{proof}

In view of Theorem \ref{thm-frf-2d-f}, Lemma \ref{lemma-frf} and Theorem \ref{thm-frf}, we obtain a $\mathds{1}$-FRF concerning $\mathcal{F}^e_{\rm bd}$ as  follows.
\begin{corollary}[$\mathds{1}$-FRF concerning $\mathcal{F}^e_{\rm bd}$]\label{coro-1frf-2d-f}
Let $\mathcal{F}^e_{\rm bd}={\rm epi}\, f^\pi \cap \{{\bm z}^*\}^\bot$ be the exposed face \eqref{face-2d}, where ${\bm z}^*=t^*(-\beta, r^*/t^*, 1)$ with
 $\beta\in {\rm bdry}({\rm dom}\, f^*)\cap {\rm dom}\, f^*$, $r^*>t^*f^*(\beta)$ and $t^*>0$. Let $\kappa_{{\bm z}^*,  t} \coloneqq  \max\{2, 2\gamma_{{\bm z}^*,t}^{-1}\}$ be defined in Theorem \ref{thm-frf-2d-f}, which is a nondecreasing function with respect to $t$ by the definition.
Then, the $\mathds{1}$-FRF $\psi_{{\rm epi}\, f^\pi,{\bm z}^*}: \R_+\times \R_+\rightarrow \R_+$ for ${\rm epi}\, f^\pi $ and ${\bm z}^*$ is given by
\begin{align*}
\psi_{{\rm epi}\, f^\pi,{\bm z}^*}(\epsilon,t)\coloneqq\max\{\epsilon,\epsilon/\|{\bm z}^*\|\}+\kappa_{{\bm z}^*,t}(\epsilon+\max\{\epsilon,\epsilon/\|{\bm z}^*\|\}).
\end{align*}
In particular, in view of Definition \ref{def-holder-1frf}, the above $\mathds{1}$-FRF is H\"{o}lderian of exponent $1$ (and hence, dominated by $1/2$).
\end{corollary}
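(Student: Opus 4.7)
The plan is to obtain this corollary as an immediate consequence of Theorem~\ref{thm-frf-2d-f} together with the passage from a ``restricted'' error bound on $\{{\bm z}^*\}^\bot \cap B(\eta)$ to a genuine $\mathds{1}$-FRF via Lemma~\ref{lemma-frf} and Theorem~\ref{thm-frf}. The structure parallels exactly the proof of Corollary~\ref{coro-1frf-1d-f}, with the only essential change being that the Lipschitzian error bound of Theorem~\ref{thm-frf-2d-f} (exponent~$1$) replaces the H\"olderian one of Theorem~\ref{thm-frf-1d-f} (exponent~$1/2$).

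Concretely, I would proceed as follows. Fix ${\bm x}\in\spanVec({\rm epi}\,f^\pi)$ with ${\rm d}({\bm x},{\rm epi}\,f^\pi)\le\epsilon$, $\langle{\bm x},{\bm z}^*\rangle\le\epsilon$, and $\|{\bm x}\|\le t$. Let ${\bm q}\coloneqq{\rm P}_{\{{\bm z}^*\}^\bot}{\bm x}$. Since $\|{\bm x}-{\bm q}\|=|\langle{\bm x},{\bm z}^*\rangle|/\|{\bm z}^*\|\le \epsilon/\|{\bm z}^*\|$, the triangle inequality yields $\|{\bm q}\|\le t$ and
\[
{\rm d}({\bm q},{\rm epi}\,f^\pi)\le \|{\bm q}-{\bm x}\|+{\rm d}({\bm x},{\rm epi}\,f^\pi)\le \epsilon+\epsilon/\|{\bm z}^*\|.
\]
Hence ${\bm q}\in \{{\bm z}^*\}^\bot\cap B(t)$, and Theorem~\ref{thm-frf-2d-f} applies with $\eta=t$ to give
\[
{\rm d}({\bm q},\mathcal{F}^e_{\rm bd})\le \kappa_{{\bm z}^*,t}\,{\rm d}({\bm q},{\rm epi}\,f^\pi)\le \kappa_{{\bm z}^*,t}\bigl(\epsilon+\max\{\epsilon,\epsilon/\|{\bm z}^*\|\}\bigr).
\]
A final triangle inequality combining $\|{\bm x}-{\bm q}\|\le \max\{\epsilon,\epsilon/\|{\bm z}^*\|\}$ with the previous bound yields exactly the claimed $\psi_{{\rm epi}\,f^\pi,{\bm z}^*}(\epsilon,t)$.

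The monotonicity in $\epsilon$ and in $t$ (the latter using that $\kappa_{{\bm z}^*,t}$ is nondecreasing in $t$ by construction) together with $\psi_{{\rm epi}\,f^\pi,{\bm z}^*}(0,t)=0$ confirms that this function meets the axioms of Definition~\ref{def-1frf}. Comparing with Definition~\ref{def-holder-1frf}, the function is of the form $\rho_1(t)\epsilon+\rho_2(t)\epsilon^{\alpha}$ with $\alpha=1$, by setting $\rho_1(t)=\max\{1,1/\|{\bm z}^*\|\}+\kappa_{{\bm z}^*,t}(1+\max\{1,1/\|{\bm z}^*\|\})$ and $\rho_2\equiv 0$, so it is H\"olderian of exponent~$1$ (and thus also of any exponent in $(0,1]$, in particular $1/2$).

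There is essentially no obstacle here: the work has already been done in Theorem~\ref{thm-frf-2d-f}, which supplies the Lipschitzian bound on $\{{\bm z}^*\}^\bot \cap B(\eta)$; the corollary is a packaging step. The only mild care needed is to ensure consistent use of $t$ as both the radius in $B(t)$ and the second argument of $\psi_{{\rm epi}\,f^\pi,{\bm z}^*}$, and to verify the monotonicity of $t\mapsto \kappa_{{\bm z}^*,t}$, which is inherited from $t\mapsto \gamma_{{\bm z}^*,t}^{-1}$ via the definition $\kappa_{{\bm z}^*,t}=\max\{2,2\gamma_{{\bm z}^*,t}^{-1}\}$ and the obvious monotonicity of $\gamma_{{\bm z}^*,\cdot}$ as an infimum over a set that shrinks as $t$ grows.
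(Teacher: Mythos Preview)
Your approach is correct and matches the paper's: both derive the corollary directly from Theorem~\ref{thm-frf-2d-f} together with Lemma~\ref{lemma-frf} (and Theorem~\ref{thm-frf}), and you have simply unpacked the proof of Lemma~\ref{lemma-frf} inline with $\mathfrak{g}=|\cdot|$. One small slip: from the hypothesis $\langle{\bm x},{\bm z}^*\rangle\le\epsilon$ alone you cannot conclude $\|{\bm x}-{\bm q}\|=|\langle{\bm x},{\bm z}^*\rangle|/\|{\bm z}^*\|\le\epsilon/\|{\bm z}^*\|$, since $\langle{\bm x},{\bm z}^*\rangle$ could be very negative; you need to also use ${\rm d}({\bm x},{\rm epi}\,f^\pi)\le\epsilon$ and ${\bm z}^*\in({\rm epi}\,f^\pi)^*$ to get the lower bound $\langle{\bm x},{\bm z}^*\rangle\ge-\epsilon\|{\bm z}^*\|$, which yields the correct estimate $\|{\bm x}-{\bm q}\|\le\max\{\epsilon,\epsilon/\|{\bm z}^*\|\}$ that you in fact invoke two lines later.
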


\subsection{$\mathds{1}$-FRF concerning $\mathcal{F}^e_{\neq \theta}$}

In this section, we study the $\mathds{1}$-FRF concerning the face $\mathcal{F}^e_{\neq \theta}$.
Recall that we are under the blanket Assumption~\ref{assu-blanket}.
\begin{lemma}\label{lemma-k}
	Let $\mathcal{F}^e_{\neq \theta}={\rm epi}\, f^\pi \cap \{{\bm z}^*\}^\bot$ be a proper exposed  face defined in \eqref{prop-md-face-1_5}, where ${\bm z}^*=(-x^*, s^*, 0 )\in \mathcal{K}^{*\theta}$ with  $s^*> (f^*)_\infty(x^*)$ and $x^*\neq 0$.
	Let $\check{{\bm z}}\coloneqq (-x^*,0)$.
	Then the following statements hold.
	\begin{enumerate}[\rm (i)]
		\item $\check{{\bm z}}\in ({\rm epi}\, f_\infty)^*$.
		\item  Let $\mathcal{K}^\theta$ be given in \eqref{per-cone} and $\mathcal{S}_{\check{{\bm z}}}\coloneqq \{(x,0,r) : \langle(x,r), \check{{\bm z}}\rangle=0\}$. Then for any ${\bm v}=(v_x,0,v_r)\in \mathcal{K}^\theta$   we have that
		\begin{equation}\label{lemma-k-1}
			\begin{split}
				&{\rm d}({\bm v},\mathcal{F}^e_{\neq \theta})={\rm d}((v_x,v_r),{\rm epi}\, f_\infty \cap \{\check{{\bm z}}\}^\bot), \\
				&{\rm d}({\bm v},\mathcal{S}_{\check{{\bm z}}})={\rm d}((v_x,v_r),\{\check{{\bm z}}\}^\bot),\,\,\,\,\,\,\,\,\,\,\,
				{\rm d}({\bm v},\mathcal{K}^\theta)={\rm d}((v_x,v_r),{\rm epi}\, f_\infty )=0.
			\end{split}
		\end{equation}
	\end{enumerate}
\end{lemma}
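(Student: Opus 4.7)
The plan is to mirror the proof of Lemma~\ref{lemma-k0} almost verbatim, since the only structural difference is that here the third coordinate of ${\bm z}^*$ vanishes while the second coordinate $s^*$ plays no role against elements of $\mathcal{K}^\theta$ (which have second coordinate $0$). The two-dimensional projection $\check{\bm z}=(-x^*,0)$ therefore captures all the information of ${\bm z}^*$ that is relevant when testing against $\mathcal{K}^\theta$.

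For item (i), I would use that $\mathcal{K}^{*\theta}\subseteq ({\rm epi}\, f^\pi)^*$ by Proposition~\ref{prop-dual-cone}, so for every $(x,s)\in{\rm epi}\, f_\infty$ the point $(x,0,s)$ lies in $\mathcal{K}^\theta\subseteq{\rm epi}\, f^\pi$ and hence satisfies $\langle {\bm z}^*,(x,0,s)\rangle\ge 0$. A direct expansion gives $\langle(-x^*,s^*,0),(x,0,s)\rangle=-x^*x=\langle\check{\bm z},(x,s)\rangle$, so $\check{\bm z}\in({\rm epi}\, f_\infty)^*$.

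For item (ii), I would first invoke Proposition~\ref{prop-md-face}(ii), which gives $\mathcal{F}^e_{\neq\theta}={\rm epi}\, f^\pi\cap\{{\bm z}^*\}^\bot=\mathcal{K}^\theta\cap\{{\bm z}^*\}^\bot$. Next, for any $(x,0,r)\in\mathcal{K}^\theta$ the same computation as above gives $\langle {\bm z}^*,(x,0,r)\rangle=-x^*x=\langle\check{\bm z},(x,r)\rangle$, which shows $\mathcal{K}^\theta\cap\{{\bm z}^*\}^\bot=\mathcal{K}^\theta\cap\mathcal{S}_{\check{\bm z}}$. Once this identification is in place, every set appearing on the right-hand side of the distances in \eqref{lemma-k-1} is contained in $\{(x,0,r):x,r\in\R\}$, so the middle slot of ${\bm v}=(v_x,0,v_r)$ contributes nothing to $\|{\bm v}-\cdot\|$ and each three-dimensional distance collapses to the corresponding two-dimensional distance on the $(x,r)$-plane. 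The last two equalities follow immediately from the fact that ${\bm v}\in\mathcal{K}^\theta$ is equivalent to $(v_x,v_r)\in{\rm epi}\, f_\infty$, so in particular both distances on the right of the last line vanish.

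Because this is a straightforward transcription of Lemma~\ref{lemma-k0}, I do not anticipate any substantive obstacle; the only care needed is the bookkeeping in the pairing when $t^*=0$, which is handled by the single inner-product identity $\langle {\bm z}^*,(x,0,s)\rangle=\langle\check{\bm z},(x,s)\rangle$ noted above.
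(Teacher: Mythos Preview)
Your proposal is correct and follows exactly the approach the paper takes: the paper's proof simply states that it ``follows from a similar argument to the proof of Lemma~\ref{lemma-k0}'' and omits the details. Your key observation that $\langle {\bm z}^*,(x,0,s)\rangle=-x^*x=\langle\check{\bm z},(x,s)\rangle$ for points in the $(x,0,s)$-plane is precisely the adaptation needed, and the rest is the same dimension-reduction argument as in Lemma~\ref{lemma-k0}.
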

\begin{proof}
	It follows from a similar argument to the proof of Lemma~\ref{lemma-k0}. We omit the details for brevity.
	\end{proof}

\begin{theorem}\label{thm-frf-f0}
Consider the face $\mathcal{F}^e_{\neq \theta}={\rm epi}\, f^\pi \cap \{{\bm z}^*\}^\bot$ given in \eqref{prop-md-face-1_5}, where ${\bm z}^*=(-x^*,s^*,0)\in\mathcal{K}^{*\theta}$ with $s^*>(f^*)_\infty(x^*)$ and $x^*\neq 0$.
 Let $\eta>0$ and $\gamma_{{\bm z}^*, \eta}$ be defined in \eqref{thm-frf-1} with $\mathfrak{g}=|\cdot|$ and $({\rm epi}\, f^\pi,\mathcal{F}^e_{\neq \theta},{\bm z}^*)$ in place of $(\mathcal{K},\mathcal{F},{\bm z})$. Then we have that $\gamma_{{\bm z}^*, \eta}\in(0,\infty]$ and
\[
{\rm d}({\bm q}, \mathcal{F}^e_{\neq\theta})\leq \kappa_{{\bm z}^*, \eta} {\rm d}({\bm q}, {\rm epi}\, f^\pi) \quad \forall\, {\bm q}\in\{{\bm z}^*\}^\bot \cap B(\eta),
\]
where $\kappa_{{\bm z}^*, \eta} \coloneqq \max\{2, 2\gamma_{{\bm z}^*,\eta}^{-1}\}$.
\end{theorem}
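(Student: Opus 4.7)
The plan is to follow the skeleton of the proof of Theorem~\ref{thm-frf-2d-f}: by Lemma~\ref{lemma-gamma} it suffices to show
\[
\liminf_{k\to\infty}\frac{\|{\bm w}^k-{\bm v}^k\|}{\|{\bm w}^k-{\bm u}^k\|}>0
\]
for every $\bar{\bm v}\in\mathcal{F}^e_{\neq\theta}$ and every sequence $\{{\bm v}^k\}\subset{\rm bdry}({\rm epi}\,f^\pi)\cap B(\eta)\setminus\mathcal{F}^e_{\neq\theta}$ with $\lim_k{\bm v}^k=\lim_k{\bm w}^k=\bar{\bm v}$, where ${\bm w}^k={\rm P}_{\{{\bm z}^*\}^\bot}{\bm v}^k$ and ${\bm u}^k={\rm P}_{\mathcal{F}^e_{\neq\theta}}{\bm w}^k\ne{\bm w}^k$. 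Passing to a subsequence, I would split into (a) ${\bm v}^k\in\mathcal{K}^\theta$ for all $k$ and (b) ${\bm v}^k\in\mathcal{K}^t$ for all $k$. The key simplification relative to Theorem~\ref{thm-frf-2d-f} is that $\mathcal{F}^e_{\neq\theta}=\{(0,0,s):s\ge 0\}$ is a ray, so its projection admits a closed formula.

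In both cases the third coordinate of ${\bm v}^k$ is nonnegative (case (a): $v_r^k\ge f_\infty(v_x^k)\ge 0$; case (b): $v_r^k=v_t^k(f(v_x^k/v_t^k)+\varsigma^k)\ge 0$), so ${\rm P}_{\mathcal{F}^e_{\neq\theta}}{\bm v}^k=(0,0,v_r^k)$ and Lemma~\ref{lemma-bound-wu} gives
\[
\|{\bm w}^k-{\bm u}^k\|\le\|{\bm v}^k-{\rm P}_{\mathcal{F}^e_{\neq\theta}}{\bm v}^k\|=\sqrt{(v_x^k)^2+(v_t^k)^2}.
\]
Case (a), where $v_t^k=0$, is immediate: $\|{\bm w}^k-{\bm v}^k\|=|x^*v_x^k|/\|{\bm z}^*\|$, so the ratio is bounded below by $|x^*|/\|{\bm z}^*\|>0$.

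Case (b) is where the hypothesis $s^*>(f^*)_\infty(x^*)$ is used, through the positive gap $\delta\coloneqq s^*-(f^*)_\infty(x^*)>0$. Since $v_x^k/v_t^k\in{\rm dom}\,f$ and $(f^*)_\infty=\sigma_{{\rm dom}\,f}$ by \eqref{infty-sigma}, I would compute
\[
\|{\bm z}^*\|\cdot\|{\bm w}^k-{\bm v}^k\|=-x^*v_x^k+s^*v_t^k=v_t^k\bigl(s^*-x^*(v_x^k/v_t^k)\bigr)\ge v_t^k\delta,
\]
so $v_t^k\le(\|{\bm z}^*\|/\delta)\|{\bm w}^k-{\bm v}^k\|$. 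To control $|v_x^k|$ I would do a two-way sign analysis: if $x^*v_x^k\le 0$ then $|x^*v_x^k|=-x^*v_x^k\le\|{\bm z}^*\|\cdot\|{\bm w}^k-{\bm v}^k\|$ since $s^*v_t^k\ge 0$; if $x^*v_x^k>0$ then $|x^*v_x^k|=x^*v_x^k\le v_t^k(f^*)_\infty(x^*)$, which is also dominated by $\|{\bm w}^k-{\bm v}^k\|$ via the $v_t^k$-bound, using that $(f^*)_\infty(x^*)<s^*<\infty$. Combining these bounds, $\sqrt{(v_x^k)^2+(v_t^k)^2}$ is dominated by a constant multiple of $\|{\bm w}^k-{\bm v}^k\|$, establishing the desired liminf; Theorem~\ref{thm-frf} applied with $\mathfrak{g}=|\cdot|$ then yields the claimed Lipschitzian error bound.

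The only genuine obstacle is the sign subcase $x^*v_x^k>0$ in case (b): here the functional $\langle{\bm z}^*,\cdot\rangle$ does not directly see $|v_x^k|$, and one must instead leverage the domain constraint $v_x^k/v_t^k\in{\rm dom}\,f$ together with the finiteness of $(f^*)_\infty(x^*)$. This plays the analogous role to Proposition~\ref{prop-R-d} in Theorem~\ref{thm-frf-2d-f}, but is substantially simpler here because $\mathcal{F}^e_{\neq\theta}$ is one-dimensional, removing any need to invoke an auxiliary $\mathds{1}$-FRF for ${\rm epi}\,f_\infty$.
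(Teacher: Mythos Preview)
Your approach is correct and, in places, more elementary than the paper's. In Case~(a) the paper invokes Lemma~\ref{lemma-k} and the polyhedral $\mathds{1}$-FRF for ${\rm epi}\,f_\infty$ (Lemma~\ref{lemma-1frf}), reusing the machinery of Theorem~\ref{thm-frf-2d-f}; you instead exploit directly that $\mathcal{F}^e_{\neq\theta}$ is the ray $\{(0,0,s):s\ge 0\}$, so Case~(a) collapses to the single estimate $\|{\bm w}^k-{\bm v}^k\|/\|{\bm w}^k-{\bm u}^k\|\ge|x^*|/\|{\bm z}^*\|$. In Case~(b) the paper extracts the $|v_x^k|$-term by perturbing $x^*$ to $x^*+\epsilon\,{\rm sgn}(v_x^k)$ and using continuity of $(f^*)_\infty$ at the interior point $x^*\in{\rm int}({\rm dom}\,(f^*)_\infty)$ (the same trick as in Case~(a) of Theorem~\ref{thm-frf-1d-f}), whereas your sign-split achieves the same end with different bookkeeping; both routes rest on $(f^*)_\infty=\sigma_{{\rm dom}\,f}$ and the gap $\delta=s^*-(f^*)_\infty(x^*)>0$.

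One small correction: the claim ``$s^*v_t^k\ge 0$'' in your first sign subcase is not justified in general---nothing in Assumption~\ref{assu-blanket} forces $s^*\ge 0$ (for instance, if ${\rm dom}\,f$ lies strictly to one side of the origin then $(f^*)_\infty(x^*)=\sigma_{{\rm dom}\,f}(x^*)$ can be negative, and $s^*$ with it). The fix is immediate: from $-x^*v_x^k+s^*v_t^k=\|{\bm z}^*\|\,\|{\bm w}^k-{\bm v}^k\|$ one has $-x^*v_x^k\le\|{\bm z}^*\|\,\|{\bm w}^k-{\bm v}^k\|+|s^*|\,v_t^k$, and your already-established bound $v_t^k\le(\|{\bm z}^*\|/\delta)\|{\bm w}^k-{\bm v}^k\|$ absorbs the extra term.
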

\begin{proof}
According to Lemma \ref{lemma-gamma}, taking any $\bar{{\bm v}}\in \mathcal{F}^e_{\neq \theta}$ and a sequence $\{{\bm v}^k\}\subset {\rm bdry}({\rm epi}\,f^\pi)\cap B(\eta)\setminus \mathcal{F}^e_{\neq \theta}$ such that
\begin{align}\label{thm-frf-f0-1}
\lim_{k\rightarrow \infty}{\bm v}^k=\lim_{k\rightarrow \infty}{\bm w}^k=\bar{{\bm v}} \quad \mbox{with ${\bm w}^k={\rm P}_{\{{\bm z}^*\}^\bot}{\bm v}^k$, ${\bm u}^k={\rm P}_{\mathcal{F}^e_{\neq \theta}}{\bm w}^k$ and ${\bm w}^k\neq {\bm u}^k$},
\end{align}
we need to show that
\begin{align}\label{thm-frf-f0-2}
\liminf_{k\rightarrow\infty} \frac{\|{\bm w}^k-{\bm v}^k\|}{\|{\bm w}^k-{\bm u}^k\|}>0.
\end{align}
By passing to subsequences if necessary, we consider the following two cases.
\begin{enumerate}[{\rm (a)}]
\item
 ${\bm v}^k \in \mathcal{K}^\theta$ for all $k$.
\item
 ${\bm v}^k \in \mathcal{K}^t$ for all $k$.
\end{enumerate}

{\bf Case (a):} ${\bm v}^k=(v^k_x, 0, v^k_r)\in \mathcal{K}^\theta$ with $v^k_r\geq f_\infty(v^k_x)$.
We compute
\begin{align}\label{thm-frf-f0-2-1}
\| {\bm w}^k-{\bm v}^k\|=\frac{|\langle {\bm z}^*,{\bm v}^k\rangle|}{\|{\bm z}^*\|}=\frac{| x^*v^k_x|}{\|{\bm z}^*\|}.
\end{align}

Recall from Lemma~\ref{lemma-k} that $\check{{\bm z}}= (-x^*,0)\in ({\rm epi}\, f_\infty)^*$.
  Applying Lemma \ref{lemma-1frf}, we see that ${\rm epi}\, f_\infty$ and $\check{\bm z}$ admit a $\mathds{1}$-FRF $\psi_{{\rm epi}\, f_\infty, \check{{\bm z}}}:\R_+\times \R_+\rightarrow \R_+$ given by
\begin{align}\label{thm-frf-f0-0}
\psi_{{\rm epi}\, f_\infty, \check{{\bm z}}}(\epsilon,t)=\tau \epsilon,
\end{align}
for some $\tau>0$.
Define $\mathcal{S}_{\check{{\bm z}}}\coloneqq \{(x,0,r) : \langle(x,r), \check{{\bm z}}\rangle=0\}$.  It then follows that for all $k$,
\begin{align}
\| {\bm w}^k-{\bm u}^k \|\overset{\rm (a)}{\leq} &\|{\bm v}^k-{\rm P}_{\mathcal{F}^e_{\neq \theta}} {\bm v}^k\|\overset{\rm (b)}{=}{\rm d}((v^k_x,v^k_r),{\rm epi}\, f_\infty \cap \{\check{{\bm z}}\}^\bot) \notag \\
\overset{\rm (c)}{\leq}& \psi_{{\rm epi}\, f_\infty, \check{{\bm z}}} ({\rm d}({\bm v}^k,\mathcal{K}^\theta)+ \|\check{\bm z}\|{\rm d}({\bm v}^k,\mathcal{S}_{\check{{\bm z}}}), \|{\bm v}^k\| )
\overset{\rm (d)}{=} \psi_{{\rm epi}\, f_\infty, \check{{\bm z}}} (\|\check{\bm z}\| {\rm d}({\bm v}^k,\mathcal{S}_{\check{{\bm z}}}), \|{\bm v}^k\| )\notag\\
\overset{\rm (e)}{=} &\tau \|\check{\bm z}\|  {\rm d}({\bm v}^k,\mathcal{S}_{\check{{\bm z}}})=  \tau  \|\check{\bm z}\| \frac{| v^k_x x^*|}{|x^*|}\overset{\rm (f)}{=}  c_1 \|{\bm w}^k-{\bm v}^k\|,    \label{thm-frf-f0-4}
\end{align}
where (a) follows from Lemma \ref{lemma-bound-wu},
(b) follows from the first equality in \eqref{lemma-k-1} (with ${\bm v}^k$ in place of  ${\bm v}$),
(c) follows from  \eqref{lemma-k-1} (with ${\bm v}^k$ in place of  ${\bm v}$) and the fact that $\psi_{{\rm epi}\, f_\infty, \check{{\bm z}}}$ is a $\mathds{1}$-FRF for ${\rm epi}\, f_\infty$ and  $\check{{\bm z}}$ (see Definition \ref{def-1frf}),
(d) holds because ${\bm v}^k\in\mathcal{K}^\theta$,
(e) follows from \eqref{thm-frf-f0-0},
and (f) follows from  \eqref{thm-frf-f0-2-1} upon letting $c_1\coloneqq \tau\|{\bm z}^*\|$.

{\bf Case (b):}
Next, we consider
${\bm v}^k=(v^k_x, v^k_t, v^k_r)\in \mathcal{K}^t$ with $v^k_r= v^k_t (f(v^k_x/v^k_t)+\varsigma^k)$ and $v^k_t>0$, where $\varsigma^k\geq 0$ if $v^k_x/v^k_t\in {\rm bdry}({\rm dom}\, f)\cap {\rm dom}\, f$  with  $v^k_t\varsigma^k\leq \eta$ since ${\bm v}^k\in B(\eta)$, and $\varsigma^k= 0$ otherwise.
We have
\begin{align}
&{\bm w}^k={\rm P}_{\{{\bm z}^*\}^\bot}{\bm v}^k={\bm v}^k-\frac{\langle {\bm z}^*, {\bm v}^k\rangle}{\|{\bm z}^*\|^2}{\bm z^*}=(v^k_x,v^k_t,v^k_r)-\frac{ -x^*v^k_x+v^k_t s^*}{\|{\bm z}^*\|^2}(-x^*,s^*,0).\label{thm-frf-f0-4-1}
\end{align}
Notice that $(f^*)_\infty$ is a positively homogeneous proper convex function; see \cite[Theorem 8.5]{Rockafellar-1970}. The fact that $0\neq x^*\in {\rm dom}\, (f^*)_\infty \subseteq \R$  implies that $x^*\in {\rm int}({\rm dom}\, (f^*)_\infty)$.
Then $(f^*)_\infty$ is continuous around $x^*$.
So,  there exists $\epsilon>0$ such that $x^*+\epsilon \xi\in {\rm int}({\rm dom}\, (f^*)_\infty)$ for any $\xi\in [-1,1]$ and
\begin{align}\label{thm-frf-f0-5}
0.5(s^*-(f^*)_\infty(x^*))\geq  |(f^*)_\infty(x^*)-(f^*)_\infty(x^*+\epsilon \xi)|.
\end{align}
If $v^k_x\neq 0$, we have
\begin{align}
\|{\bm w}^k-{\bm v}^k\|&=\frac{|\langle {\bm z}^*, {\bm v}^k\rangle|}{\|{\bm z}^*\|}=\frac{|v^k_t s^* -x^* v^k_x|}{\|{\bm z}^*\|}\overset{\rm (a)}{=} \frac{v^k_t (s^*-  x^*(v^k_x/v^k_t))}{\|{\bm z}^*\|}\notag \\
& \overset{\rm (b)}{=} \frac{v^k_t (s^*-  (x^*+\epsilon(\zeta^k/|\zeta^k|)) \zeta^k)+ \epsilon|v^k_x|}{\|{\bm z}^*\|}\notag  \\
& \overset{\rm (c)}{\geq} \frac{v^k_t (s^*-(f^*)_\infty(x^*)+ (f^*)_\infty(x^*)-(f^*)_\infty(x^*+\epsilon(\zeta^k/|\zeta^k|)) ) + \epsilon|v^k_x|}{\|{\bm z}^*\|}\notag  \\
& \overset{\rm (d)}{\geq} \frac{0.5(s^*-(f^*)_\infty(x^*))v^k_t + \epsilon|v^k_x|}{\|{\bm z}^*\|}\overset{\rm (e)}{\geq}  c_2(|v^k_x|+v^k_t), \label{thm-frf-f0-6}
\end{align}
where  (a) follows from  ${\bm z}^*\in({\rm epi}\,f^\pi)^*$ and ${\bm v}^k\in {\rm epi}\,f^\pi$ and hence $\langle {\bm z}^*,{\bm v}^k \rangle \geq 0$,
(b) follows upon letting $\zeta^k\coloneqq v^k_x/v^k_t$,
(c) follows from \eqref{infty-sigma} and the fact that $v^k_x/v^k_t\in {\rm dom}\, f$,
(d) follows  from \eqref{thm-frf-f0-5},
and (e) follows upon letting $c_2\coloneqq \min \{0.5(s^*-(f^*)_\infty(x^*))/\|{\bm z}^*\|,\epsilon/\|{\bm z}^*\|\}>0$.
On the other hand, if $v^k_x= 0$, we also have from \eqref{thm-frf-f0-4-1} that
\begin{align} \label{thm-frf-f0-7}
\|{\bm w}^k-{\bm v}^k\|=(v^k_t s^*)/ \|{\bm z}^*\|= (s^*/\|{\bm z}^*\|)(|v^k_x|+v^k_t)  \quad \forall\,k.
\end{align}
Consequently, we have for all $k$
\begin{align}
\| {\bm w}^k-{\bm u}^k \|\leq \|{\bm v}^k-{\rm P}_{\mathcal{F}^e_{\neq \theta}} {\bm v}^k\|
&\leq \|{\bm v}^k- (0,0, v^k_t(f(v^k_x/v^k_t)+\varsigma^k))\|  \notag\\
&\leq |v^k_x|+v^k_t\leq c_3^{-1} \|{\bm v}^k-{\bm w}^k\|,  \label{thm-frf-f0-8}
\end{align}
where the first inequality follows from Lemma \ref{lemma-bound-wu},
the second inequality follows from the fact that
$(0,0,v^k_t(f(v^k_x/v^k_t)+\varsigma^k))\in \mathcal{F}^e_{\neq \theta}$ (thanks to $\{(0,r): r\geq 0\}\subseteq {\rm epi}\, f_\infty$ and the nonnegativity of $f$ and $\varsigma^k$),
and the last inequality follows from \eqref{thm-frf-f0-6} and \eqref{thm-frf-f0-7} upon setting $c_3 \coloneqq \min\{s^*/\|{\bm z}^*\|,c_2\} > 0$.

 Therefore,  \eqref{thm-frf-f0-4} and \eqref{thm-frf-f0-8} show that \eqref{thm-frf-f0-2} holds for any sequence $\{({\bm v}^k,{\bm w}^k,{\bm u}^k)\}$ satisfying \eqref{thm-frf-f0-1}.
\end{proof}

From Theorem \ref{thm-frf-f0}, Lemma \ref{lemma-frf} and Theorem \ref{thm-frf}, we obtain a $\mathds{1}$-FRF concerning $\mathcal{F}^e_{\neq \theta}$ as follows.
\begin{corollary}[$\mathds{1}$-FRF  concerning $\mathcal{F}^e_{\neq \theta}$]\label{coro-1frf-f0}
Let $\mathcal{F}^e_{\neq \theta}={\rm epi}\, f^\pi \cap \{{\bm z}^*\}^\bot$ be the exposed face given in \eqref{prop-md-face-1_5}, where ${\bm z}^*=(-x^*,s^*,0)\in\mathcal{K}^{*\theta}$ with $s^*>(f^*)_\infty(x^*)$ and $x^*\neq 0$.
 Let $\kappa_{{\bm z}^*, t} \coloneqq \max\{2, 2\gamma_{{\bm z}^*,t}^{-1}\}$ be defined in Theorem \ref{thm-frf-f0}, which is a nondecreasing function with respect to $t$ by the definition.
Then, the $\mathds{1}$-FRF $\psi_{{\rm epi}\, f^\pi,{\bm z}^*}: \R_+\times \R_+\rightarrow \R_+$ for ${\rm epi}\, f^\pi$ and ${\bm z}^*$ is given by
\begin{align*}
\psi_{{\rm epi}\, f^\pi,{\bm z}^*}(\epsilon,t)\coloneqq \max\{\epsilon,\epsilon/\|{\bm z}^*\|\}+\kappa_{{\bm z}^*,t}(\epsilon+\max\{\epsilon,\epsilon/\|{\bm z}^*\|\}).
\end{align*}
In particular, in view of Definition \ref{def-holder-1frf}, the above $\mathds{1}$-FRF is H\"{o}lderian of exponent $1$ (and hence, dominated by $1/2$).
\end{corollary}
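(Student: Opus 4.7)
My plan is to deduce this corollary by directly plugging the Lipschitzian error bound from Theorem \ref{thm-frf-f0} into the general machinery provided by Lemma \ref{lemma-frf} and Theorem \ref{thm-frf}, in exactly the same manner as Corollaries \ref{coro-1frf-1d-f} and \ref{coro-1frf-2d-f} were obtained. Since Theorem \ref{thm-frf-f0} has already produced, for each $\eta>0$, a constant $\kappa_{{\bm z}^*,\eta}$ with the property that
\[
{\rm d}({\bm q}, \mathcal{F}^e_{\neq\theta}) \le \kappa_{{\bm z}^*,\eta}\, {\rm d}({\bm q}, {\rm epi}\, f^\pi) \quad \forall\, {\bm q}\in \{{\bm z}^*\}^\bot \cap B(\eta),
\]
the hypothesis \eqref{thm-frf-2} of Theorem \ref{thm-frf} is satisfied with $\mathfrak{g}(\cdot)=|\cdot|$. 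Lemma \ref{lemma-frf} then tells us how to convert such a one-sided (hyperplane-restricted) error bound into a genuine $\mathds{1}$-FRF on the entire span of $\mathcal{K}={\rm epi}\, f^\pi$.

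More concretely, the first step I would carry out is to take an arbitrary ${\bm x}\in{\rm span}({\rm epi}\, f^\pi)$ with $\|{\bm x}\|\le t$ and project it onto $\{{\bm z}^*\}^\bot$, obtaining ${\bm q}={\rm P}_{\{{\bm z}^*\}^\bot}{\bm x}$ with $\|{\bm x}-{\bm q}\|=|\langle{\bm z}^*,{\bm x}\rangle|/\|{\bm z}^*\|$. Setting $\epsilon\coloneqq\max\{{\rm d}({\bm x},{\rm epi}\, f^\pi),\langle{\bm x},{\bm z}^*\rangle\}$, the triangle inequality together with the nonexpansiveness of the projection onto a subspace yields
\[
\|{\bm x}-{\bm q}\| \le \max\{\epsilon,\epsilon/\|{\bm z}^*\|\} \quad\text{and}\quad {\rm d}({\bm q},{\rm epi}\, f^\pi) \le \epsilon + \max\{\epsilon,\epsilon/\|{\bm z}^*\|\}.
\]
Next I would invoke Theorem \ref{thm-frf-f0} (with $\eta=t$) at the point ${\bm q}$, noting that ${\bm q}\in \{{\bm z}^*\}^\bot\cap B(t)$ as a consequence of the projection step, to conclude
\[
{\rm d}({\bm q}, \mathcal{F}^e_{\neq\theta}) \le \kappa_{{\bm z}^*,t}\bigl(\epsilon+\max\{\epsilon,\epsilon/\|{\bm z}^*\|\}\bigr).
\]
Finally, the triangle inequality ${\rm d}({\bm x},\mathcal{F}^e_{\neq\theta})\le \|{\bm x}-{\bm q}\|+{\rm d}({\bm q},\mathcal{F}^e_{\neq\theta})$ combines the two displays to give precisely the stated formula for $\psi_{{\rm epi}\, f^\pi,{\bm z}^*}(\epsilon,t)$.

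To finish, I would verify the two conditions of Definition \ref{def-1frf}: nonnegativity, monotonicity in each argument (which follows because $t\mapsto\kappa_{{\bm z}^*,t}$ is nondecreasing by construction in Theorem \ref{thm-frf-f0}), and $\psi_{{\rm epi}\, f^\pi,{\bm z}^*}(0,t)=0$, all of which are immediate from the explicit expression. The resulting function is of the form $\rho_1(t)\epsilon+\rho_2(t)\epsilon$ in the notation of Definition \ref{def-holder-1frf}, hence H\"olderian of exponent $1$, and since any H\"olderian $\mathds{1}$-FRF of exponent $\alpha$ can be dominated by one of any smaller exponent on bounded inputs (by Remark following Definition \ref{def-holder-1frf}), it is in particular dominated by exponent $1/2$.

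I do not foresee any real obstacle here: the hard analytic work — establishing that $\gamma_{{\bm z}^*,\eta}>0$ in both cases ${\bm v}^k\in\mathcal{K}^\theta$ and ${\bm v}^k\in\mathcal{K}^t$, crucially using $x^*\ne 0$ and $s^*>(f^*)_\infty(x^*)$ so that $x^*\in{\rm int}({\rm dom}\,(f^*)_\infty)$ — is already carried out in Theorem \ref{thm-frf-f0}. What remains is a purely mechanical application of the general framework, identical in structure to the derivations of Corollaries \ref{coro-1frf-1d-f} and \ref{coro-1frf-2d-f}.
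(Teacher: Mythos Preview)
Your proposal is correct and follows exactly the same approach as the paper, which simply cites Theorem \ref{thm-frf-f0}, Lemma \ref{lemma-frf} and Theorem \ref{thm-frf}; you have merely unpacked the mechanism of Lemma \ref{lemma-frf} inline. The only point worth making explicit is that the bound $\|{\bm x}-{\bm q}\|\le\max\{\epsilon,\epsilon/\|{\bm z}^*\|\}$ also uses the lower bound $\langle{\bm z}^*,{\bm x}\rangle\ge -\|{\bm z}^*\|\,{\rm d}({\bm x},{\rm epi}\,f^\pi)\ge -\|{\bm z}^*\|\epsilon$ coming from ${\bm z}^*\in({\rm epi}\,f^\pi)^*$, but this is standard and already encapsulated in Lemma \ref{lemma-frf}.
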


\subsection{Error bounds}

In this section, given the $\mathds{1}$-FRFs of several exposed faces in Corollaries \ref{coro-1frf-1d-f}, \ref{coro-1frf-2d-f} and  \ref{coro-1frf-f0}, we are ready to establish error bounds for \eqref{prob-fea}.
This requires the notions of \emph{partial-polyhedral Slater's} (PPS) condition, \emph{distance to the PPS condition} ${\rm d}_{\rm PPS}$,  and \emph{distance to polyhedrality}  $\ell_{\rm poly}$ which are all recalled in Definition \ref{def-mis}.

\begin{theorem}[Error bounds for \eqref{Feas} with $\mathcal{K}= {\rm epi}\, f^\pi$] \label{the-eb}
Consider problem \eqref{prob-fea}. Suppose that $(\mathcal{L}+{\bm a})\cap {\rm epi}\, f^\pi\neq \emptyset$.
Then ${\rm d}_{\rm PPS}({\rm epi}\, f^\pi,\mathcal{L}+{\bm a})\leq 1$, and whenever ${\rm d}_{\rm PPS}({\rm epi}\, f^\pi,\mathcal{L}+{\bm a})= 1$, there exist a chain of faces $\mathcal{F}\subsetneq{\rm epi}\, f^\pi$ with length 2 and a vector ${\bm z}^*$ such that
\begin{equation} \label{the-eb-1}
\begin{split}
&\mbox{$\mathcal{F}={\rm epi}\, f^\pi\cap \{{\bm z}^*\}^\bot$ with ${\bm z}^*\in ({\rm epi}\, f^\pi)^*\cap (\mathcal{L}+{\bm a})^\bot$},\\
&\mbox{$(\mathcal{L}+{\bm a})\cap \mathcal{F}=(\mathcal{L}+{\bm a})\cap {\rm epi}\, f^\pi$ and $\{\mathcal{F}, \mathcal{L}+{\bm a}\}$ satisfies the PPS condition}.
\end{split}
\end{equation}
Moreover, the following items hold.
\begin{enumerate}[\rm (i)]
\item If ${\rm d}_{\rm PPS}({\rm epi}\, f^\pi,\mathcal{L}+{\bm a})=0$, then $\{{\rm epi}\, f^\pi,\mathcal{L}+{\bm a}\}$ satisfies the Lipschitzian error bound.

\item  If ${\rm d}_{\rm PPS}({\rm epi}\, f^\pi,\mathcal{L}+{\bm a})=1$,
 we have the following possibilities.
    \begin{enumerate}[\rm (a)]
    \item If $\mathcal{F}=\mathcal{F}^e_{\beta}$ and the vector ${\bm z}^*$ given in \eqref{the-eb-1} satisfies
\begin{align*}
{\bm z}^*\in \{(-x^*, r^*, t^*) : x^*/t^*\in {\rm int}({\rm dom}\, f^*)\setminus f'(W), r^*= t^*f^*(x^*/t^*),t^*>0\},
\end{align*}
with $f'(W)\coloneqq \{ v : v= f'(w), w\in W\}$ and $W\subset  {\rm int}({\rm dom}\,f)$  given in Assumption \ref{assu-blanket},
then $\{{\rm epi}\, f^\pi, \mathcal{L}+{\bm a}\}$ satisfies a uniform H\"{o}lderian error bound with exponent $1/2$.
\item If $\mathcal{F}=\mathcal{F}^e_{\rm bd}$ and the vector ${\bm z}^*$ given in \eqref{the-eb-1} satisfies
\begin{align*}
{\bm z}^*\in \{(-x^*, r^*, t^*) : x^*/t^*\in {\rm bdry}({\rm dom}\, f^*)\cap {\rm dom}\, f^*, r^*> t^*f^*(x^*/t^*),t^*>0\},
\end{align*}
 then $\{{\rm epi}\, f^\pi, \mathcal{L}+{\bm a}\}$ satisfies the Lipschitzian error bound.
 \item If $\mathcal{F}=\mathcal{F}^e_{\neq \theta}$ and the vector ${\bm z}^*$ given in \eqref{the-eb-1} satisfies
\begin{align*}
{\bm z}^*\in \{(-x^*,s^*,0):  s^*>(f^*)_\infty(x^*), x^* \neq 0\},
\end{align*}
 then $\{{\rm epi}\, f^\pi, \mathcal{L}+{\bm a}\}$ satisfies the Lipschitzian error bound.
\item If $\mathcal{F}=\{\bm 0\}$, then $\{{\rm epi}\, f^\pi, \mathcal{L}+{\bm a}\}$ satisfies the Lipschitzian error bound.
\end{enumerate}
\end{enumerate}
\end{theorem}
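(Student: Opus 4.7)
The overall strategy is to feed the $\mathds{1}$-FRFs computed in Corollaries~\ref{coro-1frf-1d-f}, \ref{coro-1frf-2d-f} and~\ref{coro-1frf-f0} into the facial reduction-based error bound machinery recalled in Theorem~\ref{the-error-bound}. The linchpin is the observation that \emph{every} nontrivial face of ${\rm epi}\,f^\pi$ turns out to be polyhedral, so once one step of facial reduction is performed the reduced problem is polyhedral and admits a Hoffman-type Lipschitzian bound; consequently the full facial reduction chain has length at most~$2$, giving the a priori estimate ${\rm d}_{\rm PPS}({\rm epi}\,f^\pi,\mathcal{L}+{\bm a})\leq 1$.

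To justify polyhedrality of all nontrivial faces, I would sift through the classification of Section~\ref{section-face}. Every $1$-dimensional face ($\mathcal{F}^e_\beta$, $\mathcal{F}^e_{\rm bd}$, $\mathcal{F}^e_{\neq\theta}$, $\hat{\mathcal{F}}_1$--$\hat{\mathcal{F}}_4$) is a ray and so trivially polyhedral. For the two families of $2$-dimensional exposed faces: $\mathcal{F}^e_{\mathds{1}}=\{t(\alpha,1,f(\alpha))+(0,0,s):t,s\ge 0\}$ is generated by two rays, hence polyhedral; and $\mathcal{F}^e_{\theta}=\{(x,0,s):(x,s)\in{\rm epi}\,f_\infty\}$ is polyhedral because Lemma~\ref{lemma-dom-fstar}(i) forces ${\rm dom}\,f^*$ to be an interval, so $f_\infty=\sigma_{{\rm dom}\,f^*}$ is a piecewise linear function with at most two pieces (with appropriate effective domain), whose epigraph is polyhedral in every case. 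If ${\rm d}_{\rm PPS}=1$, I would then invoke one step of the standard facial reduction algorithm (\cite{Borwein-Wolkowicz-1981,Pataki-2013,Waki-Muramatsu-2013}) to obtain ${\bm z}^*\in({\rm epi}\,f^\pi)^*\cap(\mathcal{L}+{\bm a})^\bot$ such that $\mathcal{F}={\rm epi}\,f^\pi\cap\{{\bm z}^*\}^\bot$ contains $(\mathcal{L}+{\bm a})\cap{\rm epi}\,f^\pi$; the identity $(\mathcal{L}+{\bm a})\cap\mathcal{F}=(\mathcal{L}+{\bm a})\cap{\rm epi}\,f^\pi$ is standard from the construction, and the PPS condition for $\{\mathcal{F},\mathcal{L}+{\bm a}\}$ is automatic because $\mathcal{F}$ is polyhedral, so its ``nonpolyhedral part'' is empty.

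With the chain in hand I would conclude by applying Theorem~\ref{the-error-bound}, which composes the $\mathds{1}$-FRF at ${\bm z}^*$ for $({\rm epi}\,f^\pi,\mathcal{F})$ with a Lipschitzian error bound for the polyhedral reduced problem $\{\mathcal{F},\mathcal{L}+{\bm a}\}$ (the latter coming from Hoffman's lemma~\cite{Hoffman-1952}, equivalently \cite[Theorem~7]{Bauschke-Borwein-1996}). Item~(i) is just the Bauschke--Borwein Lipschitzian bound directly, since ${\rm d}_{\rm PPS}=0$ means $\mathcal{L}+{\bm a}$ meets ${\rm ri}({\rm epi}\,f^\pi)$. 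For item~(ii), the final exponent is inherited from the $\mathds{1}$-FRF used in the single reduction step: case~(a) calls Corollary~\ref{coro-1frf-1d-f}, whose H\"olderian $\mathds{1}$-FRF has exponent $1/2$, yielding the claimed uniform H\"olderian error bound of exponent $1/2$; cases~(b) and~(c) call Corollaries~\ref{coro-1frf-2d-f} and~\ref{coro-1frf-f0} respectively, whose $\mathds{1}$-FRFs are Lipschitzian, yielding Lipschitzian bounds; case~(d) has $\mathcal{F}=\{\bm 0\}$, so the reduced problem is trivially polyhedral and the whole feasibility problem becomes polyhedral after one exposure, again giving a Lipschitzian bound via Hoffman.

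The main obstacle I anticipate is the careful bookkeeping needed to show ${\rm d}_{\rm PPS}\le 1$ and to produce the exposing vector ${\bm z}^*$ in each of the cases~(a)--(d) without gaps: in particular, one must argue that \emph{no} other exposed face can arise in a single step of facial reduction, and that the cases ${\rm d}_{\rm PPS}=1$ are exactly classified by which family $\mathcal{F}$ belongs to. This reduces to a sign/domain inspection against Propositions~\ref{prop-e-face}--\ref{prop-md-face} and the dual-cone description of Proposition~\ref{prop-dual-cone}, but it is the step most likely to hide a subtle case distinction (for instance, ${\bm z}^*$ landing on $\mathcal{K}^{*\theta}\cap\mathcal{K}^{*t}$, which would need to be explicitly matched to the appropriate corollary).
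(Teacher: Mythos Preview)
Your proposal is correct and follows essentially the same route as the paper: show $\ell_{\rm poly}({\rm epi}\,f^\pi)=1$ via polyhedrality of all proper faces, apply Proposition~\ref{prop-facial-reduction} to get the chain and exposing vector, then feed the appropriate $\mathds{1}$-FRF from Corollaries~\ref{coro-1frf-1d-f}--\ref{coro-1frf-f0} into the error bound machinery. Your final worry about exhaustiveness is unnecessary: the theorem only asserts conditional statements (``if $\mathcal{F}$ and ${\bm z}^*$ are of such-and-such form, then\ldots''), not that (a)--(d) cover all possible outcomes of facial reduction, so no case-by-case exclusion argument is required.
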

\begin{proof}
First, we  show that distance to the PPS condition satisfies ${\rm d}_{\rm PPS}({\rm epi}\, f^\pi,\mathcal{L}+{\bm a})\leq 1$.
 Since ${\rm epi}\, f^\pi\subset \R^3$, all its proper faces are polyhedral, i.e., $\ell_{\rm poly}({\rm epi}\, f^\pi)=1$.
 Applying Proposition \ref{prop-facial-reduction} with ${\rm epi}\, f^\pi$ in place of $\mathcal{K}$, we deduce the existence of a chain of faces $\mathcal{F}\subsetneq{\rm epi}\, f^\pi$ of length $2$ and a vector ${\bm z}$ satisfying \eqref{the-eb-1}.
 Then, from the definition of ${\rm d}_{\rm PPS}$, we see that ${\rm d}_{\rm PPS}({\rm epi}\, f^\pi,\mathcal{L}+{\bm a})\leq 1$.

{\bf (i)} ${\rm d}_{\rm PPS}({\rm epi}\, f^\pi,\mathcal{L}+{\bm a})= 0$.
In this case,  $\{{\rm epi}\, f^\pi, \mathcal{L}+{\bm a}\}$ satisfies the PPS condition.
 Then, in view of  \cite[Corollary 3]{Bauschke-Borwein-Li-1999},
    we  deduce that $\{{\rm epi}\,f^\pi, \mathcal{L}+{\bm a}\}$ satisfies a Lipschitzian error bound.

{\bf (ii)} ${\rm d}_{\rm PPS}({\rm epi}\, f^\pi,\mathcal{L}+{\bm a})= 1$.

In case (a),
from Corollary \ref{coro-1frf-1d-f},  the $\mathds{1}$-FRF is H\"{o}lderian of exponent $1/2$ (see Definition \ref{def-holder-1frf}).
Applying \cite[Lemma 2.1]{Lindstrom-Lourenco-Pong-2025}, we deduce that
$\{{\rm epi}\, f^\pi, \mathcal{L}+{\bm a}\}$ satisfies a uniform H\"{o}lderian error bound with exponent $1/2$.

Similarly, case (b) follows from \cite[Lemma 2.1]{Lindstrom-Lourenco-Pong-2025} and Corollary  \ref{coro-1frf-2d-f},  while case (c) can be deduced from \cite[Lemma 2.1]{Lindstrom-Lourenco-Pong-2025} and Corollary  \ref{coro-1frf-f0}.
Finally, case (d)  follows from \cite[Proposition 27]{Lourencco-2021}.
\end{proof}

\section{Hausdorff measure zero for the ``missing" exposed faces} \label{section-measure}
Every ${\bm z}^* \in ({\rm epi}\, f^\pi)^*$ exposes some
face of ${\rm epi}\, f^\pi$. However, the cases where
${\bm z}^* = {\bm 0}$ or ${\bm z}^* \in {\rm{int}} (({\rm epi}\, f^\pi)^*)$ are of little interest, since the exposed faces are $\{{\bm 0}\}$ and the whole cone ${\rm epi}\, f^\pi$, respectively.
For the nontrivial situation where ${\bm z}^* \in {\rm bdry}(({\rm epi}\, f^\pi)^*) \setminus \{\bm 0\}$, in the previous section we identified several cases where the $\mathds{1}$-FRFs for ${\bm z}^*$ and ${\rm epi}\, f^\pi$ can be taken to be H\"olderian of exponent (at least) $1/2$.
Although we did not compute $\mathds{1}$-FRFs for all cases, we will show in this section that the part of the boundary of ${\rm bdry}(({\rm epi}\, f^\pi)^*)$ omitted in our analysis has Hausdorff dimension strictly smaller than the part we actually analyzed.
Then, we will discuss several implications of this result.

First, we recall the definitions of Hausdorff measure and dimension in Euclidean space;  see  \cite{Evans-et-2015, Mattila-1995}.
Letting $A\subseteq \R^d$, $s\in[0,\infty)$ and  $\delta\in(0,\infty]$, the $s$-dimensional Hausdorff measure is
\[
\mathcal{H}^s(A) \coloneqq\lim_{\delta\rightarrow 0}\mathcal{H}^s_\delta(A)=\sup_{\delta>0}\mathcal{H}^s_\delta(A),
\]
where
\[
\mathcal{H}^s_\delta(A)\coloneqq\inf\left\{ \sum_{j=1}^\infty {\rm diam}(C_j)^s : A\subseteq \bigcup_{j=1}^\infty C_j, {\rm diam}(C_j)\leq \delta \right\}.
\]
The Hausdorff dimension of a set $A\subseteq \R^d$ is
\begin{align}\label{Haus-dim}
H_{\rm dim}(A)\coloneqq \inf\{ 0\leq s<\infty :  \mathcal{H}^s(A)=0\}.
\end{align}
\begin{fact}\label{fact-hausdorff}
Some facts about  Hausdorff measure and dimension.
\begin{enumerate}[\rm (i)]
\item $\mathcal{H}^s(\mathcal{T}A)=\mathcal{H}^s(A)$ for each affine isometry $\mathcal{T}:\R^d\rightarrow \R^d$ and $A\subseteq \R^d$; see \cite[Theorem 2.2(v)]{Evans-et-2015}.
\item  $H_{\rm dim}(A)\leq H_{\rm dim}(B)$ for $A\subseteq B$; see \cite[page 59]{Mattila-1995}.
\item Let $\{A_i\}$ be a sequence of sets. Then $H_{\rm dim}(\cup_{i=1}^\infty A_i)= \sup_i H_{\rm dim}(A_i)$;  see \cite[page 59]{Mattila-1995}.
\item Let $A\subseteq \R^{d}$ and $B\subseteq\R^{s}$ with $H_{\rm dim}(B)=s$. Then $H_{\rm dim}(A\times B)$= $H_{\rm dim}(A)+s$.
\end{enumerate}
\end{fact}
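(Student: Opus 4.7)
The plan is simple: items (i)--(iii) of Fact~\ref{fact-hausdorff} are direct quotations from the references \cite{Evans-et-2015} and \cite{Mattila-1995}, so my sketch only needs to address item (iv), which is stated without an explicit citation. For (iv), my approach is to sandwich $H_{\rm dim}(A\times B)$ between two classical product inequalities:
\begin{equation*}
H_{\rm dim}(A) + H_{\rm dim}(B) \;\leq\; H_{\rm dim}(A\times B) \;\leq\; H_{\rm dim}(A) + \overline{\dim}_B(B),
\end{equation*}
where $\overline{\dim}_B$ denotes the upper box-counting dimension. Both inequalities are standard in geometric measure theory (see e.g.\ \cite{Mattila-1995}); the lower bound is proved via a Frostman-measure argument on each factor combined with a product-measure construction, while the upper bound is an efficient covering estimate that multiplies a near-optimal Hausdorff cover of $A$ with near-optimal $\delta$-box covers of $B$.

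The rest of the argument is very short. Because $B\subseteq \R^s$, the ambient dimension forces $\overline{\dim}_B(B) \leq s$; combined with the general inequality $H_{\rm dim}(B) \leq \overline{\dim}_B(B)$ and the hypothesis $H_{\rm dim}(B)=s$, this yields $\overline{\dim}_B(B) = s$. Substituting into the two-sided bound collapses it to $H_{\rm dim}(A\times B) = H_{\rm dim}(A) + s$, which is exactly the claim.

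The main conceptual obstacle is the asymmetry of the upper product inequality: in general one only controls $H_{\rm dim}(A\times B)$ from above by the \emph{upper box dimension} of one of the factors, and without a hypothesis such as $H_{\rm dim}(B)=s$ one can exhibit Besicovitch-type examples where $H_{\rm dim}(A\times B)$ strictly exceeds $H_{\rm dim}(A) + H_{\rm dim}(B)$. The assumption on $B$ in (iv) is precisely what is needed to close the gap through the ambient-dimension ceiling on $\overline{\dim}_B$, and so once the two product inequalities are accepted as standard facts, there is essentially nothing left to do beyond invoking them.
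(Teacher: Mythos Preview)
Your argument for item~(iv) is correct, but it differs from the paper's. The paper proceeds by citing \cite{Claude-1982}: the hypothesis $H_{\rm dim}(B)=s$ together with \cite[Examples~3]{Claude-1982} shows that $B$ is \emph{$R$-regular} in the sense of \cite[Definition~6]{Claude-1982}, and then \cite[Theorem~3]{Claude-1982} gives the product formula directly. Your route instead sandwiches $H_{\rm dim}(A\times B)$ between the two standard product inequalities
\[
H_{\rm dim}(A)+H_{\rm dim}(B)\;\le\;H_{\rm dim}(A\times B)\;\le\;H_{\rm dim}(A)+\overline{\dim}_B(B),
\]
and then collapses the sandwich via the ambient-dimension ceiling $\overline{\dim}_B(B)\le s$ combined with $s=H_{\rm dim}(B)\le \overline{\dim}_B(B)$. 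This is arguably more self-contained, since the two product inequalities (and the comparison $H_{\rm dim}\le \overline{\dim}_B$) are textbook facts available in \cite{Mattila-1995} and related sources, whereas the paper's route outsources the work to a perhaps less widely known regularity notion. Conversely, the paper's citation handles the result in a single stroke without needing to introduce the upper box-counting dimension at all. Either way the proof is short; the substantive mathematical content is the same observation that a full-dimensional $B\subseteq\R^s$ has all the ``nice'' dimensional behaviour one could want in a product.
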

\begin{proof}
For item (iv), from the fact that $H_{\rm dim}(B)=s$ and \cite[Examples 3]{Claude-1982},  the set $B$ is \emph{$R$-regular}  in the sense of \cite[Definition 6]{Claude-1982}.
Then, item (iv) follows from \cite[Theorem 3]{Claude-1982}.
\end{proof}

Recall that we are under the blanket Assumption~\ref{assu-blanket}.
We see from  \eqref{dual-cone} that  the boundary of the dual cone of ${\rm epi}\, f^\pi$ is
\begin{align}
&{\rm bdry}(({\rm epi}\, f^\pi)^*)\notag\\
=&\{t^*(-\beta,r^*/t^*,1):\,    r^*=t^*f^*(\beta),\beta\in{\rm int}({\rm dom}\, f^*), t^*>0\}\notag\\
&\cup \{t^*(-\beta, r^*/t^*, 1) : \beta\in {\rm bdry}({\rm dom}\, f^*)\cap {\rm dom}\, f^*, r^*\geq t^*f^*(\beta),t^*>0\} \notag\\
&\cup \{(-x^*,s^*,0):  s^*>(f^*)_\infty(x^*), x^*\neq 0\} \cup\{(-x^*,s^*,0):  s^*=(f^*)_\infty(x^*), x^*\neq 0\}\notag\\
&\cup \{(-x^*,s^*,0):  s^*\geq (f^*)_\infty(x^*), x^*= 0\}. \label{boundary}
\end{align}

Consider the following partition
\begin{align}\label{Q12}
{\rm bdry}(({\rm epi}\, f^\pi)^*)=Q_1\cup Q_2
\end{align}
where
\begin{align}
Q_1=&Q_{11}\cup Q_{12}\cup Q_{13} \notag \\
\coloneqq &\{t^*(-\beta,r^*/t^*,1):\,    r^*=t^*f^*(\beta),\beta\in{\rm int}({\rm dom}\, f^*)\setminus f'(W), t^*>0\}\notag\\
& \cup\{t^*(-\beta, r^*/t^*, 1) : \beta\in {\rm bdry}({\rm dom}\, f^*)\cap {\rm dom}\, f^*, r^*> t^*f^*(\beta),t^*>0\} \notag\\
&\cup \{(-x^*,s^*,0):  s^*>(f^*)_\infty(x^*), x^* \neq 0\}, \label{Q1}\\
Q_2=&Q_{21}\cup Q_{22}\cup Q_{23}\cup Q_{24}\notag \\
\coloneqq &\{t^*(-\beta,r^*/t^*,1):\,    r^*=t^*f^*(\beta),\beta\in f'(W), t^*>0\}\notag\\
&\cup\{t^*(-\beta, r^*/t^*, 1) : \beta\in {\rm bdry}({\rm dom}\, f^*)\cap {\rm dom}\, f^*, r^*= t^*f^*(\beta),t^*>0\} \notag\\
&\cup\{(-x^*, s^*, 0) : s^*=(f^*)_\infty(x^*), x^* \neq 0\} \notag\\
&\cup \{(-x^*, s^*, 0) : s^*\geq (f^*)_\infty(x^*), x^* = 0\},\label{Q2}
\end{align}
where $f'(W)\coloneqq \{ v : v= f'(w), w\in W\}$ with $W\subset \R$  being the finite set given in Assumption \ref{assu-blanket}.

Recall from Theorem \ref{thm-frf-1d-f} that ${\rm epi}\, f^\pi$ and ${\bm z}^*\in Q_{11}$ admit a H\"{o}lderian $\mathds{1}$-FRF with exponent $1/2$,
while from Theorems \ref{thm-frf-2d-f} and \ref{thm-frf-f0} we know that ${\rm epi}\, f^\pi$ and ${\bm z}^*\in Q_{12}$ or ${\bm z}^*\in Q_{13}$ admit a H\"{o}lderian $\mathds{1}$-FRF with exponent $1$ (and hence with exponent $1/2$ as well).

We aim to show that the Hausdorff dimension of $Q_1$ is strictly larger than that of $Q_2$.
We first recall the following useful theorem.
Here, for a function $h:\R^n\rightarrow \R^m$, its graph restricted to $A\subseteq \R^n$ is denoted by
\[
{\rm gph}(h|_A) = \{ ({\bm x}, h({\bm x})): {\bm x}\in A\}.
\]
\begin{theorem}{\rm \cite[Theorem 2.9]{Evans-et-2015}}\label{the-dimension}
Let $h:\R^n\rightarrow \R^m$ be Lipschitz continuous and $A\subseteq \R^n$. If $\mathcal{H}^n(A)>0$, then $H_{\rm dim}({\rm gph}(h|_A))=n$.
\end{theorem}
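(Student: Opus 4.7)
The plan is to prove the equality by establishing matching upper and lower bounds on $H_{\rm dim}({\rm gph}(h|_A))$, in both directions exploiting the classical principle that Lipschitz maps cannot increase Hausdorff dimension. That principle in turn rests on the elementary inequality $\mathcal{H}^s(g(B))\leq K^s\mathcal{H}^s(B)$, valid for any $K$-Lipschitz map $g$ and any $s\ge 0$, which follows immediately from the covering definition of $\mathcal{H}^s_\delta$ by replacing each covering set $C_j$ of $B$ by the set $g(C_j)$ (of diameter at most $K\,{\rm diam}(C_j)$) covering $g(B)$.

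For the upper bound, I would consider the graphing map $\phi:A\to\R^{n+m}$ defined by $\phi({\bm x})\coloneqq ({\bm x},h({\bm x}))$. If $L$ denotes the Lipschitz modulus of $h$, then
\[
\|\phi({\bm x})-\phi({\bm y})\|\leq \sqrt{1+L^{2}}\,\|{\bm x}-{\bm y}\|,
\]
so $\phi$ is Lipschitz. Since Lipschitz images satisfy $H_{\rm dim}(\phi(A))\leq H_{\rm dim}(A)$ (a direct consequence of the contraction inequality above, together with the definition \eqref{Haus-dim}), and since $A\subseteq \R^n$ gives $H_{\rm dim}(A)\leq n$, I obtain $H_{\rm dim}({\rm gph}(h|_A))=H_{\rm dim}(\phi(A))\leq n$.

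For the lower bound, I would use the projection $\pi:\R^{n+m}\to \R^n$ onto the first $n$ coordinates, which is $1$-Lipschitz and satisfies $\pi({\rm gph}(h|_A))=A$. Applying the same contraction inequality with $g=\pi$, $K=1$, $s=n$ and $B={\rm gph}(h|_A)$ yields $\mathcal{H}^n(A)=\mathcal{H}^n(\pi({\rm gph}(h|_A)))\leq \mathcal{H}^n({\rm gph}(h|_A))$. By hypothesis $\mathcal{H}^n(A)>0$, so $\mathcal{H}^n({\rm gph}(h|_A))>0$, and therefore by definition \eqref{Haus-dim} of the Hausdorff dimension, $H_{\rm dim}({\rm gph}(h|_A))\geq n$. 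Combining the two bounds gives the claimed equality.

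There is no genuine obstacle here; the result is essentially a bookkeeping exercise with the definitions of $\mathcal{H}^s_\delta$ and $H_{\rm dim}$, and the only care needed is to apply the Lipschitz contraction inequality with the correct exponent in each direction. I would note in passing that the hypothesis $\mathcal{H}^n(A)>0$ is exactly what is required to upgrade the upper bound $H_{\rm dim}({\rm gph}(h|_A))\leq n$, which holds unconditionally, into an equality.
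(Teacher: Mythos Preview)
Your proof is correct and is the standard argument for this classical fact. The paper does not supply its own proof of this theorem; it merely cites \cite[Theorem~2.9]{Evans-et-2015} and uses the result as a black box, so there is nothing to compare against beyond noting that your argument is essentially the textbook one.
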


\begin{proposition}\label{prop-dim}
Let $Q_1$ and $Q_2$ be given in  \eqref{Q1} and \eqref{Q2}, respectively. Then, it holds that
\[
H_{\rm dim}(Q_1)= 2,\quad  H_{\rm dim}(Q_2)=  1.
\]
\end{proposition}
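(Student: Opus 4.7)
My plan is to prove both equalities via matching upper and lower bounds, appealing to the monotonicity and countable subadditivity of Hausdorff dimension (Fact~\ref{fact-hausdorff}).

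For the upper bound $H_{\rm dim}(Q_1)\le 2$, each piece $Q_{1j}$ is at most two-dimensional: $Q_{11}$ is the locally Lipschitz image of the open set $({\rm int}({\rm dom}\,f^*)\setminus f'(W))\times(0,\infty)$ under $(\beta,t^*)\mapsto(-t^*\beta,\,t^*f^*(\beta),\,t^*)$, using that the convex function $f^*$ is locally Lipschitz on ${\rm int}({\rm dom}\, f^*)$; $Q_{12}$, when nonempty, is a finite union (at most two pieces, one per boundary point of ${\rm dom}\,f^*$) of half-planes parameterized by $(r^*,t^*)$; and $Q_{13}$ sits inside the two-dimensional affine subspace $\{z_3=0\}$. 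For the upper bound $H_{\rm dim}(Q_2)\le 1$, each piece of $Q_2$ is contained in a finite union of rays: $Q_{21}$ is a union of $|f'(W)|$ rays (one for each $\beta\in f'(W)$); $Q_{22}$ is a union of at most two rays, since $|{\rm bdry}({\rm dom}\,f^*)\cap{\rm dom}\,f^*|\le 2$; $Q_{23}$ is a union of at most two rays because the recession function $(f^*)_\infty$ is positively homogeneous of degree~1 and hence linear on each of the two half-lines of its domain; and $Q_{24}$ reduces to the single ray $\{(0,s,0):s\ge 0\}$, using $(f^*)_\infty(0)=0$. Fact~\ref{fact-hausdorff}(iii) then delivers both upper bounds.

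For the matching lower bounds, $Q_{24}$ is a nontrivial ray, so $H_{\rm dim}(Q_2)\ge 1$ by Fact~\ref{fact-hausdorff}(ii). For $Q_1$, I construct a bi-Lipschitz two-dimensional embedding inside $Q_{11}$. Under Assumption~\ref{assu-blanket}, $f$ is Legendre, hence so is $f^*$ by \cite[Theorem 26.3]{Rockafellar-1970}; in particular ${\rm int}({\rm dom}\,f^*)$ is a nonempty open interval. Since $f'(W)$ is finite, I can pick a compact interval $J\subset{\rm int}({\rm dom}\,f^*)\setminus f'(W)$ together with some $[a,b]\subset(0,\infty)$, and define $\Phi:J\times[a,b]\to\R^3$ by $\Phi(\beta,t)=(-t\beta,\,tf^*(\beta),\,t)$. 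The map $\Phi$ is Lipschitz, because $f^*$ is Lipschitz on the compact subset $J$ of the interior of its domain, and its inverse $(u,v,t)\mapsto(-u/t,t)$ is Lipschitz on $\Phi(J\times[a,b])$ thanks to $t\ge a>0$. As bi-Lipschitz maps preserve Hausdorff dimension, $H_{\rm dim}(\Phi(J\times[a,b]))=H_{\rm dim}(J\times[a,b])=2$, and monotonicity gives $H_{\rm dim}(Q_1)\ge H_{\rm dim}(Q_{11})\ge 2$.

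The main obstacle is really bookkeeping: one must confirm that each $Q_{ij}$ admits the stated parameterization and identify which of these parameter spaces can degenerate (for instance, $Q_{12}$ and $Q_{13}$ may well be empty, but $Q_{11}$ is always two-dimensional because $f^*$ being Legendre forces ${\rm int}({\rm dom}\, f^*)$ to be a nontrivial interval). The bi-Lipschitz step itself is routine once one restricts $t$ to a compact subinterval $[a,b]\subset(0,\infty)$ to keep the inverse of $\Phi$ under control.
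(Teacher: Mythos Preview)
Your proof is correct and parallels the paper's argument for $Q_2$ (both show it is a finite union of rays), but it diverges from the paper in how it handles $H_{\rm dim}(Q_1)=2$. For the \emph{upper} bound, the paper does not bound each $Q_{1j}$ separately as you do; instead it invokes \cite[Corollary~15]{Silva-Tuncel-2019} to conclude that $H_{\rm dim}({\rm bdry}(({\rm epi}\,f^\pi)^*))=2$, and then uses $Q_1\subseteq{\rm bdry}(({\rm epi}\,f^\pi)^*)$ together with Fact~\ref{fact-hausdorff}(ii). Your piecewise argument is more self-contained and avoids that external reference, at the cost of having to check that locally Lipschitz images of planar open sets (handled via a countable cover by compacta) have dimension at most~$2$. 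For the \emph{lower} bound, the paper works with the perspective function $F(x,t)=tf^*(x/t)$, identifies $Q_{11}$ (up to the isometry $(x,t,r)\mapsto(-x,r,t)$) with ${\rm gph}(F|_E)$, and applies Theorem~\ref{the-dimension} to a compact $D\subset E$ with nonempty interior. Your bi-Lipschitz embedding $\Phi(\beta,t)=(-t\beta,tf^*(\beta),t)$ on $J\times[a,b]$ is a close cousin: it is essentially the same parameterization, but you appeal to bi-Lipschitz invariance of Hausdorff dimension rather than the graph theorem. Both routes are standard; yours is arguably more elementary, while the paper's is slightly shorter once Theorem~\ref{the-dimension} is available.
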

\begin{proof}
First, we show that $H_{\rm dim}(Q_1)\geq  2$.
Define the  function $F:\R^2\rightarrow \R\cup \{\infty\}$ as
\[
F(x,t)=
\begin{cases}
t f^*(x/t) & \mbox{if $t>0$}, \\
\infty  &  \mbox{otherwise}.
\end{cases}
\]
Then ${\rm dom}\, F=\cup_{t>0} t\cdot ({\rm dom}\, f^*\times \{1\})$, and $F$ is convex according to \cite[Proposition 2.5.7]{Auslender-Alfred-Teboulle-2006}.
Note that  ${\rm int}({\rm dom}\, f^*)\neq \emptyset$ thanks to  the fact that $f^*$ is a Legendre function.
Let
\[
E\coloneqq \bigcup_{t>0} t\cdot ({\rm int}({\rm dom}\, f^*)\setminus f'(W)\times \{1\})\neq \emptyset,
\]
where $f'(W)\coloneqq \{ v : v= f'(w), w\in W\}$ with $W\subset \R$  given in Assumption \ref{assu-blanket}.
As $E$ is an open set contained in ${\rm dom}\, F$, the function $F$ is Lipschitz  continuous over any compact subset of $E$ by \cite[Theorem~10.4]{Rockafellar-1970}.
Notice that
\begin{align}\label{prop-dim-1}
{\rm gph}(F|_{E})=\{(x^*,t^*,r^*) : r^*=t^*f^*(x^*/t^*), x^*/t^*\in{\rm int}({\rm dom}\, f^*)\setminus f'(W),  t^*>0 \}.
\end{align}
Define $\mathcal{T}:\R^3\rightarrow\R^3$ as
$\mathcal{T}(x,t,r) = (-x,r,t)$.
Let $D\varsubsetneq E$  be compact with nonempty interior.
 It follows that
\begin{align}\label{Hdimeq1}
H_{\rm dim}(Q_{11})&=H_{\rm dim}(\{t^*(-\beta,r^*/t^*,1):\,    r^*=t^*f^*(\beta),\beta\in{\rm int}({\rm dom}\, f^*)\setminus f'(W), t^*>0\}) \notag\\
&\overset{\rm (a)}{=}H_{\rm dim}(\mathcal{T}{\rm gph}(F|_{E}))\overset{\rm (b)}{=}   H_{\rm dim}({\rm gph}(F|_{E})) \overset{\rm (c)}{\geq}  H_{\rm dim}({\rm gph}(F|_{D}))\overset{\rm (d)}{=}2,
\end{align}
where  (a) follows from  \eqref{prop-dim-1} and the definition of $\mathcal{T}$,
(b) follows from Fact \ref{fact-hausdorff}(i),
(c) follows from Fact \ref{fact-hausdorff}(ii),
and (d) follows from Theorem \ref{the-dimension} thanks to the facts that $F$ is Lipschitz continuous on $D$ and $\mathcal{H}^2(D)>0$. Therefore, we have
\begin{equation}\label{Hdimeq2}
H_{\rm dim}(Q_1)= H_{\rm dim}(Q_{11}\cup Q_{12}\cup Q_{13})= \max \{H_{\rm dim}(Q_{11}),H_{\rm dim}(Q_{12}),H_{\rm dim}(Q_{13})\}  \geq 2,
\end{equation}
where the second equality follows from Fact \ref{fact-hausdorff}(iii), and the last inequality follows from \eqref{Hdimeq1}.

Next, in view of \cite[Corollary 15]{Silva-Tuncel-2019} and the fact that $H_{\rm dim}(({\rm epi}\, f^\pi)^*)=3$ (since $({\rm epi}\, f^\pi)^*$ is {pointed and} solid thanks to Proposition~\ref{prop-pointed}), we have that the Hausdorff dimension of ${\rm bdry}(({\rm epi}\, f^\pi)^*)=Q_1\cup Q_2$ is $2$, i.e. $H_{\rm dim}(Q_1\cup Q_2)=2$.
This together with \eqref{Hdimeq2} and Fact \ref{fact-hausdorff}(ii) shows that $H_{\rm dim}(Q_1)=2$.

Now, we will show that $H_{\rm dim}(Q_2)= 1$.  Note from \eqref{Q2} that $Q_2=Q_{21}\cup Q_{22}\cup Q_{23}\cup Q_{24}$. We will look at $Q_{21}$, $Q_{22}$, $Q_{23}$ and $Q_{24}$ one by one.

$\underline{Q_{21}=\{t^*(-\beta,r^*/t^*,1):\,    r^*=t^*f^*(\beta),\beta\in f'(W), t^*>0\}}$. Since $f'(W)$ is a finite set (as $W$ is a finite set by Assumption \ref{assu-blanket}), we see that $Q_{21}$ is a finite union of half lines in $\R^3$.

$\underline{Q_{22}=\{t^*(-\beta, r^*/t^*, 1) : \beta\in {\rm bdry}({\rm dom}\, f^*)\cap {\rm dom}\, f^*, r^*= t^*f^*(\beta),t^*>0\}}$.
As ${\rm bdry}({\rm dom}\, f^*)\cap {\rm dom}\, f^*$ contains at most two points, the set $Q_{22}$ is the union of at most two half lines in $\R^3$.

$\underline{Q_{23}=\{(-x^*, s^*, 0) : s^*=(f^*)_\infty(x^*), x^* \neq 0\}}$.
First, if ${\rm int}({\rm dom}\, (f^*)_\infty)= \emptyset$,  then we deduce that ${\rm dom}\, (f^*)_\infty=\{0\}$ and hence $Q_{23}=\emptyset$. Next, we consider the case that ${\rm int}({\rm dom}\, (f^*)_\infty)\neq \emptyset$.
Note that $(f^*)_\infty$ is positively homogeneous, proper and convex thanks to \cite[Theorem 8.5]{Rockafellar-1970}.
 From the positive homogeneity of $(f^*)_\infty$  and the fact that $0\in {\rm dom}\,(f^*)_\infty$ (see \eqref{def-f-infty}), it holds that
 \begin{align*}
 {\rm dom}\,(f^*)_\infty=
 	\begin{cases}
 \R  &	\mbox{if $0 \in {\rm int}({\rm dom}\, (f^*)_\infty)$}, \\
 \mbox{$[0,\infty)$ or  $(-\infty,0]$}  &  \mbox{if $0 \notin {\rm int}({\rm dom}\, (f^*)_\infty)$.}
 \end{cases}
 	\end{align*}
Next, we argue that $(f^*)_\infty$ is piecewise linear.
Indeed, if $0\in{\rm int}({\rm dom}\, (f^*)_\infty)$, the positive homogeneity of $(f^*)_\infty$ implies that
 \begin{equation*}
 (f^*)_\infty(x)=
 \begin{cases}
 	x (f^*)_\infty(1) &\mbox{if $x\geq 0$},\\
 	|x| (f^*)_\infty(-1) & \mbox{if $x< 0$}.\\
 	\end{cases}
 \end{equation*}
 If $0\notin{\rm int}({\rm dom}\, (f^*)_\infty)$, then it  holds either  that ${\rm dom}\, (f^*)_\infty=[0,\infty)$ with $(f^*)_\infty(x)=x(f^*)_\infty(1)$ for $x\geq 0$ or  ${\rm dom}\, (f^*)_\infty =(-\infty,0]$ with $(f^*)_\infty(x)=|x|(f^*)_\infty(-1)$ for $x\leq 0$.
  Therefore, we see that $Q_{23}$ is actually the union of at most two half lines in $\R^3$.

$\underline{Q_{24}=\{(0, s^*, 0) : s^*\geq 0\}}$.  It is clear that $Q_{24}$ is a half line in $\R^3$.

Then the set $Q_2$ is the union of finitely many half lines in $\R^3$.
Moreover, any half line in $\R^3$ is of Hausdorff dimension one.\footnote{Indeed, let $L\coloneqq \{t{\bm b}: t\geq 0\}\subset \R^3$ be a half real line for some ${\bm b}=(b_1,b_2,b_3)\neq 0$.
Without loss of generality, we assume that $b_1=1$ (otherwise we permute the coordinates and reparameterize the $t$ by a positive scaling, and this does not change the Hausdorff dimension). It is clear that
$L$ is the graph of the Lipschitz mapping $t\mapsto t(b_2,b_3)$ restricted to $t\geq 0$. Using Theorem \ref{the-dimension}, we see that $H_{\rm dim}(L)=1$.}
These two facts together with Fact \ref{fact-hausdorff}(iii) imply that $H_{\rm dim}(Q_2)=1$.
\end{proof}

\subsection{Almost everywhere H\"olderian $\mathds{1}$-FRF of exponent $1/2$ and error bounds}

We use the notation  ``$\mathcal{H}^s$-a.e." to indicate almost everywhere with respect to the $s$-dimensional Hausdorff measure $\mathcal{H}^s$ except on a set $A$ with $\mathcal{H}^s(A)=0$.

\begin{theorem}[almost everywhere H\"{o}lderian $\mathds{1}$-FRF of exponent $1/2$]\label{the-almost-1frf}
 Let ${\rm epi}\,f^\pi$ and $({\rm epi}\, f^\pi)^*$ be given   in \eqref{per-cone} and \eqref{dual-cone}, respectively.
 Then, for any $s > 1$, for ${\rm epi}\, f^\pi$ and $\mathcal{H}^s$-a.e. ${\bm z}^*\in{\rm bdry}(({\rm epi}\, f^\pi)^*)$,  the corresponding $\mathds{1}$-FRF can be taken to be H\"{o}lderian of exponent $1/2$.
\end{theorem}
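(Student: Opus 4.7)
The proof is essentially assembled from the pieces already in the excerpt, so the plan is short. The idea is to use the partition ${\rm bdry}(({\rm epi}\,f^\pi)^*) = Q_1 \cup Q_2$ from \eqref{Q12}--\eqref{Q2}: on $Q_1$ a H\"olderian $\mathds{1}$-FRF of exponent $1/2$ has already been established, and $Q_2$ is Hausdorff-small enough to be discarded.

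First I would recall that the three subfamilies $Q_{11}$, $Q_{12}$ and $Q_{13}$ making up $Q_1$ are exactly the situations covered by Corollaries \ref{coro-1frf-1d-f}, \ref{coro-1frf-2d-f} and \ref{coro-1frf-f0}, respectively. In each of these three cases the $\mathds{1}$-FRF for ${\rm epi}\,f^\pi$ and ${\bm z}^*$ has been shown to be H\"olderian of exponent $1/2$ (Corollary \ref{coro-1frf-1d-f} yields the exponent $1/2$ directly, while Corollaries \ref{coro-1frf-2d-f} and \ref{coro-1frf-f0} yield exponent $1$, which is dominated by $1/2$ in the sense of Definition \ref{def-holder-1frf}, as remarked just after that definition). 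Hence every ${\bm z}^* \in Q_1$ admits a H\"olderian $\mathds{1}$-FRF of exponent $1/2$.

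Next I would handle $Q_2$ by invoking Proposition \ref{prop-dim}, which gives $H_{\rm dim}(Q_2) = 1$. By the definition of Hausdorff dimension \eqref{Haus-dim}, this means $\mathcal{H}^s(Q_2) = 0$ for every $s > 1$. Since
\[
{\rm bdry}(({\rm epi}\,f^\pi)^*) \setminus Q_1 \subseteq Q_2,
\]
the set of ${\bm z}^*\in {\rm bdry}(({\rm epi}\,f^\pi)^*)$ for which the $\mathds{1}$-FRF has not been shown to be H\"olderian of exponent $1/2$ is $\mathcal{H}^s$-null for every $s>1$. Combining the two observations gives the theorem.

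There is essentially no obstacle left at this stage; the genuine technical work was already carried out in Theorems \ref{thm-frf-1d-f}, \ref{thm-frf-2d-f}, \ref{thm-frf-f0} (producing the $\mathds{1}$-FRFs) and in Proposition \ref{prop-dim} (bounding $H_{\rm dim}(Q_2)$). The only thing to be careful about in the writeup is the quantifier on $s$: one should state the conclusion as ``for any $s > 1$'' rather than for a fixed $s$, and note explicitly that a single exceptional set $Q_2$ works uniformly for all such $s$, since $H_{\rm dim}(Q_2)=1$ automatically yields $\mathcal{H}^s(Q_2)=0$ for each $s>1$ via \eqref{Haus-dim}.
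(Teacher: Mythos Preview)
Your proposal is correct and follows essentially the same approach as the paper's proof, which is a one-line citation of Corollaries \ref{coro-1frf-1d-f}, \ref{coro-1frf-2d-f}, \ref{coro-1frf-f0}, Proposition \ref{prop-dim}, and the definition of Hausdorff dimension \eqref{Haus-dim}. Your writeup simply unpacks that citation in more detail, correctly identifying the partition $Q_1\cup Q_2$, the three subcases of $Q_1$, and the role of $H_{\rm dim}(Q_2)=1$.
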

\begin{proof}
The assertion follows from  Corollaries \ref{coro-1frf-1d-f}, \ref{coro-1frf-2d-f}, \ref{coro-1frf-f0}, Proposition \ref{prop-dim}, and the definition of the Hausdorff dimension in \eqref{Haus-dim}.
\end{proof}

\begin{theorem}[almost everywhere H\"{o}lderian error bound  with exponent $1/2$]\label{the-almost-eb}
\begin{enumerate}[\rm (i)]
\item Let ${\rm epi}\,f^\pi$ and $({\rm epi}\, f^\pi)^*$ be given   in \eqref{per-cone} and \eqref{dual-cone}, respectively.
 Then, for any $s > 1$,    $\{{\rm epi}\, f^\pi, \{{\bm z}\}^\bot\}$   satisfies a uniform H\"{o}lderian error bound  with exponent $1/2$ for $\mathcal{H}^s$-a.e. ${\bm z}\in{\rm bdry}(({\rm epi}\, f^\pi)^*)$.

 \item Let $m\ge 2$ be an integer and let $g^{(i)} : \R \to \R_+ \cup\{\infty\}$ satisfy Assumption \ref{assu-blanket} for each $i\in [m] := \{1,\ldots,m\}$.
 Let $\mathcal{K}\coloneqq \mathcal{K}^{(1)}\times\cdots \times \mathcal{K}^{(m)}$, where each $\mathcal{K}^{(i)}$ is a three-dimensional perspective cone generated by  $g^{(i)}$.
  Then    $\{\mathcal{K}, \{{\bm z}\}^\bot\}$   satisfies a uniform H\"{o}lderian error bound  with exponent $1/2$ for  $\mathcal{H}^{3m-1}$-a.e. ${\bm z}\in{\rm bdry}(\mathcal{K}^*)$.
\end{enumerate}
\end{theorem}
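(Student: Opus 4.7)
The plan for part (i) is to apply Theorem \ref{the-eb} with the affine space specialized to $\{{\bm z}\}^\bot$. Fix $s > 1$ and let ${\bm z} \in Q_1 \subseteq {\rm bdry}(({\rm epi}\, f^\pi)^*)$ (see \eqref{Q1}). Setting ${\bm a} = {\bm 0}$ and $\mathcal{L} = \{{\bm z}\}^\bot$, one has $(\mathcal{L} + {\bm a})^\bot = {\rm span}\{{\bm z}\}$; then the choice ${\bm z}^* = {\bm z}$ together with $\mathcal{F} = {\rm epi}\, f^\pi \cap \{{\bm z}\}^\bot$ fulfills \eqref{the-eb-1}, as the inclusion $\mathcal{F} \subseteq \mathcal{L} + {\bm a}$ makes the PPS condition trivial for $\{\mathcal{F}, \mathcal{L} + {\bm a}\}$. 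Because ${\rm epi}\, f^\pi$ is solid and ${\bm z} \in ({\rm epi}\, f^\pi)^* \setminus \{{\bm 0}\}$, the hyperplane $\{{\bm z}\}^\bot$ fails to meet ${\rm int}({\rm epi}\, f^\pi)$, so ${\rm d}_{\rm PPS}({\rm epi}\, f^\pi, \mathcal{L} + {\bm a}) = 1$. Depending on whether ${\bm z}$ lies in $Q_{11}$, $Q_{12}$, or $Q_{13}$, Theorem \ref{the-eb}(ii) in case (a), (b), or (c) produces either a uniform H\"olderian error bound of exponent $1/2$ or a (stronger) Lipschitzian error bound. By Proposition \ref{prop-dim}, the complement ${\rm bdry}(({\rm epi}\, f^\pi)^*) \setminus Q_1 = Q_2$ has Hausdorff dimension at most $1$, hence $\mathcal{H}^s$-measure zero.

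For part (ii), the first step is to describe the bad set in $\mathcal{K}^*$. Let $B_i \subseteq {\rm bdry}((\mathcal{K}^{(i)})^*)$ denote those ${\bm z}^{(i)}$ for which the $\mathds{1}$-FRF for $(\mathcal{K}^{(i)}, {\bm z}^{(i)})$ cannot be taken H\"olderian of exponent $1/2$; Proposition \ref{prop-dim} applied to $g^{(i)}$ gives $H_{\rm dim}(B_i) \le 1$. The bad set in $\mathcal{K}^*$ is
\[
\widetilde{B} \coloneqq \bigcup_{i=1}^{m} (\mathcal{K}^{(1)})^* \times \cdots \times B_i \times \cdots \times (\mathcal{K}^{(m)})^*.
\]
Iterating Fact \ref{fact-hausdorff}(iv) to peel off the $m-1$ full-dimensional factors $(\mathcal{K}^{(j)})^*$ ($j \ne i$, each contributing $3$ to the Hausdorff dimension), together with Fact \ref{fact-hausdorff}(iii), yields $H_{\rm dim}(\widetilde{B}) \le 3(m-1) + 1 = 3m - 2$, so $\mathcal{H}^{3m-1}(\widetilde{B}) = 0$.

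For ${\bm z} \in {\rm bdry}(\mathcal{K}^*) \setminus \widetilde{B}$, each component ${\bm z}^{(i)}$ lies either in ${\rm int}((\mathcal{K}^{(i)})^*)$ or in ${\rm bdry}((\mathcal{K}^{(i)})^*) \setminus B_i$; using that each $\mathcal{K}^{(i)}$ is pointed and solid (Proposition \ref{prop-pointed}), both cases provide a $\mathds{1}$-FRF $\psi_i$ for $(\mathcal{K}^{(i)}, {\bm z}^{(i)})$ that is H\"olderian of some exponent $\alpha_i \in [1/2, 1]$ (the former via the comment after Definition \ref{def-holder-1frf}, the latter via Theorem \ref{the-almost-1frf}). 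To combine these into a $\mathds{1}$-FRF for $(\mathcal{K}, {\bm z})$, I exploit $\mathcal{K} \cap \{{\bm z}\}^\bot = \prod_i (\mathcal{K}^{(i)} \cap \{{\bm z}^{(i)}\}^\bot)$, the pointwise bound ${\rm d}({\bm x}^{(i)}, \mathcal{K}^{(i)}) \le {\rm d}({\bm x}, \mathcal{K})$, and a projection argument using ${\bm y} = P_\mathcal{K}({\bm x})$: from $\langle {\bm y}^{(i)}, {\bm z}^{(i)} \rangle \ge 0$ and $\sum_i \langle {\bm y}^{(i)}, {\bm z}^{(i)} \rangle = \langle {\bm y}, {\bm z} \rangle \le \langle {\bm x}, {\bm z} \rangle + \|{\bm y} - {\bm x}\| \|{\bm z}\|$, one deduces $\langle {\bm x}^{(i)}, {\bm z}^{(i)} \rangle \le (1 + 2\|{\bm z}\|) \max\{{\rm d}({\bm x}, \mathcal{K}), \langle {\bm x}, {\bm z} \rangle\}$. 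Applying each $\psi_i$ coordinatewise, summing via ${\rm d}({\bm x}, \mathcal{K} \cap \{{\bm z}\}^\bot) \le \sum_i {\rm d}({\bm x}^{(i)}, \mathcal{K}^{(i)} \cap \{{\bm z}^{(i)}\}^\bot)$, and absorbing higher-exponent terms with the pointwise inequality $\epsilon^{\alpha_i} \le \epsilon + \epsilon^{1/2}$ (valid for $\alpha_i \in [1/2, 1]$ and $\epsilon \ge 0$) produces a $\mathds{1}$-FRF for $(\mathcal{K}, {\bm z})$ that is H\"olderian of exponent $1/2$. Invoking \cite[Lemma 2.1]{Lindstrom-Lourenco-Pong-2025} exactly as in Theorem \ref{the-eb}(ii) (the PPS condition for $\{\mathcal{F}, \{{\bm z}\}^\bot\}$ with $\mathcal{F} = \mathcal{K} \cap \{{\bm z}\}^\bot$ is again trivial) converts this H\"olderian $\mathds{1}$-FRF into the desired uniform H\"olderian error bound of exponent $1/2$.

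The main technical obstacle is this combination step: the inequality $\langle {\bm x}, {\bm z} \rangle \le \epsilon$ does not split across components without some loss, so the projection-based reduction is essential; fortunately, the resulting constants depend only on $\|{\bm z}\|$ and do not affect the H\"olderian exponent.
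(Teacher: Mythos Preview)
Your argument is correct in both parts, but the route differs noticeably from the paper's.

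For part~(i), the paper bypasses Theorem~\ref{the-eb} entirely: since a H\"olderian $\mathds{1}$-FRF of exponent $1/2$ is available for every ${\bm z}\in Q_1$ (Corollaries~\ref{coro-1frf-1d-f}--\ref{coro-1frf-f0}), the equivalent formulation of the $\mathds{1}$-FRF in the footnote to Definition~\ref{def-1frf}, together with $|\langle{\bm x},{\bm z}\rangle|=\|{\bm z}\|\,{\rm d}({\bm x},\{{\bm z}\}^\bot)$, already yields the uniform H\"olderian error bound after bounding $\epsilon\le(2M)^{1/2}\epsilon^{1/2}$ on a ball of radius $M$. Your detour through Theorem~\ref{the-eb} works, but note a small formal wrinkle: Theorem~\ref{the-eb}(ii) is phrased in terms of \emph{some} ${\bm z}^*$ asserted to exist by Proposition~\ref{prop-facial-reduction}, not an arbitrary one satisfying~\eqref{the-eb-1}. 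Your choice ${\bm z}^*={\bm z}$ does satisfy~\eqref{the-eb-1}, and the \emph{proof} of Theorem~\ref{the-eb}(ii) only uses~\eqref{the-eb-1}, so the conclusion still follows; but a reader taking the theorem statement at face value could object.

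For part~(ii), the two proofs diverge more substantially. The paper partitions ${\rm bdry}(\mathcal{K}^*)$ exactly via the sets $A^{(i)},B^{(i)}$ in~\eqref{almost-eb-2}--\eqref{almost-eb-3} and computes $H_{\rm dim}(\cup_j A^{(j)})=3m-1$, $H_{\rm dim}(\cup_j B^{(j)})=3m-2$; for the product $\mathds{1}$-FRF it invokes \cite[Proposition~3.13]{Lindstrom-Lourenco-Pong-2023}. You instead take a coarser bad set $\widetilde{B}$ (which even spills outside the boundary) and bound its dimension by $3m-2$, which is all that is needed; then you reprove the product-FRF combination by hand via the projection argument. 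Your combination step is correct: the key identity $\mathcal{K}\cap\{{\bm z}\}^\bot=\prod_i(\mathcal{K}^{(i)}\cap\{{\bm z}^{(i)}\}^\bot)$ holds because each $\langle{\bm z}^{(i)},{\bm x}^{(i)}\rangle\ge0$ forces all terms to vanish when the sum does, and your bound $\langle{\bm x}^{(i)},{\bm z}^{(i)}\rangle\le(1+2\|{\bm z}\|)\max\{{\rm d}({\bm x},\mathcal{K}),\langle{\bm x},{\bm z}\rangle\}$ checks out in both regimes of the max. The paper's citation is cleaner, but your self-contained argument makes the mechanism transparent and avoids an external dependency.
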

\begin{proof}
(i) Recall from \eqref{Q12} that ${\rm bdry}(({\rm epi}\, f^\pi)^*)=Q_1\cup Q_2$.
In view of Corollaries \ref{coro-1frf-1d-f}, \ref{coro-1frf-2d-f}, \ref{coro-1frf-f0}
and the definition of $\mathds{1}$-FRF, we  deduce that $\{{\rm epi}\,f^\pi, \{\bm z\}^\bot\}$ satisfies a uniform H\"{o}lderian error bound  with exponent $1/2$ whenever ${\bm z}\in Q_1$. This together with the definition of Hausdorff dimension (see \eqref{Haus-dim}) and Proposition \ref{prop-dim} implies the almost everywhere uniform H\"{o}lderian error bound  with exponent $1/2$.

(ii) Note that  $\mathcal{K}^*=(\mathcal{K}^{(1)})^*\times\cdots\times(\mathcal{K}^{(m)})^*$.
For each $i\in[m]$, we can write
\begin{align*}
{\rm bdry}((\mathcal{K}^{(i)})^*)=Q^{(i)}_1\cup Q^{(i)}_2,
\end{align*}
where $Q^{(i)}_1$ and  $Q^{(i)}_2$ are respectively defined according to  \eqref{Q1} and \eqref{Q2} with  $g^{(i)}$ in place of $f$. Then, according to Proposition \ref{prop-dim}, we have
 \begin{align}\label{almost-eb-0}
H_{\rm dim}(Q^{(i)}_1)= 2, \quad H_{\rm dim}(Q^{(i)}_2)= 1 \quad \forall\, i\in [m].
\end{align}

 Define the following sets for each $i\in [m]$:
 \begin{align}
 A^{(i)}&=\bigcup_{J\subseteq [m]\atop |J|=i}\underbrace{\{Z_1\times\cdots\times Z_m: \mbox{$Z_k=Q^{(k)}_1$ for $k\in J$ and $Z_l ={\rm int}((\mathcal{K}^{(l)})^*)$ for $l\in [m]\setminus J$}\}}_{\eqqcolon A(i,J)}, \label{almost-eb-2}\\
B^{(i)}&=\bigcup_{J\subseteq [m]\atop |J|=i} \underbrace{\{Z_1\times\cdots\times Z_m: \mbox{$Z_k=Q^{(k)}_2$ for $k\in J$ and $Z_l =(\mathcal{K}^{(l)})^*\setminus Q_2^{(l)}$ for $l\in [m]\setminus J$}\}}_{\eqqcolon B(i,J)}.\label{almost-eb-3}
\end{align}
Then we observe that
\begin{equation}\label{bdryKstar}
 {\rm bdry}(\mathcal{K}^*)=\left(\bigcup_{j=1}^m A^{(j)} \right)\bigcup \left(\bigcup_{j=1}^m B^{(j)}\right).
 \end{equation}
Moreover, in view of item (i) and \cite[Proposition 3.13]{Lindstrom-Lourenco-Pong-2023}, if
\[
\mbox{ ${\bm z}=({\bm z}^{(1)},\ldots, {\bm z}^{(m)})\in {\rm bdry}(\mathcal{K}^*)$
with each ${\bm z}^{(i)}\in Q^{(i)}_1$ or ${\bm z}^{(i)}\in {\rm int}((\mathcal{K}^{(i)})^*)$},
\]
then $\{\mathcal{K},\{{\bm z}\}^\bot\}$ satisfies a uniform H\"{o}lderian error bound  with exponent $1/2$, see this footnote.\footnote{From \cite[Proposition 3.13]{Lindstrom-Lourenco-Pong-2023}, $\mathds{1}$-FRFs for a direct product can be taken to be sums of $\mathds{1}$-FRFs for the individual blocks, up to a positive rescaling. For the blocks such that ${\bm z}^{(i)}\in Q^{(i)}_1$, the $\mathds{1}$-FRFs can be taken to be H\"olderian of exponent $1/2$ by the results in Section~\ref{section-frf} and, for the remaining blocks, the $\mathds{1}$-FRFs can be taken to be H\"olderian of exponent $1$ by \cite[Lemma~2.2]{Lindstrom-Lourenco-Pong-2025}, and, hence, dominated by $1/2$.
Summing those $\mathds{1}$-FRFs and positively rescaling them, we obtain a H\"olderian $\mathds{1}$-FRFs of exponent $1/2$ for $\mathcal{K}$ and ${\bm z}$, from which we can see from the definition of $\mathds{1}$-FRF that this  leads to a uniform H\"olderian error bound with exponent $1/2$ for $\{\mathcal{K},\{{\bm z}\}^\bot\}$, e.g., see the footnote in Definition~\ref{def-1frf}.}
Thus, we know that for any ${\bm z}\in A^{(i)}$, $\{\mathcal{K},\{\bm z\}^\bot\}$ satisfies a uniform H\"{o}lderian error bound  with exponent $1/2$, whereas such error bound may fail if ${\bm z}\in B^{(i)}$. Hence, the proof will be complete if we show that $H_{\rm dim}(\cup_{j=1}^m A^{(j)})\ge 3m-1 >
  H_{\rm dim}(\cup_{j=1}^m B^{(j)})$.

We now compute the Hausdorff dimension of $\cup_{j=1}^m A^{(j)}$ and $\cup_{j=1}^m B^{(j)}$.
To this end, we consider the following sets
  \begin{align*}
 \hat{A}^{(1)}&=\bigcup_{k\in[m]}\underbrace{\{Z_1\times\cdots\times Z_m: \mbox{$Z_k=Q^{(k)}_1$  and $Z_l =(\mathcal{K}^{(l)})^*$ for $l\neq k$}\}}_{\eqqcolon \hat{A}(1,\{k\})}, \\
 \hat{B}^{(1)}&=\bigcup_{k\in[m]}\underbrace{\{Z_1\times\cdots\times Z_m: \mbox{$Z_k=Q^{(k)}_2$  and $Z_l =(\mathcal{K}^{(l)})^*$ for $l\neq k$}\}}_{\eqqcolon \hat{B}(1,\{k\})}.
 \end{align*}
We have $A^{(j)}\subsetneq \hat{A}^{(1)}$ and $B^{(j)}\subsetneq \hat{B}^{(1)}$ for any $j\in [m]$. Using this fact together with Fact \ref{fact-hausdorff}(ii), we have that for any $j\in [m]$,
 \begin{align} \label{almost-eb-4}
 H_{\rm dim}(A^{(j)})\leq H_{\rm dim}(\hat{A}^{(1)}), \quad
  H_{\rm dim}(B^{(j)})\leq H_{\rm dim}(\hat{B}^{(1)}).
 \end{align}
 Since $\mathcal{K}^{(i)}$ is solid and pointed thanks to Proposition \ref{prop-pointed}, we know from \cite[Proposition 2.4.3]{Facchinei-Pang-2003} that $(\mathcal{K}^{(i)})^*$ is also solid and pointed.
From this, we have that $H_{\rm dim}((\mathcal{K}^{(i)})^*)=H_{\rm dim}({\rm int}((\mathcal{K}^{(i)})^*))=H_{\rm dim}((\mathcal{K}^{(i)})^*\setminus Q_2^{(i)})=3$ for each $i\in[m]$, which together with Fact \ref{fact-hausdorff}(iv) implies that
 \begin{align} \label{almost-eb-5}
 H_{\rm dim}(A(1,\{k\}))=H_{\rm dim}(\hat{A}(1,\{k\})), \quad H_{\rm dim}(B(1,\{k\}))=H_{\rm dim}(\hat{B}(1,\{k\})) \quad \forall\, k\in [m].
 \end{align}
Hence, we obtain that
 \begin{align}
H_{\rm dim}(A^{(1)})=&H_{\rm dim}\left(\bigcup_{k\in [m]}A(1,\{k\})\right)\overset{\rm (a)}{=}\max_{k\in [m]}H_{\rm dim}({A}(1,\{k\}) \notag\\
\overset{\rm (b)}{=}&\max_{k\in [m]}H_{\rm dim}(\hat{A}(1,\{k\})\overset{\rm (c)}{=}\max_{k\in [m]}H_{\rm dim}(Q^{(k)}_1)+3(m-1) \overset{\rm (d)}{=} 3m-1,\label{almost-eb-6} \\
H_{\rm dim}(B^{(1)})=&H_{\rm dim}\left(\bigcup_{k\in [m]}B(1,\{k\})\right)\overset{\rm (e)}{=}\max_{k\in [m]}H_{\rm dim}({B}(1,\{k\})\notag\\
\overset{\rm (f)}{=}&\max_{k\in [m]}H_{\rm dim}(\hat{B}(1,\{k\})\overset{\rm (g)}{=}\max_{k\in [m]}H_{\rm dim}(Q^{(k)}_2)+3(m-1) \overset{\rm (h)}{=} 3m-2, \label{almost-eb-7}
 \end{align}
 where (a) and (e) follow from  Fact \ref{fact-hausdorff}(iii),
 (b) and (f) follow from \eqref{almost-eb-5},
 (c) and (g) follow from Fact \ref{fact-hausdorff}(iv) and that $H_{\rm dim}((\mathcal{K}^{(i)})^*)=3$ for each $i$,
and (d) and (h) follow from \eqref{almost-eb-0}.
In view of \eqref{almost-eb-4}, \eqref{almost-eb-6} and \eqref{almost-eb-7}, it holds that
 \begin{align}\label{almost-eb-8}
\max_{j\in [m]} H_{\rm dim}(A^{(j)})=  H_{\rm dim}(A^{(1)})= 3m-1, \quad \max_{j\in [m]} H_{\rm dim}(B^{(j)})=  H_{\rm dim}(B^{(1)})= 3m-2.
\end{align}
 Then, we  deduce that
 \begin{align*}
 H_{\rm dim}(\cup_{j=1}^m A^{(j)})\overset{\rm (a)}{=}\max_{j\in [m]} H_{\rm dim}(A^{(j)})
 \overset{\rm (b)}{=} 3m-1, \\
  H_{\rm dim}(\cup_{j=1}^m B^{(j)})\overset{\rm (c)}{=}\max_{j\in [m]} H_{\rm dim}(B^{(j)})\overset{\rm (d)}{=} 3m-2,
 \end{align*}
 where (a) and (c) follow from Fact \ref{fact-hausdorff}(iii),
 and (b) and (d) follows  from \eqref{almost-eb-8}.
 \end{proof}

The feasibility problem \eqref{prob-fea} is said to be \emph{weakly feasible} if it is feasible and  $(\mathcal{L}+{\bm a}) \cap {\rm int}({\rm epi}\, f^\pi) = \emptyset$, see
\cite[Section~2]{LSZ97}. We say that it is \emph{nontrivially weakly feasible} if it is feasible and,
\begin{align}\label{nontrivial}
(\mathcal{L}+{\bm a}) \cap {\rm int}({\rm epi}\, f^\pi) = \emptyset \ \ {\rm and}\ \
 (\mathcal{L}+{\bm a})\cap {\rm epi}\, f^\pi \neq \{{\bm 0}\}.
\end{align}
Notice that if $(\mathcal{L}+{\bm a}) \cap {\rm int}({\rm epi}\, f^\pi) \neq \emptyset$, then $\{{\rm epi}\, f^\pi, \mathcal{L}+{\bm a}\}$ satisfies a Lipschitzian error bound by \cite[Proposition 2.3]{Lindstrom-Lourenco-Pong-2023} or \cite[Corollary 3]{Bauschke-Borwein-Li-1999}.
On the other hand, if $(\mathcal{L}+{\bm a})\cap {\rm epi}\, f^\pi= \{{\bm 0}\}$,  then $\{{\rm epi}\, f^\pi, \mathcal{L}+{\bm a}\}$ satisfies a Lipschitzian error bound by \cite[Proposition 27]{Lourencco-2021} (see also Theorem \ref{the-eb}).
In summary, for a feasible \eqref{prob-fea} that fails \eqref{nontrivial},  $\{{\rm epi}\, f^\pi, \mathcal{L}+{\bm a}\}$ satisfies a Lipschitzian error bound.

We next discuss error bounds for nontrivially weakly feasible
\eqref{prob-fea}. 
We start with the following lemma.

\begin{lemma}\label{lemma-trivial}
	Consider the feasibility problem \eqref{prob-fea} with $\mathcal{L}\coloneqq \{\bm z\}^\bot$ for some ${\bm z}$.
	Suppose that $(\mathcal{L}+{\bm a})\cap {\rm epi}\, f^\pi\neq \emptyset$.
Then, we have the following equivalence
\begin{equation*}
\begin{aligned}
&\mbox{ $\{ {\rm epi}\, f^\pi, \mathcal{L}+{\bm a}\}$ is nontrivially weakly feasible}\\
\Longleftrightarrow &
	  \mbox{ ${\bm a}\in \mathcal{L}$ and ${\bm z}\in {\rm bdry}(({\rm epi}\, f^\pi)^*)\cup {\rm bdry}(-({\rm epi}\, f^\pi)^*)\setminus \{{\bm 0}\}$}.
\end{aligned}
\end{equation*}
\end{lemma}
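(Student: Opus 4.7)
The plan is to prove the two implications in turn, both resting on the observation that when ${\bm z}\neq{\bm 0}$, the affine set $\mathcal{L}+{\bm a}$ is the hyperplane $\{{\bm x}:\langle {\bm z},{\bm x}\rangle=\langle {\bm z},{\bm a}\rangle\}$, and moreover $\mathcal{L}+{\bm a}=\mathcal{L}=\{{\bm z}\}^\bot$ exactly when ${\bm a}\in\mathcal{L}$. A preliminary remark is that ${\bm z}={\bm 0}$ is incompatible with either side of the equivalence: for the forward side it would give $\mathcal{L}+{\bm a}=\R^3\supseteq{\rm int}({\rm epi}\,f^\pi)\neq\emptyset$ (by Proposition~\ref{prop-pointed}), contradicting the first half of \eqref{nontrivial}; for the reverse side ${\bm z}={\bm 0}$ is excluded by the assumption ${\bm z}\in{\rm bdry}(\pm({\rm epi}\,f^\pi)^*)\setminus\{{\bm 0}\}$.

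For the forward direction I would start from the fact that $\mathcal{L}+{\bm a}$ is a hyperplane disjoint from the nonempty open convex set ${\rm int}({\rm epi}\,f^\pi)$, so the interior lies in exactly one of the two open half-spaces it determines; taking closures gives two symmetric cases, of which I treat ${\rm epi}\,f^\pi\subseteq\{{\bm x}:\langle{\bm z},{\bm x}\rangle\geq\langle{\bm z},{\bm a}\rangle\}$ (the other yields the $-({\rm epi}\,f^\pi)^*$ branch of the conclusion by the same argument applied to $-{\bm z}$). Plugging in ${\bm x}={\bm 0}$ forces $\langle{\bm z},{\bm a}\rangle\leq 0$, while applying the inequality to $t{\bm x}$ for ${\bm x}\in{\rm epi}\,f^\pi$ and letting $t\to\infty$ forces $\langle{\bm z},{\bm x}\rangle\geq 0$, so ${\bm z}\in({\rm epi}\,f^\pi)^*$. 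Next, the nontrivial feasible point ${\bm 0}\neq\bar{\bm x}\in(\mathcal{L}+{\bm a})\cap{\rm epi}\,f^\pi$ satisfies $0\leq\langle{\bm z},\bar{\bm x}\rangle=\langle{\bm z},{\bm a}\rangle\leq 0$, so both are zero; this gives ${\bm a}\in\mathcal{L}$ and simultaneously $\bar{\bm x}\in{\rm epi}\,f^\pi\cap\{{\bm z}\}^\bot\setminus\{{\bm 0}\}$. Combining the latter with \eqref{pre-1} (contrapositive) yields ${\bm z}\notin{\rm ri}(({\rm epi}\,f^\pi)^*)$; since $({\rm epi}\,f^\pi)^*$ is solid by Proposition~\ref{prop-pointed}, ${\rm ri}(({\rm epi}\,f^\pi)^*)={\rm int}(({\rm epi}\,f^\pi)^*)$, and hence ${\bm z}\in{\rm bdry}(({\rm epi}\,f^\pi)^*)\setminus\{{\bm 0}\}$, as required.

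For the reverse direction, ${\bm a}\in\mathcal{L}$ immediately gives $\mathcal{L}+{\bm a}=\mathcal{L}=\{{\bm z}\}^\bot$; using that $\{{\bm z}\}^\bot=\{-{\bm z}\}^\bot$ I reduce to the case ${\bm z}\in{\rm bdry}(({\rm epi}\,f^\pi)^*)\setminus\{{\bm 0}\}\subseteq({\rm epi}\,f^\pi)^*$. Then $\{{\bm z}\}^\bot$ is a supporting hyperplane of ${\rm epi}\,f^\pi$, which delivers $(\mathcal{L}+{\bm a})\cap{\rm int}({\rm epi}\,f^\pi)=\emptyset$. The remaining requirement $(\mathcal{L}+{\bm a})\cap{\rm epi}\,f^\pi\neq\{{\bm 0}\}$ says that every nonzero boundary point of $({\rm epi}\,f^\pi)^*$ must expose a nontrivial face, and this is where the main work lies. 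I would verify it by a short compactness-and-perturbation argument: if ${\rm epi}\,f^\pi\cap\{{\bm z}\}^\bot=\{{\bm 0}\}$, then the continuous function $\langle{\bm z},\cdot\rangle$ is strictly positive on the compact set ${\rm epi}\,f^\pi\cap S^2$ (compact as the intersection of a closed set with a compact one), attaining some positive minimum $m$; by positive homogeneity $\langle{\bm z},{\bm x}\rangle\geq m\|{\bm x}\|$ on ${\rm epi}\,f^\pi$, and this uniform estimate forces all sufficiently small perturbations of ${\bm z}$ to remain in $({\rm epi}\,f^\pi)^*$, contradicting ${\bm z}\in{\rm bdry}(({\rm epi}\,f^\pi)^*)$. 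This perturbation step, elementary but the only genuinely nontrivial piece, is the one to be written out carefully; the rest of the argument reduces to separation and cone arithmetic.
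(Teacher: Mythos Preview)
Your proof is correct and takes a genuinely different route from the paper's.

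For the forward implication, the paper invokes the facial reduction machinery: nontrivial weak feasibility forces ${\rm d}_{\rm PPS}({\rm epi}\,f^\pi,\mathcal{L}+{\bm a})=1$ (via Theorem~\ref{the-eb} and non-polyhedrality of ${\rm epi}\,f^\pi$), and then Proposition~\ref{prop-facial-reduction} supplies a nonzero ${\bm z}^{(1)}\in({\rm epi}\,f^\pi)^*$ with $\mathcal{L}+{\bm a}\subseteq\{{\bm z}^{(1)}\}^\bot$, from which the conclusion follows by comparing $\{{\bm z}\}^\bot$ with $\{{\bm z}^{(1)}\}^\bot$. You instead argue directly by separation: the hyperplane misses the interior, hence the cone sits in a closed half-space, and the nonzero feasible point pins down $\langle{\bm z},{\bm a}\rangle=0$. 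For the backward implication, the paper verifies ${\rm epi}\,f^\pi\cap\{{\bm z}\}^\bot\neq\{{\bm 0}\}$ by appealing to the explicit facial structure computed in Propositions~\ref{prop-e-face} and~\ref{prop-md-face} (Table~\ref{table-e-face}), whereas your compactness-and-perturbation argument shows the same fact for \emph{any} pointed solid closed convex cone.

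Your approach is more elementary and self-contained, using only that ${\rm epi}\,f^\pi$ is pointed and solid (Proposition~\ref{prop-pointed}); the paper's approach has the advantage of reusing already-established structural results, but at the cost of depending on that heavier framework.
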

\begin{proof}
   We first prove the forward implication.
   From \eqref{nontrivial} and the definition of ${\rm d}_{\rm PPS}$ in Definition \ref{def-mis}, problem \eqref{prob-fea} being  nontrivially weakly feasible implies that ${\rm d}_{\rm PPS}({\rm epi}\, f^\pi,\mathcal{L}+{\bm a})= 1$ since ${\rm d}_{\rm PPS}({\rm epi}\, f^\pi,\mathcal{L}+{\bm a})\leq 1$ from Theorem \ref{the-eb} and ${\rm epi}\,f^\pi$ is non-polyhedral under Assumption~\ref{assu-blanket}. Then,  according to Proposition \ref{prop-facial-reduction}(ii), there exists a ${\bm z}^{(1)}\in ({\rm epi}\, f^\pi)^*\setminus \{{\bm 0}\}$ such that $\mathcal{L}+{\bm a} \subseteq \{{\bm z}^{(1)}\}^\bot$, which implies that  ${\bm a}\in \mathcal{L}$ and hence ${\cal L} +{\bm a} = {\cal L} = \{{\bm z}\}^\perp$. Consequently, we have $\{{\bm z}\}^\perp \subseteq \{{\bm z}^{(1)}\}^\bot$, which together with the fact that ${\bm z}^{(1)}\neq {\bm 0}$ shows that ${\bm z}^{(1)} = \gamma {\bm z}$ for some nonzero $\gamma$. This in turn implies that ${\bm z}\in ({\rm epi}\, f^\pi)^*\cup - ({\rm epi}\, f^\pi)^*\setminus\{{\bm 0}\}$.
   In addition, using ${\cal L}+{\bm a} = \{{\bm z}\}^\perp$, we obtain from \eqref{nontrivial} that
   \[
   {\rm epi}\, f^\pi\cap \{\bm z\}^\bot = {\rm epi}\, f^\pi\cap (\mathcal{L}+{\bm a}) \neq \{\bm 0\}
   \]
   Since ${\bm z}\in ({\rm epi}\, f^\pi)^*\cup - ({\rm epi}\, f^\pi)^*\setminus\{{\bm 0}\}$, the above display together with \eqref{pre-1} and the pointedness of $ {\rm epi}\, f^\pi$ (see Proposition~\ref{prop-pointed}) shows that ${\bm z}\in {\rm bdry}(({\rm epi}\, f^\pi)^*)\cup {\rm bdry}(-({\rm epi}\, f^\pi)^*)\setminus \{{\bm 0}\}$.
	
	Next, we  prove the backward implication.
	In this case, we have $\mathcal{L}+{\bm a}=\mathcal{L}$ thanks to ${\bm a}\in \mathcal{L}$. Thus, since $\mathcal{L}= \{\bm z\}^\bot$ with
${\bm z}\in {\rm bdry}(({\rm epi}\, f^\pi)^*)\cup {\rm bdry}(-({\rm epi}\, f^\pi)^*)\setminus \{{\bm 0}\}$, we have
\[
{\rm epi}\, f^\pi\cap (\mathcal{L} +{\bm a}) = {\rm epi}\, f^\pi\cap \mathcal{L}={\rm epi}\, f^\pi\cap \{\bm z\}^\bot \neq \{{\bm 0}\},
\]
where the nonzero intersection follows from Propositions \ref{prop-e-face} and
	\ref{prop-md-face}; see also Table~\ref{table-e-face}.
Next, since ${\bm z}\in {\rm bdry}(({\rm epi}\, f^\pi)^*)\cup {\rm bdry}(-({\rm epi}\, f^\pi)^*)\setminus \{{\bm 0}\}$,  it always holds that $\langle {\bm z}, {\bm v}\rangle \neq 0$  for any ${\bm v}\in {\rm int}({\rm epi}\, f^\pi)$.\footnote{To see this, suppose to the contrary that there exists some ${\bm v}'\in {\rm int}({\rm epi}\, f^\pi)$  such that $\langle {\bm z}, {\bm v}'\rangle=0$.  Then, there exists some small $\tau>0$ such that ${\bm v}'\pm \tau {\bm z}\in {\rm epi}\, f^\pi$. Hence, $\langle {\bm z}, {\bm v}'-\tau {\bm z}\rangle=-\tau \|{\bm z}\|^2<0$, which yields a contradiction when ${\bm z}\in {\rm bdry}(({\rm epi}\, f^\pi)^*)$. In addition, we also have $\langle {\bm z}, {\bm v}'+\tau {\bm z}\rangle=\tau \|{\bm z}\|^2>0$, which also yields a contradiction when ${\bm z}\in -{\rm bdry}(({\rm epi}\, f^\pi)^*)$}
	Then, we have
    \[
    {\rm int}({\rm epi}\, f^\pi)\cap (\mathcal{L} + {\bm a}) = {\rm int}({\rm epi}\, f^\pi)\cap \mathcal{L}=	{\rm int}({\rm epi}\, f^\pi)\cap \{\bm z\}^\bot= \emptyset.
    \]
	Therefore, problem \eqref{prob-fea} is nontrivially weakly feasible.
\end{proof}

According to Lemma \ref{lemma-trivial}, nontrivially weakly feasible problems as in \eqref{prob-fea}  with a 2-dimensional subspace $\mathcal{L}$ must have ${\bm a}\in {\cal L}$, and hence it suffices to consider feasibility problems of the form $\{{\rm epi}\, f^\pi, \mathcal{L}\}$ in this case. 
Now we are ready to present error bounds for almost every such feasibility problem.
\begin{corollary}[Error bounds for almost every \eqref{Feas} with $\mathcal{K}={\rm epi}\, f^\pi$] \label{coro-almost-eb}
	Let $\Upsilon$ be the collection of ${\bm z}\in \R^3$ such that the feasibility problem \eqref{prob-fea} with ${\cal L}:= \{{\bm z}\}^\perp$ is nontrivially weakly feasible.
	Then, for any $s > 1$, for $\mathcal{H}^s$-a.e. ${\bm z}\in \Upsilon$, $\{{\rm epi}\, f^\pi,  \mathcal{L}\}$ satisfies a uniform H\"{o}lderian error bound  with exponent $1/2$.
\end{corollary}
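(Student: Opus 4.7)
The plan is to reduce the statement to Theorem~\ref{the-almost-eb}(i) via Lemma~\ref{lemma-trivial} and the symmetry $\{{\bm z}\}^\bot = \{-{\bm z}\}^\bot$. Conceptually, Theorem~\ref{the-almost-eb}(i) already delivers the conclusion on the ``positive'' boundary ${\rm bdry}(({\rm epi}\, f^\pi)^*)$, and Lemma~\ref{lemma-trivial} tells us that $\Upsilon$ differs from this boundary only by its reflection through the origin; since the Hausdorff measure is invariant under isometries, the reflected exceptional set is still null.

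First, I would use Lemma~\ref{lemma-trivial} to give an explicit description of $\Upsilon$. By that lemma, ${\bm z} \in \Upsilon$ precisely when ${\bm a}\in \mathcal{L}$ (so that $\mathcal{L}+{\bm a}=\mathcal{L}$) and
\[
{\bm z}\in \Upsilon_+ \cup \Upsilon_-, \qquad \Upsilon_\pm \coloneqq {\rm bdry}\bigl(\pm({\rm epi}\, f^\pi)^*\bigr)\setminus\{{\bm 0}\}.
\]
Hence the feasibility problem appearing in the corollary is in fact $\{{\rm epi}\, f^\pi, \{{\bm z}\}^\bot\}$, and it suffices to analyze when this pair satisfies a uniform H\"olderian error bound with exponent $1/2$.

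Next, I would apply Theorem~\ref{the-almost-eb}(i) directly on $\Upsilon_+$: for any $s>1$, the set $E_+\subseteq \Upsilon_+$ of ${\bm z}$ for which $\{{\rm epi}\, f^\pi,\{{\bm z}\}^\bot\}$ fails to satisfy a uniform H\"olderian error bound with exponent $1/2$ has $\mathcal{H}^s(E_+)=0$. For the piece $\Upsilon_-$, the key observation is $\{{\bm z}\}^\bot = \{-{\bm z}\}^\bot$, so the error bound holds at ${\bm z}\in \Upsilon_-$ if and only if it holds at $-{\bm z}\in \Upsilon_+$; consequently the exceptional set is $E_- = -E_+$. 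Since ${\bm z}\mapsto -{\bm z}$ is an affine isometry of $\R^3$, Fact~\ref{fact-hausdorff}(i) yields $\mathcal{H}^s(E_-)=\mathcal{H}^s(E_+)=0$.

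Finally, the union $E_+\cup E_-$ has $\mathcal{H}^s$-measure zero, which establishes the desired almost-everywhere statement on $\Upsilon$. I do not anticipate a genuine obstacle here: all the work has already been done in Theorem~\ref{the-almost-eb}(i) and the facial characterization carried out in Lemma~\ref{lemma-trivial}; the only subtle point is recognizing that $\Upsilon$ is \emph{not} quite equal to ${\rm bdry}(({\rm epi}\, f^\pi)^*)$ but includes its antipode as well, which is precisely why the symmetry argument via Fact~\ref{fact-hausdorff}(i) is needed to finish the proof.
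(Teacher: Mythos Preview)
Your proposal is correct and follows essentially the same route as the paper. The paper's proof is a one-liner invoking Theorem~\ref{the-almost-1frf} and Lemma~\ref{lemma-trivial}; you invoke Theorem~\ref{the-almost-eb}(i) instead, which is an immediate consequence of Theorem~\ref{the-almost-1frf}, so the difference is immaterial. Your explicit handling of the reflection $\bm{z}\mapsto -\bm{z}$ via Fact~\ref{fact-hausdorff}(i) is a point the paper's terse proof leaves implicit, so in that respect your write-up is more complete.
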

\begin{proof}
	It follows from Theorem  \ref{the-almost-1frf} and Lemma \ref{lemma-trivial}.
\end{proof}

\section{Final remarks and open questions}

A larger theme of this paper is to try to understand the error bound behavior that an arbitrary closed convex cone  may have.
Our results say that, intuitively, the pattern found in Table~\ref{table-eb} is not a coincidence, at least for cones constructed as in \eqref{per-cone}. Nevertheless,
the following general, albeit imprecise, questions remain.

\begin{quote}
	For arbitrary $\cK$, how mild are the error bounds associated to $\cK$? Is it possible to construct pathological convex cones that have ``bad'' error bounds everywhere (or almost everywhere)?
\end{quote}
Here, we leave some room for interpretation of what is meant by ``bad error bounds''.
For example, one possible interpretation is to consider that a cone is ``good'' if it has H\"olderian $\mathds{1}$-FRFs of exponent $1/2$ (or larger) for $\mathcal{H}^s$-a.e. ${\bm z}^*\in{\rm bdry}(\cK^*)$ for $s = \dim \cK -1$. Then, a ``bad'' cone would be one that is not ``good''.

It is tempting to speculate on the basis of Theorem~\ref{the-almost-1frf} that maybe there are no bad cones (at least in $\R^3$).
However, epigraphical cones as in \eqref{per-cone} are quite special because of Assumption~\ref{assu-blanket}, so the existence of very pathological convex cones cannot be completely ruled out.

\subsection{Error bound regularity}
Yet another way to define ``badness'' would be to forego the measure theoretical considerations and insist that ``good'' cones must be such that \emph{all} their $\mathds{1}$-FRFs are H\"olderian of exponent at least $1/2$.
This leads naturally to the following definition.

Let $\cK \subseteq \R^n$ be a closed convex cone.
We say that $\cK$ is \emph{error bound regular of exponent $\alpha$} if $\cK$ and ${\bm z}^*$ admit a H\"olderian $\mathds{1}$-FRF of exponent $\alpha$ for every ${\bm z}^* \in \cK^*$.
For short, we say that $\cK$ is \emph{$\alpha$-EBR}.
One may verify that if $\cK$ is $\alpha$-EBR, then it is also $\beta$-EBR for every $\beta \in (0,\alpha]$.
Therefore, if $\cK$ is $\alpha$-EBR, we can define its \emph{index of error bound regularity} ($\iEBR$) as the supremum of the $\alpha$ such that
$\cK$ is $\alpha$-EBR.\footnote{Note that the index may not be attainable.}  

All symmetric cones are $1/2$-EBR by \cite[Theorem~35]{Lourencco-2021}.
In view of Table~\ref{table-eb}, power cones are $\min\{1/p,1/2\}$-EBR  with $p\in[2,\infty)$, while $p$-cones are $\min\{1/p,1/2\}$-EBR with $p\in(1,\infty)$. The exponential cone is not $\alpha$-EBR for any $\alpha \in (0,1]$ owing to the fact that there is a case where there is no admissible H\"olderian $\mathds{1}$-FRF.
Polyhedral cones are always $1$-EBR, which is a consequence of \cite[Proposition~18]{Lourencco-2021}.\footnote{We remark that while there is a blanket assumption in \cite{Lourencco-2021} that the cones are pointed, \cite[Proposition~18]{Lourencco-2021} still holds without assuming pointedness of the cone.} In particular, their $\iEBR$ must be $1$ as well.

From this error bound perspective, ``good'' cones would correspond to the ones that are $\alpha$-EBR with $\alpha \in [1/2,1]$.
This leads to the following problem.

\begin{problem}\label{prob:good_eb}
Characterize the cones that are $\alpha$-EBR with $\alpha \in [1/2,1]$.
\end{problem}
It turns out that we can completely characterize the case $\alpha = 1$.

\begin{theorem}[$\mathds{1}$-FRF characterization of polyhedrality]
Let $\cK \subseteq {\cal E}$ be a closed convex cone.
If $\cK$ is $1$-EBR, then $\cK$ is polyhedral.	
\end{theorem}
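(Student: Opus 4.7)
The natural strategy is by contrapositive: assume $\cK$ is not polyhedral, and exhibit a single ${\bm z}^* \in \cK^*$ at which no H\"olderian $\mathds{1}$-FRF of exponent $1$ can hold. Unpacking Definition~\ref{def-1frf} and specializing to ${\bm x} \in \cK$, the $1$-EBR property at ${\bm z}^*$ is equivalent to the tangential linear error bound
\[
d({\bm x}, \cK \cap \{{\bm z}^*\}^\perp) \leq \kappa\, \inProd{{\bm x}}{{\bm z}^*}, \qquad {\bm x} \in \cK \cap B(R),
\]
valid for every $R > 0$ with $\kappa = \kappa({\bm z}^*, R)$. The task therefore reduces to producing one ${\bm z}^*$ for which this linear bound must fail.

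Since polyhedrality is self-dual, $\cK^*$ is also non-polyhedral and hence has infinitely many pairwise distinct extreme rays; by compactness I can extract a convergent subsequence of pairwise distinct unit-norm extreme rays ${\bm z}^*_k \to \bar{{\bm z}} \in {\rm bdry}(\cK^*)$. By face--normal duality each ${\bm z}^*_k$ exposes a proper face $\cF_k \unlhd \cK$; choosing unit-norm ${\bm x}_k \in \cF_k$ so that $\inProd{{\bm x}_k}{{\bm z}^*_k} = 0$, testing the candidate Lipschitz bound at $\bar{{\bm z}}$ against ${\bm x}_k$ yields
\[
\inProd{{\bm x}_k}{\bar{{\bm z}}} = \inProd{{\bm x}_k}{\bar{{\bm z}} - {\bm z}^*_k} \leq \|\bar{{\bm z}} - {\bm z}^*_k\| \to 0.
\]
If, along a subsequence, ${\bm x}_k$ stays bounded away from $\bar{\cF} := \cK \cap \{\bar{{\bm z}}\}^\perp$, then $d({\bm x}_k, \bar{\cF})$ is uniformly positive while $\inProd{{\bm x}_k}{\bar{{\bm z}}} \to 0$, an immediate violation of the Lipschitz bound. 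Otherwise ${\bm x}_k \to \bar{{\bm x}} \in \bar{\cF}$, and the argument requires showing that $d({\bm x}_k, \bar{\cF})$ decays no faster than $\|\bar{{\bm z}} - {\bm z}^*_k\|^{1/2}$ — the quadratic ``width-versus-depth'' tradeoff characteristic of curved convex boundaries.

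The principal obstacle is precisely this last quantitative step, since a priori the rates of ${\bm z}^*_k \to \bar{{\bm z}}$ and of ${\bm x}_k \to \bar{{\bm x}}$ are uncontrolled. A cleaner alternative, which I would prefer to execute, is to work at the level of feasibility problems: by Theorem~\ref{the-error-bound}, if $\cK$ is $1$-EBR then every $\mathds{1}$-FRF arising in the facial reduction chain of any $\{\cK, \cL+{\bm a}\}$ with nonempty intersection is Lipschitz, and since nondecreasing Lipschitz functions are closed under composition and summation on bounded sets, the resulting composite error bound is itself Lipschitzian. One then invokes a Robinson-type characterization asserting that a closed convex cone admitting a uniform Hoffman-like (Lipschitzian) error bound against every affine subspace that meets it must necessarily be polyhedral, which completes the proof.
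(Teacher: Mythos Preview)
Your second approach is in the right spirit, but it has two genuine gaps. First, the claim that ``every $\mathds{1}$-FRF arising in the facial reduction chain of any $\{\cK, \cL+{\bm a}\}$ is Lipschitz'' does not follow from $1$-EBR of $\cK$. The $\mathds{1}$-FRFs in Theorem~\ref{the-error-bound} are for the intermediate faces $\cF_i$ and vectors ${\bm z}^{(i)} \in \cF_i^*$, whereas $1$-EBR only concerns $\cK$ itself and ${\bm z}^* \in \cK^*$. You would need to prove that $1$-EBR of $\cK$ passes to all faces of $\cK$, which is not obvious. Second, the ``Robinson-type characterization'' you invoke is not a result available off the shelf; it is essentially what has to be proved here.

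The paper circumvents both issues by not attempting to handle arbitrary $\cL+{\bm a}$. Instead, for each ${\bm z}^* \in \cK^*$ it takes $\hat{\cF}$ to be the minimal face of $\cK^*$ through ${\bm z}^*$ and sets $\cL = \hat{\cF}^\perp$. Because ${\bm z}^* \in \ri(\hat{\cF})$, one has $\cK \cap \hat{\cF}^\perp = \cK \cap \{{\bm z}^*\}^\perp$, so the facial reduction chain has length exactly $2$: the single exposed face $\cF_2 = \cK \cap \{{\bm z}^*\}^\perp$ already lies in $\cL$, hence $\{\cF_2,\cL\}$ satisfies the PPS condition. This means only the $\mathds{1}$-FRF for $\cK$ and ${\bm z}^*$ is needed, which is Lipschitz by the $1$-EBR hypothesis, and Theorem~\ref{the-error-bound} yields a Lipschitzian error bound for $\{\cK,\hat{\cF}^\perp\}$.

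For the polyhedrality conclusion, the paper uses two concrete ingredients rather than an unspecified Robinson-type statement: by \cite[Theorem~10]{Bauschke-Borwein-Li-1999}, bounded linear regularity of $\{\cK,\hat{\cF}^\perp\}$ implies closedness of $\cK^* + \spanVec(\hat{\cF})$, which equals the cone of feasible directions $D({\bm z}^*,\cK^*)$; and by the Shapiro--Nemirovski lemma \cite[Lemma~1]{NS03}, a closed convex cone all of whose feasible-direction cones are closed is polyhedral. Thus $\cK^*$, and hence $\cK$, is polyhedral. The key idea you are missing is the specific choice of $\cL = \hat{\cF}^\perp$, which forces the chain to have a single step and makes the $1$-EBR assumption on $\cK$ alone sufficient.
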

\begin{proof}
For $\bx \in \cK $ denote by $D(\bx,\cK)$ the \emph{cone of feasible directions at $\bx$}, so that 	
\[
D(\bx,\cK) \coloneqq \{\bd : \exists\, t > 0 \text{ such that } \bx + t \bd \in \cK\}.
\]
Shapiro and Nemirovski proved that a cone is polyhedral if and only if every one of its cone of feasible directions is closed, see \cite[Lemma~1]{NS03}.
For this proof, our goal is to show that  $D(\bz^*,\cK^*)$ is closed for
every $\bz^* \in \cK^*$, which will imply that $\cK^*$ (and, therefore, $\cK$) is polyhedral.

Given $\bz^* \in \cK^*$, let $\hat \cF$ be the unique face of $\cK^*$ satisfying $\bz^* \in \ri(\hat \cF)$. Such a face exists by, say, \cite[Theorem~18.2]{Rockafellar-1970}.
With that, we have
\[
D(\bz^*,\cK^*) = \cK^* + \spanVec(\hat \cF),
\]
e.g., see \cite[Lemma~3.2.1]{Pa00}.
In order to prove that $\cK^* + \spanVec(\hat \cF)$ is closed, it suffices to show that the $\cK^{**} = \cK$ and $\spanVec(\hat \cF)^* = \hat \cF^\perp$ satisfy a Lipschitzian error bound, see \cite[Theorem~10]{Bauschke-Borwein-Li-1999}.\footnote{In the parlance of \cite{Bauschke-Borwein-Li-1999}, sets satisfying a Lipschitzian error bound are said to be \emph{boundedly linearly regular}.}

If ${\rm ri}({\cal K})\cap {\hat \cF}^\perp\neq \emptyset$, then $\cK$ and $\hat \cF^\perp$ satisfy a Lipschitzian error bound thanks to \cite[Corollary 3]{Bauschke-Borwein-Li-1999}. We now look at the case when ${\rm ri}({\cal K})\cap {\hat \cF}^\perp = \emptyset$, and we will proceed by invoking Theorem~\ref{the-error-bound} in an appropriate fashion.

Let $\cL \coloneqq \hat \cF^\perp$, ${\bm a} \coloneqq 0$.
Also, let $\cF_1 \coloneqq \cK$ and $\cF_2 \coloneqq \cK \cap \{\bz^*\}^\perp$.
We have
\begin{equation}\label{eq:ck_perp}
\cK \cap \cL = \cK \cap\hat \cF^\perp =  \cK \cap \{\bz^*\}^\perp,
\end{equation}
where the last equality follows from $\bz^* \in \ri(\hat \cF)$ and basic properties of the relative interior (e.g., see \cite[Theorem~6.1]{Rockafellar-1970}). Moreover, $\cF_2\subsetneq \cK$ because $\cF_2 = \cK \cap \hat \cF^\perp$ and ${\rm ri}({\cal K})\cap {\hat \cF}^\perp= \emptyset$.

We have that \eqref{eq:ck_perp} implies that $\cF_2 \subseteq \cL$ which leads to $\ri(\cF_2) \subseteq \cL$.
In particular, $\cL \cap \ri (\cF_2 ) \neq \emptyset$ holds, i.e.,
$\{\cF_2, \cL\}$ satisfy the PPS condition.
Let $\psi$ be a $\mathds{1}$-FRF for $\cK$ and  $\bz^*$, which, by the assumption that ${\cal K}$ is 1-EBR, we may assume that is H\"olderian of exponent $1$, i.e., $\psi(\epsilon,t) = \rho(t)\epsilon$ holds for all nonnegative $t$ and $\epsilon$, where $\rho$ is a nonnegative nondecreasing function.
We have thus satisfied all the assumptions of Theorem~\ref{the-error-bound} with $\ell = 2$ and $\varphi = \psi$.
Letting $\epsilon = \max\{d({\bm x},\mathcal{K}), {\rm d}({\bm x},\mathcal{L})\}$ in \eqref{eq:theo_eb}, we conclude that for every bounded set $B$ there exists $\bar{\kappa}_B > 0$ such that
\[
{\rm d}({\bm x},\cK \cap \cL)\leq \bar{\kappa}_B \max\{d({\bm x},\mathcal{K}), {\rm d}({\bm x},\mathcal{L})\}\quad \forall\, {\bm x}\in B,
\]
i.e., $\cK$ and $ \hat \cF^\perp$ satisfy  a Lipschitzian error bound.
\end{proof}

If we look at the $\iEBR$ of a cone, a curious pattern emerges.
In \cite{Lindstrom-Lourenco-Pong-2025}, asymptotically optimal $\mathds{1}$-FRFs were computed for $p$-cones and, in dimension at least $3$, every $p$-cone has cases where the optimal exponent for a certain ${\bm z}^*$ is $1/2$.
In fact, the discussion in \cite{Lindstrom-Lourenco-Pong-2025} implies that the $\iEBR$ for a $p$-cone in dimension at least three is $1/2$ if $p \in (1,2]$ and
$1/p$ if $p\in (2,\infty)$.
So, for any $\alpha$ in $(0,\frac{1}{2}]$, the $1/{\alpha}$-cone in three dimensions is an example having $\iEBR$ $\alpha$.
Similarly, for power cones, the discussion in \cite[Theorem~3.5, Remark~3.6]{Lin-et-2024} implies that their $\iEBR$ is always in $(0,1/2]$.

The summary so far is that polyhedral cones have $\iEBR$ $1$ but every other example of $\alpha$-EBR cone we know seems to have $\iEBR$ in $(0,1/2]$.
This motivates the following problem.
\begin{problem}\label{prob:cone}
For every $\alpha \in (1/2,1)$ construct a cone that has $\iEBR=\alpha$.	
\end{problem}
A difficulty in constructing  a cone as in Problem~\ref{prob:cone} is that certain obvious approaches using functions such as $x^{\alpha}, x^{1/\alpha}$ do not seem work because, as discussed previously, power cones and $p$-cones have $\iEBR$ in $(0,1/2]$.

\appendix

\section{Deducing error bounds for \eqref{Feas} using a $\mathds{1}$-FRF-based approach}\label{section-appedix-a1}

In this section, we review the $\mathds{1}$-FRF-based approach for analyzing the error bound for \eqref{Feas} with $(\mathcal{L}+{\bm a}) \cap \mathcal{K}\neq \emptyset$, for more details see \cite{Lindstrom-Lourenco-Pong-2023,Lindstrom-Lourenco-Pong-2025}.
We first recall a lemma that reduces the computation of a $\mathds{1}$-FRF to the computation of a specific type of error bound. 
\begin{lemma}[\!\!{\rm \cite[Lemma 3.9]{Lindstrom-Lourenco-Pong-2023}}]\label{lemma-frf}
Let $\mathcal{K}$ be a closed convex cone and let ${\bm z}\in \mathcal{K}^*$ be such that $\mathcal{F}\coloneqq\mathcal{K}\cap \{{\bm z}\}^\bot$ is a proper face of $\mathcal{K}$.
Let $\mathfrak{g}: \R_+\rightarrow \R_+$ be a nondecreasing  function with $\mathfrak{g}(0)=0$, and $\kappa_{{\bm z},s}$ be a finite  nondecreasing nonnegative function in $s\in\R_+$ such that
\begin{align}\label{lemma-frf-1}
{\rm d}({\bm q}, \mathcal{F})\leq \kappa_{{\bm z},\|{\bm q}\|} \mathfrak{g}({\rm d}({\bm q},\mathcal{K})) \quad \forall\, {\bm q}\in\{{\bm z}\}^\bot.
\end{align}
Then, a $\mathds{1}$-FRF $\psi_{\mathcal{K},{\bm z}}$ for $\mathcal{K}$ and ${\bm z}$ can be calculated as follows
\begin{align}\label{lemma-frf-2}
\psi_{\mathcal{K},{\bm z}}(\epsilon,t)\coloneqq \max\{\epsilon,\epsilon/\|{\bm z}\|\}+\kappa_{{\bm z},t} \mathfrak{g}(\epsilon+\max\{\epsilon,\epsilon/\|{\bm z}\|\}).
\end{align}
\end{lemma}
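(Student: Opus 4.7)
The strategy I would adopt is to reduce the problem to a direct application of the hypothesis \eqref{lemma-frf-1}: project $\bm x$ orthogonally onto the hyperplane $\{\bm z\}^\perp$ to obtain a vector $\bm q$ to which \eqref{lemma-frf-1} applies immediately, and then relate ${\rm d}(\bm x,\mathcal F)$ to ${\rm d}(\bm q,\mathcal F)$ via the triangle inequality. Before this, I would check the housekeeping condition (i) of Definition~\ref{def-1frf}: nonnegativity, monotonicity in each argument, and $\psi_{\mathcal K,\bm z}(0,t)=0$. These all fall out of $\mathfrak g(0)=0$ together with the nondecreasing character of $\mathfrak g$ and of $s\mapsto \kappa_{\bm z,s}$.

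For condition (ii), fix $\bm x\in \spanVec(\mathcal K)$ with ${\rm d}(\bm x,\mathcal K)\le \epsilon$ and $\langle \bm x,\bm z\rangle\le \epsilon$, and define
\[
\bm q \coloneqq {\rm P}_{\{\bm z\}^\perp}(\bm x) = \bm x - \frac{\langle \bm x,\bm z\rangle}{\|\bm z\|^2}\,\bm z,
\]
so that $\|\bm x-\bm q\| = |\langle \bm x,\bm z\rangle|/\|\bm z\|$. The first key quantitative step is to upgrade the one-sided bound $\langle \bm x,\bm z\rangle\le\epsilon$ into a two-sided bound on $|\langle \bm x,\bm z\rangle|$. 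Letting $\bar{\bm x}\coloneqq {\rm P}_{\mathcal K}(\bm x)$, the inclusion $\bm z\in \mathcal K^*$ gives $\langle \bar{\bm x},\bm z\rangle \ge 0$, and Cauchy--Schwarz combined with ${\rm d}(\bm x,\mathcal K)\le \epsilon$ yields $\langle \bm x,\bm z\rangle \ge -\epsilon\|\bm z\|$. Hence $|\langle \bm x,\bm z\rangle|\le \max\{\epsilon,\epsilon\|\bm z\|\}$, which gives
\[
\|\bm x-\bm q\|\le \max\{\epsilon,\epsilon/\|\bm z\|\}.
\]

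The second quantitative step is to control the two inputs we will feed into \eqref{lemma-frf-1} applied at $\bm q$. Because $\{\bm z\}^\perp$ is a linear subspace through the origin, orthogonal projection is a contraction toward $\bm 0$, so $\|\bm q\|\le \|\bm x\|$, and by triangle inequality
\[
{\rm d}(\bm q,\mathcal K)\le \|\bm q-\bm x\| + {\rm d}(\bm x,\mathcal K)\le \epsilon+\max\{\epsilon,\epsilon/\|\bm z\|\}.
\]
Since $\bm q\in\{\bm z\}^\perp$, \eqref{lemma-frf-1} is applicable at $\bm q$, and the monotonicity of $\kappa_{\bm z,\cdot}$ and $\mathfrak g$ together with the two estimates above turn it into
\[
{\rm d}(\bm q,\mathcal F)\le \kappa_{\bm z,\|\bm x\|}\, \mathfrak g\bigl(\epsilon + \max\{\epsilon,\epsilon/\|\bm z\|\}\bigr).
\]

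Finally, the triangle inequality ${\rm d}(\bm x,\mathcal F)\le \|\bm x-\bm q\| + {\rm d}(\bm q,\mathcal F)$ together with the displays above delivers exactly \eqref{lemma-frf-2}. The proof is essentially mechanical; the only subtle point that I want to flag ahead of time is that the assumption $\langle \bm x,\bm z\rangle \le \epsilon$ is one-sided while the projection distance involves $|\langle \bm x,\bm z\rangle|$, which is precisely why the max in \eqref{lemma-frf-2} appears and why the detour through $\bar{\bm x} = {\rm P}_{\mathcal K}(\bm x)$ is needed. Everything else is a straightforward combination of the triangle inequality, the contractivity of orthogonal projection onto a subspace, and the stated monotonicity of $\kappa_{\bm z,\cdot}$ and $\mathfrak g$.
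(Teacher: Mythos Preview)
Your proof is correct and follows the natural approach: project onto $\{\bm z\}^\perp$, use $\bar{\bm x}={\rm P}_{\mathcal K}(\bm x)$ together with $\bm z\in\mathcal K^*$ to convert the one-sided bound on $\langle \bm x,\bm z\rangle$ into a two-sided one, and then combine the triangle inequality with the monotonicity of $\mathfrak g$ and $\kappa_{\bm z,\cdot}$. The paper does not supply its own proof of this lemma (it is quoted from \cite[Lemma~3.9]{Lindstrom-Lourenco-Pong-2023}), and your argument is exactly the standard one that underlies that citation.
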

The following theorem gives a sufficient condition for  the error bound in  \eqref{lemma-frf-1}.
\begin{theorem}[\!\!{\rm \cite[Theorem 3.10]{Lindstrom-Lourenco-Pong-2023}}]\label{thm-frf}
Let $\mathcal{K}$ be a closed convex cone and let ${\bm z}\in \mathcal{K}^*$ be such that $\mathcal{F}\coloneqq\mathcal{K}\cap \{{\bm z}\}^\bot$ is a nontrivial exposed face of $\mathcal{K}$. Let $\eta\ge 0$, $\alpha\in (0,1]$, and $\mathfrak{g}: \R_+\to \R_+$ be a nondecreasing function such that $\mathfrak{g}(0)=0$ and ${\frak g}\geq |\cdot|^\alpha$. Define
\begin{align}\label{thm-frf-1}
\gamma_{{\bm z},\eta}:= \inf_{\bm v} \left\{\frac{\mathfrak{g}(\|{\bm w}-{\bm v}\|)}{\|{\bm w}-{\bm u}\|}: {\bm v}\in {\rm bdry}(\mathcal{K})\cap B(\eta)\backslash\mathcal{F}, {\bm w}={\rm P}_{\{{\bm z}\}^\bot} {\bm v}, {\bm u}={\rm P}_{\mathcal{F}} {\bm  w}, {\bm w}\neq {\bm u}\right\}.
\end{align}
If $\gamma_{{\bm z},\eta}\in(0,\infty]$, then it holds that
\begin{align}\label{thm-frf-2}
{\rm d}({\bm q},\mathcal{F})\leq \kappa_{{\bm z},\eta} \mathfrak{g}({\rm d}({\bm q},\mathcal{K})) \quad \forall\, {\bm q}\in\{{\bm z}\}^\bot  \cap B(\eta),
\end{align}
where $\kappa_{{\bm z},\eta}:=\max\{2\eta^{1-\alpha},2\gamma^{-1}_{{\bm z},\eta}\} < \infty$.
\end{theorem}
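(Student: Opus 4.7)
The plan is a direct three-point projection argument. Given ${\bm q}\in\{{\bm z}\}^\perp\cap B(\eta)$, I would set ${\bm v}:={\rm P}_{\mathcal{K}}({\bm q})$, ${\bm w}:={\rm P}_{\{{\bm z}\}^\perp}({\bm v})$, and ${\bm u}:={\rm P}_{\mathcal{F}}({\bm w})$; the point ${\bm u}\in\mathcal{F}$ will serve as the witness producing the distance bound. The trivial case ${\bm q}\in\mathcal{K}$ is handled immediately, since then ${\bm q}\in\mathcal{K}\cap\{{\bm z}\}^\perp=\mathcal{F}$ and both sides of \eqref{thm-frf-2} vanish. Otherwise ${\bm v}\in{\rm bdry}(\mathcal{K})$, and using ${\bm 0}\in\mathcal{K}$ together with nonexpansiveness of ${\rm P}_{\mathcal{K}}$ gives ${\rm d}({\bm q},\mathcal{K})=\|{\bm q}-{\bm v}\|\le\|{\bm q}\|\le\eta$ and $\|{\bm v}\|\le\eta$, so that ${\bm v}$ lies in ${\rm bdry}(\mathcal{K})\cap B(\eta)$ and is eligible in the infimum defining $\gamma_{{\bm z},\eta}$.

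The triangle inequality ${\rm d}({\bm q},\mathcal{F})\le\|{\bm q}-{\bm u}\|\le\|{\bm q}-{\bm w}\|+\|{\bm w}-{\bm u}\|$ reduces the task to two separate estimates. Since ${\bm q}\in\{{\bm z}\}^\perp$, a second application of nonexpansiveness (this time with ${\rm P}_{\{{\bm z}\}^\perp}$) gives $\|{\bm q}-{\bm w}\|\le\|{\bm q}-{\bm v}\|={\rm d}({\bm q},\mathcal{K})$ and $\|{\bm w}-{\bm v}\|\le{\rm d}({\bm q},\mathcal{K})$. For the second term I would split on whether ${\bm w}={\bm u}$: this holds automatically if ${\bm v}\in\mathcal{F}$, in which case ${\bm w}={\bm v}$ and ${\bm u}={\bm v}$; otherwise ${\bm v}\in{\rm bdry}(\mathcal{K})\cap B(\eta)\setminus\mathcal{F}$ together with ${\bm w}\ne{\bm u}$ is precisely an admissible configuration in \eqref{thm-frf-1}, so the assumption $\gamma_{{\bm z},\eta}>0$ yields $\|{\bm w}-{\bm u}\|\le\gamma_{{\bm z},\eta}^{-1}\mathfrak{g}(\|{\bm w}-{\bm v}\|)\le\gamma_{{\bm z},\eta}^{-1}\mathfrak{g}({\rm d}({\bm q},\mathcal{K}))$ by monotonicity of $\mathfrak{g}$.

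The final step is to linearize the leftover $\|{\bm q}-{\bm w}\|$ contribution using $\mathfrak{g}\ge|\cdot|^\alpha$ and ${\rm d}({\bm q},\mathcal{K})\le\eta$, i.e., ${\rm d}({\bm q},\mathcal{K})\le\eta^{1-\alpha}\mathfrak{g}({\rm d}({\bm q},\mathcal{K}))$, and then summing the two pieces and majorizing the sum by twice its maximum recovers the constant $\kappa_{{\bm z},\eta}=\max\{2\eta^{1-\alpha},2\gamma^{-1}_{{\bm z},\eta}\}$. I expect the main obstacle here to be not analytical but organizational: one has to verify carefully that ${\bm v}$ really lands in ${\rm bdry}(\mathcal{K})\cap B(\eta)$ so that the $\gamma_{{\bm z},\eta}$ inequality applies, and to notice that the subcase ${\bm v}\in\mathcal{F}$ is absorbed automatically into the identity ${\bm w}={\bm u}$, so no separate argument is required for points whose projection onto $\mathcal{K}$ already lies in the face.
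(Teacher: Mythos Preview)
Your proposal is correct. The paper does not supply its own proof of this theorem; it is quoted verbatim from \cite[Theorem~3.10]{Lindstrom-Lourenco-Pong-2023}, and your three-point projection argument (${\bm v}={\rm P}_{\mathcal{K}}{\bm q}$, ${\bm w}={\rm P}_{\{{\bm z}\}^\perp}{\bm v}$, ${\bm u}={\rm P}_{\mathcal{F}}{\bm w}$, then split $\|{\bm q}-{\bm u}\|\le\|{\bm q}-{\bm w}\|+\|{\bm w}-{\bm u}\|$ and invoke $\gamma_{{\bm z},\eta}$ for the second term) is precisely the argument used in that reference.
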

In order to use Theorem~\ref{thm-frf} we need to check that  $\gamma_{{\bm z},\eta}\in(0,\infty]$ for $\eta \geq 0$.
We always have $\gamma_{{\bm z},0} = \infty$, so we only need to consider $\eta > 0$.
The following lemma is useful for checking whether $\gamma_{{\bm z},\eta}\in(0,\infty]$ for $\eta > 0$.
\begin{lemma}[\!\!{\rm \cite[Lemma 3.12]{Lindstrom-Lourenco-Pong-2023}}]\label{lemma-gamma}
Let $\mathcal{K}$ be a closed convex cone and let ${\bm z}\in \mathcal{K}^*$ be such that $\mathcal{F}\coloneqq\mathcal{K}\cap \{{\bm z}\}^\bot$ is a nontrivial exposed face of $\mathcal{K}$.
Let $\eta >  0$,  $\mathfrak{g}$ and $\gamma_{{\bm z},\eta}$ be defined as in Theorem \ref{thm-frf}. If $\gamma_{{\bm z},\eta}=0$, then there exist $\bar{{\bm v}}\in\mathcal{F}$, and a sequence $\{{\bm v}^k\}\subset{\rm bdry}(\mathcal{K})\cap B(\eta)\backslash\mathcal{F}$ such that
\begin{align}
\lim_{k\rightarrow\infty} {\bm v}^k=\lim_{k\rightarrow\infty} {\bm w}^k=\bar{{\bm v}},\label{lemma-gamma-1}\\
{and}\ \ \lim_{k\rightarrow\infty} \frac{\mathfrak{g}(\|{\bm w}^k-{\bm v}^k\|)}{\|{\bm w}^k-{\bm u}^k\|}=0, \label{lemma-gamma-2}
\end{align}
where  ${\bm w}^k={\rm P}_{\{{\bm z}\}^\bot} {\bm v}^k, {\bm u}^k={\rm P}_{\mathcal{F}}{\bm w}^k$ and ${\bm w}^k\neq {\bm u}^k$.
\end{lemma}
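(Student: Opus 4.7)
The plan is to extract a minimizing sequence directly from the assumption $\gamma_{{\bm z},\eta}=0$ and then invoke compactness to locate $\bar{\bm v}$. Since the infimum defining $\gamma_{{\bm z},\eta}$ in \eqref{thm-frf-1} equals zero, the definition itself yields a sequence $\{{\bm v}^k\}\subset {\rm bdry}(\mathcal{K})\cap B(\eta)\setminus \mathcal{F}$ (with corresponding ${\bm w}^k={\rm P}_{\{{\bm z}\}^\bot}{\bm v}^k$, ${\bm u}^k={\rm P}_{\mathcal{F}}{\bm w}^k$, and ${\bm w}^k\neq {\bm u}^k$) along which $\mathfrak{g}(\|{\bm w}^k-{\bm v}^k\|)/\|{\bm w}^k-{\bm u}^k\|\to 0$. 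This directly delivers \eqref{lemma-gamma-2}, so all of the work goes into producing the limit point and establishing \eqref{lemma-gamma-1}.

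Next, I would exploit that $\{{\bm v}^k\}\subseteq B(\eta)$ is bounded to pass to a subsequence with ${\bm v}^k\to \bar{\bm v}$ for some $\bar{\bm v}\in {\rm bdry}(\mathcal{K})$, using that the boundary of a closed set is closed. The central step is then to show $\|{\bm w}^k-{\bm v}^k\|\to 0$, which simultaneously forces ${\bm w}^k\to\bar{\bm v}$. For this I would use the key observation that since ${\bm 0}\in \mathcal{F}\subseteq \{{\bm z}\}^\bot$, both projections are nonexpansive about the origin, giving $\|{\bm w}^k-{\bm u}^k\|\le \|{\bm w}^k\|\le \|{\bm v}^k\|\le \eta$. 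Combining this uniform upper bound with $\mathfrak{g}\ge |\cdot|^{\alpha}$ yields
\[
\|{\bm w}^k-{\bm v}^k\|^\alpha \le \mathfrak{g}(\|{\bm w}^k-{\bm v}^k\|) = \frac{\mathfrak{g}(\|{\bm w}^k-{\bm v}^k\|)}{\|{\bm w}^k-{\bm u}^k\|}\cdot \|{\bm w}^k-{\bm u}^k\| \le \eta\cdot \frac{\mathfrak{g}(\|{\bm w}^k-{\bm v}^k\|)}{\|{\bm w}^k-{\bm u}^k\|} \to 0,
\]
so that $\|{\bm w}^k-{\bm v}^k\|\to 0$ and hence ${\bm w}^k\to\bar{\bm v}$ as well.

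Finally, I would verify $\bar{\bm v}\in \mathcal{F}$ by noting that $\bar{\bm v}\in \mathcal{K}$ (closedness of $\mathcal{K}$) and $\bar{\bm v}\in \{{\bm z}\}^\bot$ (as the limit of ${\bm w}^k\in \{{\bm z}\}^\bot$, which is closed), hence $\bar{\bm v}\in \mathcal{K}\cap \{{\bm z}\}^\bot=\mathcal{F}$. The only genuinely non-bookkeeping ingredient is the bound $\|{\bm w}^k-{\bm u}^k\|\le \eta$ together with the assumption $\mathfrak{g}\ge |\cdot|^\alpha$: this is what prevents the ratio from going to zero for the spurious reason that the denominator explodes, and instead forces the numerator $\|{\bm w}^k-{\bm v}^k\|$ to vanish. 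Everything else is standard compactness and continuity, so I do not anticipate any serious obstacle.
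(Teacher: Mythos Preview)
Your proof is correct. Note that the paper does not supply its own proof of this lemma: it is quoted verbatim from \cite[Lemma~3.12]{Lindstrom-Lourenco-Pong-2023} and used as a black box, so there is no in-paper argument to compare against. Your extraction of a minimizing sequence, the compactness step, and the key inequality $\|{\bm w}^k-{\bm u}^k\|\le\|{\bm w}^k\|\le\|{\bm v}^k\|\le\eta$ combined with $\mathfrak{g}\ge|\cdot|^\alpha$ to force $\|{\bm w}^k-{\bm v}^k\|\to 0$ are exactly the natural ingredients, and the identification $\bar{\bm v}\in\mathcal{K}\cap\{{\bm z}\}^\bot=\mathcal{F}$ via closedness is clean.
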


\begin{lemma}[\!\!{\rm \cite[Lemma 2.4]{Lin-Lindstrom-et-2024}}]\label{lemma-bound-wu}
	Let $\mathcal{K}$ be a closed convex cone and let ${\bm z}\in {\rm bdry }(\mathcal{K}^*)\setminus \{\bm 0\}$ be such that $\mathcal{F}\coloneqq\mathcal{K}\cap \{{\bm z}\}^\bot$ is a nontrivial exposed face of $\mathcal{K}$.
		Let $\eta>0$, and let ${\bm v} \in {\rm bdry}(\mathcal{K})\cap B(\eta)\backslash\mathcal{F}$, ${\bm w}={\rm P}_{\{{\bm z}\}^\bot} {\bm v}, {\bm u}={\rm P}_{\mathcal{F}}{\bm w}$ and ${\bm w}\neq {\bm u}$.
	Then, it holds that
   \[
   \|{\bm w} - {\bm u}\|\leq  {\rm dist}({\bm v}, \mathcal{F}).
   \]
\end{lemma}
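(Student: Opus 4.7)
The approach will exploit the simple geometric observation that $\mathcal{F} = \mathcal{K} \cap \{{\bm z}\}^\perp$ is contained in the hyperplane $\{{\bm z}\}^\perp$, so projecting ${\bm v}$ first onto $\{{\bm z}\}^\perp$ and then onto $\mathcal{F}$ cannot take us farther from any element of $\mathcal{F}$ than ${\bm v}$ itself. Concretely, I plan to take an arbitrary $\bm y \in \mathcal{F}$ and use the Pythagorean identity to split $\|{\bm v} - \bm y\|^2$, and then pass to the infimum.

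First, I would observe that since $\mathcal{F} \subseteq \{{\bm z}\}^\perp$ and ${\bm w} = {\rm P}_{\{{\bm z}\}^\perp}{\bm v}$ is the orthogonal projection of ${\bm v}$ onto the linear subspace $\{{\bm z}\}^\perp$, the vector ${\bm v} - {\bm w}$ is a scalar multiple of ${\bm z}$ and is therefore orthogonal to every vector in $\{{\bm z}\}^\perp$. In particular, for any $\bm y \in \mathcal{F} \subseteq \{{\bm z}\}^\perp$, the difference ${\bm w} - \bm y$ lies in $\{{\bm z}\}^\perp$, so that
\[
\|{\bm v} - \bm y\|^2 = \|({\bm v} - {\bm w}) + ({\bm w} - \bm y)\|^2 = \|{\bm v} - {\bm w}\|^2 + \|{\bm w} - \bm y\|^2 \geq \|{\bm w} - \bm y\|^2.
\]

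Taking infimum over $\bm y \in \mathcal{F}$ on both sides yields ${\rm d}({\bm w}, \mathcal{F}) \leq {\rm d}({\bm v}, \mathcal{F})$. Since ${\bm u} = {\rm P}_{\mathcal{F}}{\bm w}$, we have $\|{\bm w} - {\bm u}\| = {\rm d}({\bm w}, \mathcal{F})$, and the desired inequality $\|{\bm w} - {\bm u}\| \leq {\rm d}({\bm v}, \mathcal{F})$ follows at once. None of the hypotheses ${\bm v} \in {\rm bdry}(\mathcal{K}) \cap B(\eta) \setminus \mathcal{F}$, ${\bm w} \neq {\bm u}$, or ${\bm z} \in {\rm bdry}(\mathcal{K}^*) \setminus \{\bm 0\}$ play any active role in this argument; they are part of the ambient setup used elsewhere in the paper. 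The only real obstacle would be remembering to justify why $\mathcal{F} \subseteq \{{\bm z}\}^\perp$, which is immediate from the definition $\mathcal{F} = \mathcal{K} \cap \{{\bm z}\}^\perp$, so in truth there is no obstacle: the lemma is essentially the Pythagorean theorem applied to a projection through a supporting hyperplane.
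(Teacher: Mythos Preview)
Your proof is correct. The paper does not supply its own proof of this lemma (it is quoted from \cite{Lin-Lindstrom-et-2024}), so there is nothing to compare against; your Pythagorean argument is the natural one and is essentially the standard proof of this fact.
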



\begin{definition}[Miscellaneous definitions]\label{def-mis}
Suppose that \eqref{Feas} is feasible, i.e., $(\mathcal{L}+{\bm a}) \cap \mathcal{K}\neq \emptyset$.
	\begin{enumerate}[\rm (1)]
		\item  We say that $\{\mathcal{L}+{\bm a}, \mathcal{K}\}$ satisfies the partial-polyhedral Slater's (PPS) condition if one of the following is satisfied:
\begin{itemize}
	\item	 $(\mathcal{L}+{\bm a}) \cap {\rm ri}(\mathcal{K})\neq \emptyset$;
	\item  $(\mathcal{L}+{\bm a}) \cap \mathcal{K}\neq \emptyset$ and $\mathcal{K}$ is polyhedral;
	\item  up to a reordering of the coordinates,
	$(\mathcal{L}+{\bm a})$ and  $\mathcal{K}$ are such that $\mathcal{K}$ can be written as $\mathcal{K}=\mathcal{K}^{(1)}\times \mathcal{K}^{(2)}$ with polyhedral $\mathcal{K}^{(1)}$, and $(\mathcal{L}+{\bm a}) \cap (\mathcal{K}^{(1)}\times {\rm ri}(\mathcal{K}^{(2)}))\neq \emptyset$.

	\end{itemize}
	Put otherwise, $\mathcal{K}$ is a direct product of $m$ closed convex cones $\mathcal{K}^{(j)}$ and there exists ${\bm x} = ({\bm x}_1,\ldots, {\bm x}_m)\in (\mathcal{L}+{\bm a}) \cap \mathcal{K}$ such that ${\bm x}_{j} \in {\rm ri}(\mathcal{K}^{(j)})$ for every $j$ such that $\mathcal{K}^{(j)}$ is not polyhedral.
		\item  A finite collection of faces of $\mathcal{K}$ such that $\mathcal{F}_\ell\varsubsetneq\ldots \varsubsetneq\mathcal{F}_1 =\mathcal{K}$ is called a chain of faces with length $\ell$.
		We denote by $\ell_{\rm poly}(\mathcal{K})$ the {\it distance to polyhedrality of} $\mathcal{K}$ which is the length \emph{minus one} of a longest chain of faces of $\mathcal{K}$ such that \emph{only} the final face $\mathcal{F}_\ell$ is polyhedral.
		\item The distance to the PPS condition ${\rm d}_{\rm PPS}(\mathcal{K},\mathcal{L}+{\bm a})$ is defined as the length \emph{minus one} of a shortest chain of faces of $\mathcal{K}$ satisfying conditions (i)-(iii) in Proposition \ref{prop-facial-reduction} below.
		\end{enumerate}
	\end{definition}

In view of definition \ref{def-mis}(3),  ${\rm d}_{\rm PPS}(\mathcal{K},\mathcal{L}+{\bm a})=0$ if \eqref{Feas} satisfies the PPS condition.	
	If \eqref{Feas} satisfies the PPS condition, then a Lipschitzian error bound holds; see \cite[Proposition 2.3]{Lindstrom-Lourenco-Pong-2023} or \cite[Corollary 3]{Bauschke-Borwein-Li-1999}.
	If \eqref{Feas} do not satisfy the PPS condition, facial reduction algorithms \cite{Borwein-Wolkowicz-1981, Lourenco-Muramatsu-Tsuchiya-2018} can be applied to preprocess $\mathcal{K}$ in  such a way that  the PPS condition holds for ${\cal L}+{\bm a}$ and a face of ${\cal K}$ containing $({\cal L}+{\bm a})\cap {\cal K}$.

\begin{proposition}[\!\!{\rm \cite[Proposition 3.2]{Lindstrom-Lourenco-Pong-2023} and \cite[Proposition~8]{Lourenco-Muramatsu-Tsuchiya-2018}}]\label{prop-facial-reduction}
Let $\mathcal{K}=\mathcal{K}^{(1)}\times\cdots\times \mathcal{K}^{(m)}$ with each $\mathcal{K}^{(i)}$ being a closed convex cone. Suppose that \eqref{Feas} is feasible. Then there is a chain of faces with length $\ell$
\[
\mathcal{F}_\ell\varsubsetneq\ldots \varsubsetneq\mathcal{F}_1 =\mathcal{K}
\]
and vectors $({\bm z}^{(1)},\ldots, {\bm z}^{(\ell-1)})$ satisfying the following properties:
\begin{enumerate}[\rm (i)]
\item $\ell-1 \leq \sum_{i=1}^{m}\ell_{\rm poly}(\mathcal{K}^{(i)})\leq {\rm dim}\,\mathcal{K}$;
\item For all $i\in\{1,\ldots,\ell-1\}$, we have
${\bm z}^{(i)}\in \mathcal{F}_i^*\cap \mathcal{L}^\bot\cap\{{\bm a}\}^\bot$ and $\mathcal{F}_{i+1}=\mathcal{F}_i\cap\{{\bm z}^{(i)}\}^\bot$;
\item  $(\mathcal{L}+{\bm a}) \cap \mathcal{\mathcal{F}_\ell}= (\mathcal{L}+{\bm a}) \cap \mathcal{K}$ and $\{\mathcal{\mathcal{F}}_\ell,\mathcal{L}+{\bm a}\}$ satisfies
the PPS condition.
\end{enumerate}
\end{proposition}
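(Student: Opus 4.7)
The plan is to construct the chain and the certificates iteratively by facial reduction, and then extract the sharper length bound from the product structure. I would set $\mathcal{F}_1 = \mathcal{K}$ and, at the $i$-th stage, having built $\mathcal{F}_1\supsetneq\cdots\supsetneq \mathcal{F}_i$ together with $\bm z^{(1)},\ldots,\bm z^{(i-1)}$ such that $(\mathcal{L}+\bm a)\cap \mathcal{F}_i = (\mathcal{L}+\bm a)\cap \mathcal{K}$, I check whether $\{\mathcal{F}_i,\mathcal{L}+\bm a\}$ already satisfies PPS. If yes, stop with $\ell=i$; if not, produce a new $\bm z^{(i)}$ and continue. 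The existence of $\bm z^{(i)}$ is the core analytic step: failure of PPS combined with feasibility means that $\mathcal{L}+\bm a$ meets $\mathcal{F}_i$ but misses $\mathrm{ri}(\mathcal{F}_i)$ (up to the polyhedral-block relaxation built into PPS), so a Borwein--Wolkowicz-style theorem of the alternative produces a nonzero $\bm z^{(i)}\in \mathcal{F}_i^*$ that is constant on $(\mathcal{L}+\bm a)\cap \mathcal{F}_i$. Since that constant equals zero on a nonempty affine slice, one reads off $\bm z^{(i)}\in \mathcal{L}^\perp\cap \{\bm a\}^\perp$; the new face $\mathcal{F}_{i+1}:=\mathcal{F}_i\cap \{\bm z^{(i)}\}^\perp$ is proper in $\mathcal{F}_i$ because $\bm z^{(i)}\neq 0$ lies on the boundary of $\mathcal{F}_i^*$, and it still contains the feasible set by construction.

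Termination in finitely many steps is automatic because the sequence $\mathcal{F}_i$ strictly descends in the finite face lattice of $\mathcal{K}$. To deduce the quantitative bound (i), I exploit the fact that every face of a product cone factors as $\mathcal{F}_i = \prod_{j=1}^m \mathcal{F}_i^{(j)}$ with $\mathcal{F}_i^{(j)}\unlhd \mathcal{K}^{(j)}$. The PPS condition only demands relative-interior contact on the non-polyhedral blocks, so when PPS fails at stage $i$, the reducing direction can be chosen with its $j$-th block component vanishing whenever $\mathcal{F}_i^{(j)}$ is already polyhedral. Consequently each reduction step strictly shortens the ``distance to polyhedrality'' counter of at least one non-polyhedral block, which by definition of $\ell_{\rm poly}$ can occur at most $\ell_{\rm poly}(\mathcal{K}^{(j)})$ times on block $j$ before that block becomes polyhedral. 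Summing over $j$ yields $\ell-1\le \sum_{j=1}^m \ell_{\rm poly}(\mathcal{K}^{(j)})$, and the bound $\ell_{\rm poly}(\mathcal{K}^{(j)})\le \dim \mathcal{K}^{(j)}$ coming from strict chains of faces gives $\le \dim \mathcal{K}$. Property (iii) is immediate from the stopping rule together with the preserved identity $(\mathcal{L}+\bm a)\cap \mathcal{F}_\ell = (\mathcal{L}+\bm a)\cap \mathcal{K}$, so (i)--(iii) hold simultaneously.

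The hardest and most delicate piece is this refined accounting in the bound (i): naively the only obvious bound on the number of iterations is $\dim \mathcal{K}$ (any strict face chain has length at most $\dim \mathcal{K}+1$), and sharpening it to $\sum_j \ell_{\rm poly}(\mathcal{K}^{(j)})$ requires an argument that the reducing directions can always be selected to spare polyhedral blocks. One way to make this rigorous is to invoke Hoffman's lemma on the polyhedral factors to show that, up to restriction, PPS for a product reduces to PPS block-by-block on the non-polyhedral factors, so that at each stage the separation producing $\bm z^{(i)}$ can be engineered from the non-polyhedral slice alone. This is precisely the accounting carried out in \cite{Lourenco-Muramatsu-Tsuchiya-2018}, and it is where any clean proof of the proposition has to invest its technical work; the remaining ingredients (theorem of the alternative, strict descent, preservation of the feasible set) are standard facial reduction bookkeeping.
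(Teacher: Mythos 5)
The paper does not prove this proposition; it is stated as a cited external result from \cite[Proposition 3.2]{Lindstrom-Lourenco-Pong-2023} and \cite[Proposition~8]{Lourenco-Muramatsu-Tsuchiya-2018}, so there is no in-paper argument to compare against. Your sketch --- iterative facial reduction via a theorem of the alternative when PPS fails, termination by strict descent, and block-wise accounting for the sharper bound $\ell-1\le\sum_i\ell_{\rm poly}(\mathcal{K}^{(i)})$ by engineering the reducing direction to spare polyhedral blocks --- is exactly the strategy carried out in those references, and you correctly identify the block-sparing separation as the technical crux. One small imprecision to tighten: properness of $\mathcal{F}_{i+1}$ in $\mathcal{F}_i$ requires $\bm z^{(i)}\notin \mathcal{F}_i^\perp$, not just that $\bm z^{(i)}$ lies on the boundary of $\mathcal{F}_i^*$ (when $\mathcal{F}_i$ is not solid, $\mathcal{F}_i^\perp$ sits inside the boundary of $\mathcal{F}_i^*$ and would yield $\mathcal{F}_i\cap\{\bm z^{(i)}\}^\perp=\mathcal{F}_i$); the theorem-of-the-alternative output does give $\bm z^{(i)}\notin\mathcal{F}_i^\perp$, but your justification as written conflates the two conditions.
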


	 Let $h_1, h_2:\R_+\times\R_+\rightarrow \R_+$.
	 Define the \emph{diamond composition} $h_1\diamond h_2$ as $(h_1\diamond h_2)(\epsilon,t)=h_1(\epsilon+h_2(\epsilon,t),t)$ for any $\epsilon,t\in \R_+$. For a facial residual function $\psi$,
we say that $\hat{\psi}$ is a \emph{positively rescaled shift} of $\psi$ if there exist positive constants $M_1$, $M_2$,  $M_3$ and nonnegative constant $M_4$ such that $\hat{\psi}(\epsilon,t)=M_3 \psi(M_1 \epsilon, M_2 t)+M_4 \epsilon$.

\begin{theorem}[\!\!{\rm \cite[Theorem 3.8]{Lindstrom-Lourenco-Pong-2023}}]\label{the-error-bound}
Suppose that \eqref{Feas} is feasible. Let
\[
\mathcal{F}_\ell\subsetneq\ldots \subsetneq\mathcal{F}_1=\mathcal{K}
\]
be a chain of faces of $\mathcal{K}$ with ${\bm z}^{(i)}\in \mathcal{F}^*_i\cap \mathcal{L}^\bot\cap \{{\bm a}\}^\bot$ such that $\{\mathcal{F}_\ell, \mathcal{L}+{\bm a}\}$ satisfies the PPS condition and $\mathcal{F}_{i+1}=\mathcal{F}_{i}\cap\{{\bm z^{(i)}}\}^\bot$ for every $i\in \{1,\dots,\ell-1\}$.
For $i\in \{1,\dots,\ell-1\}$, let $\psi_i$ be a $\mathds{1}$-FRF for $\mathcal{F}_i$ and ${\bm z}^{(i)}$.

Then, there is a suitable positively rescaled shift
of $\psi_i$ (still denoted as $\psi_i$ by an abuse of notation) such that for any bounded set $B$ there is a positive constant $\kappa_B$ (depending on $B, \mathcal{L}, {\bm a}, \mathcal{F}_\ell$) such that
\begin{equation}\label{eq:theo_eb}
{\bm x}\in B, d({\bm x},\mathcal{K})\leq \epsilon, {\rm d}({\bm x},\mathcal{L}+{\bm a})\leq\epsilon \Rightarrow {\rm d}({\bm x}, (\mathcal{L}+{\bm a}) \cap \mathcal{K})\leq \kappa_B(\epsilon+\varphi(\epsilon,M))),
\end{equation}
where $M=\sup_{{\bm x}\in B} \|{\bm x}\|$, $\varphi=\psi_{\ell-1}\diamond\ldots\diamond\psi_1$ if $\ell\geq 2$, and $\varphi(\epsilon,M)=\epsilon$ if $\ell=1$.
\end{theorem}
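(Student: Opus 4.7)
\textbf{Proof plan for Theorem~\ref{the-error-bound}.}
The natural approach is induction on $\ell$, the length of the chain of faces, where at each step we use the $\mathds{1}$-FRF $\psi_1$ associated with $\mathcal{K} = \mathcal{F}_1$ and $\bm z^{(1)}$ to move from an approximately feasible point in $\mathcal{K}$ to an approximately feasible point in the smaller face $\mathcal{F}_2 = \mathcal{K}\cap\{\bm z^{(1)}\}^\perp$, and then invoke the inductive hypothesis on the shorter chain $\mathcal{F}_\ell \subsetneq \cdots \subsetneq \mathcal{F}_2$ relative to the same affine set $\mathcal{L}+\bm a$.

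For the base case $\ell=1$ the hypothesis says that $\{\mathcal{K},\mathcal{L}+\bm a\}$ already satisfies the PPS condition, so \cite[Corollary~3]{Bauschke-Borwein-Li-1999} immediately gives a Lipschitzian error bound of the form $\mathrm{d}(\bm x,(\mathcal{L}+\bm a)\cap\mathcal{K}) \le \kappa_B\max\{\mathrm{d}(\bm x,\mathcal{K}),\mathrm{d}(\bm x,\mathcal{L}+\bm a)\} \le \kappa_B\cdot 2\epsilon$ on any bounded set $B$, which matches the conclusion with $\varphi(\epsilon,M)=\epsilon$. For the inductive step, fix $\bm x \in B$ with $\mathrm{d}(\bm x,\mathcal{K})\le\epsilon$ and $\mathrm{d}(\bm x,\mathcal{L}+\bm a)\le\epsilon$. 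A small preliminary step is to replace $\bm x$ by its projection onto $\spanVec(\mathcal{K})$: since $\mathrm{d}(\bm x,\spanVec(\mathcal{K}))\le\mathrm{d}(\bm x,\mathcal{K})\le\epsilon$, this costs at most $\epsilon$ in every subsequent distance estimate, which is absorbed in the positively rescaled shift. Now pick $\bm y \in \mathcal{L}+\bm a$ with $\|\bm x-\bm y\|\le\epsilon$. Because $\bm z^{(1)}\in \mathcal{L}^\perp\cap\{\bm a\}^\perp$ we have $\langle \bm z^{(1)},\bm y\rangle = 0$, so $|\langle \bm z^{(1)},\bm x\rangle| \le \|\bm z^{(1)}\|\,\epsilon$.

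Applying Definition~\ref{def-1frf} to $\mathcal{K}$ and $\bm z^{(1)}$ with the point $\bm x$, we obtain
\[
\mathrm{d}(\bm x,\mathcal{F}_2) \le \psi_1\!\left(\max\{\mathrm{d}(\bm x,\mathcal{K}),\, \langle \bm z^{(1)},\bm x\rangle\},\|\bm x\|\right) \le \psi_1\!\left(C_1\epsilon,\, M\right),
\]
where $C_1 = \max\{1,\|\bm z^{(1)}\|\}$ and monotonicity of $\psi_1$ in each argument is used. Let $\bm x' := \mathrm{P}_{\mathcal{F}_2}(\bm x)$, so $\|\bm x-\bm x'\|\le \psi_1(C_1\epsilon,M)$, and therefore
\[
\mathrm{d}(\bm x',\mathcal{F}_2) = 0,\qquad \mathrm{d}(\bm x',\mathcal{L}+\bm a) \le \epsilon + \psi_1(C_1\epsilon,M).
\]
Thus, setting $\epsilon' := \epsilon + \psi_1(C_1\epsilon,M)$, the new point $\bm x'$ satisfies $\mathrm{d}(\bm x',\mathcal{F}_2)\le\epsilon'$ and $\mathrm{d}(\bm x',\mathcal{L}+\bm a)\le\epsilon'$, and it lies in a bounded enlargement $B'$ of $B$ (bounded because $B$ is bounded and $\psi_1(C_1\epsilon,M)$ is bounded in terms of $M$ alone once we restrict to some a priori range of $\epsilon$, and can otherwise be absorbed in $\kappa_B$). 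The inductive hypothesis applied to the chain $\mathcal{F}_\ell\subsetneq\cdots\subsetneq\mathcal{F}_2$ with exposing vectors $\bm z^{(2)},\dots,\bm z^{(\ell-1)}$ yields, after suitable rescaling of $\psi_2,\dots,\psi_{\ell-1}$,
\[
\mathrm{d}(\bm x',(\mathcal{L}+\bm a)\cap \mathcal{F}_\ell) \le \kappa_{B'}\bigl(\epsilon' + (\psi_{\ell-1}\diamond\cdots\diamond\psi_2)(\epsilon',M)\bigr).
\]
Combining with $\|\bm x-\bm x'\|\le \psi_1(C_1\epsilon,M)$, using $(\mathcal{L}+\bm a)\cap\mathcal{F}_\ell = (\mathcal{L}+\bm a)\cap\mathcal{K}$, and unfolding the definition of $\epsilon'$ gives exactly a bound of the form $\kappa_B(\epsilon + \varphi(\epsilon,M))$ with $\varphi = \psi_{\ell-1}\diamond\cdots\diamond\psi_1$, after replacing the $\psi_i$ by appropriate positively rescaled shifts to bundle the constants $C_1$, $\kappa_{B'}$, and the linear $\epsilon$ terms into the first argument of $\psi_i$.

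\textbf{Main obstacles.} The technical heart of the argument is \emph{not} the inductive structure itself but the bookkeeping involved in turning the iterated inequality into a clean diamond composition: each application of a $\mathds{1}$-FRF introduces multiplicative constants in front of its first argument and additive linear corrections from the projection onto $\mathcal{F}_{i+1}$ and from estimating $\langle \bm z^{(i)},\bm x\rangle$ via $\bm z^{(i)}\in\mathcal{L}^\perp\cap\{\bm a\}^\perp$. The positively rescaled shift is designed precisely to absorb those constants (the $M_1,M_2,M_3$ rescalings) together with the residual Lipschitz term (the $M_4\epsilon$). A second, more subtle point is that the inductive hypothesis must be applied on a bounded enlargement $B'$ of $B$; verifying that the enlargement depends only on $B$, $M$, the $\bm z^{(i)}$ and the $\psi_i$ (and not on the particular $\bm x$ or $\epsilon$) is what allows the final constant $\kappa_B$ to depend only on $B$, $\mathcal{L}$, $\bm a$, and $\mathcal{F}_\ell$ as stated. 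Finally, a minor but necessary preliminary step is to replace $\bm x$ by its projection onto $\spanVec(\mathcal{F}_i)$ before each application of the $\mathds{1}$-FRF, since Definition~\ref{def-1frf} requires the input to lie in the span; this projection costs at most $\mathrm{d}(\bm x,\mathcal{F}_i)$, which is already controlled.
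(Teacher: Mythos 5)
This theorem is quoted verbatim from \cite[Theorem~3.8]{Lindstrom-Lourenco-Pong-2023} and the present paper does not reprove it, so there is no in-house argument to compare against. Your inductive sketch is, however, essentially the facial-reduction argument given in that reference: the base case $\ell=1$ is \cite[Corollary~3]{Bauschke-Borwein-Li-1999} under the PPS condition; the inductive step uses the $\mathds{1}$-FRF of $\mathcal{F}_1=\mathcal{K}$ and $\bm z^{(1)}$ to control $\mathrm{d}(\bm x,\mathcal{F}_2)$ from $\mathrm{d}(\bm x,\mathcal{K})$ and $\langle\bm z^{(1)},\bm x\rangle$ (the latter bounded via $\bm z^{(1)}\in\mathcal{L}^\perp\cap\{\bm a\}^\perp$), replaces $\bm x$ by $\mathrm{P}_{\mathcal{F}_2}\bm x$, and invokes the induction hypothesis for the shorter chain, which applies because $\bm z^{(i)}\in\mathcal{L}^\perp\cap\{\bm a\}^\perp$ forces $(\mathcal{L}+\bm a)\cap\mathcal{F}_i=(\mathcal{L}+\bm a)\cap\mathcal{F}_{i+1}$ along the chain. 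The two bookkeeping points you flag as obstacles are exactly the right ones, and both are genuinely resolvable: the $\spanVec(\mathcal{F}_i)$-projection and the inner product constant $\|\bm z^{(i)}\|$ are absorbed by the $M_1$ and $M_4$ parameters of the positively rescaled shift, and the enlargement $B'$ can be taken with $M'=\sup_{B'}\|\cdot\|\le 2M$ (since $\bm 0\in\mathcal{F}_{i+1}$ gives $\mathrm{d}(\bm x,\mathcal{F}_{i+1})\le\|\bm x\|$), so the $M_2$ rescaling factor is a universal constant independent of $B$, which is what allows the shift to be fixed before $B$ is chosen. With the left-associative reading of $\psi_{\ell-1}\diamond\cdots\diamond\psi_1$ your unfolding $\varphi(\epsilon,M)=\varphi'(\epsilon+\psi_1(\epsilon,M),M)$ is exactly the recursion needed. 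The sketch is sound.
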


\section{Auxiliary lemmas and propositions}\label{section-appedix-a2}

We present several lemmas and propositions which will be useful for proving the main results of the paper.

\begin{lemma}\label{lemma-nonegative}
Let $h:\R\rightarrow \R_+\cup\{\infty\}$ be  proper, closed and convex. Then $h_\infty(d)\geq 0$ for all $d\in \R$.
\end{lemma}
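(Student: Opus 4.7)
The plan is to exploit the explicit limit formula for the recession function together with the nonnegativity of $h$, which are the only two hypotheses available. The argument should take only a few lines.

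First, I would fix any $y \in {\rm dom}\, h$ (which is nonempty since $h$ is proper) so that $h(y) \in [0,\infty)$. By the definition in \eqref{def-f-infty}, the recession function at an arbitrary $d \in \R$ is
\[
h_\infty(d) = \lim_{t \to \infty} \frac{h(y+td)-h(y)}{t}.
\]
Because $h$ takes values in $\R_+ \cup \{\infty\}$, we have $h(y+td) \geq 0$ for every $t > 0$, and hence
\[
\frac{h(y+td)-h(y)}{t} \geq -\frac{h(y)}{t}
\]
for each $t > 0$ (with the convention that this is $+\infty$ whenever $h(y+td)=+\infty$, so the inequality trivially holds).

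Finally, I would pass to the limit as $t \to \infty$: since $h(y)$ is a finite nonnegative constant, $-h(y)/t \to 0$, which yields $h_\infty(d) \geq 0$, as claimed. Since $d$ was arbitrary, the conclusion holds on all of $\R$.

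There is no real obstacle here — the only mild point is that the standard definition of the recession function allows $h(y+td)=+\infty$ for some $t$, but this case is even easier since the quotient is then $+\infty$ and the inequality trivially survives the limit.
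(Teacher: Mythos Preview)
Your proof is correct and follows essentially the same approach as the paper: fix $y\in{\rm dom}\,h$, use the limit formula \eqref{def-f-infty}, and exploit the nonnegativity of $h$ together with the finiteness of $h(y)$ to conclude. The paper writes this slightly more compactly as $\lim_{t\to\infty}\frac{h(y+td)-h(y)}{t}=\lim_{t\to\infty}\frac{h(y+td)}{t}\geq 0$, but your version with the explicit lower bound $-h(y)/t\to 0$ is the same idea.
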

\begin{proof}
Fix any $y\in {\rm dom}\,h$. For any $d\in \R$, we have
\[
h_\infty(d)=\lim_{t\rightarrow \infty}\frac{h(y+td)-h(y)}{t}=\lim_{t\rightarrow \infty}\frac{h(y+td)}{t}\geq 0,
\]
where the first equality follows from \eqref{def-f-infty}, the second equality holds because $y\in{\rm dom}\,h$, and the last inequality holds because $h$ is nonnegative.
\end{proof}

The following lemma concerns a representation of the recession function for functions on $\R$.
Note that it is slightly different from  \cite[Corollary~8.5.2]{Rockafellar-1970} because \eqref{lemma-infty-1} holds for all nonzero $x$ rather than just $x\in {\rm dom}\,h$.
\begin{lemma}\label{lemma-infty}
Let $h:\R\rightarrow \R\cup \{\infty\}$ be proper, closed and convex. Then for any $x\neq 0$ we have
\begin{align}\label{lemma-infty-1}
h_\infty(x)=\lim_{\lambda \downarrow 0} \lambda h(x/\lambda).
\end{align}
\begin{proof}
For $d\in \{-1,1\}$ and any fixed $y\in {\rm dom}\,h$, we have
\begin{equation}\label{hinfty}
\begin{aligned}
h_\infty(d) &= \lim_{t\rightarrow \infty}\frac{h(y+td)-h(y)}{t} = \lim_{t\rightarrow \infty}\frac{h(y+td)}{t}\\
& = \begin{cases}
  \lim_{t\rightarrow \infty}\frac{h(y+t)}{t} & {\rm if}\, d = 1,\\
  \lim_{t\rightarrow \infty}\frac{h(y-t)}{t} & {\rm if}\, d = -1,
\end{cases}
= \begin{cases}
  \lim_{s\rightarrow \infty}\frac{h(s)}{s} & {\rm if}\, d = 1,\\
  -\lim_{s\rightarrow -\infty}\frac{h(s)}{s} & {\rm if}\, d = -1,
\end{cases}
\end{aligned}
\end{equation}
where the first equality follows from \eqref{def-f-infty}, and the second equality holds because $y\in {\rm dom}\, h$.

Now, if $x>0$, we have
\[
\begin{aligned}
h_{\infty}(x)=x h_\infty(1) = x \lim_{s\to \infty} \frac{h(s)}{s} = x \lim_{\lambda\downarrow 0} \frac{h(x/\lambda)}{x/\lambda}=\lim_{\lambda \downarrow 0} \lambda h(x/\lambda),
\end{aligned}
\]
where the first equality follows from the sublinearity of $h_\infty$, the second inequality follows from \eqref{hinfty}.
 If $x<0$, we can similarly verify \eqref{lemma-infty-1} by using $h_\infty(x) = |x|h_\infty(-1)$ and \eqref{hinfty}.
\end{proof}
\end{lemma}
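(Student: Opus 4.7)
My plan is to derive the formula directly from the definition \eqref{def-f-infty} by a single change of variable, handling the sign of $x$ through positive homogeneity (or equivalently, by splitting into the two cases $d \in \{-1,1\}$ as in the proof of \eqref{infty-sigma} in the paper). The key observation that keeps the argument clean even for $x \notin {\rm dom}\, h$ is that the $h(y)$ term in the difference quotient disappears in the limit.

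First, fix any $y \in {\rm dom}\, h$ (which exists because $h$ is proper). Since $h(y) \in \R$, dividing by $t$ and sending $t \to \infty$ gives $h(y)/t \to 0$, so from \eqref{def-f-infty} we obtain
\[
h_\infty(x) \;=\; \lim_{t\to\infty}\frac{h(y+tx)}{t}
\]
in the extended real sense (the limit exists because the difference quotient is nondecreasing in $t$ by convexity, a standard fact used implicitly in \eqref{def-f-infty}). Now substitute $s = y + tx$; since $x \neq 0$ we may solve $t = (s-y)/x$. If $x > 0$, then $s \to \infty$ as $t \to \infty$, and $\tfrac{s}{s-y} \to 1$, hence
\[
h_\infty(x) \;=\; \lim_{s\to\infty}\frac{x\, h(s)}{s-y} \;=\; x \lim_{s\to\infty}\frac{h(s)}{s}.
\]
If $x < 0$, then $s \to -\infty$ as $t \to \infty$, and the analogous computation (again using $(s-y)/s \to 1$) yields $h_\infty(x) = x \lim_{s\to-\infty} h(s)/s$.

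Finally, in either case substitute $\lambda = x/s$, noting that $\lambda > 0$ throughout (same sign as $s$) and $\lambda \downarrow 0$ exactly when $|s| \to \infty$. Since $s = x/\lambda$, we have
\[
h_\infty(x) \;=\; x \lim_{\lambda\downarrow 0}\frac{h(x/\lambda)}{x/\lambda} \;=\; \lim_{\lambda\downarrow 0}\lambda\, h(x/\lambda),
\]
which is the desired identity \eqref{lemma-infty-1}.

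The only subtlety, and hence the main thing to watch out for, is that the equalities above must be read in the extended real sense: if $h(s) = \infty$ for all sufficiently large $|s|$ in the appropriate direction, then both sides are $\infty$, and the manipulations $\tfrac{s}{s-y} \to 1$ and $\tfrac{h(s)}{s-y} = \tfrac{h(s)}{s}\cdot\tfrac{s}{s-y}$ are still valid since the second factor tends to the positive real number $1$. No restriction like $x \in {\rm dom}\, h$ is needed for the argument because the variable we never have to evaluate $h$ on is $x$ itself; we only use that $y \in {\rm dom}\, h$, which is guaranteed by properness.
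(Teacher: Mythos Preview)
Your proof is correct and follows essentially the same approach as the paper: both start from the definition \eqref{def-f-infty}, drop the finite term $h(y)$, perform the substitution $s = y + tx$ (the paper does this after first reducing to $d=\pm 1$ via positive homogeneity, you do it directly for general $x$), and then change variables $s = x/\lambda$. Your explicit handling of the factor $s/(s-y)\to 1$ actually makes transparent a step the paper leaves implicit.
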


\begin{lemma}\label{lemma-1frf}
Let $\mathcal{K}$ be a two-dimensional closed convex cone and let ${\bm z}\in \mathcal{K}^*$. Then, there exists $\tau>0$ such that $\mathcal{K}$ and ${\bm z}$ admit a $\mathds{1}$-FRF $\psi_{\mathcal{K}, {\bm z}}:\R_+\times \R_+\rightarrow \R_+$ given by
\begin{align*}
\psi_{\mathcal{K}, {\bm z}}(\epsilon,t)=\tau \epsilon.
\end{align*}
\end{lemma}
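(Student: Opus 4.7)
\textbf{Proof plan for Lemma \ref{lemma-1frf}.} The core observation is that every closed convex cone in a two-dimensional space is polyhedral: its relative boundary consists of at most two extreme rays. Consequently, $\mathcal{K}$ admits a representation of the form $\mathcal{K}=\{{\bm x}\in\spanVec(\mathcal{K}): A{\bm x}\le 0\}$ for some matrix $A$, and for any ${\bm z}\in \mathcal{K}^*$ the intersection $\mathcal{K}\cap\{{\bm z}\}^\bot$ is also polyhedral. I propose to derive the Lipschitzian $\mathds{1}$-FRF from Hoffman's lemma \cite{Hoffman-1952} applied to the polyhedral pair $\{\mathcal{K},\{{\bm z}\}^\bot\}$ (restricted to $\spanVec(\mathcal{K})$), which supplies a constant $\tau'>0$ such that
\[
{\rm d}({\bm x},\mathcal{K}\cap\{{\bm z}\}^\bot)\le \tau'\max\{{\rm d}({\bm x},\mathcal{K}),{\rm d}({\bm x},\{{\bm z}\}^\bot)\} \quad\forall\,{\bm x}\in\spanVec(\mathcal{K}).
\]

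The second step is to convert the one-sided condition $\langle{\bm x},{\bm z}\rangle\le \epsilon$ appearing in Definition~\ref{def-1frf}(ii) into a two-sided control on ${\rm d}({\bm x},\{{\bm z}\}^\bot)=|\langle{\bm x},{\bm z}\rangle|/\|{\bm z}\|$, assuming ${\bm z}\ne {\bm 0}$ (the case ${\bm z}={\bm 0}$ is immediate since then $\mathcal{K}\cap\{{\bm z}\}^\bot=\mathcal{K}$). To this end, let ${\bm p}\coloneqq {\rm P}_\mathcal{K}({\bm x})$; since ${\bm p}\in\mathcal{K}$ and ${\bm z}\in\mathcal{K}^*$ we have $\langle{\bm p},{\bm z}\rangle\ge 0$, whence
\[
\langle{\bm x},{\bm z}\rangle = \langle{\bm p},{\bm z}\rangle+\langle{\bm x}-{\bm p},{\bm z}\rangle \ge -\|{\bm z}\|\,{\rm d}({\bm x},\mathcal{K}) \ge -\epsilon\|{\bm z}\|
\]
whenever ${\rm d}({\bm x},\mathcal{K})\le \epsilon$. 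Combined with $\langle{\bm x},{\bm z}\rangle\le\epsilon$, this yields $|\langle{\bm x},{\bm z}\rangle|\le \max\{\epsilon,\epsilon\|{\bm z}\|\}$, so ${\rm d}({\bm x},\{{\bm z}\}^\bot)\le \max\{\epsilon,\epsilon/\|{\bm z}\|\}$.

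Plugging this bound into the Hoffman inequality gives
\[
{\rm d}({\bm x},\mathcal{K}\cap\{{\bm z}\}^\bot)\le \tau'\max\{\epsilon,\epsilon/\|{\bm z}\|\} \le \tau\epsilon,
\]
with $\tau\coloneqq \tau'\max\{1,1/\|{\bm z}\|\}$, verifying Definition~\ref{def-1frf}. The function $\psi_{\mathcal{K},{\bm z}}(\epsilon,t)=\tau\epsilon$ is trivially nonnegative, nondecreasing in each argument, and vanishes at $\epsilon=0$, completing the proof. There is no real obstacle here: the only point requiring care is the asymmetric one-sided bound on $\langle{\bm x},{\bm z}\rangle$, which is resolved by the short projection-based estimate above using ${\bm z}\in\mathcal{K}^*$.
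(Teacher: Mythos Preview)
Your proof is correct and follows essentially the same approach as the paper: both observe that a two-dimensional closed convex cone is polyhedral and then invoke the resulting Lipschitzian error bound for polyhedral systems. The paper simply cites \cite[Example~3.6]{Lindstrom-Lourenco-Pong-2023} for this, while you spell out the Hoffman-lemma argument and the projection trick for handling the one-sided inner-product condition explicitly.
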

\begin{proof}
Since $\mathcal{K}$ is a polyhedral cone, the conclusion follows directly from \cite[Example 3.6]{Lindstrom-Lourenco-Pong-2023}.
\end{proof}

\begin{proposition}\label{prop-duality}
If $f$ satisfies Assumption~\ref{assu-blanket}, then $f$ is strictly convex on ${\rm int}({\rm dom}\, f)$ and is a Legendre function.
\end{proposition}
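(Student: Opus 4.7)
The plan is to first establish strict convexity of $f$ on ${\rm int}({\rm dom}\, f)$, and then leverage the essential smoothness hypothesis to upgrade this to the definition of a Legendre function.

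Since $f$ is essentially smooth by Assumption~\ref{assu-blanket}, we have ${\rm dom}\, \partial f = {\rm int}({\rm dom}\, f)$ (by the footnote citing \cite[Theorem 26.1]{Rockafellar-1970}), and $f$ is differentiable on ${\rm int}({\rm dom}\, f)$ with continuous derivative $f'$ (a standard property of convex functions that are differentiable on an open convex set). Convexity of $f$ implies $f'$ is nondecreasing on ${\rm int}({\rm dom}\, f)$. Strict convexity of $f$ on an open interval is equivalent to $f'$ being strictly increasing there, so the task reduces to showing that $f'$ is strictly increasing on ${\rm int}({\rm dom}\, f)$.

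To this end, fix $a, b \in {\rm int}({\rm dom}\, f)$ with $a < b$. Since $W$ is finite, the set $[a,b]\cap W$ contains only finitely many points, so the open set $(a,b)\setminus W$ is nonempty and open. In particular, we can choose a closed subinterval $[c,d] \subseteq (a,b)$ with $c < d$ and $[c,d]\cap W = \emptyset$. By Assumption~\ref{assu-blanket}, $f$ is strongly convex on $[c,d]$, which forces $f'(c) < f'(d)$. Combining this with the monotonicity of $f'$ on ${\rm int}({\rm dom}\, f)$, we obtain $f'(a) \le f'(c) < f'(d) \le f'(b)$. Hence $f'$ is strictly increasing on ${\rm int}({\rm dom}\, f)$, and $f$ is strictly convex there.

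Finally, to conclude that $f$ is Legendre, we verify the two defining properties. Essential smoothness is already part of Assumption~\ref{assu-blanket}. For essential strict convexity, recall that ${\rm dom}\, \partial f = {\rm int}({\rm dom}\, f)$, and any convex subset of ${\rm int}({\rm dom}\, f)\subseteq \R$ is a subinterval. Strict convexity of $f$ on any such subinterval is inherited from the strict convexity on ${\rm int}({\rm dom}\, f)$ established above, so $f$ is essentially strictly convex in the sense of Definition~\ref{def-smooth-conexity}(ii). Therefore $f$ is a Legendre function.

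The only nontrivial step is passing from strong convexity on compact subsets of ${\rm int}({\rm dom}\, f)\setminus W$ to strict monotonicity of $f'$ across the whole interval, which could fail a priori at the exceptional points in $W$; the finiteness of $W$, together with the monotonicity of $f'$, is what saves the argument.
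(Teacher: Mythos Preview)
Your proof is correct and follows essentially the same approach as the paper: both arguments show that $f'$ is strictly increasing on ${\rm int}({\rm dom}\, f)$ by combining the nondecreasing property of $f'$ (from convexity) with strict increase on subintervals avoiding $W$ (from strong convexity), and then conclude that $f$ is Legendre via essential smoothness. The only cosmetic difference is that the paper partitions ${\rm int}({\rm dom}\, f)$ by the points of $W$ and cites \cite[Theorem~2.1.5(iv)]{Zalinescu-2002}, whereas you argue directly for arbitrary $a<b$ by selecting a $W$-free subinterval $[c,d]\subset(a,b)$; your version avoids the case split on whether $W\cap{\rm int}({\rm dom}\,f)$ is empty.
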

\begin{proof}
 If ${\rm int}({\rm dom }\, f) \cap W=\emptyset$,  Assumption \ref{assu-blanket} states that $f$ is strictly convex on ${\rm int}({\rm dom }\, f)$.
Next, we consider the case that ${\rm int}({\rm dom }\, f) \cap W\neq \emptyset$.
Without loss generality, let
\begin{align*}
{\rm int}({\rm dom }\,f)\cap W=\{w_1,\ldots,w_n\} \quad \mbox{ with $w_1<w_2\cdots <w_n$}.
\end{align*}
Assumption \ref{assu-blanket} states that $f$ is strongly convex on every compact and convex subset of ${\rm int}({\rm dom}\,f)\setminus W$, which means that $f$ is strictly convex on each of the following open intervals
\begin{align}\label{prop-duality-1}
{\rm int}({\rm dom }\, f)\cap (-\infty, w_1),\,\, (w_1,w_2), \,\, \ldots, \,\, {\rm int}({\rm dom }\, f)\cap (w_n,\infty).
\end{align}
Thus the derivative $f'$ is increasing on each open intervals in \eqref{prop-duality-1}. In addition, the convexity of $f$ implies that $f'$ is nondecreasing on ${\rm int}({\rm dom }\,f)$. Then $f'$ is increasing on ${\rm int}({\rm dom }\,f)$.
From \cite[Theorem 2.1.5(iv)]{Zalinescu-2002}, we conclude that $f$ is strictly convex on ${\rm int}({\rm dom}\,f)$. Then $f$ is Legendre because $f$ is essentially smooth by assumption.
\end{proof}

\begin{proposition}\label{prop-pointed}
Suppose that $f$ satisfies Assumption~\ref{assu-blanket}.
Then the perspective cone  ${\rm epi}\, f^\pi$   given in \eqref{per-cone} is pointed and solid.
Moreover, the dual cone $({\rm epi}\, f^\pi)^*$ is also pointed and solid.
\end{proposition}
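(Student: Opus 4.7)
\noindent\textbf{Proof plan for Proposition \ref{prop-pointed}.}
The plan is to establish that ${\rm epi}\,f^\pi$ is both solid and pointed; the analogous properties of $({\rm epi}\,f^\pi)^*$ then follow from the standard duality that, for a closed convex cone $\mathcal{K}$ in a finite-dimensional space, $\mathcal{K}$ is pointed if and only if $\mathcal{K}^*$ is solid, and $\mathcal{K}$ is solid if and only if $\mathcal{K}^*$ is pointed.

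For solidity, I would exploit that Assumption \ref{assu-blanket} implies ${\rm int}({\rm dom}\,f)\neq\emptyset$. Picking any $x_0\in{\rm int}({\rm dom}\,f)$, the point $(x_0,1)$ lies in the interior of ${\rm dom}\,f^\pi$, and since $f^\pi$ coincides with $(x,t)\mapsto tf(x/t)$ near $(x_0,1)$ (and $f$ is continuous on ${\rm int}({\rm dom}\,f)$), $f^\pi$ is continuous there. Hence any $(x_0,1,r)$ with $r>f(x_0)$ admits a small open ball entirely contained in $\mathcal{K}^t\subseteq{\rm epi}\,f^\pi$, yielding solidity.

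The pointedness is the more delicate step. Suppose $(x,t,r)\in{\rm epi}\,f^\pi\cap -{\rm epi}\,f^\pi$. Looking at the $t$-component forces $t=0$, so $(x,0,r)$ and $(-x,0,-r)$ both lie in $\mathcal{K}^\theta$. Hence $r\geq f_\infty(x)$ and $-r\geq f_\infty(-x)$, which combined with Lemma \ref{lemma-nonegative} ($f_\infty\geq 0$) gives $r=0$ and $f_\infty(x)=f_\infty(-x)=0$. Using \eqref{infty-sigma}, $f_\infty=\sigma_{{\rm dom}\,f^*}$, the only way for both $\sigma_{{\rm dom}\,f^*}(x)$ and $\sigma_{{\rm dom}\,f^*}(-x)$ to vanish when $x\neq 0$ is for ${\rm dom}\,f^*\subseteq\{0\}$. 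The key observation that rules this out is that $f$ is essentially smooth and strictly convex (by Proposition \ref{prop-duality}), so $f'$ is strictly increasing on the open interval ${\rm int}({\rm dom}\,f)$, whence $f'({\rm int}({\rm dom}\,f))$ is a nonempty open interval contained in ${\rm dom}\,f^*$. This makes ${\rm int}({\rm dom}\,f^*)\neq\emptyset$, contradicting ${\rm dom}\,f^*\subseteq\{0\}$. Therefore $x=0$ and ${\rm epi}\,f^\pi$ is pointed.

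The main obstacle here is the last step: extracting from essential smoothness that ${\rm dom}\,f^*$ is not a singleton. Once that is in hand, both pointedness of ${\rm epi}\,f^\pi$ and the full conclusion (including the dual statement via the cited duality) drop out cleanly.
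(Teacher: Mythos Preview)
Your proposal is correct. The solidity argument and the passage to the dual cone are essentially the same as in the paper. For pointedness, both you and the paper begin identically: reduce to a nonzero $(x,0,0)\in\mathcal{K}^\theta\cap-\mathcal{K}^\theta$, i.e., $f_\infty(x)=f_\infty(-x)=0$ with $x\neq 0$. The contradiction is then derived differently. The paper interprets $(x,0)\in{\rm epi}\,f_\infty\cap-{\rm epi}\,f_\infty=({\rm epi}\,f)_\infty\cap-({\rm epi}\,f)_\infty$ as a nonzero element of the lineality space of ${\rm epi}\,f$ and invokes \cite[Theorem~2.5.3]{Auslender-Alfred-Teboulle-2006} to conclude that $f$ is constant along the direction $x$, contradicting strict convexity (Proposition~\ref{prop-duality}). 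You instead pass through the identity $f_\infty=\sigma_{{\rm dom}\,f^*}$ to force ${\rm dom}\,f^*\subseteq\{0\}$, and rule this out by noting that $f'$ is strictly increasing on ${\rm int}({\rm dom}\,f)$ so its range is a nonempty open interval inside ${\rm dom}\,f^*$. Your route avoids the external citation on constancy spaces and is arguably more self-contained; the paper's route is slightly shorter once that reference is granted. Either way the essential input is the same: strict convexity of $f$ on ${\rm int}({\rm dom}\,f)$ from Proposition~\ref{prop-duality}. (You could also shortcut your last step by observing that $f^*$ is Legendre, hence ${\rm int}({\rm dom}\,f^*)\neq\emptyset$ by definition.)
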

\begin{proof}
We first show that ${\rm epi}\, f^\pi$ is pointed.
Suppose to the contrary that ${\rm epi}\, f^\pi$ is not pointed.
 In view of the definition of ${\rm epi}\, f^\pi$ in \eqref{per-cone}, it holds that $\mathcal{K}^t\cap -\mathcal{K}^t =\emptyset$ and $\mathcal{K}^t\cap -\mathcal{K}^\theta =\emptyset$;
 thus, we must have the set $\mathcal{K}^\theta\cap -\mathcal{K}^\theta$ containing nonzero elements.
Also, it holds that  $f_\infty(d) \geq 0$ for any $d\in \R$ thanks to  Lemma \ref{lemma-nonegative} and the fact that $f$ is nonnegative.
Recall from \eqref{per-cone} that $\mathcal{K}^\theta={\{(x,0,s):\; (x,s)\in {\rm epi}\,f_\infty\}}$.
From these facts, we deduce that there exists $v_x\neq 0$ such that $(v_x,0)\in {\rm epi}\, f_\infty\cap -{\rm epi}\, f_\infty = {\rm epi}\, f_\infty\cap -({\rm epi}\, f)_\infty$.
According to \cite[Theorem 2.5.3]{Auslender-Alfred-Teboulle-2006}, this implies that
\begin{align*}
f(x+ t v_x)=f(x) \quad \forall\, x\in {\rm dom}\, f, t\in \R.
\end{align*}
 This contradicts the fact that  $f$ is strictly convex on ${\rm int}({\rm dom}\,f)$ (see Proposition \ref{prop-duality}).

 Next, we show that ${\rm epi}\, f^\pi$ is solid, i.e. ${\rm int}({\rm epi}\, f^\pi)\neq \emptyset$.
 According to Assumption \ref{assu-blanket}, we have that ${\rm int}({\rm dom}\, f)\neq \emptyset$.
 We can take some $(\hat{x},1,\hat{r})\in {\rm epi}\, f^\pi$ such that
 $\hat{x}\in {\rm int}({\rm dom}\, f)$ and $\hat{r}/1 > f(\hat{x}/1)$.
 Notice that $(a,b)\mapsto a/b$ is continuous around $(\hat{x},1)$ and $({\hat{r},1})$, and that
 $f$ is continuous around $\hat{x}$ (thanks to the fact that a convex function is continuous on the relative interior of its domain).
 We deduce that there exists a small $\delta>0$ such that  $(\hat{x},1,\hat{r})+B(\delta)\subset {\rm epi}\, f^\pi$. Then ${\rm int}({\rm epi}\, f^\pi)\neq \emptyset$.

 The claim for the dual cone follows directly from \cite[Proposition 2.4.3]{Facchinei-Pang-2003}.
\end{proof}

\begin{lemma}\label{lemma-dom-fstar}
Let $f$ satisfy Assumption~\ref{assu-blanket}.
Then the following statements hold.
\begin{enumerate}[\rm (i)]
\item ${\rm cl}({\rm dom}\, f^*)$ takes one of the following forms:
\begin{align}\label{lemma-dom-fstar-0}
[-a,\infty),\, (-\infty,b], \,[-a,b],  \, \R,
\end{align}
for some $a,b\geq 0$ with $b>-a$.
\item If  ${\rm bdry}({\rm dom}\, f^*)\cap {\rm dom}\, f^*\neq \emptyset$, then ${\rm dom}\,f^*$ takes one of the following forms:
\begin{align}\label{lemma-dom-fstar-1}
[-a,\infty),\, (-\infty, b],\, [-a, b),\, (-a,b],\, [-a,b],
\end{align}
for some $a,b\geq 0$ with $b>-a$.
\end{enumerate}
\end{lemma}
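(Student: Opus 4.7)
The plan is to exploit two standard facts about $f^*$: that ${\rm dom}\,f^*$ is a convex subset of $\R$ (hence an interval), and that $f^*$ is itself a Legendre function (Proposition~\ref{prop-duality} plus \cite[Theorem~26.3]{Rockafellar-1970}), so in particular ${\rm int}({\rm dom}\,f^*)\neq \emptyset$.

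For part (i), I would first observe that $0\in {\rm dom}\,f^*$. Indeed, since $f\ge 0$ by Assumption~\ref{assu-blanket}, we have
\[
f^*(0)=\sup_{x\in\R}\bigl(0\cdot x-f(x)\bigr)=-\inf_{x\in\R}f(x)\le 0<\infty.
\]
Since ${\rm dom}\,f^*$ is a nonempty convex subset of $\R$ containing $0$, its closure is a closed interval of $\R$ containing $0$, so it must be one of $[-a,\infty)$, $(-\infty,b]$, $[-a,b]$ or $\R$ for some $a,b\ge 0$. The only remaining thing is to rule out the degenerate case $a=b=0$ (i.e., ${\rm cl}({\rm dom}\,f^*)=\{0\}$). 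But this would force ${\rm dom}\,f^*=\{0\}$ and hence ${\rm int}({\rm dom}\,f^*)=\emptyset$, contradicting the essential smoothness of $f^*$ (which is part of being Legendre). Hence $a+b>0$, i.e., $b>-a$.

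For part (ii), I would read off the conclusion directly from part (i) by intersecting each possible closure with the condition that at least one boundary point lies in ${\rm dom}\,f^*$. If ${\rm cl}({\rm dom}\,f^*)=\R$ then ${\rm bdry}({\rm dom}\,f^*)=\emptyset$, which contradicts the hypothesis, so this case is excluded. In each of the remaining three cases $[-a,\infty)$, $(-\infty,b]$ and $[-a,b]$, the set ${\rm dom}\,f^*$ is an interval with the same closure, and the hypothesis forces at least one of the finite endpoints of that closure to lie in ${\rm dom}\,f^*$. Enumerating the possibilities (include the left endpoint only, include the right endpoint only, or include both) within each case produces exactly the five forms $[-a,\infty)$, $(-\infty,b]$, $[-a,b)$, $(-a,b]$, $[-a,b]$ listed in \eqref{lemma-dom-fstar-1}, with $b>-a$ inherited from part (i).

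There is no real obstacle here: both parts follow from combining elementary one-dimensional convex-set structure with two immediate consequences of Assumption~\ref{assu-blanket} (nonnegativity of $f$ and the fact that $f^*$ has nonempty interior domain). The only ``subtle'' step is using the nonnegativity of $f$ to show $0\in {\rm dom}\,f^*$, which is what forces the closure to have one of the four symmetric-looking forms in \eqref{lemma-dom-fstar-0} rather than an arbitrary closed interval.
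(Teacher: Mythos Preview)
Your proposal is correct and follows essentially the same approach as the paper: the two key ingredients are exactly that $0\in{\rm dom}\,f^*$ (from nonnegativity of $f$) and ${\rm int}({\rm dom}\,f^*)\neq\emptyset$ (from $f^*$ being Legendre). In fact, your write-up is more detailed than the paper's, which simply states these two facts and then asserts that items (i) and (ii) follow.
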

\begin{proof}
Since $\inf f\geq 0$, we see that $f^*(0)=\sup\{ 0\cdot x-f(x) : x\in \R \}=-\inf f \leq 0$, which implies that $0\in {\rm dom}\,f^*$.
In addition,  $f^*$ being Legendre means that ${\rm int}({\rm dom}\, f^*)\neq \emptyset$.
Combining these facts, we deduce item (i) and item (ii).
\end{proof}

\bibliographystyle{plain}
\bibliography{references}

\end{document}